\numberwithin{equation}{section}
\def\rr{{\mathbb R}}
\def\rn{{{\rr}^n}}
\def\zz{{\mathbb Z}}
\def\cc{{\mathbb C}}
\def\nn{{\mathbb N}}
\def\cb{{\mathcal B}}
\def\ce{{\mathcal E}}
\def\cg{{\mathcal G}}
\def\cl{{\mathcal L}}
\def\mr{{\mathcal R}}
\def\cx{{\mathcal X}}
\def\fz{\infty}
\def\az{\alpha}
\def\bz{\beta}
\def\dz{\delta}
\def\ez{\epsilon}
\def\gz{{\gamma}}
\def\kz{\kappa}
\def\lz{\lambda}
\def\tz{\theta}
\def\sz{\sigma}
\def\vz{\varphi}
\def\lf{\left}
\def\r{\right}
\def\hs{\hspace{0.25cm}}
\def\ls{\lesssim}
\def\noz{\nonumber}
\def\wz{\widetilde}
\def\st{\subset}
\def\bh{\backslash}
\def\bp{\bigcup}
\def\gfz{\genfrac{}{}{0pt}{}}
\def\supp{\mathop\mathrm{\,supp\,}}
\def\diam{\mathrm{\,diam}\,}
\def\dint{\displaystyle\int}
\def\lp{{L^p(\mu)}}
\def\lq{{L^q(\mu)}}
\def\lpt{{L^{p_2}(\mu)}}
\def\lon{{L^1(\mu)}}
\def\ltw{{L^2(\mu)}}
\def\cer{{{\mathcal E}^{1/p-1}_{\rho}(\mu)}}
\def\cera{{{\mathcal E}^{\alpha}_{\rho}(\mu)}}
\def\ceag{{{\mathcal E}^{\alpha,\,q}_{\rho,\,\gz}(\mu)}}
\def\cear{{{\mathcal E}^{\alpha,\,q}_{\rho}(\mu)}}
\def\ceaeg{{{\mathcal E}^{\alpha,\,q}_{\rho,\,\eta,\,\gz}(\mu)}}
\def\ceaog{{{\mathcal E}^{\alpha,\,q}_{\rho,\,\eta_1,\,\gz}(\mu)}}
\def\ceatg{{{\mathcal E}^{\alpha,\,q}_{\rho,\,\eta_2,\,\gz}(\mu)}}
\def\ceago{{{\mathcal E}^{\alpha,\,q}_{\rho_1,\,\eta,\,\gz}(\mu)}}
\def\ceagt{{{\mathcal E}^{\alpha,\,q}_{\rho_2,\,\eta,\,\gz}(\mu)}}
\def\li{{L^{\infty}(\mu)}}
\def\lip{{\mathop\mathrm{\,Lip}}}
\def\rbmo{\mathop\mathrm{\,{\rm RBMO}(\mu)}}
\def\kbsp{{\wz K^{(\rho),\,p}_{B,\,S}}}
\def\kbjp{{\wz K^{(\rho),\,p}_{B_j,\,B}}}
\def\kbsa{{\wz K^{(\rho),\,1/(\az+1)}_{B,\,S}}}
\def\hoq{{H_{\rm atb}^{1,\,q}(\mu)}}
\def\nhoq{{\wz H_{\rm atb}^{1,\,q}(\mu)}}
\def\hp{H_{\rm{atb},\,\rho}^{1,\,q,\,\gz}(\mu)}
\def\nhp{\wz H_{\rm{atb},\,\rho}^{p,\,q,\,\gz}(\mu)}
\def\nhop{\wz H_{\rm{atb},\,\rho}^{p_1,\,q,\,\gz}(\mu)}
\def\pnhp{\wz {\mathbb{H}}_{\rm{atb},\,\rho}^{p,\,q,\,\gz}(\mu)}
\def\pnhpo{\wz {\mathbb{H}}_{\rm{atb},\,\rho}^{p,\,q_1,\,\gz}(\mu)}
\def\pnhpt{\wz {\mathbb{H}}_{\rm{atb},\,\rho}^{p,\,q_2,\,\gz}(\mu)}
\def\mhp{\wz H_{\rm{mb},\,\rho}^{p,\,q,\,\gz,\,\ez}(\mu)}
\def\mhop{\wz H_{\rm{mb},\,\rho}^{p_1,\,q,\,\gz,\,\tz}(\mu)}
\def\mhpd{\wz H_{\rm{mb},\,2}^{p,\,q,\,1,\,\dz}(\mu)}
\def\mhpe{\wz H_{\rm{mb},\,2}^{p,\,q,\,1,\,\ez}(\mu)}
\def\mhpod{\wz H_{\rm{mb},\,2}^{p_1,\,q,\,1,\,\tz}(\mu)}
\def\pmhp{\wz {\mathbb{H}}_{\rm{mb},\,\rho}^{p,\,q,\,\gz,\,\ez}(\mu)}
\def\pmhpo{\wz {\mathbb{H}}_{\rm{mb},\,\rho}^{p,\,q_1,\,\gz,\,\ez}(\mu)}
\def\pmhpt{\wz {\mathbb{H}}_{\rm{mb},\,\rho}^{p,\,q_2,\,\gz,\,\ez}(\mu)}
\def\hoq{{H_{\rm atb}^{1,\,q}(\mu)}}
\def\nhoq{{\wz H_{\rm atb}^{1,\,q}(\mu)}}
\def\nhp{\wz H_{\rm{atb},\,\rho}^{p,\,q,\,\gz}(\mu)}
\def\mhp{\wz H_{\rm{mb},\,\rho}^{p,\,q,\,\gz,\,\ez}(\mu)}
\def\hhp{\widehat H_{\rm{atb},\,\rho}^{p,\,q,\,\gz}(\mu)}
\def\hhpq{\widehat H_{\rm{atb}}^{p,\,q}(\mu)}
\def\hmp{\widehat H_{\rm{mb},\,\rho}^{p,\,q,\,\gz,\,\ez}(\mu)}
\def\loc{{\mathop\mathrm{loc\,}}}
\def\dsum{\displaystyle\sum}
\begin{document}

\Year{2015} %
\Month{January}
\Vol{58} %
\No{1} %
\BeginPage{1} %
\EndPage{XX} %
\AuthorMark{Fu X {\it et al.}}
\ReceivedDay{August 7, 2014}
\AcceptedDay{November 19 , 2014}
\DOI{10.1007/s11425-014-4956-2} 

\title{Hardy spaces $H^p$ over
non-homogeneous metric measure spaces and their applications}{}


\author{FU Xing$^1$}{}
\author{LIN Haibo$^2$}{}
\author{YANG Dachun$^{1,}$}{Corresponding author}
\author{YANG Dongyong$^3$}{}

\address[{\rm1}]{School of Mathematical Sciences, Beijing Normal University,
Laboratory of Mathematics\\
and Complex Systems, Ministry of
Education, Beijing {\rm 100875}, China;}
\address[{\rm2}]{College of Science, China Agricultural University, Beijing {\rm 100083}, China;}
\address[{\rm3}]{School of Mathematical Sciences,
Xiamen University, Xiamen 361005, China}
\Emails{xingfu@mail.bnu.edu.cn, haibolincau@126.com, dcyang@bnu.edu.cn,
dyyang@xmu.edu.cn}\maketitle


 {\begin{center}
\parbox{14.5cm}{\begin{abstract}
Let $({\mathcal X},d,\mu)$
be a metric measure space
satisfying both the geometrically doubling and the upper doubling
conditions. Let $\rho\in (1,\infty)$, $0<p\le1\le q\le\infty$,
$p\neq q$, $\gamma\in[1,\infty)$ and $\epsilon\in(0,\infty)$.
In this article, the authors introduce
the atomic Hardy space
${\widetilde H_{\mathrm{atb},\,\rho}^{p,\,q,\,\gamma}(\mu)}$
and the molecular Hardy space
${\widetilde H_{\rm{mb},\,\rho}^{p,\,q,\,\gamma,\,\epsilon}(\mu)}$
via the discrete coefficient $\widetilde{K}^{(\rho),\,p}_{B,\,S}$,
and prove that the Calder\'on-Zygmund operator is bounded
from ${\widetilde H_{\rm{mb},\,\rho}^{p,\,q,\,\gamma,\,\delta}(\mu)}$
(or ${\widetilde H_{\rm{atb},\,\rho}^{p,\,q,\,\gamma}(\mu)}$)
into $L^p(\mu)$, and from
${\widetilde H_{\rm{atb},\,\rho(\rho+1)}^{p,\,q,\,\gamma+1}(\mu)}$ into
${\widetilde H_{\rm{mb},\,\rho}^{p,\,q,\,\gamma,\,\frac12(\delta
-\frac{\nu}{p}+\nu)}(\mu)}$.
The boundedness of the generalized fractional integral
$T_{\beta}$ ($\beta\in(0,1)$) from
$\widetilde H_{\rm{mb},\,\rho}^{p_1,\,q,\,\gamma,\,\theta}(\mu)$
(or ${\widetilde H}^{p_1,\,q,\,\gamma}_{{\rm atb},\,\rho}(\mu)$)
into $L^{p_2}(\mu)$ with $1/p_2=1/p_1-\beta$ is
also established.
The authors also introduce the $\rho$-weakly doubling condition,
with $\rho\in (1,\infty)$, of the measure $\mu$ and
construct a non-doubling measure $\mu$
satisfying this condition.
If $\mu$ is $\rho$-weakly doubling, the authors further
introduce the Campanato
space ${\mathcal E}^{\alpha,\,q}_{\rho,\,\eta,\,\gamma}(\mu)$
and show that ${\mathcal E}^{\alpha,\,q}_{\rho,\,\eta,\,\gamma}(\mu)$
is independent of the choices of $\rho$, $\eta$, $\gamma$ and $q$;
the authors then introduce the atomic Hardy space
$\widehat H_{\rm{atb},\,\rho}^{p,\,q,\,\gamma}(\mu)$
and the molecular Hardy space
$\widehat H_{\rm{mb},\,\rho}^{p,\,q,\,\gamma,\,\epsilon}(\mu)$,
which coincide with each other; the authors finally prove that
$\widehat{H}_{\rm{atb},\,\rho}^{p,\,q,\,\gamma}(\mu)$
is the predual of ${\mathcal E}^{1/p-1,\,1}_{\rho,\,\rho,\,1}(\mu)$.
Moreover, if $\mu$ is doubling, the authors show that
${\mathcal E}^{\alpha,\,q}_{\rho,\,\eta,\,\gamma}(\mu)$
and the Lipschitz space ${\rm Lip}_{\alpha,\,q}(\mu)$ ($q\in[1,\infty)$),
or $\widehat{H}_{\rm{atb},\,\rho}^{p,\,q,\,\gamma}(\mu)$
and the atomic Hardy space $H^{p,\,q}_{\rm at}(\mu)$
($q\in(1,\infty]$) of R. R. Coifman and G. Weiss coincide.
Finally, if $(\mathcal{X},d,\mu)$ is an RD-space
with $\mu(\mathcal{X})=\infty$, the authors prove that
$\widetilde H_{\rm{atb},\,\rho}^{p,\,q,\,\gamma}(\mu)$,
$\widetilde H_{\rm{mb},\,\rho}^{p,\,q,\,\gamma,\,\epsilon}(\mu)$
and $H^{p,\,q}_{\rm at}(\mu)$ coincide
for any $q\in(1,2]$. In particular, when
$(\mathcal{X},d,\mu):=(\mathbb{R}^D,|\cdot|,dx)$ with $dx$ being
the $D$-dimensional Lebesgue measure, the authors show that spaces
$\widetilde H_{\rm{atb},\,\rho}^{p,\,q,\,\gamma}(\mu)$,
$\widetilde H_{\rm{mb},\,\rho}^{p,\,q,\,\gamma,\,\epsilon}(\mu)$,
$\widehat{H}_{\rm{atb},\,\rho}^{p,\,q,\,\gamma}(\mu)$
and $\widehat{H}_{\rm{mb},\,\rho}^{p,\,q,\,\gamma,\,\epsilon}(\mu)$
all coincide with $H^p(\mathbb{R}^D)$ for any $q\in(1,\infty)$.
\vspace{-3mm}
\end{abstract}}\end{center}}

 \keywords{geometrically doubling measure, upper doubling measure,
$\rho$-weakly doubling measure, non-homogeneous metric measure space,
RD-space, Hardy space, Campanato space, Lipschitz space,
Calder\'on-Zygmund operator, fractional integral, predual,
atomic block, molecular block}

 \MSC{42B30, 42B20, 42B35, 30L99}

\renewcommand{\baselinestretch}{1.2}
\begin{center} \renewcommand{\arraystretch}{1.5}
{\begin{tabular}{lp{0.8\textwidth}} \hline \scriptsize
{\bf Citation:}\!\!\!\!&\scriptsize Fu X, Lin H, Yang Da, Yang Do.
Hardy spaces $H^p$ over non-homogeneous metric measure spaces and their applications.
Sci China Math, 2015, 58, doi: 10.1007/s11425-000-0000-0\vspace{1mm}
\\
\hline
\end{tabular}}\end{center}


\baselineskip 11pt\parindent=10.8pt  \wuhao

\section{Introduction}\label{s1}

\hskip\parindent It is well known that the real
variable theory of Hardy spaces
$H^p(\rr^D)$ on the $D$-dimensional Euclidean space
$\rr^D$ has many important applications
in various fields of analysis such as harmonic analysis and
partial differential equations; see, for example, \cite{sw,s70,fs,s93}.
When $p\in(1,\fz)$, $L^p(\rr^D)$ and $H^p(\rr^D)$ are essentially
the same; however, when $p\in(0,1]$, the space $H^p(\rr^D)$ is much better
adapted to problems arising in the theory of the
boundedness of operators, since some of singular
integrals (for example, Riesz transforms) are bounded on $H^p(\rr^D)$,
but not on $L^p(\rr^D)$. In 1972, Fefferman
and Stein \cite{fs} showed that the Hardy space
$H^1(\rr^D)$ is the predual of the space BMO($\rr^D$).
Later, Walsh \cite{w73} proved that the dual space of the Hardy space
$H^p(\rr^D)$ is the Campanato space introduced by Campanato \cite{c64}.
From then on, various characterizations of $H^p(\rr^D)$,
including the atomic and the molecular characterizations, and
their applications were studied extensively in harmonic analysis;
see, for example, \cite{co1,co2,la78,tw,ns,gly13,lhd,yy12,cy}.
Moreover, the atomic and the molecular
characterizations enabled the extension of the
real variable theory of Hardy spaces on $\rr^D$ to spaces of
homogeneous type in the sense of
Coifman and Weiss \cite{cw71, cw77}, which is a far more general setting for
function spaces and singular integrals than Euclidean spaces.

Recall that a metric space $(\cx, d)$ equipped with a non-negative measure
$\mu$ is called  a {\it space of homogeneous type}, if $(\cx, d,
\mu)$ satisfies the {\it measure doubling condition}:
there exists a positive constant $C_{(\mu)}$ such that, for all balls
$B(x,r):= \{y\in\cx:\ \, d(x, y)< r\}$ with $x\in\cx$ and $r\in(0, \fz)$,
\begin{equation}\label{1.1}
\mu(B(x, 2r))\le C_{(\mu)} \mu(B(x,r)).
\end{equation}
This measure doubling condition is
one of the most crucial assumptions in the classical harmonic analysis.
We point out that a space of homogeneous type in \cite{cw71,cw77} is
endowed with a quasi-metric. However, for simplicity, throughout
this article, we \emph{always assume} that a space of homogeneous type
is endowed with a metric.

Nevertheless, in recent years, it has been proved that
many results in the classical theory of
Hardy spaces and singular integrals on $\rr^D$
remain valid with the $D$-dimensional
Lebesgue measure replaced by a non-doubling measure (see, for example,
\cite{t01a,t01b,t03a,t03b,t04,t05,t14,ntv,cmy,hmy}).
Recall that a Radon measure $\mu$ on $\rr^D$
is called  a \emph{non-doubling measure}, if there exist positive
constants $C_0$ and $\kappa\in(0,D]$
such that, for all $x\in\rr^D$ and $r\in (0,\fz)$,
\begin{equation}\label{1.2}
\mu(B(x,r))\le C_0r^{\kz},
\end{equation}
where $B(x,r):=\{y\in\rr^D:\ |y-x|<r\}$.
Tolsa \cite{t01a,t03a} introduced the atomic Hardy space $ \hoq$,
for $q\in(1, \fz]$, and its dual space, $\rbmo$,
the \emph{space of functions with regularized bounded mean oscillation},
with respect to $\mu$ as in \eqref{1.2},
and proved that Calder\'on-Zygmund operators are bounded
from $ \hoq$ into $L^1(\mu)$. Later, Chen, Meng and Yang \cite{cmy}
showed that Calder\'on-Zygmund operators are bounded
on $ \hoq$. In \cite{hmy}, Hu, Meng and Yang established an
equivalent characterization of $ \hoq$ to
obtain the $L^q(\mu)$-boundedness of
commutators and their endpoint estimates. More
research on function spaces, mainly on Morrey spaces, and
their applications related to non-doubling measures
can be found in \cite{gs13, ss13, st05, st07, st07-2, st09, st09-2}.
We point out that the analysis on such non-doubling context
plays a striking role in solving
several long-standing problems related to the analytic capacity,
like Vitushkin's conjecture
or Painlev\'e's problem; see \cite{t03b, t04, t05, t14}.

However, as was pointed out by Hyt\"onen in \cite{h10},
the measure satisfying
\eqref{1.2} is different from, but not more general than,
the doubling measure.
In \cite{h10}, Hyt\"onen  introduced
a new class of metric measure spaces satisfying the so-called
geometrically doubling and the upper doubling conditions (see, respectively,
Definitions \ref{d2.1} and \ref{d2.3} below), which are also simply called
\emph{non-homogeneous metric measure spaces}. This new class of
non-homogeneous metric measure spaces
includes both spaces of homogeneous type and
metric spaces with non-doubling measures
as special cases. It is already known that singular integrals
on non-homogeneous metric measure spaces
arise naturally in the study of
complex and harmonic analysis questions in several complex variables
(see \cite{vw,hm} for the details).

In this new setting, Hyt\"onen \cite{h10}
introduced the space $\rbmo$ and established the corresponding
John-Nirenberg inequality. Later,
Hyt\"onen et al. \cite{hyy}, and  Bui and Duong \cite{bd},
independently, introduced the atomic Hardy
space $ \hoq$ and proved that the dual space of
$ \hoq$ is $\rbmo$.
Hyt\"onen et al. \cite{hlyy} and Liu et al. \cite{lyy2}
established some equivalent characterizations for the boundedness
of Calder\'on-Zygmund operators on $\lq$ with $q\in (1,\fz)$ and
their endpoint boundedness. Fu et al. \cite{fyy3}
introduced a version of the atomic Hardy space $\nhoq\ (\st\hoq)$
via the discrete coefficients
${\wz K}^{(\rho)}_{B,\,S}$, and showed that the Calder\'on-Zygmund
operator is bounded on $\nhoq$ via establishing
a molecular characterization of $\wz H_{\rm atb}^{1,\,q}(\mu)$
in this context. Recently, Fu et al. \cite{fyy2} introduced
generalized fractional integrals and
established the boundedness of generalized fractional integrals
and their commutators in this setting. More
research on the boundedness of various operators
on non-homogeneous metric measure spaces can be found in \cite{b13, hmy12, ly12,
lmy, ly14}. We refer the reader to the survey
\cite{yyf} and the monograph \cite{yyh} for more progress
on the theory of Hardy spaces and singular integrals
over non-homogeneous metric measure spaces.

We point out that the space $\nhoq$ seems to be more
useful in the study on the boundedness of operators,
since it was shown in \cite[Theorem 1.4]{fyy3} that
Calder\'on-Zygmund operators
are bounded on $\nhoq$, but the method does not work
for the boundedness of Calder\'on-Zygmund operators
on $\hoq$ over general non-homogeneous metric measure
spaces defined via the continuous coefficients
(see \cite[Remark 2.4]{fyy3} or Remark \ref{r4.2}(iv) below).

To the best of our knowledge, the theory of the Hardy space $H^p$ on
non-homogeneous metric measure spaces is still
unknown, even on Euclidean spaces endowed with non-doubling measures.
Let $(\cx, d, \mu)$ be a non-homogeneous metric measure space
in the sense of Hyt\"onen \cite{h10}. The main purposes of this article
are two-fold. First, via the discrete coefficients $\kbsp$,
we introduce the atomic Hardy space $\nhp$
and the molecular Hardy space $\mhp$, and give their
applications to the boundedness of Calder\'on-Zygmund operators
and generalized fractional integrals.
However, it is still unknown whether
$\nhp$ is independent of the choices of $\rho$, $\gz$
and $q$ or not even under some additional condition, called the
$\rho$-weakly doubling condition (see Definition \ref{d6.1} below).
Moreover, the dual space of $\nhp$ and the equivalence
between $\nhp$ and $\mhp$ are also unclear. Thus, we are forced to
turn to the other goal: introduce another
atomic Hardy space $\hhp$ and another molecular Hardy space $\hmp$,
and then show that $\hhp$ is independent
of the choices of $\rho$ and $\gz$
under the $\rho$-weakly doubling condition.
Then we study the Campanato space
${\mathcal E}^{\alpha,\,q}_{\rho,\,\eta,\,\gamma}(\mu)$,
the dual space of $\hhp$, and the equivalence
between $\hhp$ and $\hmp$ if $\mu$ is $\rho$-weakly doubling.
Moreover, if $\mu$ is doubling, we show that
${\mathcal E}^{\alpha,\,q}_{\rho,\,\eta,\,\gamma}(\mu)$
and the Lipschitz space ${\rm Lip}_{\alpha,\,q}(\mu)$ ($q\in[1,\infty)$),
or $\widehat{H}_{\rm{atb},\,\rho}^{p,\,q,\,\gamma}(\mu)$
and the atomic Hardy space $H^{p,\,q}_{\rm at}(\mu)$
($q\in(1,\infty]$) introduced by Coifman and Weiss in \cite{cw77} coincide
with equivalent quasi-norms. Finally, if $(\mathcal{X},d,\mu)$ is an RD-space
with $\mu(\mathcal{X})=\infty$, we prove that
$\widetilde H_{\rm{atb},\,\rho}^{p,\,q,\,\gamma}(\mu)$,
$\widetilde H_{\rm{mb},\,\rho}^{p,\,q,\,\gamma,\,\epsilon}(\mu)$
and $H^{p,\,q}_{\rm at}(\mu)$ coincide
for any $q\in(1,2]$, which is still unknown if $q\in(2,\infty]$.
In particular, when
$(\mathcal{X},d,\mu):=(\mathbb{R}^D,|\cdot|,dx)$ with $dx$ being the
$D$-dimensional Lebesgue measure, we show that the spaces
$\widetilde H_{\rm{atb},\,\rho}^{p,\,q,\,\gamma}(\mu)$,
$\widetilde H_{\rm{mb},\,\rho}^{p,\,q,\,\gamma,\,\epsilon}(\mu)$,
$\widehat{H}_{\rm{atb},\,\rho}^{p,\,q,\,\gamma}(\mu)$
and $\widehat{H}_{\rm{mb},\,\rho}^{p,\,q,\,\gamma,\,\epsilon}(\mu)$
all coincide with $H^p(\mathbb{R}^D)$ for any $q\in(1,\infty)$.

The organization of this article is as follows.

In Section \ref{s2}, we first recall some necessary notation and notions,
including the discrete coefficient $\kbsp$,
and give out some fundamental properties on $\kbsp$
which are crucial to the succeeding content.

In Section \ref{s3}, we introduce
the atomic Hardy space
${\widetilde H_{\mathrm{atb},\,\rho}^{p,\,q,\,\gz}(\mu)}$
via the discrete coefficient $\widetilde{K}^{(\rho),\,p}_{B,\,S}$
($\widetilde{K}^{(\rho),\,1}_{B,\,S}=\widetilde{K}^{(\rho)}_{B,\,S}$),
where the dominating function of the considered measure
appears in the size condition of the atomic
block, which seems to be well adapted to the study
of the boundedness of Calde\'on-Zygmund operators
and generalized fractional integrals,
and establish a useful property.
The key innovation in this section is
the definition of ${\widetilde H_{\mathrm{atb},\,\rho}^{p,\,q,\,\gz}(\mu)}$
as the completeness of a subspace of $\ltw$, $\pnhp$, which
is a suitable substitute of the classical fact that the set of all Schwartz
functions having infinite order vanishing moments
is dense in the Hardy space $H^p(\rr^D)$.

In Section \ref{s4},  we introduce the notion of the molecular Hardy space
${\widetilde H_{\rm{mb},\,\rho}^{p,\,q,\,\gz,\,\ez}(\mu)}$,
and prove that the Calder\'on-Zygmund operator is bounded
from ${\widetilde H_{\rm{mb},\,\rho}^{p,\,q,\,\gz,\,\dz}(\mu)}$
(or ${\widetilde H_{\rm{atb},\,\rho}^{p,\,q,\,\gz}(\mu)}$)
into $L^p(\mu)$ by borrowing some ideas from \cite[Theorem 4.2]{hyy}
with much more complicated arguments, and from
${\widetilde H_{\rm{atb},\,\rho(\rho+1)}^{p,\,q,\,\gz+1}(\mu)}$ into
${\widetilde H_{\rm{mb},\,\rho}^{p,\,q,\,\gz,\,(\delta
-{\nu}/{p}+\nu)/2}(\mu)}$ by using a method similar
to that used in the proof of \cite[Theorem 1.14]{fyy3} with
some technical modifications.

In Section \ref{s5}, we establish the boundedness
of the generalized fractional integral $T_{\bz}$ ($\bz\in(0,1)$)
from $\mhop$ (or $\nhop$)
into $\lpt$ with $1/p_2=1/p_1-\bz$.
The proof of the above result is parallel to that of the
conclusion for Calder\'on-Zygmund operators in Section \ref{s4}
with slight modifications.
For the sake of the clearness, we present the full details there.

Section \ref{s6} is mainly devoted to the theory of Campanato spaces.
We first introduce an additional assumption,
 called  the \emph{$\rho$-weakly doubling condition}
 (see \eqref{6.1} below), which is
satisfied by spaces of homogeneous type.
We also construct a non-trivial example to show that there
exist some non-homogeneous metric measure spaces satisfying
the $\rho$-weakly doubling condition \eqref{6.1};
see Example \ref{e6.3} below.
However, it turns out that there exist many
non-homogeneous metric measure spaces which
do not satisfy the $\rho$-weakly doubling condition;
see Example \ref{e6.4} below. Then we introduce the
Campanato space $\ceaeg$ and show that $\ceaeg$ is independent
of the choices of $\rho$, $\eta$, $\gz$ and $q$
under the assumption of $\rho$-weakly doubling conditions.
Precisely, via a useful property of $\ceaeg$
(see Proposition \ref{p6.7}(a) below)
and the geometrically doubling condition, we prove that $\ceaeg$
is independent of the choices of $\rho$ and $\eta$, where
the $\rho$-weakly doubling condition plays a decisive role.
Then, by establishing an equivalent characterization of
$\ceag:={\mathcal E}^{\alpha,\,q}_{\rho,\,\rho,\,\gz}(\mu)$
and a useful lemma (see Lemma \ref{l6.12} below),
which is analogous to \cite[Lemma 2.7]{hyy}, and by
borrowing some ideas from the proof of \cite[Proposition 2.5]{hyy},
we show that $\ceag$ is independent of the choice of $\gz$.
Next, by the above equivalent characterization of $\ceag$
and the $\rho$-weakly doubling condition, we establish the
John-Nirenberg inequality for
$\cear:={\mathcal E}^{\alpha,\,q}_{\rho,\,1}(\mu)$,
which further implies that $\cear$ is independent of the choice
of $q$. We point out that, on spaces of homogeneous type, the independence
of $q$ of $\cear$ is due to the coincidence between $\cear$ and
the Lipschitz space $\lip_{\az}(\mu)$; see \cite{ms1}.
However, this coincidence is unknown on non-homogeneous
metric measure spaces, even under the $\rho$-weakly doubling condition.
Alternatively, we adopt the method developed by Hyt\"onen
for the proof of the John-Nirenberg inequality for the BMO
type space in \cite{h10}; see also \cite{t01a}.
At the end of this section, we establish another useful
characterization of $\cera:={\mathcal E}^{\az,\,1}_{\rho}(\mu)$,
which plays important roles in the later context.

In Section \ref{s7}, we introduce the atomic Hardy space $\hhp$
and the molecular Hardy space
$\widehat H_{\rm{mb},\rho}^{p,q,\gz,\ez}(\mu)$ and investigate their relation
under the $\rho$-weakly doubling condition.
By using a method similar to that used in the proof of
\cite[Theorem 1.11]{fyy3}, together with some technical modifications,
we prove that $\hhp$ and $\hmp$ coincide with equivalent quasi-norms.
It is still unclear whether the above result holds true
or not on general non-homogeneous metric measure spaces,
even on Euclidean spaces with non-doubling
measures.

Section \ref{s8} is mainly devoted to investigating
the dual space of $\hhp$ under the $\rho$-weakly doubling condition.
To this end, we first show that $\hhp$ is independent
of the choices of $\rho$ and $\gz$.
Precisely, by the $\rho$-weakly doubling condition
and borrowing some ideas from the proof of \cite[Proposition 3.3(ii)]{hyy},
we first prove that $\hhp$ is independent of the choice of $\rho$.
By establishing the corresponding
result (see Lemma \ref{l6.11} below) to \cite[Lemma 9.2]{t01a}
and constructing a sequence of $(\rho, \bz_{\rho})$-doubling balls
which is a refinement of that appears in the proof of \cite[Lemma 9.3]{t01a},
we further show that $\hhp$ is independent of $\gz$.
Finally, via the independence of $\rho$ for $\hhp$
and the equivalent characterization of
$\cera:={\mathcal E}^{\az,\,1}_{\rho}(\mu)$ established in Section \ref{s6}
(see Proposition \ref{p6.18} below),
we show that $\hhp$ is the predual of $\cer$.
It is still unknown whether the above results hold true
or not on general non-homogeneous metric measure spaces,
even on Euclidean spaces with non-doubling measures.

In Section \ref{s9}, let $(\cx,d,\mu)$ be a space of
homogeneous type in the sense of Coifman and Weiss.
We investigate the relations between
the Campanato space $\ceaeg$ and the Lipschitz space $\lip_{\az,\,q}(\mu)$,
or between $\hhp$ and the atomic Hardy space $H^{p,\,q}_{\rm at}(\mu)$
introduced by Coifman and Weiss \cite{cw77}.
By carefully dividing the situation into several parts,
constructing a sequence of balls via using a method similar
to that used in the proof of the independence of $\gz$
for $\hhp$ in Section \ref{s6} and adopting some ideas from
\cite[Proposition 4.7]{h10}, we show that, if $q\in[1,\infty)$,
then $\ceaeg$ and $\lip_{\az,\,q}(\mu)$ coincide with equivalent norms.
By a method similar to that used in the proof of this result,
we also establish the coincidence of $\hhp$ and
$H^{p,\,q}_{\rm at}(\mu)$ for any $q\in(1,\infty]$ directly.

In Section \ref{s10}, suppose that $(\cx,d,\mu)$ is an RD-space
with $\mu(\cx)=\fz$ and $q\in(1,2]$. We show that
$\wz H_{\rm{atb},\,\rho}^{p,\,q,\,\gz}(\mu)$,
$\widetilde H_{\rm{mb},\,\rho}^{p,\,q,\,\gamma,\,\epsilon}(\mu)$ and
$H^{p,\,q}_{\rm at}(\mu)$ coincide.
Let $\pnhp$ and $\pmhp$ be dense subspaces of $\nhp$ and $\mhp$,
respectively (see Definitions \ref{d3.2} and \ref{d4.1} below).
We prove that
$$(H^{p,\,q}_{\rm at}(\mu)\cap\ltw)\st\pnhp\st
\wz{\mathbb{H}}^{p,\,q,\,\gz,\,\ez}_{\rm mb,\,\rho}(\mu)
\st (H^{p,\,q}_{\rm at}(\mu)\cap\ltw)$$
by two steps. In \textbf{Step 1}, to show that
$(H^{p,\,q}_{\rm at}(\mu)\cap\ltw)\st
\wz{\mathbb{H}}^{p,\,q,\,\gz}_{\rm atb,\,\rho}(\mu)$
for any $q\in(1,2]$, we first establish a
technical lemma (see Lemma \ref{l10.2} below).
Then we establish a useful criteria for the boundedness of some
integral operators (see Lemma \ref{l10.8} below).
Via this, a standard duality argument,
the Calder\'on reproducing formula and the
boundedness of the Littlewood-Paley
$g$-function on $\ltw$ obtained in \cite{hmy06},
we give out a key atomic decomposition
for all functions from $H_{\rm at}^{p,\,q}(\mu)\cap\ltw$
in $\ltw$ (see \eqref{10.5} below), which
plays an essential role in the proof of \textbf{Step 1}.
In \textbf{Step 2}, via the fact that
$\pnhp\st\pmhp$ (see Proposition \ref{p4.3} below)
and establishing the boundedness
of the Littlewood-Paley $S$-function from
$\pmhp$ into $\lp$, we conclude that, for any $q\in(1,\fz)$,
$$\pnhp\st\pmhp\st (H^p(\mu)\cap\ltw)=(H^{p,\,q}_{\rm at}(\mu)\cap\ltw),$$
where $H^p(\mu)$ is defined by the Littlewood-Paley $S$-function
as in \cite{gly,hmy06}.
By these two steps and a standard density argument, we
obtain the desired result.
Due to the defects of the above boundedness of the Littlewood-Paley
$g$-function and the criteria for the boundedness
of some integral operators, it is still unclear whether
$\widetilde H_{\rm{mb},\,\rho}^{p,\,q,\,\gamma,\,\epsilon}(\mu)\
({\rm or}\ \wz{H}^{p,\,q,\,\gz}_{\rm atb,\,\rho}(\mu))
=H^{p,\,q}_{\rm at}(\mu)$
over RD-spaces $(\cx,d,\mu)$ with $\mu(\cx)=\fz$ for $q\in(2,\infty]$.
Finally, if $(\mathcal{X},d,\mu):=(\mathbb{R}^D,|\cdot|,dx)$
with $dx$ being the $D$-dimensional Lebesgue measure,
we prove that the spaces
$\widetilde H_{\rm{atb},\,\rho}^{p,\,q,\,\gamma}(\mu)$,
$\widetilde H_{\rm{mb},\,\rho}^{p,\,q,\,\gamma,\,\ez}(\mu)$,
$\widehat{H}_{\rm{atb},\,\rho}^{p,\,q,\,\gamma}(\mu)$
and $\widehat{H}_{\rm{mb},\,\rho}^{p,\,q,\,\gamma,\,\epsilon}(\mu)$
all coincide with $H^p(\mathbb{R}^D)$ for any $q\in(1,\infty)$.

Finally, we make some conventions on notation.
Throughout this article, $C$ stands for a {\it positive constant} which
is independent of the choices of the main parameters,
but it may vary from line to
line. \emph{Constants with subscripts}, such as $C_0$, do
not change in different occurrences.
Furthermore, we use $C_{(\rho,\,\az,\,\ldots)}$
to denote a positive constant depending
on parameters $\rho,\,\az,\,\ldots$.
Let $\nn:=\{1,2,\ldots\}$ and $\zz_+:=\{0\}\cup\nn$.
For any ball $B$, the center and the
radius of $B$ are denoted, respectively, by $c_B$ and $r_B$.
For any subset $E$ of $\cx$, we use
$\chi_E$ to denote its {\it characteristic function}.

\section{Preliminaries}\label{s2}

\hskip\parindent In this section, we recall some necessary
notation and notions, including the discrete coefficient $\kbsp$,
and give out some fundamental properties on $\kbsp$ in the
non-homogeneous context.

The following notion of the geometrically doubling
is well known in analysis on metric spaces, which was originally introduced
by Coifman and Weiss in \cite[pp.\,66-67]{cw71} and is also
known as \emph{metrically doubling} (see, for example, \cite[p.\,81]{he}).

\begin{definition}\label{d2.1}
A metric space $(\cx,d)$ is said to be \emph{geometrically doubling} if there
exists some $N_0\in \nn$ such that, for any ball
$B(x,r)\st \cx$ with $x\in\cx$ and $r\in(0,\fz)$,
there exists a finite ball covering $\{B(x_i,r/2)\}_i$ of
$B(x,r)$ such that the cardinality of this covering is at most $N_0$.
\end{definition}

\begin{remark}\label{r2.2}
Let $(\cx,d)$ be a metric space. In \cite{h10}, Hyt\"onen showed that
the following statements are mutually equivalent:
\vspace{-0.25cm}
\begin{itemize}
  \item[\rm(i)] $(\cx,d)$ is geometrically doubling.
\vspace{-0.25cm}
  \item[\rm(ii)] For any $\ez\in (0,1)$ and any ball $B(x,r)\st \cx$
with $x\in\cx$ and $r\in(0,\fz)$,
there exists a finite ball covering $\{B(x_i,\ez r)\}_i$ of
$B(x,r)$ such that the cardinality of this covering is at most $N_0\ez^{-n_0}$,
here and hereafter, $N_0$ is as in Definition \ref{d2.1} and
$n_0:=\log_2N_0$.
\vspace{-0.25cm}
  \item[\rm(iii)] For every $\ez\in (0,1)$, any ball $B(x,r)\st \cx$
with $x\in\cx$ and $r\in(0,\fz)$ contains
at most $N_0\ez^{-n_0}$ centers of disjoint balls $\{B(x_i,\ez r)\}_i$.
\vspace{-0.25cm}
  \item[\rm(iv)] There exists $M\in \nn$ such that any ball $B(x,r)\st \cx$
with $x\in\cx$ and $r\in(0,\fz)$ contains at most $M$ centers $\{x_i\}_i$ of
  disjoint balls $\{B(x_i, r/4)\}_{i=1}^M$.
  \end{itemize}
\end{remark}

Recall that spaces of homogeneous type are geometrically doubling,
which was proved
by Coifman and Weiss in \cite[pp.\,66-68]{cw71}.

The following notion of upper doubling metric
measure spaces was originally introduced
by Hyt\"onen \cite{h10} (see also \cite{hlyy,lyy2}).

\begin{definition}\label{d2.3}
A metric measure space $(\cx,d,\mu)$ is said to be \emph{upper doubling} if
$\mu$ is a Borel measure on $\cx$ and there exist a \emph{dominating function}
$\lz:\cx \times (0,\fz)\to (0,\fz)$ and a positive constant $C_{(\lz)}$,
depending on $\lz$, such that, for each $x\in \cx$, $r\to \lz(x,r)$ is
non-decreasing and, for all $x\in \cx$ and $r\in (0,\fz)$,
\begin{equation}\label{2.1}
\mu(B(x,r))\le\lz(x,r)\le C_{(\lz)}\lz(x,r/2).
\end{equation}
A metric measure space $(\cx,d,\mu)$ is called  a
\emph{non-homogeneous metric measure space}
if $(\cx,d)$ is geometrically doubling and $(\cx,d,\mu)$ is upper doubling.
\end{definition}

\begin{remark}\label{r2.4}
(i) Obviously, a space of homogeneous type is a
special case of upper doubling spaces, where we take the dominating function
$\lz(x,r):=\mu(B(x,r))$ for all $x\in\cx$ and $r\in(0,\fz)$.
On the other hand, the $D$-dimensional Euclidean space
$\rr^D$ with any Radon measure $\mu$ as
in \eqref{1.2} is also an upper doubling
space by taking $\lz(x,r):=C_0r^{\kz}$ for all $x\in\rr^D$ and
$r\in(0,\fz)$.

(ii) Let $(\cx,d,\mu)$ be upper doubling with $\lz$ being the dominating
function on $\cx \times (0,\fz)$ as in Definition \ref{d2.3}. It was proved
in \cite{hyy} that there exists another
dominating function $\wz{\lz}$ such that $\wz{\lz}\le \lz$, $C_{(\wz{\lz})}
\le C_{(\lz)}$
and, for all $x,\,y\in \cx$ with $d(x,y)\le r$,
\begin{equation}\label{2.2}
\wz{\lz}(x,r)\le C_{(\wz{\lz})}\wz{\lz}(y,r).
\end{equation}

(iii) It was shown in \cite{tl} that the upper doubling condition
is equivalent
to the \emph{weak growth condition}:
there exist a dominating function $\lz:\cx\times(0,\fz)\to(0,\fz)$,
with $r\to\lz(x,r)$ non-decreasing, positive constants $C_{(\lz)}$,
depending on $\lz$, and $\ez$ such that

$\rm (iii)_1$ for all $r\in(0,\fz)$, $t\in[0,r]$, $x,\,y\in\cx$
and $d(x,y)\in[0,r]$,
$$|\lz(y,r+t)-\lz(x,r)|\le C_{(\lz)}\lf[\frac{d(x,y)+t}r\r]^{\ez}
\lz(x,r);$$

$\rm (iii)_2$ for all $x\in\cx$ and $r\in(0,\fz)$,
$\mu(B(x,r))\le\lz(x,r).$
\end{remark}

Based on Remark \ref{r2.4}(ii), from now on, we \emph{always assume} that
$(\cx,d,\mu)$ is a non-homogeneous metric measure space
with the dominating function $\lz$ satisfying \eqref{2.2}.

Though the measure doubling condition is not assumed uniformly for all balls
in the non-homogeneous metric measure space $(\cx,d,\mu)$,
it was shown in \cite{h10}
that there still exist many
balls which have the following $(\az,\bz)$-doubling property.

\begin{definition}\label{d2.5}
Let $\az,\,\bz\in (1,\fz)$. A ball $B\st \cx$ is said to be
\emph{$(\az,\bz)$-doubling}
if $\mu(\az B)\le \bz\mu(B)$, where, for any ball
$B:=B(c_B,r_B)$ and $\rho\in(0,\fz)$,
$\rho B:=B(c_B,\rho r_B)$.
\end{definition}

To be precise, it was proved in
\cite[Lemma 3.2]{h10} that, if a metric measure
space $(\cx,d,\mu)$ is upper doubling and $\az,\,\bz\in(1,\fz)$ with
$\bz>[C_{(\lz)}]^{\log_2\az}=:\az^\nu$,
then, for any ball $B\st \cx$, there exists some
$j\in \zz_+$ such that $\az^jB$ is $(\az,\bz)$-doubling.
Moreover, let $(\cx,d)$ be geometrically
doubling, $\bz>\az^{n_0}$ with $n_0:=\log_2N_0$
and $\mu$ a Borel measure on $\cx$
which is finite on bounded sets. Hyt\"onen \cite[Lemma 3.3]{h10} also
showed that, for $\mu$-almost every $x\in \cx$, there exist
arbitrary small $(\az,\bz)$-doubling
balls centered at $x$. Furthermore, the radii of these balls may be
chosen to be of the form $\az^{-j}r$ for $j\in\nn$ and any
preassigned number $r\in(0, \fz)$.
Throughout this article, for any $\az\in (1,\fz)$ and ball $B$,
the \emph{smallest
$(\az,\bz_\az)$-doubling ball of the form $\az^j B$ with $j\in \zz_+$}
is denoted by
$\wz B^\az$, where
\begin{equation*}
\bz_\az:=\az^{3(\max\{n_0,\,\nu\})}+[\max\{5\az,\,30\}]^{n_0}
+[\max\{3\az,30\}]^\nu.
\end{equation*}

Before we introduce the discrete coefficient $\kbsp$, we first give
an assumption on the relation between two balls $B$ and $S$,
which is \emph{supposed to hold true through the whole article}:

\textbf{(A)} If $B=S$, then $c_B=c_S$ and $r_B=r_S$.

Then we claim that, if $B\st S$, then $r_B\le2r_S$.
Indeed, assume that $r_B>2r_S$. By this
and $B\st S$, together with the triangle inequality
satisfied by $d$, we see that $S\st B$. Thus, $B=S$, which,
together with the assumption \textbf{(A)},
implies that $r_B=r_S$. This contradicts to $r_B>2r_S$,
which completes the proof of the above claim.

On the other hand, we give a simple example to
illustrate that, if $B\subsetneqq S$, then it may happen
that $r_B>r_S$. Let $(\cx,d):=(\{-1,1,3\},|\cdot|)$,
$B:=\{x\in\{-1,1,3\}:\ |x+1|<3\}$ and
$$
S:=\lf\{x\in\{-1,1,3\}:\ |x-1|<\frac52\r\}.
$$
Obviously, $r_B>r_S$ and $B=\{-1,1\}\subsetneqq\{-1,1,3\}=S$.

\begin{definition}\label{d2.6}
For any $\rho\in (1,\fz)$, $p\in(0,1]$ and any two balls
$B\st S\st\cx$, let
\begin{equation}\label{2.3}
\wz K^{(\rho),\,p}_{B,\,S}
:=\lf\{1+\sum_{k=-\lfloor\log_{\rho}2\rfloor}^{N^{(\rho)}_{B,\,S}}
\lf[\frac{\mu(\rho^{k}B)}{\lz(c_{B},\rho^{k}r_{B})}\r]^p\r\}^{1/p},
\end{equation}
here and hereafter, for any $a\in\rr$,
$\lfloor a\rfloor$ represents the \emph{biggest integer which is not bigger
than} $a$, and $N^{(\rho)}_{B,\,S}$ is the
\emph{smallest integer} satisfying
$\rho^{N^{(\rho)}_{B,\,S}}r_{B}\ge r_{S}$.
\end{definition}

\begin{remark}\label{r2.7}
(i) By a change of variables and \eqref{2.1}, we easily conclude that
$$
\wz K^{(\rho),\,p}_{B,\,S}
\sim\lf\{1+\sum_{k=1}^{N^{(\rho)}_{B,\,S}+\lfloor\log_{\rho}2\rfloor+1}
\lf[\frac{\mu(\rho^{k}B)}{\lz(c_{B},\rho^{k}r_{B})}\r]^p\r\}^{1/p},
$$
where the implicit equivalent positive constants are
independent of balls $B\st S\st\cx$, but depend on $\rho$ and $p$.

(ii) A continuous version, $K_{B,\,S}$, of the coefficient in
Definition \ref{d2.6} when $p=1$ was introduced in \cite{h10,hyy}
as follows: for any two balls $B\st S\st\cx$,
\begin{equation}\label{2.3x}
K_{B,\,S}:=1+\int_{(2S)\bh B}\frac1{\lz(c_B,d(x,c_B))}\,d\mu(x).
\end{equation}
It was proved in \cite[Lemma 2.2]{hyy} that $K_{B,\,S}$ has
all properties similar to those for $\kbsp$ as
in Lemma \ref{l2.8} below. Unfortunately, $K_{B,\,S}$ and
$\wz{K}_{B,\,S}^{(\rho),\,1}$ are usually not equivalent,
but this is true for $(\rr^D,|\cdot|,\mu)$ with $\mu$
as in \eqref{1.2}; see \cite{fyy3} for more details on this.
\end{remark}

Now we give some simple properties of ${\wz K}^{(\rho),\,p}_{B,\,S}$
defined by \eqref{2.3}
adapted from \cite[Lemma 3.5]{fyy1},
in which $\rho=6$ and $p=1$. The arguments therein are
still valid for the present case.
For the sake of reader's convenience, we present some details.
In what follows, for any $a\in\rr$, $\lceil a\rceil$ represents
the \emph{smallest integer which is not smaller than} $a$.

\begin{lemma}\label{l2.8}
Let $(\cx,d,\mu)$ be a non-homogeneous metric measure space and $p\in(0,1]$.

{\rm(i)} For any $\rho\in(1,\fz)$, there exists a
positive constant $C_{(\rho)}$,
depending on $\rho$, such that, for all balls $B\st R\st S$,
$[{\wz K}^{(\rho),\,p}_{B,\,R}]^p\le C_{(\rho)}
[{\wz K}^{(\rho),\,p}_{B,\,S}]^p$.

{\rm(ii)} For any $\az\in [1,\fz)$ and $\rho\in(1,\fz)$, there exists
a positive constant $C_{(\az,\,\rho)}$, depending on $\az$
and $\rho$, such that, for
all balls $B\st S$ with
$r_S\le \az r_B$, $[{\wz K}^{(\rho),\,p}_{B,\,S}]^p\le C_{(\az,\,\rho)}$.

{\rm(iii)} For any $\rho\in(1,\fz)$, there exists
a positive constant $C_{(\rho,\,p,\,\nu)}$, depending
on $\rho$, $p$ and $\nu$,
such that, for all balls $B$,
$[{\wz K}^{(\rho),\,p}_{B,\,\wz B^{\rho}}]^p\le C_{(\rho,\,p,\,\nu)}$.
Moreover, letting $\az,\,\bz\in(1,\fz)$, $B\st S$ be any
two concentric balls
such that there exists no $(\az,\bz)$-doubling ball in the form of
$\az^k B$, with $k\in\nn$, satisfying $B\st \az^k B\st S$,
then there exists a positive constant
$C_{(\az,\,\bz,\,p,\,\nu)}$, depending on $\az$,
$\bz$, $p$ and $\nu$, such that
$[{\wz K}^{(\az),\,p}_{B,\,S}]^p\le C_{(\az,\,\bz,\,p,\,\nu)}$.

{\rm(iv)} For any $\rho\in(1,\fz)$,
there exists a positive constant $c_{(\rho,\,p,\,\nu)}$,
depending on $\rho$, $p$ and $\nu$, such that, for all balls $B\st R\st S$,
$$\lf[{\wz K}^{(\rho),\,p}_{B,\,S}\r]^p\le \lf[{\wz K}^{(\rho),\,p}_{B,\,R}\r]^p
+c_{(\rho,\,p,\,\nu)}\lf[{\wz K}^{(\rho),\,p}_{R,\,S}\r]^p.$$

{\rm(v)} For any $\rho\in(1,\fz)$, there exists a positive constant
$\wz c_{(\rho,\,p,\,\nu)}$, depending on $\rho$, $p$ and $\nu$, such that,
for all balls $B\st R\st S$,
$[{\wz K}^{(\rho),\,p}_{R,\,S}]^p\le \wz c_{(\rho,\,p,\,\nu)}
[{\wz K}^{(\rho),\,p}_{B,\,S}]^p$.
\end{lemma}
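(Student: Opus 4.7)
The plan is to adapt the arguments of \cite[Lemma 3.5]{fyy1} (established there for $\rho=6$ and $p=1$) to arbitrary $\rho\in(1,\fz)$ and $p\in(0,1]$. Four structural facts drive all five parts: (a) by \eqref{2.1}, every term in the defining sum satisfies $[\mu(\rho^{k}B)/\lz(c_B,\rho^{k}r_B)]^p\le 1$; (b) iterating \eqref{2.1} gives $\lz(c_B,\rho^{k+1}r_B)\le C_{(\rho,\nu)}\,\lz(c_B,\rho^{k}r_B)$ with $C_{(\rho,\nu)}\sim\rho^{\nu}$; (c) the claim proved just before Definition~\ref{d2.6} gives $r_R\le 2r_S$ whenever $R\st S$; and (d) by \eqref{2.2} one may switch the base point of $\lz$ between any two points lying in a common ball, at the cost of a multiplicative constant.

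I would handle (i) and (ii) together. From $R\st S$ and (c), $N^{(\rho)}_{B,\,R}\le N^{(\rho)}_{B,\,S}+\lceil\log_{\rho}2\rceil$, so the $k$-sum defining $[{\wz K}^{(\rho),p}_{B,R}]^{p}$ contains at most a bounded number of terms beyond those in the corresponding sum for $[{\wz K}^{(\rho),p}_{B,S}]^{p}$; by~(a) each extra term contributes at most~$1$, giving~(i). For~(ii), the assumption $r_S\le\az r_B$ forces $N^{(\rho)}_{B,\,S}\le\lceil\log_{\rho}\az\rceil$, so the whole sum has at most $O_{\rho,\az}(1)$ terms, each bounded by~$1$.

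For~(iii), let $\wz B^{\rho}=\rho^{j_0}B$. For every $k\in\{0,\dots,j_0-1\}$ the ball $\rho^{k}B$ fails to be $(\rho,\bz_{\rho})$-doubling, so $\mu(\rho^{k+1}B)>\bz_{\rho}\mu(\rho^{k}B)$; iterating and using \eqref{2.1} yields
\[
\mu(\rho^{k}B)\le \bz_{\rho}^{-(j_0-k)}\mu(\wz B^{\rho})
\le \bz_{\rho}^{-(j_0-k)}\lz(c_B,\rho^{j_0}r_B),
\]
while (b) gives $\lz(c_B,\rho^{j_0}r_B)\le C\,\rho^{\nu(j_0-k)}\lz(c_B,\rho^{k}r_B)$. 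Hence $\mu(\rho^{k}B)/\lz(c_B,\rho^{k}r_B)\le C\,(\rho^{\nu}/\bz_{\rho})^{j_0-k}$, and the choice of $\bz_{\rho}$ (which exceeds $\rho^{\nu}$) makes the sum of $p$-th powers a convergent geometric series. The variant with the non-doubling chain $\rho^{k}B$ is handled identically. Part~(v) follows from a similar counting argument: $B\st R$ gives $r_B\le 2r_R$, so any $k$ appearing in the $(R,S)$-sum matches some $k'\in\{k+O(1)\}$ in the $(B,S)$-sum; using (b) and (d) to absorb the shift in radius and the change of base point from $c_R$ to $c_B$ yields $[\mu(\rho^{k}R)/\lz(c_R,\rho^{k}r_R)]^{p}\le C\,[\mu(\rho^{k'}B)/\lz(c_B,\rho^{k'}r_B)]^{p}$, and summing gives the claim.

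The main obstacle is~(iv). Here I would split the sum defining $[{\wz K}^{(\rho),p}_{B,S}]^{p}$ at $k=N^{(\rho)}_{B,\,R}$: the low-range $k\le N^{(\rho)}_{B,\,R}$ is controlled term-by-term by $[{\wz K}^{(\rho),p}_{B,\,R}]^{p}$, while for $k>N^{(\rho)}_{B,\,R}$ the ball $\rho^{k}B$ already contains $R$, so I would choose $k'=k-N^{(\rho)}_{B,\,R}$ and compare
\[
\frac{\mu(\rho^{k}B)}{\lz(c_B,\rho^{k}r_B)}
\quad\text{with}\quad
\frac{\mu(\rho^{k'}R)}{\lz(c_R,\rho^{k'}r_R)},
\]
using (d) to replace $c_B$ by $c_R$ (legitimate since $c_B\in R\st \rho^{k}B$) and (b) together with (c) to absorb the bounded discrepancy between $\rho^{k}r_B$ and $\rho^{k'}r_R$. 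This yields the high-range contribution bounded by $c_{(\rho,p,\nu)}[{\wz K}^{(\rho),p}_{R,\,S}]^{p}$, completing~(iv). Throughout, the switch from $p=1$ to general $p\in(0,1]$ requires no new idea beyond applying Remark~\ref{r2.7}(i) to reorganize the sums and passing $p$-th powers inside geometric bounds.
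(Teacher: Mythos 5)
Your treatment of (i)--(iv) follows the paper's own route: the same counting of at most $O_\rho(1)$ extra indices for (i) and (ii), the same geometric-series estimate via the non-doubling chain and the choice of $\bz_\rho$ for (iii), and for (iv) the same splitting of the $(B,S)$-sum at $k=N^{(\rho)}_{B,\,R}$ followed by re-indexing the tail into the $(R,S)$-sum using $\rho^{N^{(\rho)}_{B,\,R}-1}r_B<r_R\le\rho^{N^{(\rho)}_{B,\,R}}r_B$, \eqref{2.1} and \eqref{2.2}; the minor imprecisions there (the finitely many negative indices in (iii), and the claim that $\rho^kB\supset R$ as soon as $k>N^{(\rho)}_{B,\,R}$, which really needs an extra $\lceil\log_\rho2\rceil$) are harmless.

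Part (v), however, has a genuine gap. You match the term of index $k$ in the $(R,S)$-sum with a term of index $k'\in\{k+O(1)\}$ in the $(B,S)$-sum, justified by $r_B\le2r_R$. But $r_B\le2r_R$ is only a one-sided comparison: $r_R$ may be vastly larger than $r_B$, and then $\rho^kR$ has radius $\rho^kr_R\gg\rho^{k+O(1)}r_B$, so $\rho^kR\not\st\rho^{k'}B$ and the claimed termwise bound $[\mu(\rho^kR)/\lz(c_R,\rho^kr_R)]^p\le C[\mu(\rho^{k'}B)/\lz(c_B,\rho^{k'}r_B)]^p$ fails (e.g.\ $\mu$ may charge $\rho^kR$ far from $c_B$ while $\mu(\rho^{k'}B)$ is negligible). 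The correct shift is $k\mapsto k+N^{(\rho)}_{B,\,R}+O(1)$: the numerator needs $\rho^{k'}r_B\gtrsim\rho^kr_R$ and the denominator needs $\rho^{k'}r_B\lesssim\rho^kr_R$, which pins $k'$ to within $O(1)$ of $k+N^{(\rho)}_{B,\,R}$, an unbounded shift. Once you use this shift, a second ingredient becomes necessary and is missing from your sketch: you must check that the shifted indices, for $k$ up to $N^{(\rho)}_{R,\,S}$, do not overshoot the range of the $(B,S)$-sum by more than a bounded amount, i.e.\ $N^{(\rho)}_{B,\,R}+N^{(\rho)}_{R,\,S}\le N^{(\rho)}_{B,\,S}+1$. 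This is exactly the first step of the paper's proof of (v), obtained from $r_R\le\rho^{-N^{(\rho)}_{R,\,S}+1}r_S\le\rho^{N^{(\rho)}_{B,\,S}-N^{(\rho)}_{R,\,S}+1}r_B$; with it, the argument closes as in the paper, but without it (and with the $O(1)$ shift as you wrote it) the step does not go through.
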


\begin{proof}
Fix $p\in(0,1]$, $\rho\in(1,\fz)$ and $\az\in[1,\fz)$.
We first show (i). By $R\st S$, we have $r_R\le 2r_S$. Hence,
$r_R\le2r_S\le2\rho^{N^{(\rho)}_{B,\,S}}r_B
\le\rho^{\lceil\log_{\rho}2\rceil
+N^{(\rho)}_{B,\,S}}r_B$. Thus, $N^{(\rho)}_{B,\,R}
\le\lceil\log_{\rho}2\rceil+N^{(\rho)}_{B,\,S}$.
By this and \eqref{2.1}, we see that
\begin{align*}
\lf[{\wz K}^{(\rho),\,p}_{B,\,R}\r]^p&\le1
+\sum_{k=-\lfloor\log_{\rho}2\rfloor}^{\lceil
\log_{\rho}2\rceil+N^{(\rho)}_{B,\,S}}
\lf[\frac{\mu(\rho^{k}B)}{\lz(c_{B},\rho^{k}r_{B})}\r]^p\\
&\le1+\sum_{k=-\lfloor\log_{\rho}2\rfloor}^{N^{(\rho)}_{B,\,S}}
\lf[\frac{\mu(\rho^{k}B)}{\lz(c_{B},\rho^{k}r_{B})}\r]^p
+\lceil\log_{\rho}2\rceil
\le(1+\lceil\log_{\rho}2\rceil)\lf[{\wz K}^{(\rho),\,p}_{B,\,S}\r]^p.
\end{align*}
This shows (i).

Now we prove (ii). By the fact that $\rho^{N^{(\rho)}_{B,\,S}-1}r_B<r_S
\le\az r_B$, we have $N^{(\rho)}_{B,\,S}-1<\log_{\rho}\az$.
Thus, $N^{(\rho)}_{B,\,S}-1\le\lfloor\log_{\rho}\az\rfloor$.
From this and \eqref{2.1}, we deduce that
$$
\lf[{\wz K}^{(\rho),\,p}_{B,\,S}\r]^p\le1+N^{(\rho)}_{B,\,S}
+\lfloor\log_{\rho}2\rfloor
\le2+\lfloor\log_{\rho}\az\rfloor+\lfloor\log_{\rho}2\rfloor.
$$
Thus, (ii) holds true.

Let us now prove {\rm(iii)}.
To this end, let $N$ be the first integer such that $\rho^NB$ is
$(\rho,\bz_{\rho})$-doubling. If $B$ is $(\rho,\bz_{\rho})$-doubling,
the conclusion of (iii) holds true trivially.
Thus, without loss of generality, we may assume that $B$ is
non-$(\rho,\bz_{\rho})$-doubling, which implies that $N\ge1$.
For any $k\in \{-\lfloor\log_{\rho}2\rfloor,\ldots,N-1\}$, we have
$\mu(\rho^{k+1}B)>\bz_{\rho}\mu(\rho^kB)$.
Thus, for any $k\in \{-\lfloor\log_{\rho}2\rfloor,\ldots,N-1\}$,
$\mu(\rho^kB)<\frac{\mu(\rho^NB)}{\bz_{\rho}^{N-k}}$.
By this, together with \eqref{2.1} and the fact that
$\bz_{\rho}>[C_{(\lz)}]^{\log_22\rho}=(2\rho)^{\nu}$, we conclude that
\begin{align*}
\lf[{\wz K}^{(\rho),\,p}_{B,\,\wz B^{\rho}}\r]^p&=
\lf[{\wz K}^{(\rho),\,p}_{B,\,\rho^NB}\r]^p
\le 2+\sum^{N-1}_{k=-\lfloor\log_{\rho}2\rfloor}\lf[\frac{\mu(\rho^kB)}
{\lz(c_B,\rho^kr_B)}\r]^p\\
&\le 2+\sum^{N-1}_{k=-\lfloor\log_{\rho}2\rfloor}\lf[\frac{(2\rho)^{\nu}}
{\bz_{\rho}}\r]^{p(N-k)}
\lf[\frac{\mu(\rho^NB)}{\lz(c_B,\rho^Nr_B)}\r]^p\\
&\le 2+\sum^{\fz}_{k=1}\lf[\frac{(2\rho)^{\nu}}
{\bz_{\rho}}\r]^{pk}
\ls 1,
\end{align*}
where the implicit positive constant only depends on $\rho$, $p$ and $\nu$.
Similarly, the other part of (iii) holds true, the details being omitted.
This proves (iii).

Next we show (iv). By (i),
$ N^{(\rho)}_{B,R}\le N^{(\rho)}_{B,\,S}+\lceil\log_{\rho}2\rceil$.
If $N^{(\rho)}_{B,\,S}\le N^{(\rho)}_{B,R}\le N^{(\rho)}_{B,\,S}
+\lceil\log_{\rho}2\rceil$, then there exists nothing to prove.
If $N^{(\rho)}_{B,R}<N^{(\rho)}_{B,\,S}$,
from the facts that $N^{(\rho)}_{B,\,S}\le N^{(\rho)}_{B,R}+N^{(\rho)}_{R,S}$
(since $\rho^{N^{(\rho)}_{B,R}+N^{(\rho)}_{R,S}}r_B
\ge\rho^{N^{(\rho)}_{R,S}}r_R\ge r_S$),
$\rho^{N^{(\rho)}_{B,R}}r_B\ge r_R$,
$\rho^{k+N^{(\rho)}_{B,R}+1+\lfloor\log_{\rho}2\rfloor}
B\st\rho^{k+2+\lceil\log_{\rho}2\rceil+\lfloor\log_{\rho}2\rfloor}R$
for all $k\in\zz\cap[-\lfloor\log_{\rho}2\rfloor,\fz)$,
and \eqref{2.1}, it follows that
\begin{align*}
\lf[{\wz K}^{(\rho),\,p}_{B,\,S}\r]^p&
\le\lf[{\wz K}^{(\rho),\,p}_{B,\,R}\r]^p
+\sum_{k=N^{(\rho)}_{B,R}+1}^{N^{(\rho)}_{B,R}+N^{(\rho)}_{R,S}
+1+\lfloor\log_{\rho}2\rfloor}
\lf[\frac{\mu(\rho^kB)}{\lz(c_B,\rho^kr_B)}\r]^p\\
&=\lf[{\wz K}^{(\rho),\,p}_{B,\,R}\r]^p
+\sum_{k=-\lfloor\log_{\rho}2\rfloor}^{N^{(\rho)}_{R,S}}
\lf[\frac{\mu(\rho^{k+N^{(\rho)}_{B,R}+1+\lfloor\log_{\rho}2\rfloor}B)}
{\lz(c_B,\rho^{k+N^{(\rho)}_{B,R}+1
+\lfloor\log_{\rho}2\rfloor}r_B)}\r]^p\\
&\le\lf[{\wz K}^{(\rho),\,p}_{B,\,R}\r]^p
+\sum_{k=-\lfloor\log_{\rho}2\rfloor}^{N^{(\rho)}_{R,S}}
\lf[\frac{\mu(\rho^{k+2+\lceil\log_{\rho}2\rceil
+\lfloor\log_{\rho}2\rfloor}R)}
{\lz(c_B,\rho^{k+1+\lfloor\log_{\rho}2\rfloor}r_R)}\r]^p\\
&\le\lf[{\wz K}^{(\rho),\,p}_{B,\,R}\r]^p
+c_{(\rho,\,p,\,\nu)}\sum_{k=-\lfloor\log_{\rho}2\rfloor}^{N^{(\rho)}_{R,S}}
\lf[\frac{\mu(\rho^{k+2+\lceil\log_{\rho}2\rceil
+\lfloor\log_{\rho}2\rfloor}R)}
{\lz(c_B,\rho^{k+2+\lceil\log_{\rho}2\rceil
+\lfloor\log_{\rho}2\rfloor}r_R)}\r]^p\\
&\le\lf[{\wz K}^{(\rho),\,p}_{B,\,R}\r]^p
+c_{(\rho,\,p,\,\nu)}\sum_{k=-\lfloor\log_{\rho}2\rfloor}^{N^{(\rho)}_{R,S}
+2+\lceil\log_{\rho}2\rceil+\lfloor\log_{\rho}2\rfloor}
\lf[\frac{\mu(\rho^{k}R)}{\lz(c_R,\rho^{k}r_R)}\r]^p\\
&\le\lf[{\wz K}^{(\rho),\,p}_{B,\,R}\r]^p
+c_{(\rho,\,p,\,\nu)}\lf[{\wz K}^{(\rho),\,p}_{R,\,S}\r]^p,
\end{align*}
which shows (iv).

For (v), we first prove that
$N^{(\rho)}_{B,R}+N^{(\rho)}_{R,S}\le N^{(\rho)}_{B,\,S}+1$.
Since
$$r_R=\rho^{-N^{(\rho)}_{R,S}+1}
\rho^{N^{(\rho)}_{R,S}-1}r_R\le \rho^{-N^{(\rho)}_{R,S}+1}r_S
\le \rho^{-N^{(\rho)}_{R,S}+1}\rho^{N^{(\rho)}_{B,\,S}}
r_B= \rho^{N^{(\rho)}_{B,\,S}-N^{(\rho)}_{R,S}+1}r_B,$$
we obtain $N^{(\rho)}_{B,R}\le N^{(\rho)}_{B,\,S}-N^{(\rho)}_{R,S}+1$.
From this,
$r_R>\rho^{N^{(\rho)}_{B,R}-1}r_B$,
$\rho^kR\st\rho^{k+\lceil\log_{\rho}2\rceil+N^{(\rho)}_{B,R}}B$
for all $k\in\zz\cap[-\lfloor\log_{\rho}2\rfloor,\fz)$,
\eqref{2.1} and \eqref{2.2}, it follows that
\begin{align*}
\lf[{\wz K}^{(\rho),\,p}_{R,\,S}\r]^p&
\le 1+\sum_{k=-\lfloor\log_{\rho}2\rfloor}^{N^{(\rho)}_{R,\,S}}
\lf[\frac{\mu(\rho^{k+\lceil\log_{\rho}2\rceil+N^{(\rho)}_{B,R}}B)}
{\lz(c_R,\rho^{k+N^{(\rho)}_{B,R}-1}r_B)}\r]^p\\
&\ls 1+\sum_{k=-\lfloor\log_{\rho}2\rfloor}^{N^{(\rho)}_{R,\,S}}
\lf[\frac{\mu(\rho^{k+\lceil\log_{\rho}2\rceil+N^{(\rho)}_{B,R}}B)}
{\lz(c_R,\rho^{k+\lceil\log_{\rho}2\rceil+N^{(\rho)}_{B,R}}r_B)}\r]^p\\
&\sim 1+\sum_{k=N^{(\rho)}_{B,R}-1-\lfloor\log_{\rho}2\rfloor}
^{N^{(\rho)}_{B,R}+N^{(\rho)}_{R,S}-1}
\lf[\frac{\mu(\rho^{k+1+\lceil\log_{\rho}2\rceil}B)}
{\lz(c_B,\rho^{k+1+\lceil\log_{\rho}2\rceil}r_B)}\r]^p\\
&\ls1+\sum_{k=-\lfloor\log_{\rho}2\rfloor}
^{N^{(\rho)}_{B,\,S}+1+\lceil\log_{\rho}2\rceil}
\lf[\frac{\mu(\rho^{k}B)}
{\lz(c_B,\rho^{k}r_B)}\r]^p
\ls\lf[{\wz K}^{(\rho),\,p}_{B,\,S}\r]^p,
\end{align*}
where the implicit positive constants depend only on
$\rho$, $p$ and $\nu$.
This finishes the proof of (v) and hence Lemma \ref{l2.8}.
\end{proof}

Now we show that, for any $\rho_1,\,\rho_2\in(1,\fz)$ and $p\in(0,1]$,
${\wz K}^{(\rho_1),\,p}_{B,\,S}\sim{\wz K}^{(\rho_2),\,p}_{B,\,S}$
for all balls $B\st S$.

\begin{lemma}\label{l2.9}
Let $(\cx,d,\mu)$ be a non-homogeneous metric measure space,
$\rho_1,\,\rho_2\in(1,\fz)$ and $p\in(0,1]$.
Then there exist positive constants
$c_{(\rho_1,\,\rho_2,\,p,\,\nu)}$ and
$C_{(\rho_1,\,\rho_2,\,p,\,\nu)}$, depending on
$\rho_1$, $\rho_2$, $\nu$ and $p$, such that, for all balls $B\st S$,
$$
c_{(\rho_1,\,\rho_2,\,p,\,\nu)}{\wz K}^{(\rho_2),\,p}_{B,\,S}
\le{\wz K}^{(\rho_1),\,p}_{B,\,S}\le C_{(\rho_1,\,\rho_2,\,p,\,\nu)}
{\wz K}^{(\rho_2),\,p}_{B,\,S}.
$$
\end{lemma}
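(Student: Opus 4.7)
The plan is to compare the two sums by interleaving the geometric grids $\{\rho_1^k r_B\}_{k}$ and $\{\rho_2^j r_B\}_{j}$, exploiting the upper doubling condition to absorb the small index shifts into multiplicative constants. Without loss of generality, I will assume $1<\rho_1<\rho_2$. Two preliminary observations drive the proof: every interval of real numbers of length $\log_{\rho_1}\rho_2$ contains at most $m:=\lfloor\log_{\rho_1}\rho_2\rfloor+1$ integers; and, by iterating \eqref{2.1}, one has $\lz(c_B,\rho_2 r)\le[C_{(\lz)}]^{\lceil\log_2\rho_2\rceil}\lz(c_B,r)$, and similarly with $\rho_1$ in place of $\rho_2$, for every $r\in(0,\fz)$.

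For the inequality $\wz K^{(\rho_1),p}_{B,\,S}\ls\wz K^{(\rho_2),p}_{B,\,S}$, I will associate to each integer $k$ in the summation range of $\wz K^{(\rho_1),p}_{B,\,S}$ the unique integer $j=j(k)$ satisfying $\rho_2^j\le\rho_1^k<\rho_2^{j+1}$. Then $\rho_1^kB\st\rho_2^{j+1}B$, which implies $\mu(\rho_1^kB)\le\mu(\rho_2^{j+1}B)$, while the chain $\lz(c_B,\rho_1^kr_B)\ge\lz(c_B,\rho_2^jr_B)\gs\lz(c_B,\rho_2^{j+1}r_B)$ (where the last step uses the preliminary $\rho_2$-doubling of $\lz$) yields $\mu(\rho_1^kB)/\lz(c_B,\rho_1^kr_B)\ls\mu(\rho_2^{j+1}B)/\lz(c_B,\rho_2^{j+1}r_B)$. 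Since $j(\cdot)$ has at most $m$ preimages at each value, raising to the $p$-th power and summing over $k$ picks up a factor of at most $m$ relative to $[\wz K^{(\rho_2),p}_{B,\,S}]^p$.

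For the reverse direction $\wz K^{(\rho_2),p}_{B,\,S}\ls\wz K^{(\rho_1),p}_{B,\,S}$, I will interchange the roles: for each $j$ in the summation range of $\wz K^{(\rho_2),p}_{B,\,S}$, pick the largest $k=k(j)$ with $\rho_1^k\le\rho_2^j$, so that $\rho_2^j<\rho_1^{k+1}$. The symmetric argument, bounding $\mu(\rho_2^jB)\le\mu(\rho_1^{k+1}B)$ and shifting $\lz$ by one $\rho_1$-step via upper doubling, shows that each $\rho_2$-term is controlled by a single $\rho_1$-term up to a constant depending only on $\rho_1$, $\rho_2$, $\nu$ and $p$; the assignment $j\mapsto k(j)$ being injective prevents double counting.

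The main obstacle will be bookkeeping at the endpoints of the two summation ranges: the lower cutoffs $-\lfloor\log_{\rho_i}2\rfloor$ differ between $i=1,2$, and so do the upper cutoffs $N^{(\rho_i)}_{B,\,S}$. However, in either direction the discrepancy amounts to only a bounded number of extra terms whose count depends solely on $\rho_1$ and $\rho_2$; since each individual summand is at most $1$ by the upper doubling inequality $\mu\le\lz$, these excess terms contribute only a bounded additive constant, which is absorbed into the leading $+1$ present in the definition of $\wz K^{(\rho_i),p}_{B,\,S}$. Combining this with Lemma \ref{l2.8}(ii) applied to the ``leftover'' portion between the two upper cutoffs then gives the claimed two-sided equivalence.
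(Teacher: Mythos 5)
Your proposal is correct and follows essentially the same route as the paper's own proof: interleave the two geometric grids $\{\rho_1^k r_B\}$ and $\{\rho_2^j r_B\}$, use the one-step upper doubling of $\lz$ from \eqref{2.1} to shift between adjacent radii, bound the multiplicity of the index assignment by a constant of size about $\log_{\rho_1}\rho_2$, and absorb the boundedly many endpoint-mismatch terms (each at most $1$ since $\mu\le\lz$) into the leading $1$. The only cosmetic differences are that the paper fixes the labeling $\rho_1>\rho_2$ and splits into two cases according to $\rho_1^{N_1}\lessgtr\rho_2^{N_2}$, while you handle the range discrepancies uniformly; no gap.
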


\begin{proof}
For the sake of simplicity, we only prove this lemma for $p=1$.
With some slight modifications, the arguments here are still valid for
$p\in(0,1)$.
For any $\rho_1,\,\rho_2\in(1,\fz)$,
without loss of generality, we may assume that $\rho_1>\rho_2>1$.
For any two fixed balls $B\st S$, let
$N_j:=N^{(\rho_j)}_{B,\,S}$ and
${\wz K}^{(\rho_j)}_{B,\,S}:={\wz K}^{(\rho_j),\,1}_{B,\,S}$
($j\in\{1,2\}$).
It is obvious that $N_1\le N_2$.
Now we consider the following two cases:

\textbf{Case i)} $\rho_1^{N_1}\le\rho_2^{N_2}$.
It is easy to see that
$\rho_2^{N_2-1}\le\rho_1^{N_1}$.
We first prove that
${\wz K}^{(\rho_1)}_{B,\,S}\ls{\wz K}^{(\rho_2)}_{B,\,S}$.

Indeed, for any $n_1\in\{-\lfloor\log_{\rho_1}2\rfloor,\ldots,N_1\}$,
let $n_2$ be the smallest integer such that $\rho_2^{n_2}\ge\rho_1^{n_1}$.
Then we have
\begin{equation}\label{2.4}
n_2\in\{-\lfloor\log_{\rho_2}2\rfloor,\ldots,N_2\}\quad
{\rm and}\quad \rho_2^{n_2-1}
<\rho_1^{n_1}\le\rho_2^{n_2}.
\end{equation}
Consequently, $\rho_2^{n_2-1}B\st\rho_1^{n_1}B\st\rho_2^{n_2}B$.
By some simple calculations, we see that, for any
$n_2\in\{-\lfloor\log_{\rho_2}2\rfloor,\ldots,N_2\}$, there exists at most
one $n_1$ satisfying \eqref{2.4}.
By the above facts, $-\lfloor\log_{\rho_1}2\rfloor
\ge-\lfloor\log_{\rho_2}2\rfloor$ and \eqref{2.1}, we obtain
$$
{\wz K}^{(\rho_1)}_{B,\,S}
\le1+\sum_{n_1=-\lfloor\log_{\rho_1}2\rfloor}^{N_1}
\frac{\mu(\rho_2^{n_2}B)}{\lz(c_{B},\rho_2^{n_2-1}r_{B})}
\ls1+\sum_{n_2=-\lfloor\log_{\rho_2}2\rfloor}^{N_2}
\frac{\mu(\rho_2^{n_2}B)}{\lz(c_{B},\rho_2^{n_2}r_{B})}
\sim{\wz K}^{(\rho_2)}_{B,\,S},
$$
where the implicit positive constants depend only
on $\rho_1$, $\rho_2$ and $\nu$.

On the other hand, for the case $N_2<1$, it is obvious that
${\wz K}^{(\rho_2)}_{B,\,S}\ls1\ls{\wz K}^{(\rho_1)}_{B,\,S}$,
which completes the proof of \textbf{Case i)}.
Thus, without loss of generality, we may assume that $N_2\ge1$.
We notice that
$$
{\wz K}^{(\rho_2)}_{B,\,S}\le 2\lf[1+\sum_{n_2=-\lfloor
\log_{\rho_2}2\rfloor}^{N_2-1}
\frac{\mu(\rho_2^{n_2}B)}{\lz(c_{B},\rho_2^{n_2}r_{B})}\r].
$$
For any fixed $n_2\in\{-\lfloor\log_{\rho_2}2\rfloor,\ldots,N_2-1\}$,
let $n_1$ be the smallest
positive integer such that $\rho_1^{n_1}\ge\rho_2^{n_2}$.
Then we have
\begin{equation}\label{2.5}
n_1\in\{-\lfloor\log_{\rho_1}2\rfloor,\ldots,N_1\}
\quad {\rm and}\quad \rho_1^{n_1-1}<\rho_2^{n_2}\le\rho_1^{n_1}.
\end{equation}
Consequently, $\rho_1^{n_1-1}B\st\rho_2^{n_2}B\st\rho_1^{n_1}B$.
By some simple calculations, we see that, for any
$n_1\in\{-\lfloor\log_{\rho_1}2\rfloor,\ldots,N_1\}$, the number of $n_2$
satisfying \eqref{2.5} does not exceed
$\lceil\frac{\ln\rho_1}{\ln\rho_2}\rceil$.
By the above facts and \eqref{2.1}, we know that
\begin{align*}
{\wz K}^{(\rho_2)}_{B,\,S}&\ls1
+\sum_{n_2=-\lfloor\log_{\rho_2}2\rfloor}^{N_2-1}
\frac{\mu(\rho_2^{n_2}B)}{\lz(c_{B},\rho_2^{n_2}r_{B})}
\sim1+\sum_{n_1=-\lfloor\log_{\rho_1}2\rfloor
}^{N_1}\sum_{n_2:\ \rho_1^{n_1-1}<\rho_2^{n_2}\le\rho_1^{n_1}}
\frac{\mu(\rho_2^{n_2}B)}
{\lz(c_{B},\rho_2^{n_2}r_{B})}\\
&\ls 1+\sum_{n_1=-\lfloor\log_{\rho_1}2\rfloor}^{N_1}
\frac{\mu(\rho_1^{n_1}B)}{\lz(c_{B},\rho_1^{n_1}r_{B})}
\sim{\wz K}^{(\rho_1)}_{B,\,S},
\end{align*}
where the implicit positive constants depend only on
$\rho_1$, $\rho_2$ and $\nu$.
This finishes the proof of \textbf{Case i)}.

\textbf{Case ii)} $\rho_2^{N_2}<\rho_1^{N_1}$. The proof of this case
is similar to that of \textbf{Case i)}, the details being omitted.
This finishes the proof of Lemma \ref{l2.9}.
\end{proof}

\section{Atomic Hardy Spaces $\nhp$}\label{s3}

\hskip\parindent In this section, we introduce
the atomic Hardy space $\nhp$ and establish a useful
property.
Before introducing the notion of $\nhp$,
we first recall some notions related to quasi-Banach spaces;
see, for example, \cite{gly}.

\begin{definition}\label{d3.1}
(i) A \emph{quasi-Banach space} $\cb$ is a
vector space endowed with a \emph{quasi-norm}
$\|\cdot\|_{\cb}$ which is non-negative, non-degenerate
(namely, $\|f\|_{\cb}=0$
if and only if $f=0$), homogeneous, and obeys the quasi-triangle inequality,
namely, there exists a constant $K\in[1,\fz)$ such that,
for all $f,\,g\in\cb$,
$$\|f+g\|_{\cb}\le K(\|f\|_{\cb}+\|g\|_{\cb}).$$

(ii) Let $r\in(0,1]$. A quasi-Banach space $\cb_r$ with the quasi-norm
$\|\cdot\|_{\cb_r}$ is called a \emph{$r$-quasi-Banach space} if
$\|f+g\|_{\cb_r}^r\le\|f\|_{\cb_r}^r+\|g\|_{\cb_r}^r$ for
all $f,\,g\in\cb_r$. Hereafter, $\|\cdot\|_{\cb_r}^r$
is called  the \emph{$r$-quasi-norm} of the
$r$-quasi-Banach space $\cb_r$.
\end{definition}

Then we introduce the notion of $\nhp$ over general
non-homogeneous metric measure spaces.

\begin{definition}\label{d3.2}
Let $\rho\in (1,\fz)$, $0<p\le1\le q\le\fz$, $p\neq q$ and $\gz\in[1,\fz)$.
A function $b$ in $\ltw$ when $p\in(0,1)$ and in $\lon$ when $p=1$
is called  a \emph{$(p,q,\gz,\rho)_\lz$-atomic block} if

(i) there exists a ball $B$ such that $\supp(b)\st B$;

(ii) $\int_\cx b(x)\,d\mu(x)=0$;

(iii) for any $j\in\{1,\,2\}$, there exist
a function $a_{j}$ supported on a ball
$B_{j}\st B$ and a number $\lz_j\in\cc$ such that
$b=\lz_1a_1+\lz_2a_2$
and
$$\|a_j\|_\lq\le [\mu(\rho B_j)]^{1/q-1}[\lz(c_B,r_B)]^{1-1/p}
\lf[\wz K^{(\rho),\,p}_{B_j,\,B}\r]^{-\gz}.$$
Moreover, let $|b|_{\nhp}:=|\lz_1|+|\lz_2|$.

A function $f$ is said to belong to the \emph{space} $\pnhp$
if there exists a sequence of $(p,q,\gz,\rho)_\lz$-atomic blocks,
$\{b_{i}\}_{i=1}^\fz$, such that
$f=\sum_{i=1}^{\fz} b_{i}$ in $\ltw$ when $p\in(0,1)$ and in $\lon$
when $p=1$, and
$$\sum_{i=1}^{\fz}|b_{i}|^p_{\nhp}<\fz.$$
Moreover, define
$$\|f\|_{\nhp}:=\inf
\lf\{\lf[\sum_{i=1}^{\fz}|b_{i}|^p_{\nhp}\r]^{1/p}\r\},$$
where the infimum is taken over all possible decompositions of
$f$ as above.

The \emph{atomic Hardy space} $\nhp$ is then defined as the completion of
$\pnhp$ with respect to the $p$-quasi-norm $\|\cdot\|^p_{\nhp}$.
\end{definition}

\begin{remark}\label{r3.3}
(i) By the theorem of completion of Yosida \cite[p.\,56]{y95}, we see that
$\pnhp$ has a completion
space $\nhp$, namely,
for any $f\in\nhp$,
there exists a Cauchy sequence $\{f_k\}_{k=1}^\fz$
in $\pnhp$ such that
\begin{equation}\label{z3.1}
\lim_{k\to\fz}\|f_k-f\|^p_{\nhp}=0.
\end{equation}
Moreover, if $\{f_k\}_{k=1}^{\fz}$ is a Cauchy sequence in
$\pnhp$, then there exists a
unique $f\in\nhp$ such that \eqref{z3.1} holds true.

(ii) When $p=1$, the space $\wz {\mathbb{H}}_{\rm{atb},\,\rho}
^{1,\,q,\,\gz}(\mu)$ was introduced in \cite{hyy} and proved
to be a Banach space. Thus, $\wz {H}_{\rm{atb},\,\rho}^{1,\,q,\,\gz}(\mu)
=\wz {\mathbb{H}}_{\rm{atb},\,\rho}^{1,\,q,\,\gz}(\mu)$;
see also \cite{bd}.

(iii) Fix $p$, $\rho$ and $\gz$ as in Definition \ref{d3.2}.
For $1\le q_1\le q_2\le\fz$, we easily obtain
$$
\pnhpt\st\pnhpo.
$$

(iv) In Definition \ref{d3.2}, it seems natural to assume
$b\in\lq$ and to require $f=\sum_{i=1}^\fz b_i$ also holds true in $\lq$.
However, if so, then it is unclear whether (iii) of this
remark still holds true or not, which is crucial in applications
(see, for example, Remark \ref{r10.11}(i)).
\end{remark}

Now we show that any element in $\nhp$ has a decomposition in terms of
some $(p,\,q,\,\gz,\,\rho)_{\lz}$-atomic blocks,
$\{b_i\}_{i=1}^\fz$, in $\nhp$.

\begin{proposition}\label{p3.4}
Let $(\cx,d,\mu)$ be a non-homogeneous metric measure space,
$\rho\in (1,\fz)$, $0<p\le1\le q\le\fz$, $p\neq q$ and $\gz\in[1,\fz)$.
Then $f\in\nhp$ if and only if
there exist $(p,\,q,\,\gz,\,\rho)_{\lz}$-atomic blocks
$\{b_i\}_{i=1}^\fz$ such that
\begin{equation}\label{3.1}
f=\sum_{i=1}^\fz b_i\quad {\rm in}\quad  \nhp
\end{equation}
and
$\sum_{i=1}^{\fz}|b_{i}|^p_{\nhp}<\fz$. Moreover,
$$
\|f\|^p_{\nhp}=\inf
\lf\{\sum_{i=1}^{\fz}|b_{i}|^p_{\nhp}\r\},
$$
where the infimum is taken over all possible decompositions of
$f$ as in \eqref{3.1}.
\end{proposition}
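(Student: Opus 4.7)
The strategy has two halves. For the ``if'' direction, suppose $f=\sum_{i=1}^{\fz}b_i$ in $\nhp$ with $\sum_i|b_i|^p_{\nhp}<\fz$. Each single block $b_i$ admits the one-term decomposition $b_i=b_i$, so $b_i\in\pnhp$ with $\|b_i\|^p_{\nhp}\le|b_i|^p_{\nhp}$. Using the $p$-subadditivity of $\|\cdot\|^p_{\nhp}$ (Definition \ref{d3.1}(ii)) together with the assumed convergence $\sum_{i=1}^N b_i\to f$ in $\nhp$, I obtain $\|f\|^p_{\nhp}\le\sum_{i=1}^{\fz}|b_i|^p_{\nhp}$, so passing to the infimum gives one half of the claimed identity.

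For the ``only if'' direction, fix $f\in\nhp$ and $\ez\in(0,\fz)$. By Remark \ref{r3.3}(i), pick a Cauchy sequence $\{\vz_k\}_{k=1}^{\fz}\st\pnhp$ with $\|\vz_k-f\|^p_{\nhp}\to 0$; after passing to a subsequence I may arrange $\|\vz_1-f\|^p_{\nhp}<\ez/4$ and $\|\vz_{k+1}-\vz_k\|^p_{\nhp}<\ez/2^{k+2}$ for every $k\ge 1$. Set $h_1:=\vz_1$ and $h_k:=\vz_k-\vz_{k-1}$ for $k\ge 2$. A direct inspection (interleaving atomic-block sequences and scaling the coefficients $\lz_j$) shows that $\pnhp$ is a vector space, so each $h_k\in\pnhp$. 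Using the definition of $\|h_k\|_{\nhp}$ as an infimum, for each $k$ pick $(p,q,\gz,\rho)_{\lz}$-atomic blocks $\{b_{k,i}\}_{i=1}^{\fz}$ with $h_k=\sum_i b_{k,i}$ in $\ltw$ (when $p<1$) or in $\lon$ (when $p=1$) satisfying $\sum_i|b_{k,i}|^p_{\nhp}<\|h_k\|^p_{\nhp}+\ez/2^{k+2}$. Combined with $\|h_1\|^p_{\nhp}\le\|f\|^p_{\nhp}+\ez/4$ and $\sum_{k\ge 2}\|h_k\|^p_{\nhp}<\ez/2$, this yields $\sum_{k,\,i}|b_{k,i}|^p_{\nhp}<\|f\|^p_{\nhp}+C\ez$ for an absolute constant $C$.

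The main technical step is to merge the doubly-indexed family $\{b_{k,i}\}$ into a single series convergent to $f$ in $\nhp$. My plan is a staircase enumeration: pick a strictly increasing sequence $\{M_K\}_{K\ge 1}\st\nn$ so large that $\sum_{i>M_K}|b_{k,i}|^p_{\nhp}<\ez/(K2^{k+K})$ for every $k\le K$, and enumerate $\{b_{k,i}\}$ as $\{c_n\}_{n=1}^{\fz}$ so that, for each $K$, some partial sum $\sum_{n=1}^{N_K}c_n$ equals $T_K:=\sum_{k=1}^K\sum_{i=1}^{M_K}b_{k,i}$ (concretely, between the $K$-th and $(K+1)$-th staircase steps I insert the previously omitted blocks $b_{k,i}$ with $k\le K+1$, $M_K<i\le M_{K+1}$, in any order). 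Since $\sum_n|c_n|^p_{\nhp}<\fz$, the partial sums of $\sum_n c_n$ are Cauchy in $\nhp$ and hence converge, by completeness, to some $\wz f\in\nhp$. On the other hand, the $p$-subadditivity yields
$$
\|T_K-f\|^p_{\nhp}\le\sum_{k=1}^K\lf\|h_k-\sum_{i=1}^{M_K}b_{k,i}\r\|^p_{\nhp}+\|\vz_K-f\|^p_{\nhp}\to 0,
$$
so the subsequence $\{T_K\}$ converges to $f$; hence $\wz f=f$, and letting $\ez\to 0$ closes the infimum identity.

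The main obstacle will be exactly this merging step: within each fixed $k$ the decomposition $h_k=\sum_i b_{k,i}$ converges in $\ltw$ or $\lon$, while the outer decomposition $f=\sum_k h_k$ converges only in the abstract completion $\nhp$ (whose elements need not be honest $\ltw$ or $\lon$ functions). These two topologies are not comparable in general, so a naive rearrangement in $\ltw$ or $\lon$ is unavailable; the staircase enumeration is what reconciles them, leaning only on the completeness of $\nhp$ and the $p$-subadditivity of $\|\cdot\|^p_{\nhp}$.
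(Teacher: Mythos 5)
Your proof is correct and follows essentially the same route as the paper's: both directions rest on the $p$-subadditivity of $\|\cdot\|^p_{\nhp}$, on the fact that tails $\sum_{i>M}b_{k,\,i}$ are themselves elements of $\pnhp$ with quasi-norm controlled by $\sum_{i>M}|b_{k,\,i}|^p_{\nhp}$, and, for the ``only if'' part, on telescoping a Cauchy sequence from $\pnhp$ and decomposing each increment with geometric slack $\ez2^{-k}$; your staircase enumeration simply makes explicit the double-to-single series merging that the paper asserts without comment. The only quibble is that your concrete insertion recipe, as written, never includes the blocks $b_{K+1,\,i}$ with $i\le M_K$, but the intended enumeration (list the index set of $T_1$, then that of $T_2$ minus $T_1$, and so on) obviously exists, so nothing essential is lost.
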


\begin{proof}
We first assume that $f\in\nhp$.
Observe that, if \eqref{3.1} holds true, it is easy to see that
\begin{equation}\label{3.2}
\|f\|^p_{\nhp}\le\inf
\lf\{\sum_{i=1}^{\fz}|b_{i}|^p_{\nhp}\r\},
\end{equation}
where the infimum is taken over all possible decompositions of
$f$ as in \eqref{3.1}. It remains to prove \eqref{3.1}
and the reverse inequality of \eqref{3.2}.
For any $f\in\nhp$, we consider the following two cases.

\textbf{Case i)} $f\in\pnhp$. By Definition \ref{d3.2},
there exists a sequence of $(p,q,\gz,\rho)_\lz$-atomic blocks,
$\{b_{i}\}_{i=1}^\fz$, such that
$f=\sum_{i=1}^{\fz} b_{i}$ in $\ltw$ when $p\in(0,1)$
and in $\lon$ when $p=1$ and
$\sum_{i=1}^{\fz}|b_{i}|^p_{\nhp}<\fz.$
Now we claim that \eqref{3.1} holds true.

Indeed, for any $M\in\nn$,
$f-\sum_{i=1}^M b_{i}=\sum_{i=M+1}^\fz b_{i}$ in $\ltw$ when $p\in(0,1)$
and in $\lon$ when $p=1$. Then we know that
$$
\lf\|f-\sum_{i=1}^M b_{i}\r\|^p_{\nhp}
\le\sum_{i=M+1}^\fz |b_{i}|^p_{\nhp}\to 0
\quad {\rm as}\quad  M\to\fz.
$$
Then the claim holds true.
Again, by Definition \ref{d3.2} and \eqref{3.2},
we obtain the desired result for \textbf{Case i)}.

\textbf{Case ii)} $f\in\nhp\bh\pnhp$.
By Remark \ref{r3.3}(i), there exists a Cauchy sequence $\{f_k\}_{k=1}^\fz$
in $\pnhp$ such that
$$
\|f-f_k\|^p_{\nhp}\le 2^{-k-2}\|f\|^p_{\nhp}.
$$
It is easy to see that $f=\sum_{k=1}^\fz(f_k-f_{k-1})$ in $\nhp$, where
we let $f_0:=0$.
Since $f_k-f_{k-1}\in\pnhp$ for all $k\in\nn$,
by Definition \ref{d3.2} and \textbf{Case i)}, we see that, for any
$\ez\in(0,\fz)$ and any $k\in\nn$,
there exists a sequence  of $(p,q,\gz,\rho)_\lz$-atomic blocks,
$\{b_{k,\,i}\}_{i=1}^\fz$,  such that
$$f_k-f_{k-1}=\sum_{i=1}^{\fz} b_{k,\,i}\quad {\rm in\ both}\
\ltw\ {\rm when}\ p\in(0,1),\ {\rm or}\ \lon\ {\rm when}\
p=1,\ {\rm and}\ \nhp$$
and
$$\sum_{i=1}^{\fz}|b_{k,\,i}|^p_{\nhp}<\|f_k-f_{k-1}\|_{\nhp}^p
+\frac{\ez}{2^k}.$$
From this and $f=\sum_{k=1}^\fz(f_k-f_{k-1})$ in $\nhp$,
we further deduce that
$$
f=\sum_{k=1}^\fz(f_k-f_{k-1})=\sum_{k=1}^\fz\sum_{i=1}^\fz b_{k,\,i}\quad
{\rm in}\quad \nhp
$$
and
\begin{align*}
\sum_{k=1}^\fz\sum_{i=1}^\fz |b_{k,\,i}|^p_{\nhp}
&\le\sum_{k=1}^\fz\|f_k-f_{k-1}\|^p_{\nhp}+\sum_{k=1}^{\fz}\frac{\ez}{2^k}\\
&\le\sum_{k=1}^\fz\lf[\|f_k-f\|^p_{\nhp}+\|f_{k-1}-f\|^p_{\nhp}\r]+\ez\\
&\le\sum_{k=1}^\fz 2^{-k}\|f\|^p_{\nhp}+\ez=\|f\|^p_{\nhp}+\ez,
\end{align*}
which, together with the arbitrariness of $\ez$,
completes the proof of \textbf{Case ii)} and hence the necessity
of Proposition \ref{p3.4}.

Conversely, let $f=\sum_{i=1}^\fz b_i$
in $\nhp$ and
$$
\sum_{i=1}^\fz|b_i|^p_{\nhp}<\fz.
$$
Then, for each $k\in\nn$, $f_k=\sum_{i=1}^kb_i\in\pnhp$
and $\lim_{k\to\fz}f_k=f$ in $\nhp$.
Thus, $f\in\nhp$, which completes the proof of the sufficiency
and hence Proposition \ref{p3.4}.
\end{proof}

\section{Boundedness of Calder\'on-Zygmund Operators}\label{s4}

\hskip\parindent In this section,
we introduce the notion of the molecular Hardy space $\mhp$
and prove that the Calder\'on-Zygmund operator $T$ is bounded
from ${\widetilde H_{\rm{mb},\,\rho}^{p,\,q,\,\gamma,\,\delta}(\mu)}$
(or $\nhp$) into $\lp$, and from
${\widetilde H_{\rm{atb},\,\rho(\rho+1)}^{p,\,q,\,\gz+1}(\mu)}$ into
${\widetilde H_{\rm{mb},\,\rho}^{p,\,q,\,\gz,\,\frac12(\delta
-\frac{\nu}{p}+\nu)}(\mu)}$,
where $\dz$ is some positive constant depending on $T$;
see Definition \ref{d4.6} below.

We first introduce the notion of molecular Hardy spaces in a
non-homogeneous metric measure space.

\begin{definition}\label{d4.1}
Let $\rho\in (1,\fz)$, $0<p\le1\le q\le\fz$, $p\neq q$,
$\gz\in [1,\fz)$ and $\ez\in(0,\fz)$.
A function $b$ in $\ltw$ when $p\in(0,1)$ and in $\lon$ when
$p=1$ is called  a
\emph{$(p,q,\gz,\ez,\rho)_\lz$-molecular block} if

(i) $\int_\cx b(x)\,d\mu(x)=0$;

(ii) there exist some ball $B:=B(c_B,r_B)$, with
$c_B\in\cx$ and $r_B\in(0,\fz)$, and some constants
$\wz{M},\,M\in\nn$ such that,
for all $k\in\zz_+$ and $j\in\{1, \ldots, M_k\}$
with $M_k:=\wz{M}$ if $k=0$ and $M_k:=M$ if $k\in\nn$,
there exist functions $m_{k,\,j}$ supported
on some balls $B_{k,\,j}\st U_k(B)$
for all $k\in\zz_+$,
where $U_0(B):=\rho^2 B$ and $U_k(B):=\rho^{k+2}B\bh\rho^{k-2}B$
with $k\in\nn$, and
$\lz_{k,\,j}\in\cc$ such that
$b=\sum_{k=0}^{\fz}\sum_{j=1}^{M_k}\lz_{k,\,j}m_{k,\,j}$
in $\ltw$ when $p\in(0,1)$ and in $\lon$ when $p=1$,
\begin{equation}\label{4.1}
\|m_{k,\,j}\|_{\lq}\le \rho^{-k\ez}\lf[\mu(\rho B_{k,\,j})\r]^{1/q-1}
\lf[\lz\lf(c_{B},\rho^{k+2}r_{B}\r)\r]^{1-1/p}
\lf[\wz K^{(\rho),\,p}_{B_{k,\,j},\,\rho^{k+2}B}\r]^{-\gz}
\end{equation}
and
$$|b|^p_{\mhp}:=\sum_{k=0}^{\fz}\sum_{j=1}^{M_k}|\lz_{k,\,j}|^p<\fz.$$

A function $f$ is said to belong to the \emph{space} $\pmhp$
if there exists a sequence of $(p,q,\gz,\ez,\rho)_\lz$-molecular blocks,
$\{b_{i}\}_{i=1}^\fz$, such that
$f=\sum_{i=1}^{\fz} b_{i}$ in $\ltw$ when $p\in(0,1)$ and in $\lon$ when
$p=1$, and
$$\sum_{i=1}^{\fz}|b_{i}|^p_{\mhp}<\fz.$$
Moreover, define
$$\|f\|_{\mhp}:=\inf
\lf\{\lf[\sum_{i=1}^{\fz}|b_{i}|^p_{\mhp}\r]^{1/p}\r\},$$
where the infimum is taken over all possible decompositions of
$f$ as above.

The \emph{molecular Hardy space} $\mhp$
is then defined as the completion of
$\pmhp$ with respect to the $p$-quasi-norm $\|\cdot\|^p_{\mhp}$.
\end{definition}

\begin{remark}\label{r4.2}
(i) From the theorem of completion of Yosida \cite[p.\,56]{y95},
it follows that $\pmhp$ has a completion space $\mhp$, namely,
for any $f\in\mhp$,
there exists a Cauchy sequence $\{f_k\}_{k=1}^\fz$
in $\pmhp$ such that
\begin{equation}\label{z4.2}
\lim_{k\to\fz}\|f_k-f\|^p_{\mhp}=0.
\end{equation}
Moreover, if $\{f_k\}_{k=1}^{\fz}$ is a Cauchy sequence in
$\pmhp$, then there exists a unique $f\in\mhp$ such that
\eqref{z4.2} holds true.

(ii) It was proved, in \cite[Proposition 2.2(i)]{fyy3}, that
$\wz{\mathbb{H}}_{\rm{mb},\,\rho}^{1,\,q,\,\gz,\,\ez}(\mu)$ is a
Banach space and hence
$$
\wz H_{\rm{mb},\,\rho}^{1,\,q,\,\gz,\,\ez}(\mu)
=\wz{\mathbb{H}}_{\rm{mb},\,\rho}^{1,\,q,\,\gz,\,\ez}(\mu).
$$

(iii) Fix $p$, $\rho$, $\ez$ and $\gz$ as in Definition \ref{d4.1}.
For $1\le q_1\le q_2\le\fz$, we easily have
$$
\pmhpt\st\pmhpo.
$$

(iv) We point out that, via replacing the discrete coefficient
$\wz K^{(\rho),\,1}_{B,\,S}$ in Definitions
\ref{d3.2} and \ref{d4.1} by the continuous
coefficient $K_{B,\,S}$ as in \eqref{2.3x}, the atomic Hardy space
$H_{\rm atb,\,\rho}^{1,\,q,\,\gz}(\mu)$ and the molecular Hardy space
$H_{\rm mb,\,\rho}^{1,\,q,\,\gz,\,\ez}(\mu)$ were introduced,
respectively, in \cite{hyy}
and \cite{fyy3}. It was proved,
in \cite[Proposition 3.3(ii) and Theorem 3.8]{hyy},
that $H_{\rm atb,\,\rho}^{1,\,q,\,\gz}(\mu)$
is independent of the choices of $\rho$, $\gz$ and $q$.
Moreover, in \cite[Remark 2.3]{fyy3}, it was proved that
$H_{\rm atb,\,\rho}^{1,\,q,\,\gz}(\mu)$ and
$H_{\rm mb,\,\rho}^{1,\,p,\,\gz,\,\ez}(\mu)$
coincide with equivalent norms and hence
$H_{\rm atb,\,\rho}^{1,\,q,\,\gz}(\mu)$ is
independent of the choices of $\rho$, $\gz$, $q$ and $\ez$.
However, $H_{\rm atb,\,\rho}^{1,\,q,\,\gz}(\mu)$
and $\wz{H}_{\rm atb,\,\rho}^{1,\,q,\,\gz}(\mu)$
(or $H_{\rm mb,\,\rho}^{1,\,q,\,\gz,\,\ez}(\mu)$ and
$\wz{H}_{\rm mb,\,\rho}^{1,\,q,\,\gz,\,\ez}(\mu)$) may not coincide
(see \cite[Remark 1.9]{fyy3}) and the boundedness
of Calder\'on-Zygmund operators
on $H_{\rm atb,\,\rho}^{1,\,q,\,\gz}(\mu)$
over general non-homogeneous metric measure
spaces is also unclear (see \cite[Remark 2.4]{fyy3}).
\end{remark}

By \cite[Theorem 1.11]{fyy3}, we see that
$\wz H_{\rm{atb},\,\rho}^{1,\,q,\,\gz}(\mu)
=\wz H_{\rm{mb},\,\rho}^{1,\,q,\,\gz,\,\ez}(\mu)$.
For $p\in(0,1)$, we have the following conclusion.

\begin{proposition}\label{p4.3}
Suppose that $(\cx,d,\mu)$ is a non-homogeneous metric measure space.
Let $p\in(0,1)$, and $\rho$, $q$, $\gz$ and $\ez$
be as in Definition \ref{d4.1}.
Then $\pnhp\st\pmhp\st\lp$ and
there exist positive constants $C$ and $\wz C$
such that, for all $f\in\pnhp$,
$$
C\|f\|_{\lp}^p\le\|f\|_{\mhp}^p\le\wz C\|f\|_{\nhp}^p.
$$
\end{proposition}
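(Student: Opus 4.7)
The plan is to prove the two inclusions $\pnhp\st\pmhp\st\lp$ separately, with the corresponding quantitative estimates. The containment $\pmhp\st\lp$ follows from a direct size estimate on molecular blocks using H\"older's inequality, while $\pnhp\st\pmhp$ is obtained by viewing every atomic block as a molecular block with only the $k=0$ layer active, after absorbing a universal rescaling factor.

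For $\pmhp\st\lp$, I would start from a $(p,q,\gz,\ez,\rho)_\lz$-molecular block $b=\sum_{k,j}\lz_{k,j}m_{k,j}$ and use the pointwise subadditivity $|b|^p\le\sum_{k,j}|\lz_{k,j}|^p|m_{k,j}|^p$ (valid for $p\in(0,1]$), which reduces matters to bounding each $\|m_{k,j}\|_{\lp}$. Since $m_{k,j}$ is supported in $B_{k,j}$, H\"older's inequality with exponent $q/p$ gives $\|m_{k,j}\|_{\lp}^p\le\|m_{k,j}\|_{\lq}^p[\mu(B_{k,j})]^{1-p/q}$. Plugging in \eqref{4.1} and using $\mu(B_{k,j})\le\mu(\rho B_{k,j})$ with $1-p/q\ge0$, the powers of $\mu(\rho B_{k,j})$ combine to the exponent $1-p$, yielding
$$\|m_{k,j}\|_{\lp}^p\le\rho^{-kp\ez}[\mu(\rho B_{k,j})]^{1-p}[\lz(c_B,\rho^{k+2}r_B)]^{p-1}\lf[\wz K^{(\rho),p}_{B_{k,j},\rho^{k+2}B}\r]^{-p\gz}.$$
Since $B_{k,j}\st\rho^{k+2}B$ forces $r_{B_{k,j}}\le 2\rho^{k+2}r_B$, the combination of \eqref{2.1} and \eqref{2.2} gives $\mu(\rho B_{k,j})\le C\lz(c_B,\rho^{k+2}r_B)$, and then $1-p\ge 0$ together with $\wz K^{(\rho),p}_{B_{k,j},\rho^{k+2}B}\ge 1$ collapses the right-hand side to $C\rho^{-kp\ez}$. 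Summing yields $\|b\|_{\lp}^p\le C|b|_{\mhp}^p$, and a further application of subadditivity to $f=\sum_ib_i$ (using a.e.\,convergent partial sums and Fatou) gives $\|f\|_{\lp}^p\le C\|f\|_{\mhp}^p$.

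For $\pnhp\st\pmhp$ with $\|f\|_{\mhp}^p\le\wz C\|f\|_{\nhp}^p$, I would show that any $(p,q,\gz,\rho)_\lz$-atomic block $b=\lz_1a_1+\lz_2a_2$, with underlying ball $B$ and atoms supported in $B_1,B_2\st B$, is itself (up to universal rescaling) a $(p,q,\gz,\ez,\rho)_\lz$-molecular block with $\wz M=2$, $M=0$. The support condition $B_j\st B\st\rho^2B=U_0(B)$ and $\int b\,d\mu=0$ are immediate. Setting $m_{0,j}:=C_j^{-1}a_j$ and $\wz\lz_{0,j}:=C_j\lz_j$, the molecular size bound \eqref{4.1} reduces to requiring
$$C_j\ge\lf[\frac{\lz(c_B,\rho^2r_B)}{\lz(c_B,r_B)}\r]^{1/p-1}\lf[\frac{\wz K^{(\rho),p}_{B_j,\rho^2B}}{\wz K^{(\rho),p}_{B_j,B}}\r]^{\gz}.$$
The first ratio is bounded by iterating \eqref{2.1}. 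For the second, Lemma \ref{l2.8}(iv) with $R=B$, $S=\rho^2B$ gives $[\wz K^{(\rho),p}_{B_j,\rho^2B}]^p\le[\wz K^{(\rho),p}_{B_j,B}]^p+c\,[\wz K^{(\rho),p}_{B,\rho^2B}]^p$, Lemma \ref{l2.8}(ii) with $\az=\rho^2$ gives $[\wz K^{(\rho),p}_{B,\rho^2B}]^p\le C$, and $\wz K^{(\rho),p}_{B_j,B}\ge 1$ absorbs the additive constant, so $C_j$ may be chosen uniformly bounded by some $C_0$. Consequently
$$|b|_{\mhp}^p=\sum_{j=1}^2|C_j\lz_j|^p\le C_0^p(|\lz_1|^p+|\lz_2|^p)\le 2C_0^p(|\lz_1|+|\lz_2|)^p=2C_0^p|b|_{\nhp}^p,$$
using the elementary inequality $a^p+b^p\le 2(a+b)^p$ for $a,b\ge 0$ and $p\in(0,1]$. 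Applying this to each block in any atomic decomposition of $f\in\pnhp$ and taking the infimum yields $\|f\|_{\mhp}^p\le 2C_0^p\|f\|_{\nhp}^p$.

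The main technical obstacle is the uniform bound on the $\wz K$-ratio $\wz K^{(\rho),p}_{B_j,\rho^2B}/\wz K^{(\rho),p}_{B_j,B}$: the definition only gives monotonicity in the unfavorable direction, so one really has to isolate the extra terms in the sum defining $\wz K^{(\rho),p}_{B_j,\rho^2B}$ via the ``triangle''-type estimate of Lemma \ref{l2.8}(iv) and then bound the bridging factor $\wz K^{(\rho),p}_{B,\rho^2B}$ by Lemma \ref{l2.8}(ii). Once this comparison is in hand, all remaining steps are routine bookkeeping.
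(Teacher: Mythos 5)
Your proposal is correct and follows essentially the same route as the paper: H\"older's inequality plus \eqref{2.1}, \eqref{2.2} and $\wz K\ge1$ for the embedding $\pmhp\st\lp$, and viewing an atomic block as a molecular block concentrated in the $k=0$ layer for $\pnhp\st\pmhp$. The only difference is that you spell out the rescaling constant $C_j$ (via Lemma \ref{l2.8}(iv) and (ii) and iteration of \eqref{2.1}) that the paper dismisses as "easy to see from Definitions \ref{d3.2} and \ref{d4.1}", which is a legitimate and correctly executed filling-in of that step.
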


\begin{proof}
Let $\rho$, $p$, $q$, $\gz$ and $\ez$ be as in Proposition \ref{p4.3}.
By the Fatou lemma, it suffices to prove that,
for any $(p,q,\gz,\rho)_\lz$-atomic block
$b$, $b$ is also a $(p,q,\gz,\ez,\rho)_\lz$-molecular block which
belongs to $\lp$ and
\begin{equation}\label{4.2}
\|b\|_{\lp}^p\ls|b|^p_{\mhp}\ls|b|^p_{\nhp}.
\end{equation}
By Definitions \ref{d3.2} and \ref{d4.1}, it is easy to see that,
for any $(p,q,\gz,\rho)_\lz$-atomic block $b$,
$b$ is also a $(p,q,\gz,\ez,\rho)_\lz$-molecular block and
$|b|^p_{\mhp}\ls|b|^p_{\nhp}$.

On the other hand, for any $(p,q,\gz,\ez,\rho)_\lz$-molecular block $b$
with the same notation as in Definition \ref{d4.1},
by the Fatou lemma, the H\"older inequality,
\eqref{4.1}, $B_{k,\,j}\st \rho^{k+2}B$
and \eqref{2.1}, we see that
\begin{align*}
\|b\|^p_\lp&\le\sum_{k=0}^\fz\sum_{j=1}^{M_k}
|\lz_{k,\,j}|^p\|m_{k,\,j}\|^p_\lp
\ls\sum_{k=0}^\fz\sum_{j=1}^{M_k}|\lz_{k,\,j}|^p\|m_{k,\,j}\|^p_\lq
\lf[\mu(B_{k,\,j})\r]^{1-p/q}\\
&\ls\sum_{k=0}^\fz\sum_{j=1}^{M_k}|\lz_{k,\,j}|^p
\lf[\mu(B_{k,\,j})\r]^{1-p/q}\rho^{-kp\ez}
\lf[\mu(\rho B_{k,\,j})\r]^{p/q-p}
\lf[\lz\lf(c_{B},\rho^{k+2}r_{B}\r)\r]^{p-1}\\
&\ls\sum_{k=0}^\fz\sum_{j=1}^{M_k}|\lz_{k,\,j}|^p\sim|b|^p_{\mhp},
\end{align*}
which completes the proof of Proposition \ref{p4.3}.
\end{proof}

\begin{remark}\label{r4.4}
Let $p\in(0,1)$, and $\rho$, $q$, $\gz$ and $\ez$
be as in Definition \ref{d4.1}.
By Proposition \ref{p4.3}, we easily conclude that
there exists a map $T$ from $\nhp$ to $\mhp$ such that,
for any $f\in\nhp$, there is a unique element $\wz f\in\mhp$
satisfying $Tf=\wz f$ and $\|\wz f\|_{\mhp}\ls\|f\|_{\nhp}$,
where the implicit positive constant is independent of $f$.
In this sense, we say that $\nhp\st\mhp$, which is different
from the classical inclusion relation of spaces, since it is still unclear
whether $T$ is an injection and $\|\wz f\|_{\mhp}\sim\|f\|_{\nhp}$ or not.
\end{remark}

Now we show that any element in $\mhp$ can be decomposed into a
sum of a sequence of $(p,\,q,\,\gz,\,\ez,\,\rho)_{\lz}$-molecular blocks,
$\{b_j\}_{j=1}^\fz$, in $\mhp$. The proof is similar to
that of Proposition \ref{p3.4}, the details being omitted.

\begin{proposition}\label{p4.5}
Suppose that $(\cx,d,\mu)$ is a non-homogeneous metric measure space.
Let $p$, $\rho$, $q$, $\gz$ and $\ez$ be as in Definition \ref{d4.1}.
Then $f\in\mhp$ if and only if there exist
$(p,\,q,\,\gz,\,\ez,\,\rho)_{\lz}$-molecular blocks
$\{b_i\}_{i=1}^\fz$ such that
\begin{equation}\label{4.3}
f=\sum_{i=1}^\fz b_i\quad {\rm in}\quad  \mhp
\end{equation}
and
$$\sum_{i=1}^\fz|b_i|^p_{\mhp}<\fz.$$
Moreover,
$$
\|f\|^p_{\mhp}=\inf
\lf\{\sum_{i=1}^{\fz}|b_{i}|^p_{\mhp}\r\},
$$
where the infimum is taken over all possible decompositions of
$f$ as in \eqref{4.3}.
\end{proposition}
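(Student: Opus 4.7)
The plan is to mirror the argument of Proposition \ref{p3.4}, replacing atomic blocks by $(p,q,\gz,\ez,\rho)_\lz$-molecular blocks and the atomic quasi-norm by $\|\cdot\|_{\mhp}$ throughout. For any admissible decomposition $f=\sum_{i=1}^{\fz} b_i$ holding in $\mhp$, the $p$-quasi-triangle inequality for $\|\cdot\|^p_{\mhp}$ immediately yields
\[
\|f\|^p_{\mhp}\le\sum_{i=1}^{\fz}|b_i|^p_{\mhp},
\]
so one direction of the infimum identity and the sufficiency part come for free. The remaining (main) content is to exhibit, for an arbitrary $f\in\mhp$, a molecular decomposition as in \eqref{4.3} whose total cost is arbitrarily close to $\|f\|^p_{\mhp}$.

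I would split this into two cases exactly as in Proposition \ref{p3.4}. In \textbf{Case i)}, when $f\in\pmhp$, Definition \ref{d4.1} provides molecular blocks $\{b_i\}_{i=1}^{\fz}$ with $f=\sum_{i=1}^{\fz} b_i$ in $\ltw$ (respectively $\lon$ when $p=1$) and $\sum_i|b_i|^p_{\mhp}<\fz$. For any $M\in\nn$, the tail $f-\sum_{i=1}^M b_i=\sum_{i=M+1}^{\fz} b_i$ lies in $\pmhp$, and the $p$-quasi-triangle inequality gives
\[
\lf\|f-\sum_{i=1}^M b_i\r\|^p_{\mhp}\le\sum_{i=M+1}^{\fz}|b_i|^p_{\mhp}\to 0\quad {\rm as}\quad M\to\fz,
\]
so the same decomposition converges to $f$ in $\mhp$; taking the infimum over all such choices yields the claim in this case.

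In \textbf{Case ii)}, when $f\in\mhp\setminus\pmhp$, Remark \ref{r4.2}(i) produces a Cauchy sequence $\{f_k\}_{k=1}^{\fz}\st\pmhp$ with $f_k\to f$ in $\mhp$; after passing to a subsequence, I would arrange $\|f-f_k\|^p_{\mhp}\le 2^{-k-2}\|f\|^p_{\mhp}$, so that, with $f_0:=0$,
\[
f=\sum_{k=1}^{\fz}(f_k-f_{k-1})\quad {\rm in}\quad \mhp.
\]
Each $f_k-f_{k-1}\in\pmhp$, so by \textbf{Case i)} applied with tolerance $\ez/2^k$, one can choose molecular decompositions $f_k-f_{k-1}=\sum_{i=1}^{\fz}b_{k,i}$ with $\sum_i|b_{k,i}|^p_{\mhp}<\|f_k-f_{k-1}\|^p_{\mhp}+\ez/2^k$. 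Doubly summing and using
\[
\|f_k-f_{k-1}\|^p_{\mhp}\le\|f-f_k\|^p_{\mhp}+\|f-f_{k-1}\|^p_{\mhp}\le 2^{-k}\|f\|^p_{\mhp}
\]
gives $\sum_{k,i}|b_{k,i}|^p_{\mhp}\le\|f\|^p_{\mhp}+\ez$, and letting $\ez\to 0$ finishes the reverse inequality. For sufficiency, if $f=\sum_i b_i$ in $\mhp$ with $\sum_i|b_i|^p_{\mhp}<\fz$, the partial sums $\sum_{i=1}^N b_i$ lie in $\pmhp$ and converge to $f$ in $\mhp$, whence $f\in\mhp$ by Remark \ref{r4.2}(i).

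The only mildly delicate step is ensuring that the telescoping series in \textbf{Case ii)} converges in the $\mhp$ quasi-norm and not merely in $\ltw$ or $\lon$; this is precisely why one thins the Cauchy sequence to obtain the geometric bound $\|f-f_k\|^p_{\mhp}\le 2^{-k-2}\|f\|^p_{\mhp}$ before decomposing. Since every other step is a direct transcription of the proof of Proposition \ref{p3.4} with $\pnhp$ and $\nhp$ replaced, respectively, by $\pmhp$ and $\mhp$, I do not anticipate any genuinely new obstacle.
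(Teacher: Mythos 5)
Your proposal is correct and follows essentially the same route as the paper, which itself states that Proposition \ref{p4.5} is proved by repeating the argument of Proposition \ref{p3.4} with $(p,q,\gz,\ez,\rho)_\lz$-molecular blocks and $\|\cdot\|_{\mhp}$ in place of atomic blocks and $\|\cdot\|_{\nhp}$. The two-case structure, the thinning of the Cauchy sequence to get the geometric bound $\|f-f_k\|^p_{\mhp}\le 2^{-k-2}\|f\|^p_{\mhp}$, the $\ez/2^k$ tolerance in the blockwise decompositions, and the sufficiency via partial sums are exactly the paper's (omitted) argument.
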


Now we consider the boundedness of Calder\'on-Zygmund operators
on these atomic and molecular Hardy spaces.
To this end, we first recall the following notion
of Calder\'on-Zygmund operators from \cite{hm}.

\begin{definition}\label{d4.6}
A function $K\in L_\loc^1((\cx\times
\cx)\bh\{(x,x):x\in\cx\})$ is called  a \emph{Calder\'on-Zygmund
kernel} if there exists a positive constant $C_{(K)}$,
depending on $K$, such that

(i) for all $x,\,y\in\cx$ with $x\ne y$,
\begin{equation}\label{4.4}
|K(x,y)|\le C_{(K)}\frac{1}{\lz(x,d(x,y))};
\end{equation}

(ii) there exist positive constants
$\dz\in (0,1]$ and $c_{(K)}$, depending on $K$,
such that, for all $x,\,\wz x,\,y\in\cx$
with $d(x,y)\ge c_{(K)}d(x,\wz{x})$,
\begin{equation}\label{4.5}
|K(x,y)-K(\wz{x},y)|+|K(y,x)-K(y,\wz{x})|\le C_{(K)}
\frac{[d(x,\wz{x})]^{\dz}}{[d(x,y)]^{\dz}\lz(x,d(x,y))}.
\end{equation}

A linear operator $T$ is called  a \emph{Calder\'on-Zygmund operator}
with kernel $K$ satisfying \eqref{4.4} and \eqref{4.5} if, for
all $f\in L^{\fz}_b(\mu):=\{f\in\li:\ \supp(f)\ {\rm is\ bounded}\}$,
\begin{equation}\label{4.6}
Tf(x):=\int_{\cx}K(x,y)f(y)\,d\mu(y),\quad x\not\in\supp(f).
\end{equation}
\end{definition}

A new example of operators with kernel satisfying \eqref{4.5}
and \eqref{4.6} is the so-called Bergman-type operator appearing
in \cite{vw}; see also \cite{hm} for an explanation.

We first recall the following useful lemma from \cite{hlyy}.

\begin{lemma}\label{l4.7}
Suppose that $(\cx,d,\mu)$ is a non-homogeneous metric measure space.
Let $T$ be a Calder\'on-Zygmund operator defined by
\eqref{4.6} associated with kernel $K$
satisfying \eqref{4.4} and \eqref{4.5}.
Then the following statements are equivalent:

{\rm (i)} $T$ is bounded on $\ltw$;

{\rm (ii)} $T$ is bounded on $\lq$ for all $q\in(1, \fz)$;

{\rm (iii)} $T$ is bounded from $\lon$ to weak-$\lon$.
\end{lemma}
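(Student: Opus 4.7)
The plan is to establish the three implications (ii) $\Rightarrow$ (i) $\Rightarrow$ (iii) $\Rightarrow$ (ii), of which the first is immediate by specializing $q=2$, so the real work lies in the other two. The main engine will be a Calder\'on-Zygmund type decomposition adapted to the non-homogeneous setting, together with Marcinkiewicz interpolation and a duality argument that exploits the symmetry of the kernel hypotheses \eqref{4.4} and \eqref{4.5} in $x$ and $y$.

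For (i) $\Rightarrow$ (iii), I would fix $f\in\lon$ and $t\in(0,\fz)$, and construct a Calder\'on-Zygmund decomposition $f=g+b$, where $b=\sum_i b_i$ with each $b_i$ supported in a ball $B_i$ and having vanishing integral, $\|g\|_{\ltw}^2\ls t\|f\|_{\lon}$, and $\sum_i\mu(\rho B_i)\ls t^{-1}\|f\|_{\lon}$. Such a decomposition in the upper doubling setting is by now standard: one runs a stopping-time argument over doubling balls (using the existence of arbitrarily small $(\rho,\bz_{\rho})$-doubling balls guaranteed by \cite[Lemma 3.3]{h10}) and uses the geometrically doubling property to bound the overlap of the selected balls. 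Then $\mu(\{|Tg|>t/2\})\ls t^{-2}\|Tg\|_{\ltw}^2\ls t^{-1}\|f\|_{\lon}$ by (i), while for the bad part one separates the contribution on $\bigcup_i \rho B_i$ (trivially estimated by $\sum_i\mu(\rho B_i)$) from the contribution on the complement. On the complement one writes $Tb_i(x)=\int[K(x,y)-K(x,c_{B_i})]b_i(y)\,d\mu(y)$ using the cancellation of $b_i$, and the smoothness estimate \eqref{4.5}, combined with \eqref{2.1} and the doubling property of $\rho B_i$, yields $\int_{\cx\setminus\rho B_i}|Tb_i|\,d\mu\ls \|b_i\|_{\lon}$, which after summation and Chebyshev gives the desired weak-type bound.

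For (iii) $\Rightarrow$ (ii), the case $q\in(1,2]$ follows directly from the Marcinkiewicz interpolation theorem applied to the weak-$(1,1)$ bound from (iii) and the strong $(2,2)$ bound that one extracts by noting (iii) forces $T$ to be bounded on $\ltw$ via a dual weak-type argument: indeed, applying the argument of (i) $\Rightarrow$ (iii) in reverse through interpolation between weak $(1,1)$ and an a priori $(p_0,p_0)$ bound available on $L_b^\fz(\mu)$ recovers the full range $(1,2]$. For $q\in(2,\fz)$, I would pass to the formal adjoint $T^{\ast}$, whose kernel $K^{\ast}(x,y):=K(y,x)$ still satisfies \eqref{4.4} and \eqref{4.5} (the hypotheses are symmetric in the two variables, where the second term in \eqref{4.5} handles the $y$-regularity needed for $K^{\ast}$). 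Thus $T^{\ast}$ also satisfies (iii); by the case already proved, $T^{\ast}$ is bounded on $L^{q'}(\mu)$ for $q'\in(1,2)$, and duality with respect to the $\ltw$ pairing transfers this into boundedness of $T$ on $\lq$ for $q\in(2,\fz)$.

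The main obstacle is (i) $\Rightarrow$ (iii), and within it, the correct stopping-time construction producing doubling balls $\{\rho B_i\}_i$ with controlled overlap: the absence of a uniform doubling hypothesis forces one to select balls using the $(\rho,\bz_{\rho})$-doubling mechanism of \cite[Lemma 3.3]{h10}, while the geometrically doubling condition (Definition \ref{d2.1}) is what keeps the overlap finite. The smoothness estimate on the bad part also requires care: one must replace the usual $|y-c_{B_i}|^\dz/|x-c_{B_i}|^{D+\dz}$ kernel bound by the upper doubling analogue $[d(y,c_{B_i})]^\dz/\{[d(x,c_{B_i})]^\dz\lz(c_{B_i},d(x,c_{B_i}))\}$, and verify that the resulting geometric series over annuli around $B_i$ still converges — this is where the form of the dominating function $\lz$ and the upper doubling inequality \eqref{2.1} enter decisively.
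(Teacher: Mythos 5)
You should first note that the paper itself contains no proof of Lemma \ref{l4.7}: it is explicitly recalled from \cite{hlyy}, so the only fair comparison is with the argument given there (and in \cite{bd,t01a}). Your implication (ii)$\Rightarrow$(i) is trivial, and your route for (i)$\Rightarrow$(iii) — a Calder\'on--Zygmund decomposition adapted to upper doubling measures, good part in $\ltw$, bad part handled by cancellation plus \eqref{4.5} off dilated balls — is indeed how this direction is proved in the literature. One imprecision there: with the stopping balls $B_i$ possibly non-doubling, you cannot simultaneously have $\supp(b_i)\st B_i$, $\int_\cx b_i\,d\mu=0$, $\|g\|_{\ltw}^2\ls t\|f\|_{\lon}$ and $\sum_i\mu(\rho B_i)\ls t^{-1}\|f\|_{\lon}$ by simply subtracting averages on $B_i$; as in Tolsa's construction the mean-zero correction $\varphi_i$ must be placed on the smallest doubling dilate $R_i$ of $B_i$, and the region between $B_i$ and $R_i$ is then controlled using the size condition \eqref{4.4} together with the fact that the coefficient $K_{B_i,R_i}$ (or $\wz K^{(\rho),\,1}_{B_i,R_i}$) is bounded when no doubling ball lies in between. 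This is a repairable gap, but it is exactly where the non-homogeneous difficulty sits, and your sketch elides it.

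The genuine gap is (iii)$\Rightarrow$(ii). Marcinkiewicz interpolation requires a second endpoint, and your proposed extraction of the strong $(2,2)$ bound from the weak $(1,1)$ bound — ``applying the argument of (i)$\Rightarrow$(iii) in reverse through interpolation between weak $(1,1)$ and an a priori $(p_0,p_0)$ bound available on $L^\fz_b(\mu)$'' — has no content: for $f\in L^\fz_b(\mu)$ the definition \eqref{4.6} only gives $Tf(x)$ pointwise for $x\notin\supp(f)$, with no norm estimate at any exponent, so there is no a priori strong bound to interpolate against, and reversing an implication is not an argument. The subsequent step for $q\in(2,\fz)$ is likewise a non sequitur: the symmetry of \eqref{4.4} and \eqref{4.5} does give $T^{*}$ a Calder\'on--Zygmund kernel, but hypothesis (iii) is a statement about $T$, and weak $(1,1)$ for $T$ does not transfer to $T^{*}$ unless one already knows some strong-type bound for $T$ — which is precisely what is being proved. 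In fact, ``weak $(1,1)$ implies $\ltw$ (hence $\lq$) boundedness'' is the substantial content of the cited theorem in \cite{hlyy}, where it is obtained through considerably heavier machinery (estimates for $T$ on atomic blocks, the duality between $H^{1,\,q}_{\rm atb}(\mu)$ and ${\rm RBMO}(\mu)$, and related boundedness criteria), not by interpolation plus duality. As written, your proof establishes only (ii)$\Rightarrow$(i)$\Rightarrow$(iii) (the latter modulo the decomposition issue above), and the closing of the cycle is missing.
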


Now we prove the boundedness of Calder\'on-Zygmund operators from
$\wz H_{\rm{mb},\,\rho}^{p,\,q,\,\gz,\,\dz}(\mu)$ into $\lp$.
Hereafter, let
$\nu:=\log_2C_{(\lz)}$, and $\dz$ be as in Definition \ref{d4.6}.

\begin{theorem}\label{t4.8}
Suppose that $(\cx,d,\mu)$ is a non-homogeneous metric measure space.
Let $\rho\in(1,\fz)$, $\frac{\nu}{\nu+\dz}<p\le1<q<\fz$
and $\gz\in[1,\fz)$. Assume that
the Calder\'on-Zygmund operator $T$ defined by
\eqref{4.6} associated with kernel $K$
satisfying \eqref{4.4} and \eqref{4.5} is bounded on $\ltw$. Then
$T$ is bounded from
$\wz H_{\rm{mb},\,\rho}^{p,\,q,\,\gz,\,\dz}(\mu)$ into $\lp$.
\end{theorem}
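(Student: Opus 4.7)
The plan is to reduce to a single molecular block, split the $\lp$-integral dyadically around the base ball, handle the local portion via the $L^q(\mu)$-boundedness of $T$, and handle the far portion via the cancellation of $b$ combined with the H\"older regularity of the kernel. More precisely, by Remark \ref{r4.2}(i), since $\pmhp$ is dense in $\mhp$, it suffices to establish the desired estimate on $\pmhp$; and by Proposition \ref{p4.5}, any $f\in\pmhp$ admits a decomposition $f=\sum_i b_i$ in $\mhp$ with $\sum_i|b_i|^p_{\mhp}\le 2\|f\|^p_{\mhp}$. Since $p\in(0,1]$ makes $\|\cdot\|^p_{\lp}$ subadditive, the theorem reduces to the single-block estimate $\|Tb\|^p_{\lp}\ls |b|^p_{\mhp}$ for one $(p,q,\gz,\dz,\rho)_\lz$-molecular block $b=\sum_{k=0}^{\fz}\sum_{j=1}^{M_k}\lz_{k,\,j}m_{k,\,j}$, with $m_{k,\,j}$ supported in $B_{k,\,j}\st U_k(B)$, satisfying the size estimate \eqref{4.1}, and $\int_\cx b\,d\mu=0$.

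Next, I would split
$$
\|Tb\|^p_{\lp}=\int_{2\rho^2 B}|Tb|^p\,d\mu + \sum_{l=3}^{\fz}\int_{\rho^l B\bh\rho^{l-1}B}|Tb|^p\,d\mu =: {\rm I}+\sum_{l=3}^{\fz}{\rm II}_l.
$$
For ${\rm I}$, the H\"older inequality with exponents $q/p$ and $(q/p)'$ combined with the $\lq$-boundedness of $T$ supplied by Lemma \ref{l4.7} gives ${\rm I}\ls[\mu(2\rho^2 B)]^{1-p/q}\|b\|^p_\lq$; expanding $\|b\|_\lq$ via the triangle inequality and \eqref{4.1}, using the iterated upper doubling estimate $\lz(c_B,\rho^{k+2}r_B)\ls\rho^{k\nu}\lz(c_B,\rho^2 r_B)$ together with Lemma \ref{l2.8}(iii) and (v) to absorb the $[\wz K^{(\rho),\,p}_{B_{k,\,j},\,\rho^{k+2}B}]^{-\gz}$ factors, the decay $\rho^{-k\dz}$ of \eqref{4.1} and the $p$-subadditivity $(\sum a_i)^p\le\sum a_i^p$ for $p\in(0,1]$ reduce the central contribution to $\sum_{k,\,j}|\lz_{k,\,j}|^p$, as desired.

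For each annular integral ${\rm II}_l$ with $l\ge 3$, I would further split the molecules by writing $b=b_l^{\rm n}+b_l^{\rm f}$, with $b_l^{\rm n}:=\sum_{|k-l|\le 3}\sum_j\lz_{k,\,j}m_{k,\,j}$ collecting molecules geometrically near the annulus and $b_l^{\rm f}$ the rest. The near part is controlled exactly as ${\rm I}$ above, the extra measure factor $[\mu(\rho^{l+1}B)]^{1-p/q}\ls\rho^{l\nu(1-p/q)}[\mu(\rho^2 B)]^{1-p/q}$ being absorbed by $\rho^{-kp\dz}\sim\rho^{-lp\dz}$ from \eqref{4.1}. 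The far part exploits the cancellation $\sum_{k,\,j}\lz_{k,\,j}\int_\cx m_{k,\,j}\,d\mu=0$: for $x\in\rho^l B\bh\rho^{l-1}B$ one writes
$$
Tb_l^{\rm f}(x)=\sum_{k:\,|k-l|>3}\sum_j\lz_{k,\,j}\int_\cx\lf[K(x,y)-K(x,c_B)\r]m_{k,\,j}(y)\,d\mu(y)-K(x,c_B)\int_\cx b_l^{\rm n}(y)\,d\mu(y),
$$
then applies the H\"older regularity \eqref{4.5} to the first sum (valid because $d(x,c_B)\gs\rho^l r_B\gg d(y,c_B)$ when $k<l-3$, with a symmetric estimate when $k>l+3$) and the kernel size bound \eqref{4.4} to the correction term; this produces a pointwise gain $\rho^{-(l-k)\dz}[\lz(c_B,\rho^l r_B)]^{-1}$, which after integrating over the annulus yields a double geometric sum in $k$ and $l-k$.

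The main obstacle is the delicate bookkeeping in the last paragraph, which forces the restriction $p>\nu/(\nu+\dz)$. Indeed, the size condition \eqref{4.1} contains the factor $[\lz(c_B,\rho^{k+2}r_B)]^{1-1/p}$ that, since $p<1$, \emph{grows} like $\rho^{k\nu(1-1/p)}$ in $k$; after raising the pointwise estimate to the $p$-th power and integrating over $\rho^l B\bh\rho^{l-1}B$, the competing factors $\rho^{-(l-k)\dz p}$ (from \eqref{4.5}), $\rho^{-kp\dz}$ (from \eqref{4.1}), and $[\lz(c_B,\rho^l r_B)]^{1-p}\ls\rho^{l\nu(1-p)}[\lz(c_B,r_B)]^{1-p}$ (from the $A_l$-measure) combine into a term of order $\rho^{l[\nu-p(\nu+\dz)]}\rho^{-k\nu(1-p)}|\lz_{k,\,j}|^p$, whose summability in $l$ requires exactly $\nu-p(\nu+\dz)<0$; summability in $k$ then follows from $\rho^{-k\nu(1-p)}\sum_{l>k+3}\rho^{l[\nu-p(\nu+\dz)]}\ls\rho^{-kp\dz}$, which is geometric. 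Properties (i)--(v) of $\wz K^{(\rho),\,p}_{B,\,S}$ in Lemma \ref{l2.8} --- especially the sub-additivity (iv) along the chain $B_{k,\,j}\st\rho^{k+2}B\st\rho^l B$ and the uniform bound (iii) --- are the workhorse for absorbing the $[\wz K^{(\rho),\,p}_{B_{k,\,j},\,\rho^{k+2}B}]^{-\gz}$ factors uniformly throughout the final summation.
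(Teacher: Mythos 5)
Your far-field bookkeeping for molecules near the center (the case $k<l-3$) matches the paper's term ${\rm I}_1$ and correctly isolates where $p>\nu/(\nu+\dz)$ enters, and your use of the total cancellation of $b$ to handle the correction term is in the spirit of the paper's ${\rm I}_2$. But the treatment of the central region and of the ``near'' molecules on each annulus has a genuine gap. You apply H\"older globally, e.g.\ ${\rm I}\ls[\mu(2\rho^2B)]^{1-p/q}\|b\|^p_{\lq}$, and then feed in \eqref{4.1}. The size condition \eqref{4.1} carries the factor $[\mu(\rho B_{k,\,j})]^{1/q-1}$, which is calibrated so that H\"older is applied over (a fixed dilate of) $B_{k,\,j}$ itself: then $[\mu(2B_{k,\,j})]^{1-p/q}[\mu(\rho B_{k,\,j})]^{-p/q'}\ls[\mu(2B_{k,\,j})]^{1-p}$ and everything closes. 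H\"oldering over the big ball instead leaves the uncontrolled ratio $[\mu(2\rho^2B)/\mu(\rho B_{k,\,j})]^{p/q'}$, which is unbounded on a non-homogeneous space (there is no reverse doubling, and $B_{k,\,j}$ may be a tiny ball of tiny measure); moreover $\sum_{k,\,j}|\lz_{k,\,j}|\,\|m_{k,\,j}\|_{\lq}$ need not even be finite, so ``$\|b\|_{\lq}$'' is not under control (cf.\ Remark \ref{r3.3}(iv)). The same defect propagates to your near part $b_l^{\rm n}$: the factor $[\mu(\rho^{l+1}B)]^{1-p/q}$ is not absorbed by $\rho^{-kp\dz}$ and, again, is mismatched against $[\mu(\rho B_{k,\,j})]^{1/q-1}$. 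The paper's fix, which your proposal is missing, is to split each molecule's contribution on the relevant annulus into the piece over $2B_{k,\,j}$ (local H\"older plus the $\lq$-boundedness from Lemma \ref{l4.7}) and the piece over the annulus minus $2B_{k,\,j}$, where the kernel size bound produces, after integration, a factor $[\wz K^{(\rho),\,p}_{B_{k,\,j},\,\rho^{k+5}B}]^{p}$ that must be cancelled against the $[\wz K^{(\rho),\,p}_{B_{k,\,j},\,\rho^{k+2}B}]^{-\gz p}$ built into \eqref{4.1} (via Lemma \ref{l2.8}); in your scheme the $\wz K$ factor is never actually used, yet it is exactly what makes the near-diagonal estimate work.

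Two smaller points. For molecules far outside the annulus ($k>l+3$) your ``symmetric'' application of \eqref{4.5} is neither valid nor needed: there $d(x,y)\sim d(y,c_B)$, so the regularity hypothesis may fail and in any case yields no decay (your claimed gain $\rho^{-(l-k)\dz}$ is a growth factor for $k>l$); the correct and sufficient tool is the size bound \eqref{4.4} alone, with the $\rho^{-k\dz}$ from \eqref{4.1} and the monotonicity of $\lz$ supplying the summability, as in the paper's term ${\rm III}$. Finally, the reduction from $f$ to a single block needs more than subadditivity of $\|\cdot\|^p_{\lp}$: since the molecular decomposition converges only in $\ltw$ (or $\lon$ when $p=1$), you must justify $Tf=\sum_iT(b_i)$ $\mu$-a.e.\ (the paper does this via the $\ltw$-boundedness, respectively the weak-$(1,1)$ bound, the Riesz theorem and the Fatou lemma) before summing the single-block estimates.
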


\begin{proof}
Let $\rho$, $p$, $q$ and $\gz$ be as in the assumptions of Theorem \ref{t4.8}.
For the sake of simplicity, we take $\rho=2$ and $\gz=1$.
With some slight modifications, the arguments here are still valid for
general cases.
We first reduce the proof to showing
that, for all $(p,q,1,\dz,2)_\lz$-molecular blocks $b$,
\begin{equation}\label{4.7}
\|Tb\|_{\lp}\ls|b|_{\mhpd}.
\end{equation}

Indeed, assume that \eqref{4.7} holds true. For any
$f\in\wz {\mathbb{H}}_{\rm{mb},\,2}^{p,\,q,\,1,\,\dz}(\mu)$,
there exists a sequence $\{b_i\}_{i\in\nn}$ of
$(p,q,1,\dz,2)_\lz$-molecular blocks
such that $f=\sum_{i=1}^\fz b_i$ in $\ltw$
when $p\in(0,1)$ and in $\lon$ when $p=1$ and
$$
\sum_{i=1}^\fz|b_i|^p_{\wz H_{\rm{mb},\,2}^{p,\,q,\,1,\,\dz}(\mu)}
\sim\|f\|^p_{\wz H_{\rm{mb},\,2}^{p,\,q,\,1,\,\dz}(\mu)}.
$$
If $f=\sum_{i=1}^\fz b_i$ in $\ltw$, then,
by the boundedness of $T$ on $\ltw$, we see that, for any $N\in\nn$,
$$
\lf\|\sum_{i=1}^NT(b_i)-Tf\r\|_{\ltw}=
\lf\|T\lf(\sum_{i=1}^Nb_i-f\r)\r\|_{\ltw}
\ls\lf\|\sum_{i=1}^Nb_i-f\r\|_{\ltw}\to 0 \quad {\rm as} \quad {N\to\fz},
$$
which further implies that, for all $\eta\in(0,\fz)$,
\begin{equation}\label{4.7x}
\mu\lf(\lf\{x\in\cx:\ \lf|\sum_{i=1}^N T(b_i)(x)-Tf(x)\r|>\eta\r\}\r)
\to 0 \quad {\rm as} \quad {N\to\fz}.
\end{equation}
If $f=\sum_{i=1}^\fz b_i$ in $\lon$, then, by the boundedness of $T$
from $\lon$ to weak-$\lon$, we still know that
\eqref{4.7x} holds true.
Thus, by the Riesz theorem, we know that there exists a subsequence of partial sums,
$\{\sum_{i=1}^{N_k}T(b_i)\}_k$, such that
$$
Tf=\lim_{k\to\fz}\sum_{i=1}^{N_k}T(b_i) \quad
\mu-{\rm almost\ everywhere\ on}\ \cx,
$$
which, together with the Fatou lemma and \eqref{4.7}, implies that
\begin{align*}
\|Tf\|_{\lp}^p&\le\liminf_{k\to\fz}\int_{\cx}
\sum_{i=1}^{N_k}|T(b_i)(x)|^p\,d\mu(x)
\le\sum_{i=1}^\fz\|T(b_i)\|_{\lp}^p\\
&\ls\sum_{i=1}^\fz|b_i|^p_{\wz H_{\rm{mb},\,2}^{p,\,q,\,1,\,\dz}(\mu)}
\sim\|f\|^p_{\wz H_{\rm{mb},\,2}^{p,\,q,\,1,\,\dz}(\mu)}.
\end{align*}
Moreover, by a standard density argument, we extend $T$ to be a
bounded linear operator
from $\wz H_{\rm{mb},\,2}^{p,\,q,\,1,\,\dz}(\mu)$
into $\lp$, which is the desired result.

Now we prove \eqref{4.7}.
Let $b=\sum_{k=0}^{\fz}\sum_{j=1}^{M_k}\lz_{k,\,j}m_{k,\,j}$
be a $(p,q,1,\dz,2)_\lz$-molecular block, where, for
any $k\in\zz_+$ and
$j\in\{1,\ldots,M_k\}$, $\supp(m_{k,\,j})\st B_{k,\,j}\st U_k(B)$
for some balls $B$ and $B_{k,\,j}$ as in
Definition \ref{d4.1}.
Without loss of generality, we may assume that $\wz M=M$ in
Definition \ref{d4.1}.

By the linearity of $T$, we write
\begin{align*}
\|Tb\|^p_{\lp}&\le\sum_{\ell=5}^\fz
\int_{U_\ell(B)}\lf|T\lf(\sum_{k=0}^{\ell-5}
\sum_{j=1}^M\lz_{k,\,j}m_{k,\,j}\r)(x)\r|^p\,d\mu(x)\\
&\hs+\sum_{\ell=5}^\fz\int_{U_\ell(B)}\lf|T\lf(\sum_{k=\ell-4}^{\ell+4}
\sum_{j=1}^M\lz_{k,\,j}m_{k,\,j}\r)(x)\r|^p\,d\mu(x)\\
&\hs+\sum_{\ell=5}^\fz\int_{U_\ell(B)}\lf|T\lf(\sum_{k=\ell+5}^{\fz}
\sum_{j=1}^M\lz_{k,\,j}m_{k,\,j}\r)(x)\r|^p\,d\mu(x)+\sum_{\ell=0}^4\int_{U_\ell(B)}|Tb(x)|^p\,d\mu(x)\\
&=:{\rm I}+{\rm II}+{\rm III}+{\rm IV}.
\end{align*}
Now we first estimate III. By \eqref{4.4}, \eqref{2.1}, \eqref{2.2},
the H\"older inequality and \eqref{4.1}, we obtain
\begin{align*}
{\rm III}&\le\sum_{\ell=5}^\fz\sum_{k=\ell+5}^\fz\sum_{j=1}^M
|\lz_{k,\,j}|^p\int_{U_\ell(B)}\lf[\int_{B_{k,\,j}}|m_{k,\,j}(y)|
|K(x,y)|\,d\mu(y)\r]^p\,d\mu(x)\\
&\ls\sum_{\ell=5}^\fz\sum_{k=\ell+5}^\fz\sum_{j=1}^M
|\lz_{k,\,j}|^p\int_{U_\ell(B)}\lf[\int_{B_{k,\,j}}
\frac{|m_{k,\,j}(y)|}{\lz(x,d(x,y))}\,d\mu(y)\r]^p\,d\mu(x)\\
&\ls\sum_{\ell=5}^\fz\sum_{k=\ell+5}^\fz\sum_{j=1}^M
|\lz_{k,\,j}|^p\int_{U_\ell(B)}
\frac1{[\lz(c_B,d(x,c_B))]^p}\,d\mu(x)\|m_{k,\,j}\|^p_{\lon}\\
&\ls\sum_{\ell=5}^\fz\sum_{k=\ell+5}^\fz\sum_{j=1}^M
|\lz_{k,\,j}|^p
\frac{\mu(2^{\ell+2}B)}{[\lz(c_B,2^{\ell-2}r_B)]^p}
\lf[\mu(B_{k,\,j})\r]^{p/q'}\|m_{k,\,j}\|^p_{\lq}\\
&\ls\sum_{\ell=5}^\fz\sum_{k=\ell+5}^\fz\sum_{j=1}^M
|\lz_{k,\,j}|^p
\lf[\mu\lf(2^{\ell+2}B\r)\r]^{1-p}\lf[\mu(B_{k,\,j})\r]^{p/q'}\\
&\hs\times2^{-k\dz p}\lf[\mu(2B_{k,\,j})\r]^{-p/q'}
\lf[\lz\lf(c_B,2^{k+2}r_B\r)\r]^{p-1}\\
&\ls\sum_{\ell=5}^\fz\sum_{k=\ell+5}^\fz\sum_{j=1}^M
2^{-k\dz p}|\lz_{k,\,j}|^p
\sim\sum_{j=1}^M\sum_{k=10}^\fz\sum_{\ell=5}^{k-5}
2^{-k\dz p}|\lz_{k,\,j}|^p\\
&\ls\sum_{j=1}^M\sum_{k=10}^\fz k
2^{-k\dz p}|\lz_{k,\,j}|^p\ls\sum_{k=0}^\fz\sum_{j=1}^M|\lz_{k,\,j}|^p
\sim|b|^p_{\mhpd}.
\end{align*}

In order to estimate I, write
\begin{align*}
{\rm I}&\le\sum_{\ell=5}^\fz\int_{U_\ell(B)}
\lf|\int_{\cx}\lf[\sum_{k=0}^{\ell-5}
\sum_{j=1}^M\lz_{k,\,j}m_{k,\,j}(y)\r][K(x,y)-K(x,c_B)]
\,d\mu(y)\r|^p\,d\mu(x)\\
&\hs+\sum_{\ell=5}^\fz\int_{U_\ell(B)}\lf|\int_{\cx}\lf[\sum_{k=0}^{\ell-5}
\sum_{j=1}^M\lz_{k,\,j}m_{k,\,j}(y)\r]K(x,c_B)\,d\mu(y)\r|^p\,d\mu(x)
=:{\rm I_1}+{\rm I_2}.
\end{align*}
From \eqref{4.5}, \eqref{2.2}, \eqref{2.1}, the H\"older inequality,
\eqref{4.1} and the fact that $p\in(\frac{\nu}{\nu+\dz},1]$, it follows that
\begin{align*}
{\rm I_1}&\ls\sum_{\ell=5}^\fz\sum_{k=0}^{\ell-5}\sum_{j=1}^M
|\lz_{k,\,j}|^p
\int_{U_\ell(B)}\lf\{\int_{B_{k,\,j}}
\frac{|m_{k,\,j}(y)|[d(y,c_B)]^\dz}
{[d(x,c_B)]^\dz\lz(c_B,d(x,c_B))}\,d\mu(y)\r\}^p\,d\mu(x)\\
&\ls\sum_{\ell=5}^\fz\sum_{k=0}^{\ell-5}\sum_{j=1}^M
|\lz_{k,\,j}|^p\frac{2^{(k+2)\dz p}r_B^{\dz p}\mu(2^{\ell+2}B)}
{2^{(\ell-2)\dz p}r_B^{\dz p}[\lz(c_B,2^{\ell-2}r_B)]^p}
\|m_{k,\,j}\|^p_{\lon}\\
&\ls\sum_{\ell=5}^\fz\sum_{k=0}^{\ell-5}\sum_{j=1}^M
|\lz_{k,\,j}|^p2^{(k-\ell)\dz p}\lf[\mu\lf(2^{\ell+2}B\r)\r]^{1-p}
\lf[\mu\lf(B_{k,\,j}\r)\r]^{p/q'}\|m_{k,\,j}\|^p_{\lq}\\
&\ls\sum_{\ell=5}^\fz\sum_{k=0}^{\ell-5}\sum_{j=1}^M
|\lz_{k,\,j}|^p2^{(k-\ell)\dz p}\lf[\mu\lf(2^{\ell+2}B\r)\r]^{1-p}
\lf[\mu\lf(B_{k,\,j}\r)\r]^{p/q'}\\
&\hs\times2^{-k\dz p}\lf[\mu(2B_{k,\,j})\r]^{-p/q'}
\lf[\lz\lf(c_B,2^{k+2}r_B\r)\r]^{p-1}\\
&\ls\sum_{\ell=5}^\fz\sum_{k=0}^{\ell-5}\sum_{j=1}^M
|\lz_{k,\,j}|^p2^{-\ell\dz p}\lf[\mu\lf(2^{\ell+2}B\r)\r]^{1-p}
\lf[\lz(c_B,2^{\ell+2}r_B)\r]^{p-1}\lf[C_{(\lz)}\r]^{(\ell-k)(1-p)}\\
&\ls\sum_{\ell=5}^\fz\sum_{k=0}^{\ell-5}\sum_{j=1}^M
|\lz_{k,\,j}|^p2^{[\nu(1-p)-\dz p]\ell}
2^{-k\nu(1-p)}
\ls\sum_{k=0}^\fz\sum_{j=1}^M
|\lz_{k,\,j}|^p\sim|b|^p_{\mhpd}.
\end{align*}
For ${\rm I}_2$, the vanishing moment of $b$, together with
\eqref{4.4}, \eqref{2.1} and \eqref{2.2}, implies that
\begin{align*}
{\rm I_2}&=\sum_{\ell=5}^\fz\int_{U_\ell(B)}
\lf|\int_{\cx}\lf[\sum_{k=\ell-4}^{\fz}
\sum_{j=1}^M\lz_{k,\,j}m_{k,\,j}(y)\r]K(x,c_B)\,d\mu(y)\r|^p\,d\mu(x)\\
&\ls\sum_{\ell=5}^\fz\sum_{k=\ell-4}^{\fz}\sum_{j=1}^M
|\lz_{k,\,j}|^p
\int_{U_\ell(B)}\lf[\int_{B_{k,\,j}}|m_{k,\,j}(y)|\frac{1}
{\lz(c_B,d(x,c_B))}\,d\mu(y)\r]^p\,d\mu(x)\\
&\ls\sum_{\ell=5}^\fz\sum_{k=\ell-4}^{\fz}\sum_{j=1}^M
|\lz_{k,\,j}|^p\frac{\mu(2^{\ell+2}B)}{[\lz(c_B,2^{\ell-2}r_B)]^p}
\|m_{k,\,j}\|^p_{\lon}\\
&\ls\sum_{\ell=5}^\fz\sum_{k=\ell-4}^{\fz}\sum_{j=1}^M
|\lz_{k,\,j}|^p\lf[\mu\lf(2^{\ell+2}B\r)\r]^{1-p}
\lf[\mu\lf(B_{k,\,j}\r)\r]^{p/q'}
\|m_{k,\,j}\|^p_{\lq}\\
&\ls\sum_{\ell=5}^\fz\sum_{k=\ell-4}^{\fz}\sum_{j=1}^M
|\lz_{k,\,j}|^p\lf[\mu\lf(2^{\ell+2}B\r)\r]^{1-p}
\lf[\mu\lf(B_{k,\,j}\r)\r]^{p/q'}\\
&\hs\times2^{-k\dz p}\lf[\mu(2B_{k,\,j})\r]^{-p/q'}
\lf[\lz\lf(c_B,2^{k+2}r_B\r)\r]^{p-1}\\
&\ls\sum_{\ell=5}^\fz\sum_{k=\ell-4}^{\fz}\sum_{j=1}^M2^{-k\dz p}
|\lz_{k,\,j}|^p\sim\sum_{j=1}^M\sum_{k=1}^\fz\sum_{\ell=5}^{k+4}
2^{-k\dz p}|\lz_{k,\,j}|^p\\
&\ls\sum_{j=1}^M\sum_{k=0}^\fz k
2^{-k\dz p}|\lz_{k,\,j}|^p\ls\sum_{k=0}^\fz\sum_{j=1}^M|\lz_{k,\,j}|^p
\sim|b|^p_{\mhpd}.
\end{align*}
Combining ${\rm I_1}$ and ${\rm I_2}$, we conclude that
${\rm I}\ls|b|^p_{\mhpd}$.

Then we turn to estimate II. We further write
\begin{align*}
{\rm II}&\le\sum_{\ell=5}^\fz\sum_{k=\ell-4}^{\ell+4}
\sum_{j=1}^M|\lz_{k,\,j}|^p\int_{U_\ell(B)}|Tm_{k,\,j}(x)|^p\,d\mu(x)\\
&\le\sum_{\ell=5}^\fz\sum_{k=\ell-4}^{\ell+4}
\sum_{j=1}^M|\lz_{k,\,j}|^p\int_{2B_{k,\,j}}|Tm_{k,\,j}(x)|^p\,d\mu(x)\\
&\hs+\sum_{\ell=5}^\fz\sum_{k=\ell-4}^{\ell+4}
\sum_{j=1}^M|\lz_{k,\,j}|^p\int_{U_\ell(B)\bh2B_{k,\,j}}
\cdots=:{\rm II_1}+{\rm II_2}.
\end{align*}
By the H\"older inequality, $\ltw$-boundedness of $T$, Lemma \ref{l4.7},
\eqref{4.1} and \eqref{2.1}, we see that
\begin{align*}
{\rm II_1}&\le\sum_{\ell=5}^\fz\sum_{k=\ell-4}^{\ell+4}
\sum_{j=1}^M|\lz_{k,\,j}|^p\lf[\mu(2B_{k,\,j})\r]^{1-p/q}
\|Tm_{k,\,j}\|^p_{\lq}\\
&\ls\sum_{\ell=5}^\fz\sum_{k=\ell-4}^{\ell+4}
\sum_{j=1}^M|\lz_{k,\,j}|^p\lf[\mu(2B_{k,\,j})\r]^{1-p/q}
\|m_{k,\,j}\|^p_{\lq}\\
&\ls\sum_{\ell=5}^\fz\sum_{k=\ell-4}^{\ell+4}
\sum_{j=1}^M|\lz_{k,\,j}|^p\lf[\mu(2B_{k,\,j})\r]^{1-p/q}
2^{-k\dz p}\lf[\mu(2B_{k,\,j})\r]^{-p/q'}
\lf[\lz\lf(c_B,2^{k+2}r_B\r)\r]^{p-1}\\
&\ls\sum_{\ell=5}^\fz\sum_{k=\ell-4}^{\ell+4}
\sum_{j=1}^M 2^{-k\dz p}|\lz_{k,\,j}|^p
\ls\sum_{k=0}^\fz\sum_{j=1}^M|\lz_{k,\,j}|^p
\sim|b|^p_{\mhpd}.
\end{align*}
For ${\rm II_2}$, from \eqref{4.4},
$d(x,y)\ge d(x,c_{B_{k,\,j}})-d(y,c_{B_{k,\,j}})
\ge\frac12 d(x,c_{B_{k,\,j}})$ for
$x\notin 2B_{k,\,j}$ and $y\in B_{k,\,j}$,
\eqref{2.2}, \eqref{2.1}, the H\"older inequality and \eqref{4.1},
we deduce that
\begin{align*}
{\rm II_2}&\ls\sum_{\ell=5}^\fz\sum_{k=\ell-4}^{\ell+4}
\sum_{j=1}^M|\lz_{k,\,j}|^p\int_{U_\ell(B)\bh2B_{k,\,j}}
\lf[\int_{B_{k,\,j}}\frac{|m_{k,\,j}(y)|}
{\lz(c_{B_{k,\,j}},d(x,c_{B_{k,\,j}}))}
\,d\mu(y)\r]^p\,d\mu(x)\\
&\ls\sum_{\ell=5}^\fz\sum_{k=\ell-4}^{\ell+4}
\sum_{j=1}^M|\lz_{k,\,j}|^p\int_{2^{k+6}B\bh B_{k,\,j}}
\frac1{[\lz(c_{B_{k,\,j}},d(x,c_{B_{k,\,j}}))]^p}\,d\mu(x)
\|m_{k,\,j}\|^p_{\lon}\\
&\ls\sum_{\ell=5}^\fz\sum_{k=\ell-4}^{\ell+4}
\sum_{j=1}^M|\lz_{k,\,j}|^p
\|m_{k,\,j}\|^p_{\lon}
\sum_{i=0}^{N^{(2)}_{B_{k,\,j},\,2^{k+5}B}+1}
\frac{\mu(2^{i+1}B_{k,\,j})}
{[\lz(c_{B_{k,\,j}},2^{i}r_{B_{k,\,j}})]^p}\\
&\ls\sum_{\ell=5}^\fz\sum_{k=\ell-4}^{\ell+4}
\sum_{j=1}^M|\lz_{k,\,j}|^p
\|m_{k,\,j}\|^p_{\lon}
\lf[\mu\lf(2^{N^{(2)}_{B_{k,\,j},\,2^{k+5}B}+2}B_{k,\,j}\r)\r]^{1-p}
\lf[\wz K^{(2),\,p}_{B_{k,\,j},\,2^{k+5}B}\r]^p\\
&\ls\sum_{\ell=5}^\fz\sum_{k=\ell-4}^{\ell+4}
\sum_{j=1}^M|\lz_{k,\,j}|^p\lf[\mu\lf(B_{k,\,j}\r)\r]^{p/q'}
\|m_{k,\,j}\|^p_{\lq}
\lf[\mu\lf(2^{k+9}B\r)\r]^{1-p}
\lf[\wz K^{(2),\,p}_{B_{k,\,j},\,2^{k+5}B}\r]^p\\
&\ls\sum_{\ell=5}^\fz\sum_{k=\ell-4}^{\ell+4}
\sum_{j=1}^M|\lz_{k,\,j}|^p\lf[\mu\lf(B_{k,\,j}\r)\r]^{p/q'}
2^{-k\dz p}\lf[\mu(2B_{k,\,j})\r]^{-p/q'}
\lf[\lz\lf(c_B,2^{k+2}r_B\r)\r]^{p-1}\\
&\hs\times\lf[\wz K^{(2),\,p}_{B_{k,\,j},\,2^{k+2}B}\r]^{-p}
\lf[\mu\lf(2^{k+9}B\r)\r]^{1-p}
\lf[\wz K^{(2),\,p}_{B_{k,\,j},\,2^{k+5}B}\r]^p\\
&\ls\sum_{\ell=5}^\fz\sum_{k=\ell-4}^{\ell+4}
\sum_{j=1}^M 2^{-k\dz p}|\lz_{k,\,j}|^p
\ls\sum_{k=0}^\fz\sum_{j=1}^M|\lz_{k,\,j}|^p
\sim|b|^p_{\mhp},
\end{align*}
which, together with the estimate for ${\rm II_1}$,
implies that ${\rm II}\ls|b|^p_{\mhp}$.

To estimate ${\rm IV}$, observe that
\begin{align*}
{\rm IV}&\le\sum_{\ell=0}^4\int_{U_\ell(B)}\lf|T\lf(\sum_{k=0}^{\ell+4}
\sum_{j=1}^M\lz_{k,\,j}m_{k,\,j}\r)(x)\r|^p\,d\mu(x)\\
&\hs+\sum_{\ell=0}^4\int_{U_\ell(B)}\lf|T\lf(\sum_{k=\ell+5}^{\fz}
\sum_{j=1}^M\lz_{k,\,j}m_{k,\,j}\r)(x)\r|^p\,d\mu(x)
=:{\rm IV}_1+{\rm IV}_2.
\end{align*}
By some arguments similar to those used in the estimates for $\rm II_1$
and $\rm III$, we respectively obtain
$${\rm IV}_1\ls|b|^p_{\mhpd}\quad {\rm and}\quad {\rm IV}_2\ls|b|^p_{\mhpd},$$
which, together with the estimates for $\rm III$, $\rm I$
and $\rm II$, completes the proof of Theorem \ref{t4.8}.
\end{proof}

Now we show the boundedness of
Calder\'on-Zygmund operators from $\nhp$ into $\lp$.

\begin{corollary}\label{c4.9}
Suppose that $(\cx,d,\mu)$ is a non-homogeneous metric measure space.
Let $\rho\in(1,\fz)$, $\frac{\nu}{\nu+\dz}<p\le1< q<\fz$
and $\gz\in[1,\fz)$. Assume that
the Calder\'on-Zygmund operator $T$ defined by
\eqref{4.6} associated with kernel $K$
satisfying \eqref{4.4} and \eqref{4.5} is bounded on $\ltw$. Then
$T$ is bounded from $\nhp$ into $\lp$.
\end{corollary}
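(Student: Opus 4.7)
The plan is to deduce Corollary \ref{c4.9} from Theorem \ref{t4.8} via Proposition \ref{p4.3} (for $p\in(0,1)$) or directly (for $p=1$), followed by a standard density argument; no genuinely new estimate is required beyond what was already carried out in the proof of Theorem \ref{t4.8}.

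For $p\in(0,1)$, Proposition \ref{p4.3} provides the inclusion $\pnhp\subset\pmhp$ together with the quantitative bound $\|f\|_{\mhp}^p\le\wz C\|f\|_{\nhp}^p$ valid for every $f\in\pnhp$. On the other hand, Theorem \ref{t4.8} extends $T$ to a bounded linear operator from $\mhp$ into $\lp$ which coincides with \eqref{4.6} on $\pmhp$, so $\|Tf\|_{\lp}\ls\|f\|_{\mhp}$ there. Chaining these two inequalities immediately yields $\|Tf\|_{\lp}\ls\|f\|_{\nhp}$ for every $f\in\pnhp$. For $p=1$, the equivalence $\wz H_{\rm{atb},\,\rho}^{1,\,q,\,\gz}(\mu)=\wz H_{\rm{mb},\,\rho}^{1,\,q,\,\gz,\,\ez}(\mu)$ stated just before Proposition \ref{p4.3} (via \cite[Theorem 1.11]{fyy3}) reduces the claim directly to Theorem \ref{t4.8}.

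It then remains to upgrade the bound obtained on $\pnhp$ to a bounded operator defined on all of $\nhp$. By Definition \ref{d3.2} and Remark \ref{r3.3}(i), $\pnhp$ is dense in $\nhp$; since $(\lp,\|\cdot\|_{\lp}^p)$ is a $p$-quasi-Banach space for $p\in(0,1]$, any Cauchy sequence $\{f_k\}\subset\pnhp$ converging to some $f\in\nhp$ produces a Cauchy sequence $\{Tf_k\}$ in $\lp$, whose limit is taken to be $Tf$. The uniqueness clause of Remark \ref{r3.3}(i) guarantees that this extension is independent of the chosen approximating sequence, so we obtain a well-defined bounded linear operator $T:\nhp\to\lp$ with $\|Tf\|_{\lp}\ls\|f\|_{\nhp}$.

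No substantial obstacle is expected: the corollary is a soft combination of Theorem \ref{t4.8}, Proposition \ref{p4.3} and completion. The only delicate point is the compatibility of the extended $T$ with the original pointwise definition \eqref{4.6} on $L^\infty_b(\mu)$, but this is already built into the extension performed inside the proof of Theorem \ref{t4.8}, which uses the Fatou lemma together with the $\ltw$- or weak-$\lon$-boundedness of $T$ supplied by Lemma \ref{l4.7}; consequently, the two possible interpretations of $Tf$ agree whenever $f$ lies in the overlap of the relevant function spaces.
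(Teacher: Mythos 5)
Your proposal is correct and takes essentially the same route as the paper: the paper also deduces the corollary from the key estimate \eqref{4.7} in the proof of Theorem \ref{t4.8} together with the observation \eqref{4.2} that every $(p,q,\gz,\rho)_\lz$-atomic block is a $(p,q,\gz,\dz,\rho)_\lz$-molecular block with controlled molecular norm, followed by the same limiting/density argument. The only cosmetic difference is that the paper phrases the reduction at the level of individual atomic blocks, whereas you chain the space-level bounds (Proposition \ref{p4.3} plus Theorem \ref{t4.8}) on the dense subspace $\pnhp$ before extending by density.
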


\begin{proof}
Let $\rho$, $p$, $q$, $\gz$ and $\dz$
be as in assumptions of Corollary \ref{c4.9}.
For the sake of simplicity, we take $\rho=2$ and $\gz=1$.
By an argument similar to that used in the proof of
Theorem \ref{t4.8}, it suffices to show that, for any
$(p,q,1,2)_\lz$-atomic block $b$,
$$
\|Tb\|_{\lp}\ls|b|_{\wz H_{\rm{atb},\,2}^{p,\,q,\,1}(\mu)},
$$
which is an easy consequence of the facts that
$b$ is also a $(p,q,1,\dz,2)_\lz$-molecular block
and $|b|_{\wz H_{\rm{mb},\,2}^{p,\,q,\,1,\,\dz}(\mu)}
\ls|b|_{\wz H_{\rm{atb},\,2}^{p,\,q,\,1}(\mu)}$
(see \eqref{4.2}), together with \eqref{4.7}
from the proof of Theorem \ref{t4.8}.
This finishes the proof of Corollary \ref{c4.9}.
\end{proof}

\begin{remark}\label{r4.10}
When $p=1$, Theorem \ref{t4.8} or Corollary \ref{c4.9} is
a special case of \cite[Theorem 4.1]{hyy}, since, for any
$q\in(1,\fz]$, $\rho\in(1,\fz)$, $\gz\in[1,\fz)$ and $\ez\in(0,\fz)$,
${\wz H}_{\rm{atb},\,\rho}^{1,\,q,\,\gz}(\mu)\st
H_{\rm{atb},\,\rho}^{1,\,q,\,\gz}(\mu)$, where
$H_{\rm{atb},\,\rho}^{1,\,q,\,\gz}(\mu)$
is introduced in \cite{hyy} (see \cite[Remark 1.9(i)]{fyy3}),
and, by \cite[Theorem 1.11 and Remark 1.9(i)]{fyy3}, we know that
${\wz H}_{\rm{atb},\,\rho}^{1,\,q,\,\gz}(\mu)$
is independent of the choices of $q$, $\rho$ and $\gz$, and
${\wz H}_{\rm{atb},\,\rho}^{1,\,q,\,\gz}(\mu)$ and
${\wz H}_{\rm{mb},\,\rho}^{1,\,q,\,\gz,\,\ez}(\mu)$
coincide with equivalent norms.
\end{remark}

Now we establish the (${\widetilde H_{\rm{atb},\,\rho(\rho+1)}^{p,\,q,\,\gz+1}(\mu)}$,
${\widetilde H_{\rm{mb},\,\rho}^{p,\,q,\,\gz,\,\frac12(\delta
-\frac{\nu}{p}+\nu)}(\mu)}$)-boundedness of Calder\'on-Zygmund operators.
In what follows, for $T$ as in Corollary
\ref{c4.9}, $T$ is said to satisfy $T^*1=0$ if,
for all $h\in L^{\fz}_b(\mu)$ with
$\int_{\cx}h(y)\,d\mu(y)=0$,
$$\int_{\cx}Th(y)\,d\mu(y)=0.$$
Observe that, for such $T$ and $h$, by Corollary \ref{c4.9},
we have $Th\in\lon$.

\begin{theorem}\label{t4.11}
Suppose that $(\cx,d,\mu)$ is a non-homogeneous metric measure space.
Let $\rho\in[2,\fz)$, $\frac{\nu}{\nu+\dz}<p\le1< q<\fz$ and
$\gz\in[1,\fz)$. Assume that
the Calder\'on-Zygmund operator $T$ defined by
\eqref{4.6} associated with kernel $K$
satisfying \eqref{4.4} and \eqref{4.5} is bounded on $\ltw$
and $T^*1=0$. Then $T$ is bounded from
${\widetilde H_{\rm{atb},\,\rho(\rho+1)}^{p,\,q,\,\gz+1}(\mu)}$ into
${\widetilde H_{\rm{mb},\,\rho}^{p,\,q,\,\gz,\,\frac12(\delta
-\frac{\nu}{p}+\nu)}(\mu)}$.
\end{theorem}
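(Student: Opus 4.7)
The plan is to adapt the argument of Theorem 1.14 of \cite{fyy3} (which handles the case $p=1$) to the range $p\in(\frac{\nu}{\nu+\dz},1]$. First, via Proposition \ref{p3.4} together with a density-and-limiting argument parallel to the opening of the proof of Theorem \ref{t4.8} (which is legitimate thanks to Theorem \ref{t4.8} and Corollary \ref{c4.9}), reduce the problem to the following quantitative statement: for every $(p,q,\gz+1,\rho(\rho+1))_\lz$-atomic block $b=\lz_1a_1+\lz_2a_2$ with bounding ball $B$ and sub-balls $B_j\st B$, the function $Tb$ is a single molecular block in the sense of Definition \ref{d4.1} and
$$
|Tb|^p_{\wz H_{\rm{mb},\,\rho}^{p,\,q,\,\gz,\,\ez}(\mu)}
\ls|\lz_1|^p+|\lz_2|^p,
\quad\ez:=\tfrac12\lf(\dz-\nu/p+\nu\r).
$$
The vanishing moment $\int_\cx Tb\,d\mu=0$ required by Definition \ref{d4.1}(i) is immediate from $\int b\,d\mu=0$ together with the hypothesis $T^*1=0$.

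Next, construct the molecular decomposition of $Tb$ annularly. For each $j\in\{1,2\}$ and each $k\in\zz_+$, fix a small $(\rho,\bz_\rho)$-doubling reference ball $R_{k,\,j}\st U_k(B)$ and put
$$
m_{k,\,j}:=\chi_{U_k(B)}T(\lz_ja_j)-c_{k,\,j}\chi_{R_{k,\,j}},
$$
where the scalar $c_{k,\,j}$ is chosen so that $\int_\cx m_{k,\,j}\,d\mu=0$. Since $\int Tb\,d\mu=0$, the correction terms $\sum_{k,\,j}c_{k,\,j}\chi_{R_{k,\,j}}$ telescope and $Tb=\sum_{k,\,j}m_{k,\,j}$ holds in $\ltw$ without any residue; taking $B_{k,\,j}$ to be the smallest ball containing $\supp(m_{k,\,j})$ places $B_{k,\,j}\st U_k(B)$ up to a universal enlargement of the annulus absorbed into relabelling $k$. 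The required $\lq$-estimate \eqref{4.1} then splits into two regimes. For the finitely many ``near'' annuli with $U_k(B)\cap 2B_j\ne\emptyset$, the $\lq$-boundedness of $T$ (Lemma \ref{l4.7}) together with Definition \ref{d3.2}(iii) suffice. For the ``far'' annuli, write $K(x,y)=[K(x,y)-K(x,c_{B_j})]+K(x,c_{B_j})$: the second piece is annihilated by $\int a_j\,d\mu=0$, while the first, combined with the kernel regularity \eqref{4.5}, H\"older's inequality in $y$, and the growth $\mu(\rho^kB)\ls[C_{(\lz)}]^{k\log_2\rho}\lz(c_B,r_B)$, yields $\lq$-decay of order $\rho^{-k\dz}$ on $\chi_{U_k(B)}Ta_j$. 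Balancing this decay against the worst-case growth factor $\rho^{k\nu(1-1/p)}\cdot\rho^{k\nu}$ coming from the change of scale from $\lz(c_B,r_B)^{1-1/p}$ to $\lz(c_B,\rho^{k+2}r_B)^{1-1/p}$ forces precisely $\ez=\tfrac12(\dz-\nu/p+\nu)$, the factor $\tfrac12$ being generated by the $\ell^p$-summability constraint in $k$; the lower bound $p>\nu/(\nu+\dz)$ is exactly what guarantees $\ez>0$.

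The main technical obstacle will be the bookkeeping of the discrete $\wz K$-coefficients: the atomic hypothesis provides control by $[\wz K^{(\rho(\rho+1)),\,p}_{B_j,\,B}]^{-(\gz+1)}$, while the molecular conclusion demands $[\wz K^{(\rho),\,p}_{B_{k,\,j},\,\rho^{k+2}B}]^{-\gz}$. Lemma \ref{l2.9} handles the change of step-ratio $\rho(\rho+1)\leftrightarrow\rho$, and parts (iv) and (v) of Lemma \ref{l2.8} convert $\wz K^{(\rho),\,p}_{B_{k,\,j},\,\rho^{k+2}B}$ into a bounded multiple of $\wz K^{(\rho),\,p}_{B_j,\,B}$ plus $\wz K^{(\rho),\,p}_{B,\,\rho^{k+2}B}$, the latter being at most $Ck$ by Lemma \ref{l2.8}(ii). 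The enlarged step ratio $\rho(\rho+1)$ in the hypothesis is exactly what is needed so that these manipulations are carried out with universal constants; the extra power $-1$ in the atomic exponent $-(\gz+1)$ absorbs the residual $\wz K$-loss, and the additive growth $k$ is swallowed by the geometric decay $\rho^{-k\ez p}$ upon summation. Once this algebra is settled, the $\ell^p$-bound $\sum_{k,\,j}|c_{k,\,j}|^p\ls|\lz_1|^p+|\lz_2|^p$ follows from H\"older's inequality applied to the identity $c_{k,\,j}\mu(R_{k,\,j})=\int_{U_k(B)}T(\lz_ja_j)\,d\mu$ together with the $\lq$-bounds on the main pieces, completing the proof.
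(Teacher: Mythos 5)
There are two genuine gaps in your construction. First, your far-region estimate relies on ``the second piece is annihilated by $\int a_j\,d\mu=0$'', but the sub-atoms $a_j$ of an atomic block carry \emph{no} individual cancellation: Definition \ref{d3.2} only imposes $\int_\cx b\,d\mu=0$ for the whole block $b=\lz_1a_1+\lz_2a_2$. Since your molecular pieces are built per atom, $m_{k,\,j}:=\chi_{U_k(B)}T(\lz_ja_j)-c_{k,\,j}\chi_{R_{k,\,j}}$, the term $K(x,c_{B_j})\int a_j\,d\mu$ does not vanish and you get no $\rho^{-k\dz}$ gain; without that gain the required decay $\rho^{-k\ez}$ in \eqref{4.1} is unobtainable. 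The paper avoids this by keeping $b$ intact at far scales and subtracting $K(x,c_B)$ with the \emph{common} center $c_B$, so that the cancellation of the full block (which is what $T^*1=0$ and Definition \ref{d3.2}(ii) actually provide) produces the factor $2^{-k\dz}$, which is then split as $2^{-k\ez}\cdot 2^{-k\ez}$ between the size bound and the coefficients. Second, in the central region your scheme of one piece per annulus does not meet the size condition \eqref{4.1}: a piece supported on (essentially) $\rho^2B$ must be measured against $[\mu(\rho\cdot\rho^2 B)]^{1/q-1}$, whereas $L^q$-boundedness of $T$ only gives $\|a_j\|_{\lq}\ls[\mu(\rho(\rho+1)B_j)]^{1/q-1}\cdots$, and the ratio $[\mu(\rho^3B)/\mu(\rho(\rho+1)B_j)]^{1-1/q}$ is uncontrolled when $B_j$ is much smaller than $B$ and is not compensated by powers of $\wz K^{(\rho),\,p}_{B_j,\,B}$. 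This is precisely why the paper decomposes $(Tb)\chi_{8B}$ further into $(Ta_j)\chi_{2B_j}$, the intermediate dyadic annuli $2^{i+1}B_j\bh 2^iB_j$, and an outer rim, each piece supported on a ball comparable to its own scale; the intermediate coefficients are $|\lz_j|\,\mu(2^{i+3}B_j)/\lz(x_j,2^ir_j)\,[\wz K^{(2),\,p}_{B_j,\,B}]^{-1}$, and their $\ell^p$-sum is exactly where the extra power in $[\wz K^{(\rho(\rho+1)),\,p}_{B_j,\,B}]^{-(\gz+1)}$ is consumed. Your remark that ``the extra power absorbs the residual $\wz K$-loss'' names the phenomenon but does not supply this construction.

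A smaller point: the mean-zero corrections $c_{k,\,j}\chi_{R_{k,\,j}}$ are unnecessary (Definition \ref{d4.1} requires cancellation only of the whole molecular block, which follows from $T^*1=0$), and your claim that they ``telescope \ldots without any residue'' is false as stated: the corrections do not cancel pointwise, only their total masses sum to zero, so $\sum_{k,\,j}m_{k,\,j}=Tb-\sum_{k,\,j}c_{k,\,j}\chi_{R_{k,\,j}}\neq Tb$ in general. If you insist on piecewise cancellation you would need a genuine Abel-summation argument with extra difference terms, as in the proof of Theorem \ref{t7.9}; the simpler repair is to drop the corrections altogether and follow the paper's decomposition.
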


\begin{proof}
Observe that, when $p=1$, Theorem \ref{t4.11} is a special case of
\cite[Theorem 1.14]{fyy3},
since it was shown, by \cite[Theorem 1.11 and Remark 1.9(i)]{fyy3}, that,
for any $q\in(1,\fz]$, $\rho\in(1,\fz)$, $\gz\in[1,\fz)$
and $\ez\in(0,\fz)$,
${\wz H}_{\rm{atb},\,\rho}^{1,\,q,\,\gz}(\mu)$ and
${\wz H}_{\rm{mb},\,\rho}^{1,\,q,\,\gz,\,\ez}(\mu)$
coincide with equivalent norms,
and ${\wz H}_{\rm{atb},\,\rho}^{1,\,q,\,\gz}(\mu)$ is independent
of the choices of $q$, $\rho$ and $\gz$.
Thus, to show Theorem \ref{t4.11}, we only need to consider
the case $p\in(\frac{\nu}{\nu+\dz},1)$.
Moreover, for the sake of simplicity, we assume that $\gz=1$ and $\rho=2$.
Via some slight modifications, the arguments here are still valid for
general cases.
We first reduce our proof to showing that, for
any $(p,q,2,6)_\lz$-atomic block, $Tb$ is a
$(p,q,1,\frac12(\dz-\frac{\nu}{p}+\nu),2)_{\lz}$-molecular block and
\begin{equation}\label{4.8}
|Tb|_{{\widetilde H_{\rm{mb},\,2}^{p,\,q,\,1,\,\frac12(\delta
-\frac{\nu}{p}+\nu)}(\mu)}}
\ls|b|_{{\widetilde H_{\rm{atb},\,6}^{p,\,q,\,2}(\mu)}}.
\end{equation}

Indeed, assume that \eqref{4.8} holds true. For any
${\widetilde {\mathbb{H}}_{\rm{atb},\,6}^{p,\,q,\,2}(\mu)}$,
there exists a sequence $\{b_i\}_{i\in\nn}$ of $(p,q,2,6)_\lz$-atomic blocks
such that $f=\sum_{i=1}^\fz b_i$ in $\ltw$ and
$$
\sum_{i=1}^\fz|b_i|^p
_{{\widetilde H_{\rm{atb},\,6}^{p,\,q,\,2}(\mu)}}
\sim\|f\|^p_{{\widetilde H_{\rm{atb},\,6}^{p,\,q,\,2}(\mu)}}.
$$
By the boundedness of $T$ on $\ltw$, we see that
$$
\lf\|\sum_{i=1}^NT(b_i)-Tf\r\|_{\ltw}=
\lf\|T\lf(\sum_{i=1}^Nb_i-f\r)\r\|_{\ltw}
\ls\lf\|\sum_{i=1}^Nb_i-f\r\|_{\ltw}\to 0 \quad {\rm as} \quad {N\to\fz}.
$$
Thus, $Tf=\sum_{i=1}^\fz T(b_i)$ in $\ltw$. Moreover, by \eqref{4.8} and
$T(b_i)$ is a $(p,q,1,\frac12(\dz-\frac{\nu}{p}+\nu),2)_{\lz}$-molecular block
for any $i\in\nn$, we know that
$$
\|Tf\|^p_{{\widetilde H_{\rm{mb},\,2}^{p,\,q,\,1,\,\frac12(\delta
-\frac{\nu}{p}+\nu)}(\mu)}}\le\sum_{i=1}^\fz
|T(b_i)|^p_{{\widetilde H_{\rm{mb},\,2}^{p,\,q,\,1,\,\frac12(\delta
-\frac{\nu}{p}+\nu)}(\mu)}}
\ls\sum_{i=1}^\fz|b_i|^p_{{\widetilde H_{\rm{atb},\,6}^{p,\,q,\,2}(\mu)}}
\sim\|f\|^p_{{\widetilde H_{\rm{atb},\,6}^{p,\,q,\,2}(\mu)}}.
$$
Furthermore, by a standard density argument, we extend $T$ to be a bounded
linear operator from ${\widetilde H_{\rm{atb},\,6}^{p,\,q,\,2}(\mu)}$ into
${\widetilde H_{\rm{mb},\,2}^{p,\,q,\,1,\,\frac12(\delta
-\frac{\nu}{p}+\nu)}(\mu)}$.

Now we show that \eqref{4.8} holds true.
Let $b$ be a $(p,q,2,6)_\lz$-atomic block.
Then $b:=\sum_{j=1}^2\lz_j a_j$,
where, for any $j\in\{1,2\}$, $\supp(a_j)\st B_j\st B$
for some balls $B_j$ and $B$
as in Definition \ref{d3.2}.
Let $B_0:=8B$. We write
$$Tb=(Tb)\chi_{B_0}+\sum_{k=1}^{\fz}(Tb)\chi_{2^{k}B_0\bh 2^{k-1}B_0}
=:{\rm A_1}+{\rm A_2}.$$

We first estimate ${\rm A_1}$. Since $B_j\st B$, we have $3B_j\st8B=B_0$.
Let $N_j:=N^{(2)}_{2B_j,\,B_0}$. Obviously, $N_j\ge-1$.
Without loss of generality, we may assume that $N_j\ge3$.
For the case $N_j\in[-1,3)$, we easily observe that
$2B_j\st B_0\st2^{5}B_j$, which can be reduced to the case $N_j\ge3$.
We further decompose
\begin{align*}
{\rm A_1}&=\sum_{j=1}^2\lz_j (Ta_j)\chi_{2B_j}
+\sum_{j=1}^2\sum_{i=1}^{N_j-2}
\lz_j (Ta_j)\chi_{2^{i+1}B_j\bh2^iB_j}
+\sum_{j=1}^2\lz_j (Ta_j)\chi_{B_0\bh 2^{N_j-1}B_j}\\
&=:{\rm A_{1,\,1}}+{\rm A_{1,\,2}}+{\rm A_{1,\,3}}.
\end{align*}

To estimate ${\rm A_{1,\,1}}$, by Definition \ref{d3.2}(iii),
the boundedness of $T$ on $\ltw$, Lemma \ref{l4.7},
(v), (iv) and (ii) of Lemma \ref{l2.8}, Lemma \ref{l2.9}
and $\wz K^{(2),\,p}_{3B_j,\,B_0}\ge1$,
we see that, for any $j\in\{1,2\}$,
\begin{align*}
\lf\|(Ta_j)\chi_{2B_j}\r\|_{\lq}&\ls\|a_j\|_{\lq}\ls[\mu(6B_j)]^{1/q-1}
[\lz(c_B,r_B)]^{1-1/p}\lf[\wz K^{(6),\,p}_{B_j,\,B}\r]^{-2}\\
&\ls[\mu(6B_j)]^{1/q-1}[\lz(c_B,8r_B)]^{1-1/p}
\lf[\wz K^{(2),\,p}_{3B_j,\,8B}\r]^{-2}\\
&\ls[\mu(6B_j)]^{1/q-1}
\lf[\lz(c_{B_0},r_{4B_0})\r]^{1-1/p}\lf[\wz K^{(2),\,p}_{3B_j,\,4B_0}\r]^{-1},
\end{align*}
here and hereafter, $c_B$ and $c_{B_0}$ denote the centers
of $B$ and $B_0$, and $r_B$ and $r_{4B_0}$ denote
the radii of $B$ and $4B_0$, respectively.
Let $c_1$, independent of $a_j$ and $j$, be the implicit positive constant
of the above inequality, $\sz_{j,\,1}:=c_1\lz_j$ and
$n_{j,\,1}:=c_1^{-1}(Ta_j)\chi_{2B_j}$. Then
${\rm A_{1,\,1}}=\sum_{j=1}^2\sz_{j,\,1}n_{j,\,1}$,
$\supp(n_{j,\,1})\st 3B_j\st B_0$ and
$$
\|n_{j,\,1}\|_{\lq}\le[\mu(6B_j)]^{1/q-1}
\lf[\lz(c_{B_0},r_{4B_0})\r]^{1-1/p}\lf[\wz K^{(2),\,p}_{3B_j,\,4B_0}\r]^{-1}.
$$

For ${\rm A_{1,\,3}}$, we observe that
$r_{B_0}\sim r_{2^{N_j-1}B_j}$,
where $r_{B_0}$ and $r_{2^{N_j-1}B_j}$ denote the radii
of $B_0$ and $2^{N_j-1}B_j$, respectively. For any $j\in\{1,2\}$,
let $x_j$ and $r_j$ be the center and the radius of $B_j$,
respectively.
By \eqref{4.4}, \eqref{2.2}, \eqref{2.1}, the H\"older inequality,
Definition \ref{d3.2}(iii), $\wz K^{(6),\,p}_{B_j,\,B}\ge1$,
$B_0\st 2^{N_j+3}B_j$, Lemma \ref{l2.8}(ii) and Lemma \ref{l2.9},
we obtain
\begin{align*}
\lf\|(Ta_j)\chi_{B_0\bh 2^{N_j-1}B_j}\r\|_{\lq}
&\ls\lf\{\int_{8B\bh2^{N_j-1}B_j}\lf[
\int_{B_j}\frac{|a_j(y)|}
{\lz(x,d(x,y))}\,d\mu(y)\r]^q\,d\mu(x)\r\}^{1/q}\\
&\ls\lf\{\int_{8B\bh2^{N_j-1}B_j}\lf[
\int_{B_j}\frac{|a_j(y)|}
{\lz(x_j,d(x,x_j))}\,d\mu(y)\r]^q\,d\mu(x)\r\}^{1/q}\\
&\ls\frac{\lf[\mu(8B\bh2^{N_j-1}B_j)\r]^{1/q}}{\lz(x_j,2^{N_j-1}r_j)}
[\mu(B_j)]^{1/q'}\|a_j\|_{\lq}\\
&\ls[\mu(32B)]^{1/q-1}[\lz(c_{B},r_{B})]^{1-1/p}
\lf[\wz K^{(6),\,p}_{B_j,\,B}\r]^{-2}\\
&\ls[\mu(4B_0)]^{1/q-1}\lf[\lz(c_{B_0},r_{4B_0})\r]^{1-1/p}
\lf[\wz K^{(2),\,p}_{2B_0,\,4B_0}\r]^{-1}.
\end{align*}
Let $c_2$, independent of $a_j$ and $j$, be the implicit positive constant
of the above inequality,
$\sz_{j,\,3}:=c_2\lz_j$ and $n_{j,\,3}
:=c_2^{-1}(Ta_j)\chi_{B_0\bh 2^{N_j-1}B_j}$. Then
${\rm A_{1,\,3}}=\sum_{j=1}^2\sz_{j,\,3}n_{j,\,3}$,
$\supp(n_{j,\,3})\st 16B=2B_0$ and
$$
\ \lf\|n_{j,\,3}\r\|_{\lq}\le[\mu(4B_0)]^{1/q-1}
\lf[\lz(c_{B_0},r_{4B_0})\r]^{1-1/p}\lf[\wz K^{(2),\,p}_{2B_0,\,4B_0}\r]^{-1}.
$$

We now estimate ${\rm A_{1,\,2}}$. By \eqref{4.4}, \eqref{2.2}, \eqref{2.1},
Definition \ref{d3.2}(iii), the H\"older inequality, (v),
(iv) and (ii) of Lemma \ref{l2.8}, and Lemma \ref{l2.9},
we conclude that
\begin{align*}
&\lf\|(Ta_j)\chi_{2^{i+1}B_j\bh 2^i B_j}\r\|_{\lq}\\
&\hs\ls\lf\{\int_{2^{i+1}B_j\bh2^i B_j}\lf[
\int_{B_j}\frac{|a_j(y)|}{\lz(x,d(x,y))}
\,d\mu(y)\r]^q\,d\mu(x)\r\}^{1/q}\\
&\hs\ls\lf\{\int_{2^{i+1}B_j\bh2^i B_j}\lf[
\int_{B_j}\frac{|a_j(y)|}{\lz(x_j,d(x,x_j))}
\,d\mu(y)\r]^q\,d\mu(x)\r\}^{1/q}\\
&\hs\ls\lf[\lz\lf(x_j,2^i r_j\r)\r]^{1/q-1}\|a_j\|_{\lon}
\lf[\int_{2^{i+1}B_j\bh 2^i B_j}\frac1{\lz\lf(x_j,2^i r_j\r)}
\,d\mu(x)\r]^{1/q}\\
&\hs\ls\lf[\lz\lf(x_j,2^i r_j\r)\r]^{1/q-1}\|a_j\|_{\lq}[\mu(B_j)]^{1/q'}
\lf[\frac{\mu\lf(2^{i+1}B_j\r)}{\lz\lf(x_j,2^i r_j\r)}\r]^{1/q}\\
&\hs\ls\frac{\mu\lf(2^{i+3}B_j\r)}{\lz\lf(x_j,2^i r_j\r)}
\lf[\wz K^{(2),\,p}_{B_j,\,B}\r]^{-1}\lf[\mu\lf(2^{i+3}B_j\r)\r]^{1/q-1}
\lf[\lz(c_{B_0},r_{4B_0})\r]^{1-1/p}
\lf[\wz K^{(2),\,p}_{2^{i+2}B_j,\,4B_0}\r]^{-1}.
\end{align*}
Let $c_3$, independent of $a_j$, $j$ and $i$,
be the implicit positive constant
of the above inequality,
$$\sz^{(i)}_{j,\,2}:=c_3\lz_j\frac{\mu(2^{i+3}B_j)}{\lz(x_j,2^i r_j)}
\lf[\wz K^{(2),\,p}_{B_j,\,B}\r]^{-1}$$
and
$$n^{(i)}_{j,\,2}:=\lf[c_3\frac{\mu\lf(2^{i+3}B_j\r)}
{\lz\lf(x_j,2^i r_j\r)}\r]^{-1}\wz K^{(2),\,p}_{B_j,\,B}(Ta_j)
\chi_{2^{i+1}B_j\bh 2^i B_j}.$$
Then
$${\rm A_{1,\,2}}=\sum_{j=1}^2
\sum_{i=1}^{N_j-2}\sz^{(i)}_{j,\,2}n^{(i)}_{j,\,2},$$
$\supp(n^{(i)}_{j,\,2})\st 2^{i+2}B_j\st 2B_0$ and
$$\lf\|n^{(i)}_{j,\,2}\r\|_{\lq}
\le\lf[\mu\lf(2\lf(2^{i+2}B_j\r)\r)\r]^{1/q-1}
\lf[\lz(c_{B_0},r_{4B_0})\r]^{1-1/p}
\lf[\wz K^{(2),\,p}_{2^{i+2}B_j,\,4B_0}\r]^{-1}.$$

Now we turn to estimate ${\rm A_2}$. For any $k\in\nn$,
by the geometrically doubling condition,
there exists a ball covering $\{B_{k,\,j}\}_{j=1}^{M_0}$,
with uniform radius $2^{k-3}r_{B_0}$,
of $\wz U_k(B_0):=2^{k}B_0\bh 2^{k-1}B_0$
such that the cardinality $M_0\le N_0 8^n$.
Without loss of generality, we may assume that
the centers of the balls in the covering
belong to $\wz U_k(B_0)$.

Let $C_{k,\,1}:=B_{k,\,1}$, $C_{k,\,l}:=B_{k,\,l}
\bh\bp_{m=1}^{l-1}B_{k,\,m}$,
$l\in\{2,\ldots,M_0\}$ and $D_{k,\,l}:=C_{k,\,l}\cap\wz U_k(B_0)$ for all
$l\in\{1,\ldots,M_0\}$.
Then we know that $\{D_{k,\,l}\}_{l=1}^{M_0}$ is pairwise disjoint,
$\wz U_k(B_0)=\bp_{l=1}^{M_0}D_{k,\,l}$ and,
for any $l\in\{1,\ldots,M_0\}$,
$$D_{k,\,l}\st 2B_{k,\,l}\st U_k(B_0):=2^{k+2}B_0\bh 2^{k-2}B_0.$$
Thus,
$${\rm A_2}=\sum_{k=1}^{\fz}Tb\sum_{l=1}^{M_0}\chi_{D_{k,\,l}}
=\sum_{k=1}^{\fz}\sum_{l=1}^{M_0}(Tb)\chi_{D_{k,\,l}}.$$

From $\int_{\cx}b(y)\,d\mu(y)=0$, \eqref{4.5}, \eqref{2.2}, \eqref{2.1},
the H\"older inequality, Definition \ref{d3.2}(iii),
$\wz K^{(2),\,p}_{B_j,\,B}\ge1$, $4B_{k,\,l}\st2^{k+1}B_0$
and Lemma \ref{l2.8}(ii), it follows that,
for any $k\in\nn$, $j\in\{1,2\}$ and $l\in\{1,\ldots,M_0\}$,
\begin{align*}
\|(Tb)\chi_{D_{k,\,l}}\|_{\lq}
&\le\lf\{\int_{D_{k,\,l}}\lf[
\int_B|b(y)||K(x,y)-K(x,c_B)|\,d\mu(y)\r]^q\,d\mu(x)\r\}^{1/q}\\
&\ls\lf[\int_{D_{k,\,l}}\lf\{
\int_B|b(y)|\frac{[d(y,c_B)]^{\dz}}{[d(x,c_B)]^{\dz}\lz(c_B,d(x,c_B))}
\,d\mu(y)\r\}^q\,d\mu(x)\r]^{1/q}\\
&\ls\frac{r_B^{\dz}[\mu(D_{k,\,l})]^{1/q}}
{\lz\lf(c_B,r_{2^{k-1}B_0}\r)\lf(r_{2^{k-1}B_0}\r)^{\dz}}
\int_B|b(y)|\,d\mu(y)\\
&\ls2^{-k\dz}
\lf[\mu\lf(2^{k+1}B_0\r)\r]^{1/q-1}\sum_{j=1}^2
|\lz_j|[\mu(B_j)]^{1/q'}\|a_j\|_{\lq}\\
&\ls 2^{-k\dz}
\lf[\mu\lf(2^{k+1}B_0\r)\r]^{1/q-1}\sum_{j=1}^2
|\lz_j|[\lz(c_B,r_B)]^{1-1/p}\\
&\ls2^{-k\dz}\lf[C_{(\lz)}\r]^{(k+2)(\frac1p-1)}
\sum_{j=1}^2
|\lz_j|[\mu(4B_{k,\,l})]^{1/q-1}
\lf[\lz(c_{B},r_{2^{k+2}B_0})\r]^{1-1/p}\\
&\ls2^{-\frac{k}2(\dz-\frac{\nu}{p}+\nu)}
2^{-\frac{k}2(\dz-\frac{\nu}{p}+\nu)}
\sum_{j=1}^2
|\lz_j|[\mu(4B_{k,\,l})]^{1/q-1}\\
&\hs\times\lf[\lz(c_{B},r_{2^{k+2}B_0})\r]^{1-1/p}
\lf[\wz K^{(2),\,p}_{2B_{k,\,l},\,2^{k+2}B_0}\r]^{-1}.
\end{align*}
Let $c_4$, independent of $b$ and $k$, be the implicit positive constant
of the above inequality,
$$
\lz_{k,\,l}:=c_4 2^{-\frac{k}2(\dz-\frac{\nu}{p}+\nu)}\sum_{j=1}^2|\lz_j|
$$
and $m_{k,\,l}:=\lz_{k,\,l}^{-1}
(Tb)\chi_{D_{k,\,l}}$. Then
$${\rm A_2}=\sum_{k=1}^{\fz}\sum_{l=1}^{M_0}\lz_{k,\,l}m_{k,\,l},$$
$\supp(m_{k,\,l})\st 2B_{k,\,l}\st U_k(B_0)$ and
$$\|m_{k,\,l}\|_{\lq}\le 2^{-\frac{k}2(\dz-\frac{\nu}{p}+\nu)}
\lf[\mu(2(2B_{k,\,l}))\r]^{1/q-1}
\lf[\lz(c_{B},r_{2^{k+2}B_0})\r]^{1-1/p}
\lf[\wz K^{(2),\,p}_{2B_{k,\,l},\,2^{k+2}B_0}\r]^{-1}.$$

Combining the estimates for ${\rm A_1}$ and ${\rm A_2}$,
we see that $Tb$ is a
$(p,q,1,\frac12(\dz-\frac{\nu}{p}+\nu),2)_\lz$-molecular block and
\begin{align*}
|Tb|^p_{{{\widetilde H_{\rm{mb},\,2}^{p,\,q,\,1,\,\frac12(\delta
-\frac{\nu}{p}+\nu)}(\mu)}}}
&=\sum_{j=1}^2|\sz_{j,\,1}|^p+\sum_{j=1}^2\sum_{i=1}^{N_j-1}
\lf|\sz^{(i)}_{j,\,2}\r|^p
+\sum_{j=1}^2|\sz_{j,\,3}|^p+
\sum_{k=1}^{\fz}\sum_{l=1}^{M_0}|\lz_{k,\,l}|^p\\
&\ls\sum_{j=1}^2|\lz_j|^p+\sum_{j=1}^2
\sum_{i=1}^{N_j-2}|\lz_j|^p\lf[\frac{\mu\lf(2^{i+3}B_j\r)}
{\lz(x_j,2^ir_j)}\r]^p\lf[\wz K^{(2),\,p}_{B_j,\,B}\r]^{-p}\\
&\hs+\sum_{k=1}^{\fz}\sum_{l=1}^{M_0}
2^{-\frac{k}2(\dz-\frac{\nu}{p}+\nu)p}\sum_{j=1}^2|\lz_j|^p\\
&\ls\sum_{j=1}^2|\lz_j|^p+
\sum_{k=1}^{\fz}2^{-\frac{k}2(\dz-\frac{\nu}{p}+\nu)p}M_0
\sum_{j=1}^2|\lz_j|^p
\ls\sum_{j=1}^2|\lz_j|^p
\sim|b|^p_{{\widetilde H_{\rm{atb},\,6}^{p,\,q,\,2}(\mu)}},
\end{align*}
which completes the proof of Theorem \ref{t4.11}.
\end{proof}

\begin{remark}\label{r4.12} It is still unclear whether the range of $\rho$ in
Theorem \ref{t4.11} is sharp or not.
\end{remark}

\section{Boundedness of Generalized Fractional Integrals}\label{s5}

\hskip\parindent In this section, we establish the boundedness
of the generalized fractional integral $T_{\bz}$ ($\bz\in(0,1)$)
from $\mhop$ (or $\nhop$) into $\lpt$ with $1/p_2=1/p_1-\bz$,
where $\theta$ is some positive constant depending on $T_\bz$.
To this end, we first recall the notion of generalized
fractional integrals from \cite{fyy2}.

\begin{definition}\label{d5.1}
Let $\bz\in (0,1)$.
A function $K_{\bz}\in L_{\rm{loc}}^{1}(\cx\times
\cx\setminus\{(x,x):\ x\in\cx\})$ is called a
\emph{generalized fractional integral
kernel} if there exists a positive constant $C_{(K_\bz)}$,
depending on $K_\bz$,
such that

(i) for all $x,\,y\in\cx$ with $x\ne y$,
\begin{equation}\label{5.1}
|K_{\bz}(x,y)|\le C_{(K_\bz)}\frac{1}{[\lz(x,d(x,y))]^{1-\bz}};
\end{equation}

(ii) there exist positive constants $\tz\in (0,1]$
and $c_{(K_\bz)}$, depending on $K_\bz$, such that,
for all $x,\,\wz x,\,y\in\cx$ with $d(x,y)\ge c_{(K_\bz)}d(x,\wz{x})$,
\begin{equation}\label{5.2}
|K_{\bz}(x,y)-K_{\bz}(\wz x,y)|+|K_{\bz}(y,x)-K_{\bz}(y,\wz x)|\le
C_{(K_\bz)}\frac{[d(x,\wz x)]^{\tz}}{[d(x,y)]^{\tz}[\lz(x,d(x,y))]^{1-\bz}}.
\end{equation}

A linear operator $T_{\bz}$ is called a
\emph{generalized fractional integral} with
kernel $K_{\bz}$ satisfying \eqref{5.1} and
\eqref{5.2} if, for all $f\in L^{\fz}_b(\mu)$ and $x\not\in \supp(f)$,
\begin{equation}\label{5.3}
T_{\bz}f(x):=\int_{\cx}K_{\bz}(x,y)f(y)\,d\mu(y).
\end{equation}
\end{definition}

\begin{remark}\label{r5.2}
It was shown in \cite[Remark 1.10(iii)]{fyy2} that there exists
a specific example of the generalized fractional integral,
which is a natural variant of the
so-called Bergman-type operator;
see \cite{fyy2} for the details.
\end{remark}

Now we show that the generalized fractional integral
$T_{\bz}$ is bounded from $\mhop$ into $\lpt$ for
$1/p_2=1/p_1-\bz$.
Recall that $\nu:=\log_2C_{(\lz)}$ and
$\tz$ is as in Definition \ref{d5.1}.

\begin{theorem}\label{t5.3}
Suppose that $(\cx,d,\mu)$ is a non-homogeneous metric measure space.
Let $\bz\in(0,1/2)$, $\rho\in(1,\fz)$, $\frac{\nu}{\nu+\tz}<p_1<p_2
\le1<q<1/\bz$, $1/p_2=1/p_1-\bz$ and $\gz\in[1,\fz)$. Assume that
the generalized fractional integral $T_{\bz}$ defined by
\eqref{5.3} associated with kernel $K_{\bz}$
satisfying \eqref{5.1} and \eqref{5.2} is bounded from
$\lq$ into $L^{\wz q}(\mu)$, where $1/\wz q:=1/q-\bz$. Then
$T_{\bz}$ is bounded from
$\wz H_{\rm{mb},\,\rho}^{p_1,\,q,\,\gz,\,\tz}(\mu)$ into $\lpt$.
\end{theorem}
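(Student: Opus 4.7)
The plan is to mimic the proof of Theorem \ref{t4.8} almost verbatim, with the size kernel bound \eqref{4.4} replaced by \eqref{5.1}, the regularity bound \eqref{4.5} replaced by \eqref{5.2}, and the $L^2(\mu)$-boundedness of $T$ replaced by the $(\lq,L^{\wz q}(\mu))$-boundedness of $T_{\bz}$; the price is that a factor $[\lz(c_B,\rho^{k+2}r_B)]^{-(1-\bz)}$ will appear throughout, which must be balanced against $1/p_2=1/p_1-\bz$. As in Section \ref{s4} it suffices, by a standard Riesz-Fatou/density argument (here using that $T_\bz\colon\lq\to L^{\wz q}(\mu)$ so that a Cauchy sequence in the molecular space produces a subsequence converging $\mu$-a.e.), to prove that for every $(p_1,q,\gz,\tz,\rho)_\lz$-molecular block $b$ one has
$$\|T_\bz b\|_{\lpt}\ls |b|_{\mhop}.$$

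I would take $\rho=2$ and $\gz=1$ as in the proof of Theorem \ref{t4.8} and, writing $b=\sum_{k=0}^{\fz}\sum_{j=1}^{M_k}\lz_{k,\,j}m_{k,\,j}$, split
$$\|T_\bz b\|^{p_2}_{\lpt}\le\sum_{\ell=0}^{\fz}\int_{U_\ell(B)}|T_\bz b(x)|^{p_2}\,d\mu(x)$$
and further decompose the sum over molecules into the three ranges $k\le \ell-5$, $\ell-4\le k\le \ell+4$, and $k\ge \ell+5$, plus the inner piece $\ell\in\{0,\ldots,4\}$, producing the analogues of terms $\mathrm{I},\,\mathrm{II},\,\mathrm{III},\,\mathrm{IV}$ in Theorem \ref{t4.8}. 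For the \emph{far} piece ($k\le\ell-5$) I would use the vanishing moment of the molecule together with \eqref{5.2}; for the \emph{intermediate} piece I would split into $2B_{k,\,j}$ and $U_\ell(B)\bh 2B_{k,\,j}$ and apply the $(\lq,L^{\wz q}(\mu))$-boundedness of $T_\bz$ on the inner part and the size estimate \eqref{5.1} with the chain $\wz K^{(2),\,p_1}_{B_{k,\,j},\,2^{k+5}B}\ls$ constant (by Lemma \ref{l2.8}) on the outer part; for the \emph{near} piece ($k\ge\ell+5$) I would use \eqref{5.1} directly. In each case, Hölder's inequality, the upper doubling property \eqref{2.1}, the quasi-monotonicity \eqref{2.2}, and the molecular size bound \eqref{4.1} combine to bound the term by $\sum_{k,\,j}2^{-k\tz p_2}|\lz_{k,\,j}|^{p_2}$, up to a harmless $k$-factor, which is dominated by $|b|^{p_2}_{\mhop}$ since $p_2\le 1$ allows $\ell^{p_2}(\sum|\lz|)^{p_2}\le\sum|\lz|^{p_2}$.

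The crucial bookkeeping step, and the one I expect to be the main obstacle, is the exponent balance in the near-piece estimate. Schematically, when one writes
$$\int_{U_\ell(B)\bh 2B_{k,\,j}}\lf[\int_{B_{k,\,j}}\frac{|m_{k,\,j}(y)|}{[\lz(x_{k,\,j},d(x,x_{k,\,j}))]^{1-\bz}}\,d\mu(y)\r]^{p_2}\,d\mu(x)$$
and invokes Hölder and \eqref{4.1}, one collects a factor
$$[\mu(2^{k+9}B)]^{1-p_2/q}\,[\mu(2B_{k,\,j})]^{-p_2/q'}\,[\lz(c_B,2^{k+2}r_B)]^{(1-1/p_1)p_2}\,2^{-k\tz p_2},$$
together with a summation $\sum_{i=0}^{N}[\mu(2^{i+1}B_{k,\,j})/\lz(x_{k,\,j},2^i r_{B_{k,\,j}})^{1-\bz}]^{p_2}$ coming from annular decomposition of the integration region. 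Using $\nu:=\log_2 C_{(\lz)}$ together with the relation $1/p_2=1/p_1-\bz$, all occurrences of $\lz$-factors cancel and one is left with geometric decay $2^{-k\tz p_2}$ provided $p_1>\nu/(\nu+\tz)$; this is precisely the hypothesis on $p_1$. The remaining pieces (corresponding to $\mathrm{I}_1,\mathrm{I}_2,\mathrm{III}$) follow by the same exponent arithmetic, and Lemmas \ref{l2.8} and \ref{l2.9} absorb the coefficients $\wz K^{(\rho),\,p_1}_{B_{k,\,j},\,\cdot}$ that arise. Summing over $\ell$ and $k$ and collapsing by the $p_2\le 1$ subadditivity yields the claimed bound.
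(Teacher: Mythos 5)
Your overall strategy is exactly the paper's: the published proof of Theorem \ref{t5.3} is a rerun of the proof of Theorem \ref{t4.8} with \eqref{4.4}, \eqref{4.5} replaced by \eqref{5.1}, \eqref{5.2}, the same $\mathrm{I}$--$\mathrm{IV}$ decomposition over $U_\ell(B)$, and the exponent balance driven by $1/p_2=1/p_1-\bz$. However, two steps in your sketch are not yet proofs. First, the reduction to a single molecular block: for $p_1\in(\frac{\nu}{\nu+\tz},1)$ the molecular decomposition $f=\sum_i b_i$ converges in $\ltw$, not in $\lq$, so you cannot invoke the hypothesis $T_\bz\colon\lq\to L^{\wz q}(\mu)$ to pass to an a.e.\ convergent subsequence of $\sum_i T_\bz(b_i)$. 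The paper first upgrades the hypothesis, via \cite[Theorem 1.13]{fyy2}, to boundedness from $\lon$ into weak-$L^{1/(1-\bz)}(\mu)$ and then uses boundedness from $\ltw$ into $L^{2/(1-2\bz)}(\mu)$; this is precisely where the restriction $\bz\in(0,1/2)$ enters, and your plan does not account for it.

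Second, the step you yourself single out as the main obstacle is where your bookkeeping breaks down as written. In the outer intermediate piece (the analogue of $\mathrm{II}_2$), the annular decomposition produces $\sum_{i}\lf[\mu(2^{i+1}B_{k,\,j})\r]^{p_2}\lf[\lz\lf(c_{B_{k,\,j}},2^i r_{B_{k,\,j}}\r)\r]^{-p_2(1-\bz)}$, whose exponents on $\mu$ and on $\lz$ do not match, so it is not a power of any coefficient $\wz K^{(2),\,p_1}_{B_{k,\,j},\,\cdot}$ and cannot be absorbed by Lemmas \ref{l2.8} and \ref{l2.9} alone. The paper fixes this by an additional H\"older step with exponent $p_1/(p_2(1-\bz))>1$ (legitimate because $p_2(1-\bz)<p_1$, which follows from $1/p_2=1/p_1-\bz$ and $p_2\le1$), which yields $\lf[\wz K^{(2),\,p_1}_{B_{k,\,j},\,2^{k+5}B}\r]^{p_2(1-\bz)}$ times a measure factor, and this is then cancelled against the factor $\lf[\wz K^{(2),\,p_1}_{B_{k,\,j},\,2^{k+2}B}\r]^{-p_2}$ coming from \eqref{4.1}. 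Relatedly, the hypothesis $p_1>\frac{\nu}{\nu+\tz}$ is not consumed in this near/outer piece but in the far piece $\mathrm{I}_1$, where one trades $\lz(c_B,2^{k+2}r_B)$ for $\lz(c_B,2^{\ell+2}r_B)$ at the cost of $[C_{(\lz)}]^{(\ell-k)(p_2/p_1-p_2)}$ and needs $\nu(1/p_1-1)<\tz$ to sum over $\ell>k$. With these two devices supplied, your outline coincides with the paper's argument.
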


\begin{proof}
Let $\bz$, $\rho$, $p_1$, $p_2$, $q$, $\wz q$ and $\gz$ be as
in assumptions of Theorem \ref{t5.3}.
For the sake of simplicity, we take $\rho=2$ and $\gz=1$.
With some minor modifications, the arguments here are still valid for
general cases.

Since $T_{\bz}$ is bounded from $\lq$ to $L^{\wz q}(\mu)$
for $q\in(1,1/\bz)$ and $1/\wz{q}=1/q-\bz$, by \cite[Theorem 1.13]{fyy2},
we know that $T_{\bz}$ is also bounded from $\lon$ to weak-$L^{1/(1-\bz)}(\mu)$.
By the boundedness of $T_\bz$ from $\ltw$ into $L^{2/(1-2\bz)}(\mu)$
or from $\lon$ into weak-$L^{1/(1-\bz)}(\mu)$
and an argument similar to that used in the proof of Theorem \ref{t4.8},
to show Theorem \ref{t5.3},
it suffices to show that, for all
$(p_1,q,1,\tz,2)_\lz$-molecular blocks $b$,
$$\|T_{\bz}b\|_{\lpt}\ls|b|_{\mhpod}.$$
Let $b=\sum_{k=0}^{\fz}\sum_{j=1}^{M_k}\lz_{k,\,j}m_{k,\,j}$
be a $(p_1,q,1,\tz,2)_\lz$-molecular block, where, for
any $k\in\zz_+$ and
$j\in\{1,,\ldots,M_k\}$, $\supp(m_{k,\,j})\st B_{k,\,j}\st U_k(B)$
for some balls $B$ and $B_{k,\,j}$ as in
Definition \ref{d4.1}.
Without loss of generality, we may assume that $\wz M=M$ in
Definition \ref{d4.1}.

By the linearity of $T_{\bz}$, we write
\begin{align*}
\|T_{\bz}b\|^{p_2}_{\lpt}&\le\sum_{\ell=5}^\fz
\int_{U_\ell(B)}\lf|T_{\bz}\lf(\sum_{k=0}^{\ell-5}
\sum_{j=1}^M\lz_{k,\,j}m_{k,\,j}\r)(x)\r|^{p_2}\,d\mu(x)\\
&\hs+\sum_{\ell=5}^\fz\int_{U_\ell(B)}
\lf|T_{\bz}\lf(\sum_{k=\ell-4}^{\ell+4}
\sum_{j=1}^M\lz_{k,\,j}m_{k,\,j}\r)(x)\r|^{p_2}\,d\mu(x)\\
&\hs+\sum_{\ell=5}^\fz\int_{U_\ell(B)}
\lf|T_{\bz}\lf(\sum_{k=\ell+5}^{\fz}
\sum_{j=1}^M\lz_{k,\,j}m_{k,\,j}\r)(x)\r|^{p_2}\,d\mu(x)
+\sum_{\ell=0}^4\int_{U_\ell(B)}|T_{\bz}b(x)|^{p_2}\,d\mu(x)\\
&=:{\rm I}+{\rm II}+{\rm III}+{\rm IV}.
\end{align*}

Now we first estimate III. By \eqref{5.1}, \eqref{2.1}, \eqref{2.2},
the H\"older inequality, \eqref{4.1} and $1/p_2=1/p_1-\bz$, we obtain
\begin{align*}
{\rm III}&\le\sum_{\ell=5}^\fz\sum_{k=\ell+5}^\fz\sum_{j=1}^M
|\lz_{k,\,j}|^{p_2}\int_{U_\ell(B)}\lf[\int_{B_{k,\,j}}|m_{k,\,j}(y)|
|K_{\bz}(x,y)|\,d\mu(y)\r]^{p_2}\,d\mu(x)\\
&\ls\sum_{\ell=5}^\fz\sum_{k=\ell+5}^\fz\sum_{j=1}^M
|\lz_{k,\,j}|^{p_2}\int_{U_\ell(B)}\lf\{\int_{B_{k,\,j}}
\frac{|m_{k,\,j}(y)|}
{[\lz(x,d(x,y))]^{1-\bz}}\,d\mu(y)\r\}^{p_2}\,d\mu(x)\\
&\ls\sum_{\ell=5}^\fz\sum_{k=\ell+5}^\fz\sum_{j=1}^M
|\lz_{k,\,j}|^{p_2}\int_{U_\ell(B)}
\frac1{[\lz(c_B,d(x,c_B))]^{p_2(1-\bz)}}\,d\mu(x)
\|m_{k,\,j}\|^{p_2}_{\lon}\\
&\ls\sum_{\ell=5}^\fz\sum_{k=\ell+5}^\fz\sum_{j=1}^M
|\lz_{k,\,j}|^{p_2}
\frac{\mu(2^{\ell+2}B)}{[\lz(c_B,2^{\ell-2}r_B)]^{p_2(1-\bz)}}
\lf[\mu\lf(B_{k,\,j}\r)\r]^{p_2/q'}\|m_{k,\,j}\|^{p_2}_{\lq}\\
&\ls\sum_{\ell=5}^\fz\sum_{k=\ell+5}^\fz\sum_{j=1}^M
|\lz_{k,\,j}|^{p_2}\lf[\mu\lf(2^{\ell+2}B\r)\r]^{1-p_2(1-\bz)}
\lf[\mu\lf(B_{k,\,j}\r)\r]^{p_2/q'}\\
&\hs\times2^{-k\tz p_2}\lf[\mu(2B_{k,\,j})\r]^{-p_2/q'}
\lf[\lz\lf(c_B,2^{k+2}r_B\r)\r]^{p_2(1-1/p_1)}\\
&\ls\sum_{\ell=5}^\fz\sum_{k=\ell+5}^\fz\sum_{j=1}^M
2^{-k\tz p_2}|\lz_{k,\,j}|^{p_2}
\sim\sum_{j=1}^M\sum_{k=10}^\fz\sum_{\ell=5}^{k-5}
2^{-k\tz p_2}|\lz_{k,\,j}|^{p_2}\\
&\ls\sum_{j=1}^M\sum_{k=10}^\fz k
2^{-k\tz p_2}|\lz_{k,\,j}|^{p_2}
\ls\lf\{\sum_{k=0}^\fz\sum_{j=1}^M|\lz_{k,\,j}|^{p_1}\r\}^{p_2/p_1}
\sim|b|^{p_2}_{\mhpod}.
\end{align*}

In order to estimate I, write
\begin{align*}
{\rm I}&\le\sum_{\ell=5}^\fz\int_{U_\ell(B)}\lf|\int_{\cx}
\lf[\sum_{k=0}^{\ell-5}
\sum_{j=1}^M\lz_{k,\,j}m_{k,\,j}(y)\r][K_{\bz}(x,y)-K_{\bz}(x,c_B)]
\,d\mu(y)\r|^{p_2}\,d\mu(x)\\
&\hs+\sum_{\ell=5}^\fz\int_{U_\ell(B)}
\lf|\int_{\cx}\lf[\sum_{k=0}^{\ell-5}
\sum_{j=1}^M\lz_{k,\,j}m_{k,\,j}(y)\r]
K_{\bz}(x,c_B)\,d\mu(y)\r|^{p_2}\,d\mu(x)
=:{\rm I_1}+{\rm I_2}.
\end{align*}
From \eqref{5.2}, \eqref{2.2}, \eqref{2.1}, the H\"older inequality,
\eqref{4.1} and $p_1\in(\frac{\nu}{\nu+\tz},1]$ and
$1/p_2=1/p_1-\bz$, it follows that
\begin{align*}
{\rm I_1}&\ls\sum_{\ell=5}^\fz\sum_{k=0}^{\ell-5}\sum_{j=1}^M
|\lz_{k,\,j}|^{p_2}
\int_{U_\ell(B)}\lf\{\int_{B_{k,\,j}}
\frac{|m_{k,\,j}(y)|[d(y,c_B)]^\tz}
{[d(x,c_B)]^\tz[\lz(c_B,d(x,c_B))]^{1-\bz}}\,d\mu(y)\r\}^{p_2}\,d\mu(x)\\
&\ls\sum_{\ell=5}^\fz\sum_{k=0}^{\ell-5}\sum_{j=1}^M
|\lz_{k,\,j}|^{p_2}\frac{2^{(k+2)\tz p_2}r_B^{\tz p_2}\mu(2^{\ell+2}B)}
{2^{(\ell-2)\tz p_2}r_B^{\tz p_2}[\lz(c_B,2^{\ell-2}r_B)]^{p_2(1-\bz)}}
\|m_{k,\,j}\|^{p_2}_{\lon}\\
&\ls\sum_{\ell=5}^\fz\sum_{k=0}^{\ell-5}\sum_{j=1}^M
|\lz_{k,\,j}|^{p_2}2^{(k-\ell)\tz p_2}
\lf[\mu\lf(2^{\ell+2}B\r)\r]^{1-p_2(1-\bz)}
\lf[\mu\lf(B_{k,\,j}\r)\r]^{p_2/q'}\|m_{k,\,j}\|^{p_2}_{\lq}\\
&\ls\sum_{\ell=5}^\fz\sum_{k=0}^{\ell-5}\sum_{j=1}^M
|\lz_{k,\,j}|^{p_2}2^{-\ell\tz p_2}\lf[\mu\lf(2^{\ell+2}B\r)\r]^{1-p_2(1-\bz)}
\lf[\lz\lf(c_B,2^{k+2}r_B\r)\r]^{p_2(1-1/p_1)}\\
&\ls\sum_{\ell=5}^\fz\sum_{k=0}^{\ell-5}\sum_{j=1}^M
|\lz_{k,\,j}|^{p_2}2^{-\ell\tz p_2}\lf[\mu\lf(2^{\ell+2}B\r)\r]^{1-p_2(1-\bz)}
\lf[\lz(c_B,2^{\ell+2}r_B)\r]^{p_2(1-1/p_1)}\lf[C_{(\lz)}\r]^{(\ell-k)(p_2/p_1-p_2)}\\
&\ls\sum_{\ell=5}^\fz\sum_{k=0}^{\ell-5}\sum_{j=1}^M
|\lz_{k,\,j}|^{p_2}2^{[\nu(p_2/p_1-p_2)-\tz p_2]\ell}
2^{-k\nu(p_2/p_1-p_2)}
\ls\sum_{k=0}^\fz\sum_{j=1}^M
|\lz_{k,\,j}|^{p_2}\ls|b|^{p_2}_{\mhpod}.
\end{align*}
For ${\rm I}_2$, the vanishing moment of $b$, together with
\eqref{5.1}, \eqref{2.1}, \eqref{2.2} and $1/p_2=1/p_1-\bz$,
implies that
\begin{align*}
{\rm I_2}&=\sum_{\ell=5}^\fz\int_{U_\ell(B)}
\lf|\int_{\cx}\lf[\sum_{k=\ell-4}^{\fz}
\sum_{j=1}^M\lz_{k,\,j}m_{k,\,j}(y)\r]
K_{\bz}(x,c_B)\,d\mu(y)\r|^{p_2}\,d\mu(x)\\
&\ls\sum_{\ell=5}^\fz\sum_{k=\ell-4}^{\fz}\sum_{j=1}^M
|\lz_{k,\,j}|^{p_2}
\int_{U_\ell(B)}\lf\{\int_{B_{k,\,j}}|m_{k,\,j}(y)|\frac{1}
{[\lz(c_B,d(x,c_B))]^{1-\bz}}\,d\mu(y)\r\}^{p_2}\,d\mu(x)\\
&\ls\sum_{\ell=5}^\fz\sum_{k=\ell-4}^{\fz}\sum_{j=1}^M
|\lz_{k,\,j}|^{p_2}\frac{\mu(2^{\ell+2}B)}
{[\lz(c_B,2^{\ell-2}r_B)]^{p_2(1-\bz)}}
\|m_{k,\,j}\|^{p_2}_{\lon}\\
&\ls\sum_{\ell=5}^\fz\sum_{k=\ell-4}^{\fz}\sum_{j=1}^M
|\lz_{k,\,j}|^{p_2}\lf[\mu\lf(2^{\ell+2}B\r)\r]^{1-p_2(1-\bz)}
\lf[\mu\lf(B_{k,\,j}\r)\r]^{p_2/q'}
\|m_{k,\,j}\|^{p_2}_{\lq}\\
&\ls\sum_{\ell=5}^\fz\sum_{k=\ell-4}^{\fz}\sum_{j=1}^M
|\lz_{k,\,j}|^{p_2}\lf[\mu\lf(2^{\ell+2}B\r)\r]^{1-p_2(1-\bz)}
\lf[\mu\lf(B_{k,\,j}\r)\r]^{p_2/q'}\\
&\hs\times2^{-k\tz p_2}\lf[\mu(2B_{k,\,j})\r]^{-p_2/q'}
\lf[\lz\lf(c_B,2^{k+2}r_B\r)\r]^{p_2(1-1/p_1)}\\
&\ls\sum_{\ell=5}^\fz\sum_{k=\ell-4}^{\fz}\sum_{j=1}^M2^{-k\tz p_2}
|\lz_{k,\,j}|^{p_2}\sim\sum_{j=1}^M\sum_{k=1}^\fz\sum_{\ell=5}^{k+4}
2^{-k\tz p_2}|\lz_{k,\,j}|^{p_2}\\
&\ls\sum_{j=1}^M\sum_{k=0}^\fz k
2^{-k\tz p_2}|\lz_{k,\,j}|^{p_2}
\ls\lf\{\sum_{k=0}^\fz\sum_{j=1}^M|\lz_{k,\,j}|^{p_1}
\r\}^{p_2/p_1}\sim|b|^{p_2}_{\mhpod}.
\end{align*}
Combining ${\rm I_1}$ and ${\rm I_2}$, we conclude that
${\rm I}\ls|b|^{p_2}_{\mhpod}$.

Then we turn to estimate II. We further write
\begin{align*}
{\rm II}&\le\sum_{\ell=5}^\fz\sum_{k=\ell-4}^{\ell+4}
\sum_{j=1}^M|\lz_{k,\,j}|^{p_2}\int_{U_\ell(B)}
|T_{\bz}(m_{k,\,j})(x)|^{p_2}\,d\mu(x)\\
&\le\sum_{\ell=5}^\fz\sum_{k=\ell-4}^{\ell+4}
\sum_{j=1}^M|\lz_{k,\,j}|^{p_2}\int_{2B_{k,\,j}}
|T_{\bz}(m_{k,\,j})(x)|^{p_2}\,d\mu(x)\\
&\hs+\sum_{\ell=5}^\fz\sum_{k=\ell-4}^{\ell+4}
\sum_{j=1}^M|\lz_{k,\,j}|^{p_2}\int_{U_\ell(B)\bh2B_{k,\,j}}
|T_{\bz}(m_{k,\,j})(x)|^{p_2}\,d\mu(x)
=:{\rm II_1}+{\rm II_2}.
\end{align*}
By the H\"older inequality, $(\lq,L^{\wz q}(\mu))$-boundedness
of $T_\bz$, \eqref{4.1}, \eqref{2.1}, $1/p_2=1/p_1-\bz$
and $1/{\wz q}=1/q-\bz$, we see that
\begin{align*}
{\rm II_1}&\le\sum_{\ell=5}^\fz\sum_{k=\ell-4}^{\ell+4}
\sum_{j=1}^M|\lz_{k,\,j}|^{p_2}\lf[\mu(2B_{k,\,j})\r]^{1-p_2/{\wz q}}
\|T_{\bz}(m_{k,\,j})\|^{p_2}_{L^{\wz q}(\mu)}\\
&\ls\sum_{\ell=5}^\fz\sum_{k=\ell-4}^{\ell+4}
\sum_{j=1}^M|\lz_{k,\,j}|^{p_2}\lf[\mu(2B_{k,\,j})\r]^{1-p_2/{\wz q}}
\|m_{k,\,j}\|^{p_2}_{\lq}\\
&\ls\sum_{\ell=5}^\fz\sum_{k=\ell-4}^{\ell+4}
\sum_{j=1}^M|\lz_{k,\,j}|^{p_2}\lf[\mu(2B_{k,\,j})\r]^{1-p_2/{\wz q}}
2^{-k\tz p_2}\lf[\mu(2B_{k,\,j})\r]^{-p_2/q'}\lf[\lz\lf(c_B,2^{k+2}r_B\r)\r]^{p_2(1-1/p_1)}\\
&\ls\sum_{\ell=5}^\fz\sum_{k=\ell-4}^{\ell+4}
\sum_{j=1}^M 2^{-k\tz p_2}|\lz_{k,\,j}|^{p_2}
\ls\sum_{k=0}^\fz\sum_{j=1}^M|\lz_{k,\,j}|^{p_2}
\ls|b|^{p_2}_{\mhpod}.
\end{align*}

For ${\rm II_2}$, from \eqref{5.1},
$d(x,y)\ge d(x,c_{B_{k,\,j}})-d(y,c_{B_{k,\,j}})
\ge\frac12 d(x,c_{B_{k,\,j}})$ for
$x\notin 2B_{k,\,j}$ and $y\in B_{k,\,j}$,
\eqref{2.2}, \eqref{2.1}, $p_2(1-\bz)<p_1$, the H\"older inequality,
\eqref{4.1} and $1/p_2=1/p_1-\bz$, we deduce that
\begin{align*}
{\rm II_2}&\ls\sum_{\ell=5}^\fz\sum_{k=\ell-4}^{\ell+4}
\sum_{j=1}^M|\lz_{k,\,j}|^{p_2}\int_{U_\ell(B)\bh2B_{k,\,j}}
\lf\{\int_{B_{k,\,j}}\frac{|m_{k,\,j}(y)|}
{[\lz(c_{B_{k,\,j}},d(x,c_{B_{k,\,j}}))]^{1-\bz}}
\,d\mu(y)\r\}^{p_2}\,d\mu(x)\\
&\ls\sum_{\ell=5}^\fz\sum_{k=\ell-4}^{\ell+4}
\sum_{j=1}^M|\lz_{k,\,j}|^{p_2}\int_{2^{k+6}B\bh B_{k,\,j}}
\frac1{[\lz(c_{B_{k,\,j}},d(x,c_{B_{k,\,j}}))]^{p_2(1-\bz)}}\,d\mu(x)
\|m_{k,\,j}\|^{p_2}_{\lon}\\
&\ls\sum_{\ell=5}^\fz\sum_{k=\ell-4}^{\ell+4}
\sum_{j=1}^M|\lz_{k,\,j}|^{p_2}\lf\{\int_{2^{k+6}B\bh B_{k,\,j}}
\frac1{[\lz(c_{B_{k,\,j}},d(x,c_{B_{k,\,j}}))]^{p_1}}
\,d\mu(x)\r\}^{\frac{p_2(1-\bz)}{p_1}}\\
&\hs\times\lf[\mu\lf(2^{k+6}B\r)\r]^{\frac{p_1-p_2(1-\bz)}{p_1}}
\|m_{k,\,j}\|^{p_2}_{\lon}\\
&\ls\sum_{\ell=5}^\fz\sum_{k=\ell-4}^{\ell+4}
\sum_{j=1}^M|\lz_{k,\,j}|^{p_2}
\lf\{\sum_{i=0}^{N^{(2)}_{B_{k,\,j},\,2^{k+5}B}+1}
\frac{\mu(2^{i+1}B_{k,\,j})}
{[\lz(c_{B_{k,\,j}},2^{i}r_{B_{k,\,j}})]^{p_1}}
\r\}^{\frac{p_2(1-\bz)}{p_1}}\\
&\hs\times\lf[\mu\lf(2^{k+6}B\r)\r]^{\frac{p_1-p_2(1-\bz)}{p_1}}
\|m_{k,\,j}\|^{p_2}_{\lon}\\
&\ls\sum_{\ell=5}^\fz\sum_{k=\ell-4}^{\ell+4}
\sum_{j=1}^M|\lz_{k,\,j}|^{p_2}
\lf[\mu\lf(2^{N^{(2)}_{B_{k,\,j},\,2^{k+5}B}+2}B_{k,\,j}\r)
\r]^{(1-p_1)\frac{p_2(1-\bz)}{p_1}}\\
&\hs\times\lf\{\sum_{i=0}^{N^{(2)}_{B_{k,\,j},\,2^{k+5}B}+1}
\lf[\frac{\mu(2^{i+1}B_{k,\,j})}
{\lz(c_{B_{k,\,j}},2^{i}r_{B_{k,\,j}})}\r]^{p_1}
\r\}^{\frac{p_2(1-\bz)}{p_1}}\lf[\mu\lf(2^{k+6}B\r)\r]
^{\frac{p_1-p_2(1-\bz)}{p_1}}
\|m_{k,\,j}\|^{p_2}_{\lon}\\
&\ls\sum_{\ell=5}^\fz\sum_{k=\ell-4}^{\ell+4}
\sum_{j=1}^M|\lz_{k,\,j}|^{p_2}
\|m_{k,\,j}\|^{p_2}_{\lon}
\lf[\mu\lf(2^{N^{(2)}_{B_{k,\,j},\,2^{k+5}B}+2}B_{k,\,j}\r)
\r]^{(1-p_1)\frac{p_2(1-\bz)}{p_1}}\\
&\hs\times\lf[\mu\lf(2^{k+6}B\r)\r]^{\frac{p_1-p_2(1-\bz)}{p_1}}
\lf[\wz K^{(2),\,p_1}_{B_{k,\,j},\,2^{k+5}B}\r]^{p_2(1-\bz)}\\
&\ls\sum_{\ell=5}^\fz\sum_{k=\ell-4}^{\ell+4}
\sum_{j=1}^M|\lz_{k,\,j}|^{p_2}\lf[\mu\lf(B_{k,\,j}\r)\r]^{p_2/q'}
\|m_{k,\,j}\|^{p_2}_{\lq}
\lf[\mu\lf(2^{k+9}B\r)\r]^{1-p_2(1-\bz)}
\lf[\wz K^{(2),\,p_1}_{B_{k,\,j},\,2^{k+5}B}\r]^{p_2(1-\bz)}\\
&\ls\sum_{\ell=5}^\fz\sum_{k=\ell-4}^{\ell+4}
\sum_{j=1}^M|\lz_{k,\,j}|^{p_2}\lf[\mu\lf(B_{k,\,j}\r)\r]^{p_2/q'}
2^{-k\tz p_2}\lf[\mu(2B_{k,\,j})\r]^{-p_2/q'}\\
&\hs\times
\lf[\lz\lf(c_B,2^{k+2}r_B\r)\r]^{p_2(1-1/p_1)}
\lf[\wz K^{(2),\,p_1}_{B_{k,\,j},\,2^{k+2}B}\r]^{-p_2}
\lf[\mu\lf(2^{k+9}B\r)\r]^{1-p_2(1-\bz)}
\lf[\wz K^{(2),\,p_1}_{B_{k,\,j},\,2^{k+5}B}\r]^{p_2}\\
&\ls\sum_{\ell=5}^\fz\sum_{k=\ell-4}^{\ell+4}
\sum_{j=1}^M 2^{-k\tz p_2}|\lz_{k,\,j}|^{p_2}
\ls\sum_{k=0}^\fz\sum_{j=1}^M|\lz_{k,\,j}|^{p_2}
\ls|b|^{p_2}_{\mhpod},
\end{align*}
which, together with the estimate for
${\rm II_1}$, implies that
$${\rm II}\ls|b|^{p_2}_{\mhpod}.$$

To estimate ${\rm IV}$, observe that
\begin{align*}
{\rm IV}&\le\sum_{\ell=0}^4\int_{U_\ell(B)}\lf|T_{\bz}\lf(\sum_{k=0}^{\ell+4}
\sum_{j=1}^M\lz_{k,\,j}m_{k,\,j}\r)(x)\r|^{p_2}\,d\mu(x)\\
&\hs+\sum_{\ell=0}^4\int_{U_\ell(B)}\lf|T_{\bz}\lf(\sum_{k=\ell+5}^{\fz}
\sum_{j=1}^M\lz_{k,\,j}m_{k,\,j}\r)(x)\r|^{p_2}\,d\mu(x)
=:{\rm IV}_1+{\rm IV}_2.
\end{align*}
By some arguments similar to those used in the estimates for $\rm II_1$
and $\rm III$, we respectively obtain
$${\rm IV}_1\ls|b|^{p_2}_{\mhpod}\quad {\rm and}\quad {\rm IV}_2\ls|b|^{p_2}_{\mhpod},$$
which, together with the estimates for $\rm III$, $\rm I$
and $\rm II$, completes the proof of Theorem \ref{t5.3}.
\end{proof}

Similar to Corollary \ref{c4.9}, by the proof of Theorem \ref{t5.3},
we also obtain the following boundedness of the
generalized fractional integral $T_\bz$ from $\nhop$
into $\lpt$ for $1/p_2=1/p_1-\bz$, the details being omitted.

\begin{corollary}\label{c5.4}
Suppose that $(\cx,d,\mu)$ is a non-homogeneous metric measure space.
Let $\bz\in(0,1/2)$, $\rho\in(1,\fz)$,
$\frac{\nu}{\nu+\tz}<p_1<p_2
\le1<q<1/\bz$, $1/p_2=1/p_1-\bz$ and $\gz\in[1,\fz)$. Assume that
the generalized fractional integral $T_\bz$ defined by
\eqref{5.3} associated with kernel $K_\bz$
satisfying \eqref{5.1} and \eqref{5.2} is bounded from
$\lq$ into $L^{\wz q}(\mu)$, where $1/{\wz q}:=1/q-\bz$. Then
$T_\bz$ is bounded from $\nhop$ into $\lpt$.
\end{corollary}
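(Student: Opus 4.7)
The plan is to deduce Corollary \ref{c5.4} from Theorem \ref{t5.3} by passage through the molecular Hardy space, exactly in parallel with the derivation of Corollary \ref{c4.9} from Theorem \ref{t4.8}. The one substantive observation needed is that every atomic block is automatically a molecular block with comparable quasi-norm; once this is in place, the operator bound transfers for free.

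First, I verify that any $(p_1,q,\gz,\rho)_\lz$-atomic block $b$ in the sense of Definition \ref{d3.2} is also a $(p_1,q,\gz,\tz,\rho)_\lz$-molecular block in the sense of Definition \ref{d4.1}, with
$$|b|_{\mhop}\ls|b|_{\nhop}.$$
This is essentially contained in the first paragraph of the proof of Proposition \ref{p4.3}: the two-piece decomposition $b=\lz_1 a_1+\lz_2 a_2$ with $\supp(a_j)\st B_j\st B$ sits inside the $k=0$ tier of a molecular decomposition (the factor $\rho^{-k\ez}$ being just $1$, and $B\st\rho^2B=U_0(B)$), while the atomic size bound on $\|a_j\|_\lq$ implies \eqref{4.1} at $k=0$ by the monotonicity of $r\mapsto\lz(c_B,r)$.

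Second, I reduce the operator estimate to the atomic-block level. The hypotheses on $T_\bz$ together with \cite[Theorem 1.13]{fyy2} (as invoked at the start of the proof of Theorem \ref{t5.3}) give both the $(\lq,L^{\wz q}(\mu))$-boundedness and the weak $(1,1/(1-\bz))$-boundedness of $T_\bz$. For any $f$ in the dense subspace $\wz{\mathbb{H}}^{p_1,\,q,\,\gz}_{{\rm atb},\,\rho}(\mu)$ with a decomposition $f=\sum_i b_i$ into atomic blocks converging in $\ltw$ (note $p_1\in(\nu/(\nu+\tz),1)$), the same partial-sum plus Riesz-theorem argument used in the reduction step of Theorem \ref{t5.3} produces a subsequence along which $\sum_i T_\bz b_i\to T_\bz f$ $\mu$-almost everywhere, and Fatou's lemma then yields
$$\|T_\bz f\|_\lpt^{p_2}\le\sum_i\|T_\bz b_i\|_\lpt^{p_2}.$$

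Third, it remains to bound $\|T_\bz b\|_\lpt$ on a single atomic block $b$. Combining the atomic-to-molecular passage of the first step with the molecular endpoint estimate proved inside Theorem \ref{t5.3}, one obtains $\|T_\bz b\|_\lpt\ls|b|_{\mhop}\ls|b|_{\nhop}$; a standard density argument then extends $T_\bz$ to a bounded operator from $\nhop$ into $\lpt$. The genuine analytic work—namely, the annular decomposition by the shells $U_\ell(B)$, the interplay of the H\"older inequality with the regularity bound \eqref{5.2}, the upper doubling condition, and the discrete coefficients $\wz K^{(\rho),\,p_1}_{\cdot,\,\cdot}$, and the sharp use of $p_1>\nu/(\nu+\tz)$—has already been absorbed into Theorem \ref{t5.3}, so no new obstacle arises. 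The only point requiring a bit of care is the distinction between the dense subspace $\wz{\mathbb{H}}^{p_1,\,q,\,\gz}_{{\rm atb},\,\rho}(\mu)$ and its completion $\nhop$, which is handled by the density argument above; this is why the corollary can legitimately be stated with "the details being omitted."
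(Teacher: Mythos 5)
Your proposal is correct and follows essentially the same route as the paper: the paper's (omitted) proof of Corollary \ref{c5.4} is exactly the argument it spells out for Corollary \ref{c4.9}, namely the reduction to a single atomic block via the partial-sum/Riesz/Fatou and density argument from the proof of Theorem \ref{t5.3}, combined with the observation (as in \eqref{4.2}) that an atomic block is a molecular block with $|b|_{\mhop}\ls|b|_{\nhop}$, so the molecular estimate of Theorem \ref{t5.3} applies. No gaps.
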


\begin{remark}\label{r5.5}
(a) When $p_1=1$, Theorem \ref{t5.3} or Corollary \ref{c5.4} is
a special case of \cite[Theorem 1.13]{fyy2} by the same
reasons as those used in Remark \ref{r4.10}.

(b) For all $\bz\in(0,1)$, $f\in L^{\fz}_b(\mu)$ and $x\in\cx$, the
\emph{fractional integral} $I_\bz f(x)$ is defined by
$$
I_\bz f(x):=\int_\cx \frac{f(y)}{[\lambda(y,d(x,y))]^{1-\bz}}\,d\mu(y).
$$
From \cite[Section 4]{fyy2}, we deduce that, for some
$\ez\in(0,\fz)$, under the weak growth condition as in
Remark \ref{r2.4}(iii) and the following
\emph{$\ez$-weak reverse doubling condition}
on the dominating function $\lz$:
for all $r\in(0, 2\,{\mathop\mathrm{diam}}(\cx))$
and $a\in(1,2\,{\mathop\mathrm{diam}}(\cx)/r)$,
there exists a number $C_{(a)}\in[1,\fz)$,
depending only on $a$ and $\cx$, such that, for all
$x\in\cx$, $\lz(x,ar)\ge C_{(a)}\lz(x,r)$
and $\sum_{k=1}^{\fz}\frac1{[C_{(a^k)}]^{\ez}}<\fz$,
the following statements hold true:

$\rm (b)_1$ the fractional integral $I_\bz$ is a special case of
the generalized fractional integral,
which is bounded from $\lq$ into $L^{\wz q}(\mu)$
for all $q\in(1,1/\bz)$ and $1/{\wz q}=1/q-\bz$;

$\rm (b)_2$ all conclusions of Theorem \ref{t5.3}
and Corollary \ref{c5.4} hold true,
if $T_\bz$ is replaced by $I_\bz$,
where $I_\bz$ has the same assumptions as those of $T_\bz$
in Theorem \ref{t5.3} and Corollary \ref{c5.4}, respectively.
\end{remark}

\section{Campanato Spaces $\ceaeg$}\label{s6}

\hskip\parindent In this section, we introduce the
Campanato space $\ceaeg$ and show that $\ceaeg$ is independent
of the choices of $\rho$, $\eta$, $\gz$ and $q$
under the following assumption of the $\rho$-weakly doubling condition.

\begin{definition}\label{d6.1}
Let $\rho\in(1,\fz)$. The Borel measure $\mu$
is said to satisfy the \emph{$\rho$-weakly doubling condition} if,
for all balls $B\st\cx$,
there exists a positive constant
${\wz C}_1$, depending on $\rho$ but independent of $B$, such that
\begin{equation}\label{6.1}
N^{(\rho)}_{B,\,{\wz B}^{\rho}}\le {\wz C}_1,
\end{equation}
where $N^{(\rho)}_{B,\,{\wz B}^{\rho}}$ is defined as
in Definition \ref{d2.6}.
\end{definition}

\begin{remark}\label{r6.2}
(i) Recall that ${\wz B}^{\rho}$ is totally
determined by $\mu$ and $\rho$. Let $(\cx,d,\mu)$ be a space of
homogeneous type and $\lz(x,r):=\mu(B(x,r))$
for all $x\in\cx$ and $r\in(0,\fz)$.
Then $(\cx,d,\mu)$ satisfies \eqref{6.1},
since $N^{(\rho)}_{B,\,\wz{B}^{\rho}}\sim1$ for all balls $B$ with
equivalent positive constants depending only on $\rho\in(1,\fz)$.
However, by Example
\ref{e6.3} below, there exists a non-doubling measure $\mu$ on a subset
of $\rr$ satisfying \eqref{6.1}; by Example \ref{e6.4} below,
there exists a non-doubling measure not satisfying \eqref{6.1}.
In this sense, a measure satisfying
\eqref{6.1} is said to be $\rho$-weakly doubling.

(ii) From the fact that $\rho^{N^{(\rho)}_{B,\,{\wz B}^{\rho}}}B
=\wz{B}^\rho$ and \eqref{6.1}, it follows that there exists a positive
constant $C_{(\rho,\,{\wz C}_1)}$,
depending on $\rho$ and ${\wz C}_1$, such that,
for any ball $B$,
$$r_{\wz B^{\rho}}\le C_{(\rho,\,{\wz C}_1)}r_B,$$
where $r_B$ and $r_{\wz B^{\rho}}$ denote the
radii of balls $B$ and $\wz B^{\rho}$, respectively.
Obviously, we always have $r_B\le r_{\wz B^{\rho}}$.
\end{remark}

In the remainder of this section,
we \emph{always assume} that the Borel measure $\mu$
satisfies the $\rho$-weakly doubling condition.

The following example shows that there exist some non-trivial
non-doubling measures satisfying \eqref{6.1}.

\begin{example}\label{e6.3}
Let
$$
\cx:=[0,1]\bigcup\lf(\bigcup_{k=1}^\fz\lf[2\sum_{j=0}^{k-1}e^{-j^2},\,
2\sum_{j=0}^{k-1}e^{-j^2}+e^{-k^2}\r]\r).
$$
Denote $[0,1]$ by $D_0$ and $[2\sum_{j=0}^{k-1}e^{-j^2},\,
2\sum_{j=0}^{k-1}e^{-j^2}+e^{-k^2}]$ by $D_k$ for
$k\in\nn$. For any $x\in\cx$ and $r\in(0,\fz)$,
we use $B:=B(x,\,r):=\{y\in\cx:\ |y-x|<r\}$ to
denote a ball of $\cx$. Let $\mu$ be the Lebesgue measure restricted to $\cx$.
Notice that $\mu(B(x,r))\le2r$ for all $x\in\cx$ and $r\in(0,\fz)$.
Thus, $\mu$ is an upper doubling measure with
$\lz(x,r):=2r$ for all $x\in\cx$ and $r\in(0,\fz)$.

Then we claim that $\mu$ is a non-doubling measure.
Indeed, notice that
\begin{equation}\label{6.2}
\sum_{j=k+1}^\fz e^{-j^2}
=e^{-k^2}\sum_{j=1}^\fz e^{-2kj-j^2}\le
\lf\{
  \begin{array}{ll}
    \dfrac{\sqrt{\pi}}2 e^{-k^2} &\,\, \hbox{for all $k\in\zz_+$;} \\[3mm]
    \dfrac{\sqrt{\pi}}{e^2 2} e^{-k^2} &\,\,\hbox{for all $k\in\nn$.}
  \end{array}
\r.
\end{equation}
Let $x_k:=2\sum_{j=0}^{k-1}e^{-j^2}$ and $r_k:=e^{-(k-1)^2}$. Then
$$
\mu(B(x_k,\,r_k))\le\lf(1+\frac{\sqrt{\pi}}{2}\r)e^{-k^2}\quad{\rm and}\quad
\mu(B(x_k,\,2r_k))\ge e^{-(k-1)^2}.
$$
Thus,
$$
\dfrac{\mu(B(x_k,\,2r_k))}{\mu(B(x_k,\,r_k))}
\ge\frac{1}{1+\frac{\sqrt{\pi}}{2}}e^{2k-1}
\rightarrow\fz\quad{\rm as}\quad k\rightarrow\fz,
$$
which implies that $\mu$ is non-doubling.

For any ball $B$, the smallest doubling ball of the form
$2^jB$ with $j\in\zz_+$ is denoted by $\wz B^{(2)}$.
Let $N^{(2)}_{B,\,\wz B^{(2)}}$ be the integer such that
$2^{N^{(2)}_{B,\,\wz B^{(2)}}}B=\wz B^{(2)}$. We claim that
\begin{equation}\label{6.3}
N^{(2)}_{B,\,\wz B^{(2)}}\le 2.
\end{equation}
To prove this, we consider the following two cases for $k$.

{\bf Case (I)} $k\in\{0,\,1,\,2\}$. In this case,
for all $x\in D_k$, we have
$$
\lf\{
  \begin{array}{ll}
    \mu(B(x,\,r))\sim r\sim\mu(B(x,\,2r)) &\quad
    \hbox{for $r\in\lf(0,\,2+\sqrt{\pi}\r]$;} \\[4mm]
    \mu(B(x,\,r))\sim 1\sim\mu(B(x,\,2r)) &\quad
    \hbox{for $r\in\lf(2+\sqrt{\pi},\,\fz\r)$.}
  \end{array}
\r.
$$
From this, it is easy to deduce that $B(x,\,r)$ with $x\in D_k$
and $r\in(0,\fz)$ is a doubling ball
and hence $N^{(2)}_{B,\,\wz B^{(2)}}=0$ in this case.

{\bf Case (II)} $k\in\nn\cap(2,\,\fz)$. In this case,
for all $x\in D_k$, we have
\begin{itemize}

\item[(i)] for $r\in(0,\,e^{-k^2}]$, $\mu(B(x,\,r))\sim r$;

\item[(ii)]for $r\in(e^{-k^2},\,e^{-(k-1)^2}]$,
$\mu(B(x,\,r))\sim e^{-k^2}$;

\item[(iii)] for $r\in(e^{-(k-1)^2},\,2e^{-(k-1)^2}]$, $e^{-k^2}
\le\mu(B(x,\,r))<2e^{-(k-1)^2}$;

\item[(iv)] for $r\in(2\sum_{i=1}^{j}e^{-(k-i)^2},\,2\sum_{i=1}^{j}
e^{-(k-i)^2}+e^{-(k-j-1)^2}]$
with $j\in\{1,\,\ldots,k-2\}$, $$\mu(B(x,\,r))\sim e^{-(k-j)^2};$$

\item[(v)] for $r\in(2\sum_{i=1}^{j}e^{-(k-i)^2}+e^{-(k-j-1)^2},\,
2\sum_{i=1}^{j+1}e^{-(k-i)^2}]$
with $j\in\{1,\,\ldots,k-2\}$,
$$e^{-(k-j)^2}\le\mu(B(x,\,r))<2e^{-(k-j-1)^2};$$

\item[(vi)] for $r\in(2\sum_{i=1}^{k-1}e^{-(k-i)^2},\,\fz)$,
$\mu(B(x,\,r))\sim 1$.

\end{itemize}

Now we show that $\eqref{6.3}$ holds true in cases (i) through (vi).

Indeed, in the case (vi), it is easy to see that $B(x,\,r)$ is a doubling ball
and hence $N^{(2)}_{B,\,\wz B^{(2)}}=0$ in this case.
That is, \eqref{6.3} holds true in this case.
Therefore, we only need to show that \eqref{6.3}
holds true in cases (i) through (v).

In the case (i), since $k\ge 3$, we have $2r\in(0,\,e^{-(k-1)^2}]$.
If $2r\in(0,\,e^{-k^2}]$,
then we have
$$\mu(B(x,\,r))\sim\mu(B(x,\,2r))\sim r;$$
if $2r\in(e^{-k^2},\,e^{-(k-1)^2}]$,
then $r\in(\frac{e^{-k^2}}2,\,e^{-k^2}]$,
which, together with (ii), leads to
$$\mu(B(x,\,r))\sim\mu(B(x,\,2r))\sim e^{-k^2}.$$
It then follows that $B(x,\,r)$ is a doubling ball,
which shows that $N^{(2)}_{B,\,\wz B^{(2)}}=0$ in this case.
That is, \eqref{6.3} holds true.

In the case (ii), if $2r\in(e^{-k^2},\,e^{-(k-1)^2}]$, then we have
$\mu(B(x,\,r))\sim\mu(B(x,\,2r))\sim e^{-k^2}$,
which implies that $\mu(B(x,\,r))$ is a doubling ball;
if $2r\in(e^{-(k-1)^2},\,2e^{-(k-1)^2}]$, then,
for sufficiently large $k$, $B(x,\,r)$ may be a non-doubling ball,
for example, $x=2\sum_{j=0}^{k-1}e^{-j^2}$
and $r=e^{-(k-1)^2}$. In the latter case,
if $B(x,\,r)$ is a non-doubling ball, we consider the ball $B(x,\,2r)$.
If $B(x,\,2r)$ is a doubling ball,
namely, there exists a positive constant $C$,
independent of $x$ and $r$, such that
$\mu(B(x,\,4r))\le C\mu(B(x,\,2r))$,
then there is nothing to prove. Otherwise,
we consider the ball $B(x,\,4r)$.
Notice that  $2r\in(e^{-(k-1)^2},\,2e^{-(k-1)^2}]$.
Then we have $r\in(\frac{e^{-(k-1)^2}}{2},\,e^{-(k-1)^2}]$,
which, together with $k\ge3$, shows that
$$
2e^{-(k-1)^2}<4r\le 4e^{-(k-1)^2}<2e^{-(k-1)^2}+e^{-(k-2)^2}
$$
and
$$
2e^{-(k-1)^2}<8r\le 8e^{-(k-1)^2}<2e^{-(k-1)^2}+e^{-(k-2)^2}.
$$
It then follows, from (iv) with $j=1$, that
$\mu(B(x,\,4r))\sim\mu(B(x,\,8r))\sim e^{-(k-1)^2}$,
which implies that $B(x,\,4r)$ is a doubling ball.
From the above estimate, we conclude that
$N^{(2)}_{B,\,\wz B^{(2)}}\le 2$ in this case.

The argument of the case (iii) is similar to that used in the case of
$2r\in(e^{-(k-1)^2},\,2e^{-(k-1)^2}]$ of (ii).
Moreover, we have $N^{(2)}_{B,\,\wz B^{(2)}}\le 1$ in this case.

Before we deal with the case (iv), we first consider the case (v).
In the case (v),
if $B(x,\,r)$ is a doubling ball, then there is nothing to prove.
Otherwise, we consider the following two cases for $j$. If $j=k-2$, we have
$$
4r>2r>4\sum_{i=1}^{j}e^{-(k-i)^2}+2e^{-(k-j-1)^2}
>2\sum_{i=1}^{j+1}e^{-(k-i)^2}=2\sum_{i=1}^{k-1}e^{-(k-i)^2},
$$
which, together with (vi), shows that
$\mu(B(x,\,4r))\sim\mu(B(x,\,2r))\sim 1$.
Thus, $B(x,\,2r)$ is a doubling ball.
If $j\le k-3$, then, by \eqref{6.2}, we see that
$$
6\sum_{i=1}^{j+1}e^{-(k-i)^2}<e^{-[k-(j+1)-1]^2}.
$$
It then follows that
$$
2\sum_{i=1}^{j+1}e^{-(k-i)^2}<2r<4r\le8\sum_{i=1}^{j+1}e^{-(k-i)^2}
<2\sum_{i=1}^{j+1}e^{-(k-i)^2}+e^{-[k-(j+1)-1]^2}.
$$
This via (iv) shows that $\mu(B(x,\,2r))
\sim\mu(B(x,\,4r))\sim e^{-(k-j-1)^2}$,
which implies that $B(x,\,2r)$ is a doubling ball
and hence $N^{(2)}_{B,\,\wz B^{(2)}}\le 1$ in this case.

In the case (iv), we see that $2r\in(4\sum_{i=1}^{j}e^{-(k-i)^2},\,
4\sum_{i=1}^{j}e^{-(k-i)^2}
+2e^{-(k-j-1)^2}]$.
Notice that, by \eqref{6.2} and $j\le k-2$, we see that
$$
2\sum_{i=1}^{j}e^{-(k-i)^2}=2\sum_{i=k-j}^{k-1}e^{-i^2}
\le\frac{2\sqrt{\pi}}{2e^2}e^{-(k-j-1)^2}<e^{-(k-j-1)^2}.
$$
Thus, we consider the following three cases for $2r$.

{\bf Case (a)} $2r\in(4\sum_{i=1}^{j}e^{-(k-i)^2},
\,2\sum_{i=1}^{j}e^{-(k-i)^2}
+e^{-(k-j-1)^2}]$.
In this case, it is easy to see that
$$\mu(B(x,\,2r))\sim\mu(B(x,\,r))\sim e^{-(k-j)^2},$$
which implies that $B(x,\,r)$ is a doubling ball.

{\bf Case (b)} $2r\in(2\sum_{i=1}^{j}e^{-(k-i)^2}+e^{-(k-j-1)^2},\,
2\sum_{i=1}^{j+1}e^{-(k-i)^2}]$.
In this case, by an argument similar to that used in (v),
we conclude that $N^{(2)}_{B,\,\wz B^{(2)}}\le 2$.

{\bf Case (c)} $2r\in(2\sum_{i=1}^{j+1}e^{-(k-i)^2},\,
4\sum_{i=1}^{j}e^{-(k-i)^2}+2e^{-(k-j-1)^2}]$.
In this case, if $j=k-2$, we have
$$
4r>2r>2\sum_{i=1}^{k-1}e^{-(k-i)^2},
$$
which, together with (vi), implies that $B(x,\,2r)$ is a doubling ball;
if $j\le k-3$, by \eqref{6.2}, we know that
\begin{align*}
2\sum_{i=1}^{j+1}e^{-(k-i)^2}<2r<4r&
\le8\sum_{i=1}^{j}e^{-(k-i)^2}+4e^{-(k-j-1)^2}
<2\sum_{i=1}^{j+1}e^{-(k-i)^2}+6\sum_{i=1}^{j+1}e^{-(k-i)^2}\\
&<2\sum_{i=1}^{j+1}e^{-(k-i)^2}+e^{-[k-(j+1)-1]^2}.
\end{align*}
This via (iv) shows that
$\mu(B(x,\,2r))\sim\mu(B(x,\,4r))\sim e^{-(k-j-1)^2}$,
which implies that $B(x,\,2r)$
is a doubling ball and hence $N^{(2)}_{B,\,\wz B^{(2)}}\le 1$ in the case (iv).

Combining the above estimates, we obtain \eqref{6.3},
which completes the proof of our claim and hence the example.
\end{example}

On the other hand, it turns out that
there exist many non-homogeneous metric measure spaces
which do not satisfy the $\rho$-weakly
doubling condition \eqref{6.1}. We give the
following Gauss measure on $\rr$ as an example.

\begin{example}\label{e6.4}
Let $(\cx,|\cdot|,\mu):=(\rr,|\cdot|,\mu)$, where
$|\cdot|$ denotes the Euclidean distance
and $\mu$ the Gauss measure on $\rr$,
that is, $d\mu(x):=\pi^{-\frac12}e^{-x^2}dx$ for all $x\in\rr$.
As in Example \ref{e6.3}, for any $x\in\rr$ and $r\in(0,\fz)$,
we use $B:=B(x,\,r):=\{y\in\rr:\ |y-x|<r\}$ to denote a ball of $\rr$.
First, we show that $\mu$ is a non-doubling
measure with the dominating function $\lz(x,\,r):=2\pi^{-\frac12}r$.
Indeed, for all $x\in\rr$ and $r\in(0,\fz)$,
$$
\mu(B(x,\,r))=\pi^{-\frac12}\dint^{x+r}_{x-r}e^{-y^2}dy
\le 2\pi^{-\frac12}r=\lz(x,\,r).
$$
On the other hand, let $x_k=2^{2k}$ and $r_{k,\,j}=2^j$
with $k\in\nn$ and $j\in\{1,\ldots,k\}$. Then,
for all $k\in\nn$ and $j\in\{1,\ldots,k\}$, we observe that
$$
\mu(B(x_k,\,r_{k,\,j}))=\pi^{-\frac12}\dint_{2^{2k}-2^j}^{2^{2k}+2^j}
e^{-x^2}dx\le \pi^{-\frac12}e^{-(2^{2k}-2^j)^2}2^{j+1}
$$
and
\begin{align*}
\mu(B(x_k,\,2r_{k,\,j}))&=\pi^{-\frac12}
\dint_{2^{2k}-2^{j+1}}^{2^{2k}+2^{j+1}}e^{-x^2}dx\\
&\ge\pi^{-\frac12}\dint_{2^{2k}-2^{j+1}}^{2^{2k}-3\times2^{j-1}}e^{-x^2}dx
\ge \pi^{-\frac12}e^{-(2^{2k}-3\times2^{j-1})^2}2^{j-1}.
\end{align*}
Thus,
\begin{equation}\label{6.4}
\dfrac{\mu(B(x_k,\,2r_{k,\,j}))}{\mu(B(x_k,\,r_{k,\,j}))}
\ge\frac{1}{4}e^{2^{2k+j}-\frac{5}{4}2^{2j}}
\ge\frac{1}{4}e^{2^{2k+1}-\frac{5}{4}2^{2k}}
\ge\frac{1}{4}e^{2^{2k-1}}\rightarrow\fz\quad {\rm as}
\quad k\rightarrow\fz,
\end{equation}
which implies that $\mu$ is non-doubling.

Now we claim that, for any $\rho\in(1,\fz)$,
there exists a ball $B$ such that
the number $N^{(\rho)}_{B,\,{\wz B}^{\rho}}$ can be arbitrarily large.
For the sake of simplicity, we only prove our claim for $\rho=2$.
With some simple modifications, the arguments here are still valid for all
$\rho\in(1,\,\fz)$. Recall that a ball $B\st \rr$ is said to be
$(2,\bz_2)$-doubling if $\mu(2B)\le \bz_2\mu(B)$ and
$\wz B^{(2)}$ is the smallest $(2,\bz_2)$-doubling
ball of the form $2^jB$ with $j\in\zz_+$.
Let $k_0$ be the smallest positive integer
such that $\frac{1}{4}e^{2^{2k_0-1}}>\bz_2$.
Then, for all $k\in\nn\cap[k_0,\fz)$,
we have $\frac{1}{4}e^{2^{2k-1}}>\bz_2$.
Let $B_k:=B(2^{2k},\,2)$. By \eqref{6.4},
it is easy to see that, for all $j\in\{0,\ldots,k-1\}$,
$2^jB_k$ is not a $(2,\bz_2)$-doubling ball.
It then follows, from the definition of
$N^{(2)}_{B_k,\,{\wz B_k}^{2}}$, that
$$
N^{(2)}_{B_k,\,{\wz B_k}^{2}}>k-1,
$$
which implies our claim and completes the proof of Example \ref{e6.4}.
\end{example}

We now state the definition of the Campanato space $\ceaeg$.

\begin{definition}\label{d6.5}
Let $\alpha\in[0, \fz)$, $\eta\in(1,\fz)$, $\rho\in[\eta, \fz)$
and $q,\,\gz\in[1, \fz)$.
A function $f\in L^1_{\rm loc}(\mu)$ is
said to belong to the {\it Campanato space} $\ceaeg$ if
\begin{align*}
\|f\|_{\ceaeg}
&:=\sup_B\lf\{\frac1{\mu(\eta B)}\frac1{[\lz(c_B,\,r_B)]^{\alpha q}}
\int_B\lf|f(y)-m_{\wz B^{\rho}}(f)\r|^q\,d\mu(y)\r\}^{1/q}\\
&\quad+\sup_{B\st S:\ B,\,S\ (\rho,\,\bz_{\rho})-{\rm doubling}}
\frac{|m_B(f)-m_S(f)|}{[\lz(c_S,\,r_S)]^\alpha
[\kbsa]^{\gz}}<\fz,
\end{align*}
here and hereafter, $m_B(f):=\frac1{\mu(B)}\int_Bf(x)\,d\mu(x)$
for all balls $B$ and $f\in L^1_{\rm loc}(\mu)$.
\end{definition}

\begin{remark}\label{r6.6}
Arguing as in \cite[Lemma 3.2]{ly10}, we see that
$\ce_{\rho,\,\rho,\,\gz}^{0,\,1}(\mu)$
and $\wz{{\rm RBMO}}_{\rho,\,\gz}(\mu)$
coincide with equivalent norms,
where $\wz{{\rm RBMO}}_{\rho,\,\gz}(\mu)$  was
introduced in \cite{fyy3}; see also \cite{h10, hyy}.
\end{remark}

\begin{proposition}\label{p6.7}
Suppose that $(\cx,d,\mu)$ is a non-homogeneous metric
measure space satisfying \eqref{6.1}.
Let $\alpha\in[0, \fz)$, $\eta\in(1,\fz)$, $\rho\in[\eta, \fz)$
and $q,\,\gz\in[1, \fz)$. The following statements hold true:

{\rm (a)} There exists a positive constant $C$
such that, for all balls $B\st S$ and all functions $f\in\ceaeg$,
$$
\lf|m_{\wz B^{\rho}}(f)-m_{\wz S^{\rho}}(f)\r|
\le C\lf[\kbsa\r]^{\gz}[\lz(c_S,r_S)]^{\az}\|f\|_{\ceaeg}.
$$

{\rm (b)} If $f,g\in\ceaeg$ are real-valued functions, then
$\max\{f,\,g\}\in\ceaeg$ and $\min\{f,\,g\}\in\ceaeg$.
Moreover, there exists a positive constant $C$,
independent of $f$ and $g$, such that
$$
\|\max\{f,\,g\}\|_{\ceaeg}+\|\min\{f,\,g\}\|_{\ceaeg}
\le C\lf[\|f\|_{\ceaeg}+\|g\|_{\ceaeg}\r].
$$
\end{proposition}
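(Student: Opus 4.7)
The plan for part (a) is to introduce a $(\rho,\bz_\rho)$-doubling intermediate ball that contains both $\wz B^\rho$ and $\wz S^\rho$ and to apply the second supremum in Definition \ref{d6.5} twice. Under \eqref{6.1}, Remark \ref{r6.2}(ii) yields $r_{\wz B^\rho}\ls r_B$ and $r_{\wz S^\rho}\ls r_S$. Since $B\st S$ forces $d(c_B,c_S)<r_S$ and $r_B\le 2r_S$, one can choose a constant $C\in(1,\fz)$, depending only on $\rho$ and $\wz C_1$, so that $T:=B(c_S,Cr_S)$ contains both $\wz B^\rho$ and $\wz S^\rho$. Then $\wz T^\rho$ is $(\rho,\bz_\rho)$-doubling, contains both $\wz B^\rho$ and $\wz S^\rho$, and has radius $\ls r_S$. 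Splitting
$$\lf|m_{\wz B^\rho}(f)-m_{\wz S^\rho}(f)\r|\le\lf|m_{\wz B^\rho}(f)-m_{\wz T^\rho}(f)\r|+\lf|m_{\wz S^\rho}(f)-m_{\wz T^\rho}(f)\r|$$
and applying the second supremum of $\|\cdot\|_{\ceaeg}$ to each term will reduce the problem to bounding $\wz K^{(\rho),\,1/(\az+1)}_{\wz B^\rho,\,\wz T^\rho}$, $\wz K^{(\rho),\,1/(\az+1)}_{\wz S^\rho,\,\wz T^\rho}$ and $\lz(c_{\wz T^\rho},r_{\wz T^\rho})$ in terms of $\kbsa$ and $\lz(c_S,r_S)$.

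For these coefficients I would first invoke Lemma \ref{l2.8}(v) with $B\st\wz B^\rho\st\wz T^\rho$ to get $\wz K^{(\rho),\,1/(\az+1)}_{\wz B^\rho,\,\wz T^\rho}\ls\wz K^{(\rho),\,1/(\az+1)}_{B,\,\wz T^\rho}$; then split $B\st S\st\wz T^\rho$ via Lemma \ref{l2.8}(iv) and use Lemma \ref{l2.8}(ii) on $\wz K^{(\rho),\,1/(\az+1)}_{S,\,\wz T^\rho}$ (whose radii are comparable) to conclude $\wz K^{(\rho),\,1/(\az+1)}_{B,\,\wz T^\rho}\ls\kbsa$. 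The coefficient $\wz K^{(\rho),\,1/(\az+1)}_{\wz S^\rho,\,\wz T^\rho}$ is likewise $\ls 1$ by Lemma \ref{l2.8}(ii). For the dominating function, \eqref{2.1} combined with $r_{\wz T^\rho}\ls r_S$ produces $\lz(c_S,r_{\wz T^\rho})\ls\lz(c_S,r_S)$. Absorbing the bounded term into the other using $\kbsa\ge 1$ then yields the desired estimate in (a).

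For part (b), I plan to use the identities $\max\{f,g\}=(f+g+|f-g|)/2$ and $\min\{f,g\}=(f+g-|f-g|)/2$, reducing the proof to showing that, for any real-valued $h\in\ceaeg$, $|h|\in\ceaeg$ with $\||h|\|_{\ceaeg}\ls\|h\|_{\ceaeg}$ (then applying this to $h:=f-g$ together with the quasi-triangle inequality for $\|\cdot\|_{\ceaeg}$). The central technical estimate will be
$$\lf|m_{\wz B^\rho}(|h|)-|m_{\wz B^\rho}(h)|\r|\le\frac1{\mu(\wz B^\rho)}\int_{\wz B^\rho}\lf|h(x)-m_{\wz B^\rho}(h)\r|\,d\mu(x)\ls[\lz(c_B,r_B)]^\az\|h\|_{\ceaeg},$$
which will follow by combining the H\"older inequality, the $(\rho,\bz_\rho)$-doubling of $\wz B^\rho$ (to swap $\mu(\wz B^\rho)$ for $\mu(\eta\wz B^\rho)$), the first supremum in Definition \ref{d6.5} applied to the ball $\wz B^\rho$ itself (which equals its own $\wz{\cdot}^\rho$), and the bound $\lz(c_B,r_{\wz B^\rho})\ls\lz(c_B,r_B)$ obtained from \eqref{2.1} together with $r_{\wz B^\rho}\ls r_B$. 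Inserting this through
$$\big||h(y)|-m_{\wz B^\rho}(|h|)\big|\le\lf|h(y)-m_{\wz B^\rho}(h)\r|+\lf|m_{\wz B^\rho}(|h|)-|m_{\wz B^\rho}(h)|\r|$$
controls the first supremum of $\||h|\|_{\ceaeg}$; for the second supremum, I will use $||m_B(h)|-|m_S(h)||\le|m_B(h)-m_S(h)|$ on $(\rho,\bz_\rho)$-doubling pairs $B\st S$, bounding $|m_B(h)-m_S(h)|$ directly through the second supremum of $\|h\|_{\ceaeg}$.

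The main obstacle will be in part (a): engineering the intermediate doubling ball so that the resulting $K$-coefficient compares to $\kbsa$ (with $B$, $S$, not their $(\rho,\bz_\rho)$-doubled versions) relies essentially on the $\rho$-weakly doubling condition to guarantee the size comparisons $r_{\wz B^\rho}\ls r_B$ and $r_{\wz S^\rho}\ls r_S$; without \eqref{6.1} the doubling hull could be arbitrarily larger than $S$, which would inflate the cross-term and break the estimate. Once this comparability is in hand, the remaining bookkeeping with Lemma \ref{l2.8}(ii), (iv), (v) is routine, and part (b) follows from the triangle-type inequality for $\|\cdot\|_{\ceaeg}$ plus the absolute-value estimate sketched above.
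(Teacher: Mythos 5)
Your proposal is correct and follows essentially the same route as the paper: both parts (a) and (b) are proved by introducing an auxiliary $(\rho,\beta_\rho)$-doubling ball containing $\wz B^{\rho}$ and $\wz S^{\rho}$, applying the second supremum in Definition \ref{d6.5} twice, controlling the resulting coefficients via Lemma \ref{l2.8} and the radius comparability $r_{\wz B^{\rho}}\ls r_B$ from Remark \ref{r6.2}(ii), and reducing (b) to the estimate for $|h|$ via $\max\{f,g\}=(f+g+|f-g|)/2$. The only (cosmetic) difference is that you use a single intermediate ball $\wz{B(c_S,Cr_S)}^{\rho}$ covering both doubling hulls, whereas the paper splits into the two cases $r_{\wz B^{\rho}}<r_{\wz S^{\rho}}$ and $r_{\wz S^{\rho}}\le r_{\wz B^{\rho}}$ and takes the doubling hull of twice the larger one.
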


\begin{proof}
To show (a), we consider the following two cases:

\textbf{Case (i)} $r(\wz B^{\rho})<r(\wz S^{\rho})$. It is obvious that
$\wz B^{\rho}\st2\wz S^{\rho}$. Let $S_0:=\wz{(2\wz S^{\rho})}^{\rho}$.
By (iv), (ii) and (iii) of Lemma \ref{l2.8} with $p=1/(\az+1)$, we have
$$
\lf[{\wz K^{(\rho),\,1/(\az+1)}_{\wz S^{\rho},\,S_0}}\r]^{\gz}
\ls\lf[{\wz K^{(\rho),\,1/(\az+1)}_{\wz S^{\rho},\,2\wz S^{\rho}}}\r]^{\gz}
+\lf[{\wz K^{(\rho),\,1/(\az+1)}_{2\wz S^{\rho},\,S_0}}\r]^{\gz}\ls1.
$$
From this and (v), (iv), (ii) and (iii) of Lemma \ref{l2.8}
with $p=1/(\az+1)$, it follows that
\begin{align*}
\lf[{\wz K^{(\rho),\,1/(\az+1)}_{\wz B^{\rho},\,S_0}}\r]^{\gz}
&\ls\lf[{\wz K^{(\rho),\,1/(\az+1)}_{B,\,S_0}}\r]^{\gz}\\
&\ls\lf[{\wz K^{(\rho),\,1/(\az+1)}_{B,\,S}}\r]^{\gz}
+\lf[{\wz K^{(\rho),\,1/(\az+1)}_{S,\,\wz S^{\rho}}}\r]^{\gz}
+\lf[{\wz K^{(\rho),\,1/(\az+1)}_{\wz S^{\rho},\,S_0}}\r]^{\gz}\\
&\ls\lf[{\wz K^{(\rho),\,1/(\az+1)}_{B,\,S}}\r]^{\gz}.
\end{align*}
Thus, by the above two inequalities, Remark \ref{r6.2}(ii)
and \eqref{2.1}, we have
\begin{align*}
\lf|m_{\wz B^{\rho}}(f)-m_{\wz S^{\rho}}(f)\r|
&\le\lf|m_{\wz B^{\rho}}(f)-m_{S_0}(f)\r|
+\lf|m_{\wz S^{\rho}}(f)-m_{S_0}(f)\r|\\
&\ls\lf\{\lf[{\wz K^{(\rho),\,1/(\az+1)}_{\wz B^{\rho},\,S_0}}\r]^{\gz}
+\lf[{\wz K^{(\rho),\,1/(\az+1)}_{\wz S^{\rho},\,S_0}}\r]^{\gz}\r\}
\lf[\lz(c_{S_0},r_{S_0})\r]^{\az}\|f\|_{\ceaeg}\\
&\ls\lf[{\wz K^{(\rho),\,1/(\az+1)}_{B,\,S}}\r]^{\gz}
\lf[\lz(c_{S_0},r_{S_0})\r]^{\az}\|f\|_{\ceaeg}\\
&\ls\lf[{\wz K^{(\rho),\,1/(\az+1)}_{B,\,S}}\r]^{\gz}
\lf[\lz(c_S,r_S)\r]^{\az}\|f\|_{\ceaeg}.
\end{align*}
This finishes the proof of \textbf{Case (i)}.

\textbf{Case (ii)} $r(\wz S^{\rho})\le r(\wz B^{\rho})$.
Obviously, $\wz S^{\rho}\st 2\wz B^{\rho}$. Let
$B_0:=\wz{(2\wz B^{\rho})}^{\rho}$.
From (iv), (ii) and (iii) of Lemma \ref{l2.8}
with $p=1/(\az+1)$, we deduce that
$$
\lf[{\wz K^{(\rho),\,1/(\az+1)}_{\wz B^{\rho},\,B_0}}\r]^{\gz}
\ls\lf[{\wz K^{(\rho),\,1/(\az+1)}_{\wz B^{\rho},\,2\wz B^{\rho}}}\r]^{\gz}
+\lf[{\wz K^{(\rho),\,1/(\az+1)}_{2\wz B^{\rho},\,B_0}}\r]^{\gz}\ls1.
$$
By this, $\wz S^{\rho}\supset S\supset B$
and (v), (iv) and (iii) of Lemma \ref{l2.8} with
$p=1/(\az+1)$, we know that
\begin{align*}
\lf[{\wz K^{(\rho),\,1/(\az+1)}_{\wz S^{\rho},\,B_0}}\r]^{\gz}
\ls\lf[{\wz K^{(\rho),\,1/(\az+1)}_{B,\,B_0}}\r]^{\gz}
\ls\lf[{\wz K^{(\rho),\,1/(\az+1)}_{B,\,\wz B^{\rho}}}\r]^{\gz}
+\lf[{\wz K^{(\rho),\,1/(\az+1)}_{\wz B^{\rho},\,B_0}}\r]^{\gz}
\ls1.
\end{align*}
Thus, combining the above two inequalities, \eqref{2.2},
\eqref{2.1} and Remark \ref{r6.2}(ii), we have
\begin{align*}
\lf|m_{\wz B^{\rho}}(f)-m_{\wz S^{\rho}}(f)\r|
&\le\lf|m_{\wz B^{\rho}}(f)-m_{B_0}(f)\r|
+\lf|m_{\wz S^{\rho}}(f)-m_{B_0}(f)\r|\\
&\ls\lf\{\lf[{\wz K^{(\rho),\,1/(\az+1)}_{\wz B^{\rho},\,B_0}}\r]^{\gz}
+\lf[{\wz K^{(\rho),\,1/(\az+1)}_{\wz S^{\rho},\,B_0}}\r]^{\gz}\r\}
\lf[\lz(c_{B_0},r_{B_0})\r]^{\az}\|f\|_{\ceaeg}\\
&\ls\lf[\lz(c_{B_0},r_{B_0})\r]^{\az}\|f\|_{\ceaeg}
\sim\lf[\lz(c_S,r_{B_0})\r]^{\az}\|f\|_{\ceaeg}\\
&\ls\lf[{\wz K^{(\rho),\,1/(\az+1)}_{B,\,S}}\r]^{\gz}
\lf[\lz(c_S,r_B)\r]^{\az}\|f\|_{\ceaeg}\\
&\ls\lf[{\wz K^{(\rho),\,1/(\az+1)}_{B,\,S}}\r]^{\gz}
\lf[\lz(c_S,r_S)\r]^{\az}\|f\|_{\ceaeg}.
\end{align*}
This finishes the proof of \textbf{Case (ii)} and hence (a).

To prove (b), since $\max\{f,\,g\}=\frac{f+g+|f-g|}2$
and $\min\{f,\,g\}=\frac{f+g-|f-g|}2$,
it suffices to show that, for any real-valued function $h\in\ceaeg$,
$|h|\in\ceaeg$ and
$$
\||h|\|_{\ceaeg}\ls\|h\|_{\ceaeg}
.$$

To this end, by Definition \ref{d6.5}, the H\"older inequality,
Remark \ref{r6.2}(ii) and \eqref{2.1}, we see that, for any ball $B$,
\begin{align*}
&\lf\{\frac1{\mu(\eta B)}\int_B\lf||h(y)|-m_{\wz B^\rho}(|h|)\r|^q
\,d\mu(y)\r\}^{1/q}\\
&\hs\le\lf\{\frac1{\mu(\eta B)}
\int_B\lf||h(y)|-\lf|m_{\wz B^\rho}(h)\r|\r|^q\,d\mu(y)\r\}^{1/q}
+\lf|\lf|m_{\wz B^\rho}(h)\r|-m_{\wz B^\rho}(|h|)\r|\\
&\hs\le\lf\{\frac1{\mu(\eta B)}
\int_B\lf|h(y)-m_{\wz B^\rho}(h)\r|^q\,d\mu(y)\r\}^{1/q}
+m_{\wz B^\rho}\lf(|h-m_{\wz B^\rho}(h)|\r)\\
&\hs\ls\lf\{\lf[\lz(c_B,r_B)\r]^\az
+\lf[\lz\lf(c_B, r_{\wz B^\rho}\r)\r]^{\az}\r\}
\|h\|_{\ceaeg}\ls\lf[\lz(c_B,r_B)\r]^\az\|h\|_{\ceaeg}.
\end{align*}

On the other hand, by Definition \ref{d6.5}, \eqref{2.1}
and \eqref{2.2}, we find that, for all
$(\rho,\bz_{\rho})$-doubling balls $B\st S$,
\begin{align*}
&|m_B(|h|)-m_S(|h|)|\\
&\hs\le|m_B(|h|)-|m_B(h)||+||m_B(h)|-|m_S(h)||
+||m_S(h)|-m_S(|h|)|\\
&\hs\le m_B(|h-m_B(h)|)+|m_B(h)-m_S(h)|+m_S(|h-m_S(h)|)\\
&\hs\ls\lf\{[\lz(c_B,r_B)]^\az+[\lz(c_S,r_S)]^\az
\lf[{\wz K}^{(\rho),\,1/(\az+1)}_{B,\,S}\r]^\gz
+[\lz(c_S,r_S)]^\az\r\}\|h\|_{\ceaeg}\\
&\hs\ls[\lz(c_S,r_S)]^\az\lf[{\wz K}^{(\rho),\,1/(\az+1)}_{B,\,S}\r]^\gz
\|h\|_{\ceaeg}.
\end{align*}
The above two inequalities imply that $|h|\in\ceaeg$ and
$\||h|\|_{\ceaeg}\ls\|h\|_{\ceaeg}$, which completes
the proof of (b) and hence Proposition \ref{p6.7}.
\end{proof}

We now show that the space $\ceaeg$ is independent
of the choices of $\rho$ and $\eta$.

\begin{proposition}\label{p6.8}
Suppose that $(\cx,d,\mu)$ is a non-homogeneous metric
measure space satisfying \eqref{6.1}.
Let $\az\in[0,\fz)$ and $q,\,\gz\in[1,\fz)$. The following statements
hold true:

\emph{(i)} for any $\eta_1$, $\eta_2$ and $\rho$ satisfying
$1<\eta_1<\eta_2\le\rho<\fz$,
$\ceaog$ and $\ceatg$ coincide with equivalent norms;

\emph{(ii)} for any $\rho_1$, $\rho_2$ and $\eta$ satisfying
$1<\eta\le\rho_1,\,\rho_2<\fz$,
$\ceago$ and $\ceagt$ coincide with equivalent norms.
\end{proposition}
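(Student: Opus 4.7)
The plan is to prove both equivalences by matching each of the two suprema in $\|\cdot\|_{\ceaeg}$ separately. By Lemma \ref{l2.9}, $\wz K^{(\rho_1),p}_{B,\,S}\sim \wz K^{(\rho_2),p}_{B,\,S}$ uniformly in $B\st S$, so the coefficient factor in the second supremum is insensitive to $\rho$. What remains is (a) comparing the first supremum, whose integrand depends on $\eta$ through $\mu(\eta B)$ and on $\rho$ through the averaging set $\wz B^\rho$, and (b) for part (ii), reconciling the classes of $(\rho_i,\bz_{\rho_i})$-doubling balls over which the second supremum is taken.

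For (i), without loss of generality $\eta_1<\eta_2\le\rho$. The inclusion $\|f\|_{\ceatg}\le\|f\|_{\ceaog}$ is immediate since $\mu(\eta_1 B)\le\mu(\eta_2 B)$ and the second supremum does not see $\eta$. For the reverse, fix a ball $B$ and apply the geometrically doubling property (Remark \ref{r2.2}(ii)) to cover $B$ by at most $N=N_{(\eta_1,\,\eta_2,\,n_0)}$ balls $\{B_i\}_{i=1}^N$ with centers in $B$ and common radius $(\eta_1-1)r_B/(2\eta_2)$, chosen so that $\eta_2 B_i\st\eta_1 B$. Splitting
\begin{equation*}
\lf|f(x)-m_{\wz B^\rho}(f)\r|^q
\le 2^{q-1}\lf(\lf|f(x)-m_{\wz{B_i}^\rho}(f)\r|^q
+\lf|m_{\wz{B_i}^\rho}(f)-m_{\wz B^\rho}(f)\r|^q\r)
\end{equation*}
on each $B_i$, the first piece is absorbed by the first supremum of $\ceatg$ (using $\mu(\eta_2 B_i)\le\mu(\eta_1 B)$), while the second is a pointwise constant controlled by $C[\lz(c_B,r_B)]^\az\|f\|_{\ceatg}$ via Proposition \ref{p6.7}(a) applied to $B_i\st B$, Lemma \ref{l2.8}(ii) (which bounds $\wz K^{(\rho),\,1/(\az+1)}_{B_i,\,B}$ since $r_B/r_{B_i}$ is an absolute constant) and \eqref{2.2} (to replace $\lz(c_{B_i},\cdot)$ by $\lz(c_B,\cdot)$). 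Summing over $i$ and dividing by $\mu(\eta_1 B)$ closes the estimate.

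For (ii), without loss of generality $\rho_1<\rho_2$ and $\mu$ is $\rho_i$-weakly doubling for $i\in\{1,2\}$, so by Remark \ref{r6.2}(ii) both $\wz B^{\rho_1}$ and $\wz B^{\rho_2}$ are bounded dilations of $B$. For the first supremum I will bound $|f-m_{\wz B^{\rho_2}}(f)|^q$ in terms of $|f-m_{\wz B^{\rho_1}}(f)|^q+|m_{\wz B^{\rho_1}}(f)-m_{\wz B^{\rho_2}}(f)|^q$, and handle the correction by interpolating through $(\wz B^{\rho_2})^{\rho_1}$, the minimal $(\rho_1,\bz_{\rho_1})$-doubling $\rho_1$-power dilation of $\wz B^{\rho_2}$: Proposition \ref{p6.7}(a) applied to $B\st\wz B^{\rho_2}$ with parameter $\rho_1$ gives $|m_{\wz B^{\rho_1}}(f)-m_{(\wz B^{\rho_2})^{\rho_1}}(f)|\ls[\lz(c_B,r_B)]^\az\|f\|_{\ceago}$, while the H\"older inequality combined with the $(\rho_2,\bz_{\rho_2})$-doubling of $\wz B^{\rho_2}$ (yielding $\mu(\eta\wz B^{\rho_2})\le\bz_{\rho_2}\mu(\wz B^{\rho_2})$) gives the same bound for $|m_{\wz B^{\rho_2}}(f)-m_{(\wz B^{\rho_2})^{\rho_1}}(f)|$. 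For the second supremum, given $(\rho_2,\bz_{\rho_2})$-doubling balls $B\st S$, I insert
\begin{equation*}
|m_B(f)-m_S(f)|\le|m_B(f)-m_{\wz B^{\rho_1}}(f)|+|m_{\wz B^{\rho_1}}(f)-m_{\wz S^{\rho_1}}(f)|+|m_{\wz S^{\rho_1}}(f)-m_S(f)|,
\end{equation*}
bound the middle term by the second supremum of $\ceago$ applied to the pair $\wz B^{\rho_1}\st\wz S^{\rho_1}$ (together with Lemma \ref{l2.9} to pass from $\wz K^{(\rho_1),\,1/(\az+1)}$ to $\wz K^{(\rho_2),\,1/(\az+1)}$), and the flanking terms by H\"older plus the doubling of $B$ and $S$ combined with the first supremum of $\ceago$. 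The reverse inclusion is symmetric.

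The main obstacle is (ii): without \eqref{6.1} the radius $r_{\wz B^{\rho_2}}$ could exceed $r_{\wz B^{\rho_1}}$ by an unbounded factor, and both the interpolation via $(\wz B^{\rho_2})^{\rho_1}$ and the H\"older-plus-doubling estimates would pick up factors that depend on $B$. The $\rho$-weakly doubling condition is precisely what makes $r_{\wz B^{\rho_i}}\le C_{(\rho_i)}r_B$ uniformly in $B$, so that Proposition \ref{p6.7}(a), the chain $N^{(\rho_i)}_{B,\,\wz B^{\rho_i}}\le\wz C_1$ and \eqref{2.2} provide bounds with constants depending only on $\rho_1,\rho_2,\eta,\az,q,\gz$ and the ambient constants.
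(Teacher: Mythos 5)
Your overall strategy coincides with the paper's: part (i) by a geometrically-doubling covering of $B$ together with Proposition \ref{p6.7}(a), and part (ii) by triangle inequalities through an intermediate minimal doubling dilation plus the doubling of the tilde-balls, H\"older, Proposition \ref{p6.7}(a) and Lemma \ref{l2.9} (your interpolation through $\wz{(\wz B^{\rho_2})}^{\rho_1}$ is just the mirror image of the paper's $\wz{(\wz B^{\rho_1})}^{\rho_2}$, because you prove the opposite containment first). However, two steps as written do not go through. First, in (i) you apply Proposition \ref{p6.7}(a) to the pair $B_i\st B$, but the covering balls $B_i$ only have their \emph{centers} in $B$, so $B_i\st B$ may fail; what is true is $B_i\st\eta_1 B$ and $B\st\eta_1 B$, and one should compare $m_{\wz{B_i}^{\rho}}(f)$ and $m_{\wz B^{\rho}}(f)$ through the common ball $\eta_1 B$, applying Proposition \ref{p6.7}(a) twice and then Lemma \ref{l2.8}(ii) (the radius ratios $r_{\eta_1 B}/r_{B_i}$ and $r_{\eta_1 B}/r_B$ are absolute constants), which is exactly the paper's route.

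Second, and more substantively, in the second-supremum estimate of (ii) you bound $|m_{\wz B^{\rho_1}}(f)-m_{\wz S^{\rho_1}}(f)|$ by ``the second supremum of $\ceago$ applied to the pair $\wz B^{\rho_1}\st\wz S^{\rho_1}$''. This presupposes $\wz B^{\rho_1}\st\wz S^{\rho_1}$, which does not follow from $B\st S$: the minimal $(\rho_1,\bz_{\rho_1})$-doubling dilations of nested balls need not be nested, even under \eqref{6.1}, since $\wz S^{\rho_1}$ may equal $S$ itself while $\wz B^{\rho_1}$ is a bounded but nontrivial dilation of $B$ sticking out of $S$. The correct tool here --- already in your toolkit --- is Proposition \ref{p6.7}(a) applied directly to the original pair $B\st S$ (with parameter $\rho_1$), which yields
\begin{equation*}
\lf|m_{\wz B^{\rho_1}}(f)-m_{\wz S^{\rho_1}}(f)\r|
\ls\lf[\wz K^{(\rho_1),\,1/(\az+1)}_{B,\,S}\r]^{\gz}[\lz(c_S,r_S)]^{\az}\|f\|_{\ceago},
\end{equation*}
after which Lemma \ref{l2.9} converts $\wz K^{(\rho_1),\,1/(\az+1)}_{B,\,S}$ into $\wz K^{(\rho_2),\,1/(\az+1)}_{B,\,S}$; this is precisely how the paper handles the middle term (with $\rho_1$ and $\rho_2$ interchanged). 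With these two repairs, your flanking-term estimates (H\"older plus the doubling of $B$, $S$ and of $\wz B^{\rho_2}$, with $\eta\le\rho_1\wedge\rho_2$) and your use of Remark \ref{r6.2}(ii) to keep $r_{\wz B^{\rho_i}}\ls r_B$ are correct, and the argument closes as in the paper.
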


\begin{proof}
We first prove (i). Fix $\az\in[0,\fz)$ and $q,\,\gz\in[1,\fz)$.
Let $\eta_1$, $\eta_2$ and $\rho$ satisfy
$1<\eta_1<\eta_2\le\rho<\fz$. It is obvious that
$\ceaog\st\ceatg$ and, for all $f\in\ceaog$,
$\|f\|_{\ceatg}\le\|f\|_{\ceaog}$.

Conversely, let $f\in\ceatg$. We show that $f\in\ceaog$
and $\|f\|_{\ceaog}\ls\|f\|_{\ceatg}$.
To this end, it suffices to show that, for any ball $B$,
\begin{equation}\label{6.5}
\lf\{\frac1{\mu(\eta_1 B)}\int_B\lf|f(y)
-m_{\wz B^{\rho}}(f)\r|^q\,d\mu(y)\r\}^{1/q}
\ls[\lz(c_B,r_B)]^{\az}\|f\|_{\ceatg}.
\end{equation}
To do so, for any $x\in B$, let $B_x$ be the ball
centered at $x$ with radius $\frac{\eta_1-1}{10\eta_2}r_B$.
Then $r_{\eta_2 B_x}=\frac{\eta_1-1}{10}r_B$ and
$\eta_2 B_x\st \eta_1 B$.

By the geometrically doubling condition and Remark \ref{r2.2}(ii),
we see that there exist $N_1\in\nn$, depending on $\eta_1$,
$\eta_2$ and $(\cx,d,\mu)$, and a finite sequence $\{B_{x_i}\}_{i=1}^{N_1}
=:\{B_i\}_{i=1}^{N_1}$ of balls such that $x_i\in B$ for all
$i\in\{1,\ldots,N_1\}$ and $B\st\cup_{i=1}^{N_1}B_i$.
By this, $\eta_2 B_i\st \eta_1 B$ for all $i\in\{1,\ldots,N_1\}$,
\eqref{2.1}, \eqref{2.2}, Proposition \ref{p6.7}(a) and
Lemma \ref{l2.8}(ii), we conclude that
\begin{align*}
&\frac1{\mu(\eta_1 B)}\int_B\lf|f(y)-m_{\wz B^{\rho}}(f)\r|^q\,d\mu(y)\\
&\hs\le\sum_{i=1}^{N_1}\frac1{\mu(\eta_1 B)}
\int_{B_i}\lf|f(y)-m_{\wz B^{\rho}}(f)\r|^q\,d\mu(y)\\
&\hs\ls\sum_{i=1}^{N_1}\frac{\mu(\eta_2 B_i)}{\mu(\eta_1 B)}
\lf\{\frac1{\mu(\eta_2 B_i)}\int_{B_i}\lf|f(y)
-m_{\wz B_i^{\rho}}(f)\r|^q\,d\mu(y)
+\lf|m_{\wz B_i^{\rho}}(f)-m_{\wz B^{\rho}}(f)\r|^q\r\}\\
&\hs\ls\sum_{i=1}^{N_1}\lf\{\lf[\lz(c_{B_i},r_{B_i})\r]^{\az q}
\|f\|_{\ceatg}^q+\lf|m_{\wz B_i^{\rho}}(f)
-m_{\wz{(\eta_1 B)}^{\rho}}(f)\r|^q
+\lf|m_{\wz{(\eta_1 B)}^{\rho}}(f)
-m_{\wz B^{\rho}}(f)\r|^q\r\}\\
&\hs\ls[\lz(c_B,\eta_1 r_B)]^{\az q}\|f\|_{\ceatg}^q
\lf\{\lf[{\wz K}^{(\rho),\,1/(\az+1)}_{B_i,\,\eta_1 B}\r]^{\gz q}
+\lf[{\wz K}^{(\rho),\,1/(\az+1)}_{B,\,\eta_1 B}\r]^{\gz q}\r\}\\
&\hs\ls[\lz(c_B,r_B)]^{\az q}\|f\|_{\ceatg}^q,
\end{align*}
which completes the proof of \eqref{6.5} and hence (i).

To show (ii), fix $\az\in[0,\fz)$ and $q,\,\gz\in[1,\fz)$.
Let $\rho_1$, $\rho_2$ and $\eta$ satisfy
$1<\eta\le\rho_1,\,\rho_2<\fz$. By the symmetry of
$\rho_1$ and $\rho_2$, it suffices to show that
$\ceagt\st\ceago$ and $\|f\|_{\ceago}\ls\|f\|_{\ceagt}$
for all $f\in\ceagt$. Assume that $f\in\ceagt$.
From the Minkowski inequality, the H\"older inequality,
Proposition \ref{p6.7},
$\rho_1\ge\eta$, Remark \ref{r6.2}(ii), Lemma \ref{l2.9} and
Lemma \ref{l2.8}(iii), we deduce that
\begin{align*}
&\lf\{\frac1{\mu(\eta B)}\int_B\lf|f(y)-m_{\wz B^{\rho_1}}(f)\r|^q
\,d\mu(y)\r\}^{1/q}\\
&\hs\le\lf\{\frac1{\mu(\eta B)}\int_B\lf|f(y)-m_{\wz B^{\rho_2}}(f)\r|^q
\,d\mu(y)\r\}^{1/q}+\lf|m_{\wz B^{\rho_1}}(f)-m_{\wz B^{\rho_2}}(f)\r|\\
&\hs\le[\lz(c_B,r_B)]^{\az}\|f\|_{\ceagt}
+\lf|m_{\wz B^{\rho_1}}(f)-m_{\wz{\wz B^{\rho_1}}^{\rho_2}}(f)\r|
+\lf|m_{\wz{\wz B^{\rho_1}}^{\rho_2}}(f)-m_{\wz B^{\rho_2}}(f)\r|\\
&\hs\ls[\lz(c_B,r_B)]^{\az}\|f\|_{\ceagt}+
\lf\{\frac1{\mu(\eta\wz B^{\rho_1})}\int_{\wz B^{\rho_1}}
\lf|f(y)-m_{\wz{\wz B^{\rho_1}}^{\rho_2}}(f)\r|^q\,d\mu(y)\r\}^{1/q}\\
&\hs\hs+\lf[{\wz K}^{(\rho_2),\,1/(\az+1)}_{B,\,\wz B^{\rho_1}}\r]^{\gz}
\lf[\lz\lf(c_B,r_{\wz B^{\rho_1}}\r)\r]^{\az}\|f\|_{\ceagt}\\
&\hs\ls\lf\{[\lz(c_B,r_B)]^{\az}+\lf[\lz\lf(c_B,r_{\wz B^{\rho_1}}\r)
\r]^{\az}\r\}\|f\|_{\ceagt}\ls[\lz(c_B,r_B)]^{\az}\|f\|_{\ceagt}.
\end{align*}
On the other hand, for all $(\rho_1,\bz_{\rho_1})$-doubling
balls $B\st S$, by the H\"older inequality, Proposition \ref{p6.7},
$\rho_1\ge\eta$, Lemma \ref{l2.9}, \eqref{2.1} and \eqref{2.2},
we have
\begin{align*}
&|m_B(f)-m_S(f)|\\
&\hs\le\lf|m_B(f)-m_{\wz B^{\rho_2}}(f)\r|
+\lf|m_{\wz B^{\rho_2}}(f)-m_{\wz S^{\rho_2}}(f)\r|
+\lf|m_{\wz S^{\rho_2}}(f)-m_S(f)\r|\\
&\hs\le\lf\{\frac1{\mu(B)}\int_B\lf|f(y)-m_{\wz B^{\rho_2}}(f)\r|^q
\,d\mu(y)\r\}^{1/q}
+\lf[{\wz K}^{(\rho_2),\,1/(\az+1)}_{B,\,S}\r]^{\gz}
[\lz(c_S,r_S)]^{\az}\|f\|_{\ceagt}\\
&\hs\hs+\lf\{\frac1{\mu(S)}\int_S\lf|f(y)-m_{\wz S^{\rho_2}}(f)\r|^q
\,d\mu(y)\r\}^{1/q}\\
&\hs\ls[\lz(c_B,r_B)]^{\az}\|f\|_{\ceagt}
+\lf[{\wz K}^{(\rho_1),\,1/(\az+1)}_{B,\,S}\r]^{\gz}
[\lz(c_S,r_S)]^{\az}\|f\|_{\ceagt}\\
&\hs\ls\lf[{\wz K}^{(\rho_1),\,1/(\az+1)}_{B,\,S}\r]^{\gz}
[\lz(c_S,r_S)]^{\az}\|f\|_{\ceagt},
\end{align*}
which completes the proof of (ii) and hence Proposition \ref{p6.8}.
\end{proof}

\begin{remark}\label{r6.9}
(i) By Proposition \ref{p6.8}, we know that the space
$\ceaeg$ is independent of the choices of $\rho$ and $\eta$.
From now on, unless explicitly pointed out, we \emph{always
assume} that $\rho=\eta$ in $\ceaeg$ and \emph{write $\ceaeg$
simply by $\ceag$} and its \emph{norm $\|\cdot\|_{\ceaeg}$
simply by $\|\cdot\|_{\ceag}$}.

(ii) It is still unknown whether
$\ceaeg$ is independent of the choices of $\rho$ and $\eta$ or not
on general non-homogeneous metric measure spaces
without the assumption \eqref{6.1}.
\end{remark}

Before we show that $\ceag$ is independent of the choices of
$\gz$ and $q$, we first give a useful characterization of $\ceag$
which is a variant of \cite[Proposition 2.10]{hyy}.

\begin{proposition}\label{p6.10}
Suppose that $(\cx,d,\mu)$ is a non-homogeneous metric
measure space satisfying \eqref{6.1}.
Let $\rho\in(1,\fz)$, $\az\in[0,\fz)$ and $q,\,\gz\in[1,\fz)$.
The following statements are equivalent:

\emph{(a)} $f\in\ceag$;

\emph{(b)} there exists a sequence $\{f_B\}_B$ of complex numbers associated
with balls $B:=B(c_B,r_B)$, with $c_B\in\cx$ and $r_B\in(0,\fz)$, such that
\begin{align*}
\|f\|^{(q)}_{*,\,\rho}
&:=\sup_B\lf\{\frac1{\mu(\rho B)}\frac1{[\lz(c_B,\,r_B)]^{\az q}}
\int_B|f(y)-f_B|^q\,d\mu(y)\r\}^{1/q}\\
&\quad+\sup_{B\st S}\frac{|f_B-f_S|}{[\lz(c_S,\,r_S)]^{\az}
[\kbsa]^{\gz}}<\fz,
\end{align*}
where $c_S\in\cx$ and $r_S\in(0,\fz)$ denote, respectively,
the center and the radius of the ball $S$, and the first
supremum is taking all balls $B\st\cx$ and the second
one all balls $B\st S\st \cx$.

Moreover, the norms $\|\cdot\|_{\ceag}$ and $\|\cdot\|^{(q)}_{*,\,\rho}$
are equivalent.
\end{proposition}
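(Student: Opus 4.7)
The plan is to prove both implications and extract equivalence of the norms $\|\cdot\|_{\ceag}$ and $\|\cdot\|^{(q)}_{*,\,\rho}$ along the way. For $(a)\Rightarrow(b)$, I would take $f_B:=m_{\wz B^\rho}(f)$ for every ball $B$. With this choice the first supremum in (b) is literally the first term of the norm $\|f\|_{\ceag}$ (using $\eta=\rho$, as permitted by Remark \ref{r6.9}(i)). The second supremum in (b), which unlike the second term in $\|\cdot\|_{\ceag}$ must hold for \emph{every} pair $B\st S$ rather than only for $(\rho,\bz_\rho)$-doubling pairs, is precisely the content of Proposition \ref{p6.7}(a); it yields
$$|f_B-f_S|=\lf|m_{\wz B^\rho}(f)-m_{\wz S^\rho}(f)\r|\ls[\kbsa]^\gz[\lz(c_S,r_S)]^\az\|f\|_{\ceag}$$
at no extra cost, whence $\|f\|^{(q)}_{*,\,\rho}\ls\|f\|_{\ceag}$.

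For $(b)\Rightarrow(a)$, the routine step is to control $|f_B-m_B(f)|$ for $(\rho,\bz_\rho)$-doubling balls: Jensen's inequality combined with the first supremum in (b) and the doubling property $\mu(\rho B)\le\bz_\rho\mu(B)$ yields $|f_B-m_B(f)|\ls[\lz(c_B,r_B)]^\az\|f\|^{(q)}_{*,\,\rho}$. Inserting this into $|m_B(f)-m_S(f)|\le|m_B(f)-f_B|+|f_B-f_S|+|f_S-m_S(f)|$ for any $(\rho,\bz_\rho)$-doubling pair $B\st S$, together with the observation that $[\lz(c_B,r_B)]^\az\ls[\lz(c_S,r_S)]^\az$ (since $B\st S$ forces $c_B\in S$ and $r_B\le 2r_S$, after which \eqref{2.1} and \eqref{2.2} apply), recovers the second part of the $\ceag$ norm.

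The main obstacle, and where the $\rho$-weakly doubling hypothesis \eqref{6.1} genuinely enters, is the verification of the first part of $\|f\|_{\ceag}$, since it requires replacing $f_B$ by $m_{\wz B^\rho}(f)$ for an \emph{arbitrary} ball $B$, not only for doubling ones. I would split
$$|f_B-m_{\wz B^\rho}(f)|\le|f_B-f_{\wz B^\rho}|+|f_{\wz B^\rho}-m_{\wz B^\rho}(f)|.$$
The second summand is handled by the doubling-ball bound of the previous paragraph applied to $\wz B^\rho$ itself, producing $[\lz(c_{\wz B^\rho},r_{\wz B^\rho})]^\az\|f\|^{(q)}_{*,\,\rho}$; since $c_{\wz B^\rho}=c_B$ and, by Remark \ref{r6.2}(ii) (which is exactly where \eqref{6.1} is used), $r_{\wz B^\rho}\ls r_B$, this collapses via \eqref{2.1} to $[\lz(c_B,r_B)]^\az\|f\|^{(q)}_{*,\,\rho}$. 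For the first summand, the second supremum in (b) applied to the pair $B\st\wz B^\rho$ yields $[\wz K^{(\rho),\,1/(\az+1)}_{B,\,\wz B^\rho}]^\gz[\lz(c_B,r_{\wz B^\rho})]^\az\|f\|^{(q)}_{*,\,\rho}$, and Lemma \ref{l2.8}(iii) with $p=1/(\az+1)$ absorbs the $\wz K$-factor into an absolute constant, after which the $\lz$-factor is again reduced to $[\lz(c_B,r_B)]^\az$. A final Minkowski inequality
$$\lf\{\frac1{\mu(\rho B)}\int_B\lf|f(y)-m_{\wz B^\rho}(f)\r|^q\,d\mu(y)\r\}^{1/q}\le\lf\{\frac1{\mu(\rho B)}\int_B|f(y)-f_B|^q\,d\mu(y)\r\}^{1/q}+|f_B-m_{\wz B^\rho}(f)|$$
then delivers the first part of the bound $\|f\|_{\ceag}\ls\|f\|^{(q)}_{*,\,\rho}$, completing the argument.
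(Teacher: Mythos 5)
Your proposal is correct and follows essentially the same route as the paper: choosing $f_B:=m_{\wz B^{\rho}}(f)$ and invoking Proposition \ref{p6.7}(a) for (a)$\Rightarrow$(b), and for the converse using the H\"older/doubling estimate $|f_B-m_B(f)|\ls[\lz(c_B,r_B)]^{\az}\|f\|^{(q)}_{*,\,\rho}$ for $(\rho,\bz_{\rho})$-doubling balls, the splitting $|f_B-m_{\wz B^{\rho}}(f)|\le|f_B-f_{\wz B^{\rho}}|+|f_{\wz B^{\rho}}-m_{\wz B^{\rho}}(f)|$ with Lemma \ref{l2.8}(iii) and Remark \ref{r6.2}(ii) (where \eqref{6.1} enters), and the three-term triangle inequality for $|m_B(f)-m_S(f)|$, exactly as in the paper's proof. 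No gaps.
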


\begin{proof}
Fix $\rho\in(1,\fz)$, $\az\in[0,\fz)$ and $q,\,\gz\in[1,\fz)$.
Let $f\in\ceag$. We first show that $\|f\|^{(q)}_{*,\,\rho}\ls\|f\|_{\ceag}$.
Indeed, for any ball $B$, let $f_B:=m_{\wz B^{\rho}}(f)$.
Then Proposition \ref{p6.7} implies that, for any two balls
$B\st S$,
$$|f_B-f_S|\ls[\lz(c_S,r_S)]^{\az}[\kbsa]^{\gz}\|f\|_{\ceag}.$$
This, together with the fact that, for any ball $B$,
$$
\lf\{\frac1{\mu(\rho B)}\frac1{[\lz(c_B,\,r_B)]^{\az q}}
\int_B|f(y)-f_B|^q\,d\mu(y)\r\}^{1/q}
\le\|f\|_{\ceag},
$$
implies that $\|f\|^{(q)}_{*,\,\rho}\ls\|f\|_{\ceag}$.

Conversely, assume that $\|f\|^{(q)}_{*,\,\rho}<\fz$. If $B$ is a
$(\rho,\bz_{\rho})$-doubling ball, then, by the H\"older
inequality, we have
\begin{equation}\label{6.6}
|f_B-m_B(f)|\ls\lf\{\frac1{\mu(\rho B)}\int_B|f(y)-f_B|^q\,d\mu(y)\r\}^{1/q}
\ls[\lz(c_B,r_B)]^{\az}\|f\|^{(q)}_{*,\,\rho},
\end{equation}
which, together with the Minkowski inequality, implies that
\begin{align*}
&\lf\{\frac1{\mu(B)}\int_B|f(y)-m_B(f)|^q\,d\mu(y)\r\}^{1/q}\\
&\hs\ls\lf\{\frac1{\mu(\rho B)}\int_B|f(y)-f_B|^q\,d\mu(y)\r\}^{1/q}
+|f_B-m_B(f)|\ls[\lz(c_B,r_B)]^{\az}\|f\|^{(q)}_{*,\,\rho}.
\end{align*}
Thus, by this, the Minkowski inequality,
\eqref{6.6}, Remark \ref{r6.2}(ii) and Lemma \ref{l2.8}(iii), we obtain
\begin{align*}
&\lf\{\frac1{\mu(\rho B)}\int_B\lf|f(y)-m_{\wz B^{\rho}}(f)\r|^q
\,d\mu(y)\r\}^{1/q}\\
&\hs\le\lf\{\frac1{\mu(\rho B)}\int_B|f(y)-f_B|^q\,d\mu(y)\r\}^{1/q}
+\lf|f_B-f_{\wz B^{\rho}}\r|+\lf|f_{\wz B^{\rho}}-m_{\wz B^{\rho}}(f)\r|\\
&\hs\ls\lf\{[\lz(c_B,r_B)]^{\az}+[\lz(c_B,r_{\wz B^{\rho}})]^{\az}
\lf[{\wz K}^{(\rho),\,1/(\az+1)}_{B,\,\wz B^{\rho}}\r]^{\gz}\r\}
\|f\|^{(q)}_{*,\,\rho}\\
&\hs\ls[\lz(c_B,r_B)]^{\az}\|f\|^{(q)}_{*,\,\rho}.
\end{align*}
Moreover, by \eqref{6.6}, \eqref{2.1} and \eqref{2.2},
we conclude that, for all $(\rho,\bz_\rho)$-doubling balls $B\st S$,
\begin{align*}
|m_B(f)-m_S(f)|&\le|m_B(f)-f_B|+|f_B-f_S|+|f_S-m_S(f)|\\
&\ls\lf\{[\lz(c_B,r_B)]^{\az}+[\lz(c_S,r_S)]^{\az}
\lf[\kbsa\r]^{\gz}\r\}\|f\|^{(q)}_{*,\,\rho}\\
&\ls[\lz(c_S,r_S)]^{\az}\|f\|^{(q)}_{*,\,\rho}\lf[\kbsa\r]^{\gz},
\end{align*}
which completes the proof of Proposition \ref{p6.10}.
\end{proof}

To show that $\ceag$ is independent of the choice of $\gz\in[1,\fz)$,
we need the following technical lemma,
which is similar to \cite[Lemma 2.6]{hyy}
(see also \cite[Lemma 9.2]{t01a}).

\begin{lemma}\label{l6.11}
Suppose that $(\cx,d,\mu)$ is a non-homogeneous metric measure space.
Let $m\in\nn\cap(1,\fz)$, $\rho\in(1,\fz)$, $p\in(0,1]$
and $B:=B_1\subset\cdots\subset B_m$ be concentric balls
with center $c_B$ and radii of the form $\rho^N r_B$, where $N\in\zz_+$.
If ${\wz K}^{(\rho),\,p}_{B_i,\,B_{i+1}}
>(3+\lfloor\log_{\rho}2\rfloor)^{1/p}$
for any $i\in\{1,\,\ldots,\,m-1\}$, then
\begin{equation}\label{6.7}
\dsum_{i=1}^{m-1}\lf[{\wz K}^{(\rho),\,p}_{B_i,\,B_{i+1}}\r]^p
<\lf(3+\lfloor\log_{\rho}2\rfloor\r)
\lf[{\wz K}^{(\rho),\,p}_{B_1,\,B_{m}}\r]^p.
\end{equation}
\end{lemma}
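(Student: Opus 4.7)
The plan is to reduce \eqref{6.7} to a combinatorial inequality about overlapping integer intervals. Set $L:=\lfloor\log_{\rho}2\rfloor$ and, for $j\in\zz$, write $a_j:=[\mu(\rho^jB)/\lz(c_B,\rho^jr_B)]^p\ge0$. Since $B_1\st\cdots\st B_m$ are concentric with common center $c_B$ and radii of the form $r_{B_i}=\rho^{N_i}r_B$ for a strictly increasing sequence $0=N_1<N_2<\cdots<N_m$ in $\zz_+$, one has $N^{(\rho)}_{B_i,B_{i+1}}=N_{i+1}-N_i$ and $N^{(\rho)}_{B_1,B_m}=N_m$. A change of summation index ($j=k+N_i$) in Definition~\ref{d2.6} therefore yields
\begin{equation*}
\lf[{\wz K}^{(\rho),\,p}_{B_i,\,B_{i+1}}\r]^p=1+\sum_{j=N_i-L}^{N_{i+1}}a_j
\quad\text{and}\quad
\lf[{\wz K}^{(\rho),\,p}_{B_1,\,B_{m}}\r]^p=1+\sum_{j=-L}^{N_m}a_j,
\end{equation*}
so \eqref{6.7} becomes $(m-1)+\sum_{i=1}^{m-1}\sum_{j=N_i-L}^{N_{i+1}}a_j<(3+L)[1+\sum_{j=-L}^{N_m}a_j]$.

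The crux is a multiplicity bound. For each $j$, let $m(j)$ denote the number of $i\in\{1,\ldots,m-1\}$ with $N_i-L\le j\le N_{i+1}$. The defining conditions $N_i\le j+L$ and $N_{i+1}\ge j$ cut out a contiguous range of indices $\{i_{\min},\ldots,i_{\max}\}$, and for every $i$ in this range except possibly the smallest one has $N_i\ge N_{i_{\min}+1}\ge j$ together with $N_i\le j+L$; since $\{N_i\}$ is strictly increasing in $\zz$, the interval $[j,j+L]$ contains at most $L+1$ such indices, so I obtain
\begin{equation*}
m(j)\le L+2\quad\text{for every }j\in\zz.
\end{equation*}
Interchanging orders of summation then gives $\sum_{i=1}^{m-1}\sum_{j=N_i-L}^{N_{i+1}}a_j=\sum_{j=-L}^{N_m}m(j)\,a_j\le(L+2)\sum_{j=-L}^{N_m}a_j$.

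The hypothesis $[{\wz K}^{(\rho),\,p}_{B_i,\,B_{i+1}}]^p>3+L$ reads $\sum_{j=N_i-L}^{N_{i+1}}a_j>L+2$ for each $i$; summing in $i$ and combining with the overlap identity above yields the strict inequality
\begin{equation*}
(m-1)(L+2)<\sum_{i=1}^{m-1}\sum_{j=N_i-L}^{N_{i+1}}a_j\le(L+2)\sum_{j=-L}^{N_m}a_j,
\end{equation*}
so $m-1<\sum_{j=-L}^{N_m}a_j$. Adding this strict bound to the multiplicity bound produces
\begin{equation*}
(m-1)+\sum_{i=1}^{m-1}\sum_{j=N_i-L}^{N_{i+1}}a_j<(L+3)\sum_{j=-L}^{N_m}a_j<(L+3)\lf[1+\sum_{j=-L}^{N_m}a_j\r],
\end{equation*}
which is precisely \eqref{6.7}. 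The only nontrivial step in the plan is the combinatorial estimate $m(j)\le L+2$; once that is in hand, the hypothesis is used exactly once to convert the count $m-1$ into a sum of the $a_j$'s, and the rest is bookkeeping with nonnegative quantities.
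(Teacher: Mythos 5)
Your argument is correct, and it reaches \eqref{6.7} by a noticeably different bookkeeping than the paper. The paper works index by index: after the change of variables it bounds each term with nonpositive shifted index by $1$ via \eqref{2.1}, so that $[{\wz K}^{(\rho),\,p}_{B_i,\,B_{i+1}}]^p\le 2+\lfloor\log_{\rho}2\rfloor+\sum_{k=1}^{N^{(\rho)}_{B_i,\,B_{i+1}}}a_{k+N_i}$; it then uses the hypothesis for \emph{each} $i$ to see that this ``fresh'' block sum exceeds $1$, absorbs the additive constant into it to get the factor $3+\lfloor\log_{\rho}2\rfloor$ per index, and finally sums using that the blocks $\{N_i+1,\ldots,N_{i+1}\}$ are pairwise disjoint inside $\{1,\ldots,N_m\}$. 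You instead keep every term of every ${\wz K}$, prove the overlap bound $m(j)\le\lfloor\log_{\rho}2\rfloor+2$ (your contiguity-plus-distinctness argument for this is sound), and invoke the hypothesis only once, summed over $i$, to convert the additive count $m-1$ into mass $\sum_j a_j$; both routes land on exactly the constant $3+\lfloor\log_{\rho}2\rfloor$. A pleasant feature of your version is that the core estimate uses only $a_j\ge0$ and the integer structure of the radii, whereas the paper's absorption step needs the upper bound $a_j\le1$ from \eqref{2.1}. One small point to add: the strict monotonicity $N_1<\cdots<N_m$, which your multiplicity count genuinely needs (distinct $N_i$ in $[j,j+\lfloor\log_{\rho}2\rfloor]$), is not literally part of the hypotheses, since $B_i\st B_{i+1}$ permits equal balls; it follows in one line from the assumption $[{\wz K}^{(\rho),\,p}_{B_i,\,B_{i+1}}]^p>3+\lfloor\log_{\rho}2\rfloor$, because $N_{i+1}\le N_i$ would give at most $\lfloor\log_{\rho}2\rfloor+1$ summands, each at most $1$ by \eqref{2.1}, hence $[{\wz K}^{(\rho),\,p}_{B_i,\,B_{i+1}}]^p\le2+\lfloor\log_{\rho}2\rfloor$ --- the same observation the paper records as $N^{(\rho)}_{B_i,\,B_{i+1}}\ge1$. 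With that sentence inserted, your proof is complete.
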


\begin{proof}
Fix $m\in\nn$, $\rho\in(1,\fz)$ and $p\in(0,1]$.
Assume that, for any $i\in\{1,\,\ldots,\,m\}$,
$r_{B_i}:=\rho^{N_i}r_B$
for some $N_i\in\zz_+$. For any $i\in\{1,\,\ldots,\,m-1\}$,
by ${\wz K}^{(\rho),\,p}_{B_i,\,B_{i+1}}
>(3+\lfloor\log_{\rho}2\rfloor)^{1/p}$, it is easy to see that
$N_{i+1}-N_i=N^{(\rho)}_{B_i,\,B_{i+1}}\ge1$,
$1<\sum_{k=1}^{N^{(\rho)}_{B_i,\,B_{i+1}}}
\lf[\frac{\mu(\rho^kB_i)}{\lz(c_B,\,\rho^kr_{B_i})}\r]^p$
and $N_m=N^{(\rho)}_{B_1,\,B_{m}}$.
From these facts and \eqref{2.1}, we deduce that,
for any $i\in\{1,\,\ldots,\,m-1\}$,
\begin{align*}
\lf[{\wz K}^{(\rho),\,p}_{B_i,\,B_{i+1}}\r]^p
&\le2+\lfloor\log_{\rho}2\rfloor+\sum_{k=1}^{N^{(\rho)}_{B_i,\,B_{i+1}}}
\lf[\frac{\mu(\rho^kB_i)}{\lz(c_B,\,\rho^kr_{B_i})}\r]^p\\
&<\lf(3+\lfloor\log_{\rho}2\rfloor\r)\sum_{k=1}^{N^{(\rho)}_{B_i,\,B_{i+1}}}
\lf[\frac{\mu(\rho^kB_i)}{\lz(c_B,\,\rho^kr_{B_i})}\r]^p\\
&=\lf(3+\lfloor\log_{\rho}2\rfloor\r)\sum_{k=N_i+1}^{N_{i+1}}
\lf[\frac{\mu(\rho^kB)}{\lz(c_B,\,\rho^kr_{B})}\r]^p.
\end{align*}
Notice that $p\in(0,1]$ and $N_m=N^{(\rho)}_{B_1,\,B_{m}}$.
It then follows that
\begin{align*}
\dsum_{i=1}^{m-1}\lf[{\wz K}^{(\rho),\,p}_{B_i,\,B_{i+1}}\r]^p
<\lf(3+\lfloor\log_{\rho}2\rfloor\r)
\lf[{\wz K}^{(\rho),\,p}_{B_1,\,B_{m}}\r]^p,
\end{align*}
which implies \eqref{6.7} and hence completes the proof of Lemma \ref{l6.11}.
\end{proof}

The following lemma is an analogue of \cite[Lemma 2.7]{hyy},
whose proof needs to use Lemma \ref{l6.11}, the details being omitted.

\begin{lemma}\label{l6.12}
Suppose that $(\cx,d,\mu)$ is a non-homogeneous metric measure space.
Let $\az\in[0,\fz)$, $\rho\in(1,\fz)$ and $p\in(0,1]$.
For a large positive constant $C$, the following statement holds true:
let $x\in\cx$ be a fixed point, and $\{f_B\}_{B\ni x}$
some collection of complex numbers associated with balls $B\ni x$.
If there exists a positive constant
$C_x$, depending on $x$, such that, for all balls $B$ and $S$
with $x\in B\st S$ and $\kbsp\le C$,
$|f_B-f_S|\le C_x\kbsp[\lz(c_S,r_S)]^{\az}$, then,
for all balls $B$ and $S$
with $x\in B\st S$,
$$|f_B-f_S|\le CC_x\kbsp[\lz(c_S,r_S)]^{\az}.$$
\end{lemma}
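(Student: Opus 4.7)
The strategy is a chain-of-balls argument between $B$ and $S$ through concentric enlargements of $B$, applying the local hypothesis on each short step and summing via the $p$-quasi-norm inequality, with Lemma \ref{l6.11} controlling the total cost. Fix $x\in B\st S$. Set $C_0:=(3+\lfloor\log_{\rho}2\rfloor)^{1/p}$; the constant $C$ in the statement will be chosen of the form $C:=C_{(\rho,\,p,\,\nu)}$, namely a sufficiently large constant, depending only on $\rho$, $p$, $\nu$, so that $C>C_0$ and $C$ exceeds the uniform upper bound on $\wz K^{(\rho),\,p}$ arising from the construction below. We may assume $\kbsp>C$, since otherwise the hypothesis already yields the conclusion with constant $1\le C$.

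Construct a finite chain of concentric balls $B=:B_1\st B_2\st\cdots\st B_m$, each of the form $B_i:=\rho^{N_i}B$ with $0=N_1<N_2<\cdots<N_m$, by the following rule: given $B_i$, let $B_{i+1}$ be the smallest ball of the form $\rho^{N_i+k}B$ (with $k\in\nn$) for which
$$
\wz K^{(\rho),\,p}_{B_i,\,B_{i+1}}>C_0;
$$
such $k$ exists because $\kbsp>C$ forces $\wz K^{(\rho),\,p}_{B_i,\,\rho^{N}B}$ to grow without bound as $N\to\fz$. By minimality of $k$ and Lemma \ref{l2.8}(iv) applied to $B_i\st \rho^{-1}B_{i+1}\st B_{i+1}$, together with Lemma \ref{l2.8}(ii) for the short step from $\rho^{-1}B_{i+1}$ to $B_{i+1}$, we obtain the two-sided control
$$
C_0<\wz K^{(\rho),\,p}_{B_i,\,B_{i+1}}\le C.
$$
We terminate at the first index $m$ for which $B_m\supset S$; this happens in finitely many steps because $c_B\in B\st S$ implies $S$ is contained in the ball of center $c_B$ and radius $r_B+2r_S$, and the radii of the $B_i$ grow geometrically. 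By construction, $r_{B_m}\le\rho\cdot r_{B_{m-1}}\le \rho(r_B+2r_S)\ls r_S$, and Lemma \ref{l2.8}(v) yields $\wz K^{(\rho),\,p}_{B,\,B_m}\ls \kbsp$.

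For each $i$, since $x\in B\st B_i\st B_{i+1}$ and $\wz K^{(\rho),\,p}_{B_i,\,B_{i+1}}\le C$, the hypothesis applies:
$$
|f_{B_i}-f_{B_{i+1}}|\le C_x\,\wz K^{(\rho),\,p}_{B_i,\,B_{i+1}}\,[\lz(c_B,r_{B_{i+1}})]^{\az}.
$$
Using $|a+b|^p\le|a|^p+|b|^p$ for $p\in(0,1]$, monotonicity of $r\mapsto\lz(c_B,r)$, and the lower bound $\wz K^{(\rho),\,p}_{B_i,\,B_{i+1}}>C_0$ with Lemma \ref{l6.11}, we estimate
\begin{align*}
|f_B-f_{B_m}|^p
&\le\sum_{i=1}^{m-1}|f_{B_i}-f_{B_{i+1}}|^p
\le C_x^p\,[\lz(c_B,r_{B_m})]^{\az p}\sum_{i=1}^{m-1}\lf[\wz K^{(\rho),\,p}_{B_i,\,B_{i+1}}\r]^p\\
&<C_x^p\,(3+\lfloor\log_{\rho}2\rfloor)\,\lf[\wz K^{(\rho),\,p}_{B,\,B_m}\r]^p[\lz(c_B,r_{B_m})]^{\az p}.
\end{align*}
Taking $p$-th roots and using $\wz K^{(\rho),\,p}_{B,\,B_m}\ls\kbsp$ together with $\lz(c_B,r_{B_m})\sim\lz(c_S,r_S)$, which follows from $r_{B_m}\sim r_S$, \eqref{2.1} and \eqref{2.2} (note $d(c_B,c_S)\le r_S$ since $c_B\in S$), we obtain
$$
|f_B-f_{B_m}|\le C_{(\rho,\,p,\,\nu)}\,C_x\,\kbsp\,[\lz(c_S,r_S)]^{\az}.
$$

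Finally, we compare $f_{B_m}$ with $f_S$. Since $x\in S\st B_m$ and $r_{B_m}\ls r_S$, Lemma \ref{l2.8}(ii) gives $\wz K^{(\rho),\,p}_{S,\,B_m}\le C$, so the hypothesis applied to the pair $(S,B_m)$ yields $|f_S-f_{B_m}|\le C_x\,C\,[\lz(c_{B_m},r_{B_m})]^{\az}\ls C_x[\lz(c_S,r_S)]^{\az}$. Combining with the previous display through $|f_B-f_S|^p\le|f_B-f_{B_m}|^p+|f_S-f_{B_m}|^p$, and using $\kbsp\ge 1$, we conclude $|f_B-f_S|\le CC_x\,\kbsp\,[\lz(c_S,r_S)]^{\az}$ after adjusting $C$ to absorb all constants depending on $\rho$, $p$, and $\nu$. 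The main obstacle is the simultaneous calibration of the threshold: one must choose the construction so that each consecutive pair $B_i,B_{i+1}$ satisfies $\wz K^{(\rho),\,p}_{B_i,\,B_{i+1}}$ strictly above $C_0$ (to feed Lemma \ref{l6.11}) yet bounded above by the hypothesis threshold $C$ (which forces $C$ to be taken just above the natural constant produced by Lemma \ref{l2.8}(iv) applied to a one-step $\rho$-enlargement), and one must then reconcile the concentric chain with the non-concentric pair $(B,S)$ through a short final step.
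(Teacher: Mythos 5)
Your overall strategy is exactly the intended one (the paper omits the proof, pointing to \cite[Lemma 2.7]{hyy}, which is precisely this chain-of-concentric-balls argument fed into Lemma \ref{l6.11}), but the chain construction as written has two genuine gaps. First, the existence of each next ball: you justify it by claiming that $\kbsp>C$ forces $\wz K^{(\rho),\,p}_{B_i,\,\rho^{N}B}$ to grow without bound as $N\to\fz$. That is false in general: the coefficient need not be unbounded, and for $i\ge2$ the measure of the annuli beyond $B_i$ may be so small that $\wz K^{(\rho),\,p}_{B_i,\,\rho^{N}B}\le C_0$ for \emph{every} $N$, in which case your selection rule produces no next ball at all (for $i=1$ the needed $k$ does exist, since $\wz K^{(\rho),\,p}_{B,\,\rho^NB}\ge\kbsp>C$ once $\rho^Nr_B\ge r_S$, but not for the later steps). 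This stalled case is in fact the easy one, and it must be handled by the dichotomy used in the paper's own analogous constructions (proofs of Propositions \ref{p8.2} and \ref{p9.2}): if no threshold ball exists, jump directly to the terminal ball and apply the hypothesis to that single link.

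Second, and more seriously, the termination estimate $r_{B_m}\le\rho\, r_{B_{m-1}}$ is unjustified: $B_m$ is the \emph{smallest} ball $\rho^{N_{m-1}+k}B$ with $\wz K^{(\rho),\,p}_{B_{m-1},\,B_m}>C_0$, and $k$ may be arbitrarily large, so the first selected ball containing $S$ can overshoot $S$ by an arbitrarily large factor. When it does, you lose both $\lz(c_B,r_{B_m})\ls\lz(c_S,r_S)$ (so for $\az>0$ the weight $[\lz(c_S,r_S)]^{\az}$ in the conclusion is no longer recovered) and the final-link bound $\wz K^{(\rho),\,p}_{S,\,B_m}\le C$, since Lemma \ref{l2.8}(ii) requires $r_{B_m}\le\az' r_S$. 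The standard repair is to fix in advance the terminal ball $B'$, the smallest concentric ball $\rho^jB$ containing $S$ (so $r_{B'}\ls r_S$), and to stop the chain as soon as the next threshold ball would have radius at least $r_{B'}$ (or when no threshold ball exists), taking $B'$ as the last ball; minimality of the threshold index then gives $\wz K^{(\rho),\,p}_{B_{m-1},\,B'}\ls1\le C$, so the hypothesis still applies to the last link, and Lemma \ref{l6.11} applies to the earlier links exactly as you use it. A further small point: the inequality $\wz K^{(\rho),\,p}_{B,\,B'}\ls\kbsp$ follows from Lemma \ref{l2.8}(iv) combined with (ii) (since $S\st B'$ and $r_{B'}\ls r_S$), not from Lemma \ref{l2.8}(v), which compares the coefficient of an intermediate ball with that of the inner ball and goes in the wrong direction here.
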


By Lemma \ref{l6.12}, now we are ready to state the result that
$\ceag$ is independent of the choice of
$\gz$, whose proof is similar to that of \cite[Proposition 2.5]{hyy},
the details being omitted.

\begin{proposition}\label{p6.13}
Suppose that $(\cx,d,\mu)$ is a non-homogeneous metric
measure space satisfying \eqref{6.1}.
Let $\az\in[0,\fz)$, $\rho,\,\gz\in(1,\fz)$ and $q\in[1,\fz)$.
Then $\ceag$ and ${{\mathcal E}^{\alpha,\,q}_{\rho,\,1}(\mu)}$
coincide with equivalent norms.
\end{proposition}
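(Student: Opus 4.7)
The plan is to establish the two inclusions separately. The easy direction ${{\mathcal E}^{\alpha,\,q}_{\rho,\,1}(\mu)}\st\ceag$ is immediate from $\wz K^{(\rho),\,1/(\az+1)}_{B,\,S}\ge 1$ together with $\gz>1$: the pointwise inequality $[\wz K^{(\rho),\,1/(\az+1)}_{B,\,S}]^{\gz}\ge \wz K^{(\rho),\,1/(\az+1)}_{B,\,S}$ makes the defining supremum for $\|\cdot\|_{\ceag}$ dominated by that for $\|\cdot\|_{{{\mathcal E}^{\alpha,\,q}_{\rho,\,1}(\mu)}}$. The substantive content lies in the reverse inclusion $\ceag\st{{\mathcal E}^{\alpha,\,q}_{\rho,\,1}(\mu)}$.

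For this reverse direction, I intend to pass through the equivalent characterization of Proposition \ref{p6.10}. Given $f\in\ceag$, I will set $f_B:=m_{\wz B^{\rho}}(f)$ for each ball $B$ and verify that $\|f\|^{(q)}_{*,\,\rho}$, computed with $\gz=1$ in the denominator of the second supremum, is finite and bounded by $\|f\|_{\ceag}$. The mean-oscillation term is controlled directly by Definition \ref{d6.5} under the convention $\eta=\rho$ (Remark \ref{r6.9}(i)). What remains is the key linear estimate
$$
|f_B-f_S|\lesssim \wz K^{(\rho),\,1/(\az+1)}_{B,\,S}\,[\lz(c_S,r_S)]^{\az}\|f\|_{\ceag}\quad\text{for all balls } B\st S,
$$
with the coefficient entering only to the first power. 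Once this is in hand, Proposition \ref{p6.10} applied with $\gz=1$ produces $f\in{{\mathcal E}^{\alpha,\,q}_{\rho,\,1}(\mu)}$ with $\|f\|_{{{\mathcal E}^{\alpha,\,q}_{\rho,\,1}(\mu)}}\lesssim\|f\|_{\ceag}$.

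The upgrade from a $\gz$-power estimate to the above linear estimate is the hard part, and it is where Lemma \ref{l6.12} is essential, applied with $p:=1/(\az+1)\in(0,1]$. Proposition \ref{p6.7}(a) already furnishes the $\gz$-power bound $|f_B-f_S|\lesssim [\wz K^{(\rho),\,p}_{B,\,S}]^{\gz}[\lz(c_S,r_S)]^{\az}\|f\|_{\ceag}$. Fix any $x\in\cx$. For balls $B\ni x$ with $B\st S$ and $\wz K^{(\rho),\,p}_{B,\,S}\le C_0$ (where $C_0$ is the large constant supplied by Lemma \ref{l6.12}), the trivial inequality $[\wz K^{(\rho),\,p}_{B,\,S}]^{\gz}\le C_0^{\gz-1}\wz K^{(\rho),\,p}_{B,\,S}$ converts the $\gz$-power bound into exactly the linear hypothesis of Lemma \ref{l6.12} with constant $C_x\sim\|f\|_{\ceag}$. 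Lemma \ref{l6.12} then extends the linear bound to all $B\st S$ with $x\in B$. Since the constants and the quantity $|f_B-f_S|$ do not depend on the choice of $x\in B$, the estimate holds for every pair $B\st S$.

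The main obstacle is the careful use of Lemma \ref{l6.12}, which ultimately rests on the chain-additivity of Lemma \ref{l6.11} combined with the $\rho$-weakly doubling assumption \eqref{6.1} to prevent coefficients from blowing up between consecutive $(\rho,\bz_{\rho})$-doubling scales. Once Lemma \ref{l6.12} is applied with $p=1/(\az+1)$, the remainder of the argument is essentially bookkeeping and a routine adaptation of the proof of \cite[Proposition 2.5]{hyy} to the present discrete-coefficient framework; no further technical ingredient beyond those already established in Propositions \ref{p6.7}, \ref{p6.8}, \ref{p6.10} and Lemmas \ref{l6.11}, \ref{l6.12} is needed.
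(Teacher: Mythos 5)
Your proposal is correct and follows essentially the route the paper intends: the paper omits the details but explicitly indicates that the proof rests on Lemma \ref{l6.12} (with $p=1/(\az+1)$) and an adaptation of \cite[Proposition 2.5]{hyy}, which is exactly what you reconstruct via Proposition \ref{p6.7}(a), the truncation to pairs with $\wz K^{(\rho),\,1/(\az+1)}_{B,\,S}\le C$, and Proposition \ref{p6.10} applied with $\gz=1$. The easy inclusion from $\kbsa\ge1$ and the uniformity of $C_x\sim\|f\|_{\ceag}$ in $x$ are handled correctly as well.
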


\begin{remark}\label{r6.14}
(i) By Proposition \ref{p6.13}, we know that the space
$\ceag$ is independent of the choice of $\gz$.
From now on, unless explicitly pointed out, we \emph{always
assume} that $\gz=1$ in $\ceag$ and write $\ceag$ simply by $\cear$.

(ii) It is still unknown whether
$\ceaeg$ is independent of the choice of $\gz$ or not
on general non-homogeneous metric measure spaces
without the assumption \eqref{6.1},
even on Euclidean spaces endowed with non-doubling measures.
\end{remark}

In order to show that $\cear$ is independent of $q$,
we establish the following John-Nirenberg type inequality
which is a generalization of \cite[Proposition 6.1]{h10}.
Hereafter, \emph{${\mathcal E}^{\alpha,\,1}_{\rho}(\mu)$ is
simply denoted by $\cera$} and its \emph{equivalent norm
$\|\cdot\|^{(1)}_{*,\,\rho}$ simply by $\|\cdot\|_{*,\,\rho}$}.

\begin{proposition}\label{p6.15}
Suppose that $(\cx,d,\mu)$ is a non-homogeneous metric
measure space satisfying \eqref{6.1}.
Let $\az\in[0,\fz)$ and $\rho\in(1,\fz)$. Then there exists
a positive constant $c$ such that, for any $f\in\cera$,
$t\in(0,\fz)$ and every ball $B_0:=B(x_0,r)$
with $x_0\in\cx$ and $r\in(0,\fz)$,
$$
\mu\lf(\lf\{x\in B_0:\ \frac{|f(x)-f_{B_0}|}{[\lz(x_0,r)]^{\az}}>t\r\}\r)
\le2\mu(\rho B_0)e^{-\frac{ct}{\|f\|_{*,\,\rho}}},
$$
where $f_{B_0}$ is as in Proposition \ref{p6.10}\emph{(ii)} with
$B$ replaced by $B_0$.
\end{proposition}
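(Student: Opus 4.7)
The plan is to adapt Hyt\"onen's proof of the John--Nirenberg inequality for $\mathrm{RBMO}(\mu)$ from \cite[Proposition 6.1]{h10} (see also \cite{t01a}) to the present weighted Campanato setting. After normalizing $\|f\|_{*,\rho}=1$ via Proposition \ref{p6.10}, fix $B_0=B(x_0,r)$, write $\Lambda:=[\lz(x_0,r)]^{\az}$, and set
$$
\psi(t):=\frac{\mu(\{x\in B_0:\ |f(x)-f_{B_0}|>t\Lambda\})}{\mu(\rho B_0)},\qquad t\in(0,\fz).
$$
The Chebyshev inequality yields the trivial estimate $\psi(t)\le 1/t$, which provides the starting point; the goal is to upgrade it to the exponential bound $\psi(t)\le 2e^{-ct}$ with a universal rate $c=c(\rho,\nu,\az,C_{(\lz)},\wz{C}_1)>0$.

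The method is an iterated Calder\'on--Zygmund/Vitali decomposition generating a tree $\mathcal T$ of $(\rho,\bz_\rho)$-doubling balls rooted at $B_0$. At each generation, given a ball $B\in\mathcal T$ and the abstract average $f_B$ produced by Proposition \ref{p6.10}, I extract a family of $(\rho,\bz_\rho)$-doubling children $\{B'\}\st 2B$ centered in $\{x\in B:|f(x)-f_B|>\az_0\Lambda_B\}$, where $\az_0$ is a large constant, such that the stopping estimate $\mu(\rho B')^{-1}\int_{B'}|f-f_B|\,d\mu\sim\az_0\Lambda_B$ and the packing bound $\sum_{B'}\mu(\rho B')\le(\bz_\rho/\az_0)\mu(\rho B)$ hold. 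The existence of doubling children at the stopping scale is supplied by the material recalled in Section \ref{s2}, and the doubling property combined with \eqref{6.6} yields a uniform shift estimate $|f_{B'}-f_B|\le C_1\az_0\Lambda_B$.

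Iterating the scheme $k$ times produces a tree whose $k$-th generation leaves cover the level set $\{x\in B_0:|f(x)-f_{B_0}|>T_k\Lambda\}$ for a threshold $T_k$ comparable to $k\az_0$, while the total $\mu$-measure at generation $k$ decays geometrically as $\mu(\rho B_0)(C_2/\az_0)^k$. Choosing $\az_0$ larger than $2C_2$ gives a contraction factor $\le 1/2$, and setting $t=T_k$ produces the desired exponential decay with $c:=\log 2/(C_1\az_0)$.

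The main technical obstacle is ensuring that the threshold $T_k$ at generation $k$ grows only \emph{linearly} in $k$, rather than super-linearly: this requires that the cumulative shift $|f_{B^{(k)}}-f_{B_0}|$ (controlled a priori via Proposition \ref{p6.10}(a) by a constant multiple of $\wz K^{(\rho),\,1/(\az+1)}_{B^{(k)},\,B_0}\Lambda$) and the ratio of weights $\Lambda_{B^{(k)}}/\Lambda$ remain uniformly controlled along the stopping chain. The $\rho$-weakly doubling condition \eqref{6.1} is used decisively here: combined with Lemma \ref{l2.8}(iii) and Lemma \ref{l6.11}, it forces the coefficients $\wz K^{(\rho),\,1/(\az+1)}_{B^{(k)},\,B_0}$ to grow only linearly in the generation $k$, while \eqref{2.1} and \eqref{2.2}, together with $B'\st 2B$, keep the weight ratios $\Lambda_{B^{(k)}}/\Lambda$ bounded by a constant. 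Without \eqref{6.1} one could only guarantee polynomial decay for $\psi(t)$; with it, the iteration closes with a contraction factor independent of $t$, yielding the claimed exponential estimate.
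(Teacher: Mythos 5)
Your overall strategy (an iterated stopping-time/Vitali decomposition with doubling balls, geometric decay of measure per generation, linear growth of the threshold) is the same Hyt\"onen-type scheme the paper follows, but the sketch leaves a genuine gap at exactly the point where the non-doubling difficulty lives. Everything hinges on your ``stopping estimate'' $\frac1{\mu(\rho B')}\int_{B'}|f-f_B|\,d\mu\sim\az_0\Lambda_B$, or equivalently on the uniform per-generation shift $|f_{B'}-f_B|\le C_1\az_0\Lambda_B$, which you assert follows from ``the doubling property combined with \eqref{6.6}''. It does not: \eqref{6.6} only controls $|f_{B'}-m_{B'}(f)|$, and to compare $m_{B'}(f)$ (or $f_{B'}$) with $f_B$ you need either the coefficient $\wz K^{(\rho),\,1/(\az+1)}_{B',\,2B}$, which is \emph{not} bounded (there may be arbitrarily many non-doubling scales between the child and the parent), or an \emph{upper} bound on the average of $|f-f_B|$ over the selected child. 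Producing doubling children for which this upper bound holds is precisely the delicate step: in a non-homogeneous space you cannot pass from ``the parent scale has controlled average'' to ``the child at the stopping scale has average $\ls\az_0\Lambda_B$'' without a doubling comparison at the transition scale, and the transition ball need not be doubling. The paper resolves this by stopping at the \emph{largest} $(\sz,\bz_\sz)$-doubling ball $\widehat B_x^\sz=B(x,\sz^{-i}r)$ with $|f_{\widehat B_x^\sz}-f_{B_0}|>L$ and comparing with the doubling hull $\widehat{\widehat B}_x^\sz=\wz{(\sz\widehat B_x^\sz)}^{\sz}$, so that only coefficients of the form $\wz K^{(\rho),\,1/(\az+1)}_{B,\,\wz B^\rho}$ or between balls of comparable radii occur, all bounded by Lemma \ref{l2.8}(ii)--(iii); this construction (and the $5$-covering with $\sz=5\rho$ so that disjointness of the undilated balls plus doubling yields your packing bound) is missing from your outline.

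Your account of where the $\rho$-weakly doubling condition enters is also off. The claim that \eqref{6.1}, via Lemma \ref{l2.8}(iii) and Lemma \ref{l6.11}, forces $\wz K^{(\rho),\,1/(\az+1)}_{B^{(k)},\,B_0}$ to grow only linearly in the generation $k$ is false: already on a space of homogeneous type (which satisfies \eqref{6.1}) this coefficient between a first-generation stopping ball of tiny radius and $B_0$ can be arbitrarily large, since it counts all intermediate scales; and Lemma \ref{l6.11} plays no role in this proposition. Moreover, if your per-generation shift bound held, you would never need to control $\wz K^{(\rho),\,1/(\az+1)}_{B^{(k)},\,B_0}$ at all, so this paragraph does not repair the gap above. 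The actual use of \eqref{6.1} in the paper is different: it enters through the norm equivalences of Propositions \ref{p6.8}--\ref{p6.13} (allowing the switch between the parameters $\rho$, $\sqrt\rho$ and $\sz=5\rho$), and, decisively, through Remark \ref{r6.2}(ii), which guarantees that doubling hulls have radius comparable to the original ball, so that when $\widehat{\widehat B}_x^\sz\not\st\sqrt\rho B_0$ its radius is comparable to $r_{B_0}$ and both the weights $[\lz(\cdot,\cdot)]^{\az}$ and the relevant $\wz K$-coefficients stay bounded. Without supplying the stopping construction and the doubling-hull comparison, your iteration does not close.
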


\begin{proof}
Fix $\az\in[0,\fz)$ and $\rho\in(1,\fz)$. Let $\sz:=5\rho$,
$f\in\cera$ and $L$ be a large positive constant
whose value will be determined later.
We first claim that, for $\mu$-almost every $x\in B_0$ with
$\frac{|f(x)-f_{B_0}|}{[\lz(x_0,r)]^{\az}}>2L$, there exists a
$(\sz,\bz_{\sz})$-doubling ball $\widehat{B}_x^{\sz}$ of the form
$B(x,\sz^{-i}r)$, $i\in\nn$, satisfying
\begin{equation}\label{6.8}
\widehat{B}_x^{\sz}\st\sqrt{\rho}B_0\quad {\rm and}\quad
\frac{|f_{\widehat{B}_x^{\sz}}-f_{B_0}|}{[\lz(x_0,r)]^{\az}}>L.
\end{equation}
Indeed, from $\frac{|f(x)-f_{B_0}|}{[\lz(x_0,r)]^{\az}}>2L$ and
\cite[Corollary 3.6]{h10}, it follows that there exists a
$(\sz,\bz_{\sz})$-doubling ball $\widehat{B}_x^{\sz}$ of the
form $B(x,\sz^{-i}r)$, $i\in\nn$, such that
$\widehat{B}_x^{\sz}\st\sqrt{\rho}B_0$ and
$\frac{|m_{\widehat{B}_x^{\sz}}(f)-f_{B_0}|}{[\lz(x_0,r)]^{\az}}>2L$.
Thus, by this, Propositions \ref{p6.8} and \ref{p6.10}, \eqref{2.1}
and \eqref{2.2}, we conclude that
\begin{align*}
\frac{|f_{\widehat{B}_x^{\sz}}-f_{B_0}|}{[\lz(x_0,r)]^{\az}}
&\ge\frac{|m_{\widehat{B}_x^{\sz}}(f)-f_{B_0}|}{[\lz(x_0,r)]^{\az}}
-\frac{|f_{\widehat{B}_x^{\sz}}
-m_{\widehat{B}_x^{\sz}}(f)|}{[\lz(x_0,r)]^{\az}}\\
&>2L-\frac1{[\lz(x_0,r)]^{\az}}\frac1{\mu(\widehat{B}_x^{\sz})}
\int_{\widehat{B}_x^{\sz}}\lf|f(y)-f_{\widehat{B}_x^{\sz}}\r|\,d\mu(y)\\
&\ge2L-\frac{[\lz(x,\sqrt{\rho}r)]^{\az}}{[\lz(x_0,r)]^{\az}}\bz_{\sz}
\|f\|_{*,\,\sqrt\rho}\\
&\ge2L-C_1\|f\|_{*,\,\rho}\ge L,
\end{align*}
provided that $L\ge C_1\|f\|_{*,\,\rho}$ and $C_1$ is
a positive constant, which implies the claim.

Now we let $\widehat{B}_x^{\sz}$ be the biggest
$(\sz,\bz_{\sz})$-doubling ball of the form
$B(x,\sz^{-i}r)$, $i\in\nn$, satisfying \eqref{6.8}.
By \eqref{6.8}, \eqref{2.1} and \eqref{2.2}, we know that
\begin{align}\label{6.9}
&\noz\frac1{\mu(\widehat{B}_x^{\sz})}\int_{\widehat{B}_x^{\sz}}
\frac{|f(y)-f_{B_0}|}{[\lz(x_0,r)]^{\az}}\,d\mu(y)\\
&\noz\hs\ge\frac{|f_{\widehat{B}_x^{\sz}}-f_{B_0}|}{[\lz(x_0,r)]^{\az}}
-\frac1{\mu(\widehat{B}_x^{\sz})}\int_{\widehat{B}_x^{\sz}}
\frac{|f(y)-f_{\widehat{B}_x^{\sz}}|}{[\lz(x_0,r)]^{\az}}\,d\mu(y)\\
&\hs>L-\frac{[\lz(x,\sqrt{\rho}r)]^{\az}}
{[\lz(x_0,r)]^{\az}}\bz_{\sz}\|f\|_{*,\,\sqrt\rho}\ge L
-C_1\|f\|_{*,\,\rho}\ge L/2,
\end{align}
provided that $L\ge2C_1\|f\|_{*,\,\rho}$.

Then we show that the ball $\wz{(\sz\widehat{B}^{\sz}_x)}^{\sz}
=:\widehat{\widehat{B}}^{\sz}_x$ satisfies
\begin{equation}\label{6.10}
\widehat{\widehat{B}}^{\sz}_x\not\subset\sqrt{\rho}B_0\quad
{\rm or}\quad \frac{|f_{\widehat{\widehat{B}}_x^{\sz}}-f_{B_0}|}
{[\lz(x_0,r)]^{\az}}\le L.
\end{equation}
Indeed, it suffices to prove that, if $\widehat{\widehat{B}}^{\sz}_x\st
\sqrt{\rho}B_0$,
then $\frac{|f_{\widehat{\widehat{B}}_x^{\sz}}-f_{B_0}|}
{[\lz(x_0,r)]^{\az}}\le L$.
From Lemma \ref{l2.9}, $B(x,r)\st 2\sqrt{\rho}B_0$,
(v), (iv), (iii) and (ii) of Lemma \ref{l2.8}, it follows that,
if $\widehat{\widehat{B}}^{\sz}_x\st\sqrt{\rho}B_0\st2\sqrt{\rho}B_0$, then
\begin{align*}
\frac{|f_{\widehat{\widehat{B}}_x^{\sz}}-f_{B_0}|}{[\lz(x_0,r)]^{\az}}
&\le\frac{|f_{\widehat{\widehat{B}}_x^{\sz}}-f_{2\sqrt{\rho}B_0}|}
{[\lz(x_0,r)]^{\az}}+\frac{|f_{2\sqrt{\rho}B_0}-f_{B_0}|}
{[\lz(x_0,r)]^{\az}}\\
&\ls\|f\|_{*,\,\rho}\frac{[\lz(x_0,2\sqrt{\rho}r)]^{\az}}
{[\lz(x_0,r)]^{\az}}\lf[{\wz K}^{(\rho),\,1/(\az+1)}
_{\widehat{\widehat{B}}_x^{\sz},\,2\sqrt{\rho}B_0}
+{\wz K}^{(\rho),\,1/(\az+1)}_{B_0,\,2\sqrt{\rho}B_0}\r]\\
&\ls \|f\|_{*,\,\rho}{\wz K}^{(\sz),\,1/(\az+1)}
_{\widehat{B}_x^{\sz},\,2\sqrt{\rho}B_0}
\ls \|f\|_{*,\,\rho}\lf[{\wz K}^{(\sz),\,1/(\az+1)}
_{\widehat{B}_x^{\sz},\,B(x,\,r)}
+{\wz K}^{(\sz),\,1/(\az+1)}_{B(x,\,r),\,2\sqrt{\rho}B_0}\r]\\
&\le C_2\|f\|_{*,\,\rho}\le L,
\end{align*}
provided that $L\ge C_2\|f\|_{*,\,\rho}$ and
$C_2$ is a positive constant, which shows \eqref{6.10}.

Moreover, if $\widehat{\widehat{B}}^{\sz}_x\not\subset\sqrt{\rho}B_0$,
let $\sz^j\widehat{B}^{\sz}_x$ be the smallest ball of the form
$\sz^k\widehat{B}^{\sz}_x$ ($k\in\nn$) satisfying
$\sz^j\widehat{B}^{\sz}_x\not\subset\sqrt{\rho}B_0$. We easily
obtain
\begin{equation*}
r_{\sz^j\widehat{B}^{\sz}_x}\sim r_{B_0}\quad {\rm and}\quad
\widehat{\widehat{B}}_x^{\sz}=\wz{\lf(\sz^j\widehat{B}^{\sz}_x\r)}^{\sz},
\end{equation*}
where $r_{\sz^j\widehat{B}^{\sz}_x}$ and $r_{B_0}$
denote the radii of balls $\sz^j\widehat{B}^{\sz}_x$ and
$B_0$, respectively.
By this, $\sz^j\widehat{B}^{\sz}_x\st3\sz\sqrt{\rho}B_0$,
$\widehat{B}^{\sz}_x\st\sqrt{\rho}B_0$,
Remark \ref{r6.2}(ii), \eqref{2.1}, \eqref{2.2}, Lemma \ref{l2.9},
(ii) and (iii) of Lemma \ref{l2.8}, we have
\begin{align*}
\frac{|f_{\widehat{\widehat{B}}_x^{\sz}}-f_{B_0}|}{[\lz(x_0,r)]^{\az}}
&\le\frac{|f_{\widehat{\widehat{B}}_x^{\sz}}-f_{\sz^j\widehat{B}^{\sz}_x}|}
{[\lz(x_0,r)]^{\az}}
+\frac{|f_{\sz^j\widehat{B}^{\sz}_x}-f_{3\sz\sqrt{\rho}B_0}|}
{[\lz(x_0,r)]^{\az}}+\frac{|f_{3\sz\sqrt{\rho}B_0}-f_{B_0}|}
{[\lz(x_0,r)]^{\az}}\\
&\ls\frac{[\lz(x,r_{\widehat{\widehat{B}}_x^{\sz}})]^{\az}}{[\lz(x_0,r)]^{\az}}
\|f\|_{*,\,\rho}+\frac{[\lz(x_0,3\sz\sqrt{\rho}r)]^{\az}}
{[\lz(x_0,r)]^{\az}}\|f\|_{*,\,\rho}\lf[{\wz K}^{(\rho),\,1/(\az+1)}
_{\sz^j\widehat{B}_x^{\sz},\,3\sz\sqrt{\rho}B_0}
+{\wz K}^{(\rho),\,1/(\az+1)}_{B_0,\,3\sz\sqrt{\rho}B_0}\r]\\
&\ls\frac{[\lz(x,r_{\sz\widehat{B}_x^{\sz}})]^{\az}}{[\lz(x_0,r)]^{\az}}
\|f\|_{*,\,\rho}+\|f\|_{*,\,\rho}\le C_3\|f\|_{*,\,\rho}\le L,
\end{align*}
provided that $L\ge C_3\|f\|_{*,\,\rho}$ and
$C_3$ is a positive constant.

Thus, in any case, we have
\begin{equation}\label{6.11}
 \frac{|f_{\widehat{\widehat{B}}_x^{\sz}}-f_{B_0}|}
{[\lz(x_0,r)]^{\az}}\le L,
\end{equation}
provided that $L\ge\max\{C_2,C_3\}\|f\|_{*,\,\rho}$.

Furthermore, by \cite[Theorem 1.2]{he} and \cite[Lemma 2.5]{h10},
we see that there exists a sequence $\{\widehat{B}_{x_i}^{\sz}\}_{i\in I}$
of disjoint balls such that $x_i\in B_0$ for any $i\in I$ and
$B_0\st\cup_{x\in B_0}\widehat{B}_x^{\sz}
\st\cup_{i\in I}5\widehat{B}_{x_i}^{\sz}$.
Let $B^{(i)}:=5\widehat{B}_{x_i}^{\sz}$ for any $i\in I$.
Observe that, for any $n\in\nn\cap[2,\fz)$,
if $x\in B_0$ and $\frac{|f(x)-f_{B_0}|}{[\lz(x_0,r)]^{\az}}>nL$,
then there exists $i\in I$ such that $x\in B^{(i)}$ and,
from \eqref{6.11}, Remark \ref{r6.2}(ii), \eqref{2.1},
 Lemma \ref{l2.9},
(v), (iv), (ii) and (iii) of Lemma \ref{l2.8}, it follows that
\begin{align}\label{6.12}
\noz\frac{|f(x)-f_{B^{(i)}}|}{[\lz(x_0,r)]^{\az}}&
\ge\frac{|f(x)-f_{B_0}|}{[\lz(x_0,r)]^{\az}}
-\frac{|f_{B_0}-f_{\widehat{\widehat{B}}_{x_i}^{\sz}}|}{[\lz(x_0,r)]^{\az}}
-\frac{|f_{\widehat{\widehat{B}}_{x_i}^{\sz}}-f_{5\widehat{B}_{x_i}^{\sz}}|}
{[\lz(x_0,r)]^{\az}}\\
&\noz>n L-L-\frac{[\lz(x_i,r_{\widehat{\widehat{B}}_{x_i}^{\sz}})]^{\az}}
{[\lz(x_0,r)]^{\az}}
\|f\|_{*,\,\rho}{\wz K}^{(\rho),\,1/(\az+1)}
_{5\widehat{B}_{x_i}^{\sz},\,\widehat{\widehat{B}}_{x_i}^{\sz}}\\
&\ge (n-1)L-C_4\|f\|_{*,\,\rho}\ge(n-2)L,
\end{align}
provided that $L\ge C_4\|f\|_{*,\,\rho}$ and $C_4$
is a positive constant.

By \eqref{6.9}, the disjointness of $\{\widehat{B}_{x_i}^{\sz}\}_{i\in I}$,
$\widehat{B}_{x_i}^{\sz}\st\sqrt{\rho}B_0$ for all
$i\in I$, Proposition \ref{p6.8}(ii), Proposition \ref{p6.10}(b),
Lemma \ref{l2.8}(ii) and \eqref{2.1},
we further see that
\begin{align}\label{6.13}
\noz\sum_{i\in I}\mu(\rho B^{(i)})&\le\bz_{\sz}
\sum_{i\in I}\mu(\widehat{B}_{x_i}^{\sz})
\le\frac{2\bz_{\sz}}{L}\sum_{i\in I}\int_{\widehat{B}_{x_i}^{\sz}}
\frac{|f(y)-f_{B_0}|}{[\lz(x_0,r)]^{\az}}\,d\mu(y)\\
&\noz\le\frac{2\bz_{\sz}}{L}\lf\{\int_{\sqrt{\rho}B_0}
\frac{|f(y)-f_{\sqrt{\rho}B_0}|}{[\lz(x_0,r)]^{\az}}\,d\mu(y)
+\frac{|f_{\sqrt{\rho}B_0}-f_{B_0}|\mu(\sqrt{\rho}B_0)}
{[\lz(x_0,r)]^{\az}}\r\}\\
&\noz\ls \frac1L \frac{[\lz(x_0,\sqrt{\rho}r)]^{\az}}{[\lz(x_0,r)]^{\az}}
\|f\|_{*,\,\rho}\lf\{\mu(\rho B_0)+\mu(\sqrt{\rho}B_0)
{\wz K}^{(\rho),\,1/(\az+1)}_{B_0,\,\sqrt{\rho}B_0}\r\}\\
&\le\frac{C_5}{L}\|f\|_{*,\,\rho}\mu(\rho B_0)\le\frac12 \mu(\rho B_0),
\end{align}
provided that $L\ge2C_5\|f\|_{*,\,\rho}$ and $C_5$ is a positive
constant.

Moreover, for any $t\in(0,\fz)$, there exists $n\in\zz_+$
such that $2nL\le t<2(n+1)L$. By this and \eqref{6.12},
we know that
\begin{align}\label{6.14}
\noz\lf\{x\in B_0:\ \frac{|f(x)-f_{B_0}|}{[\lz(x_0,r)]^{\az}}>t\r\}
&\st\lf\{x\in B_0:\ \frac{|f(x)-f_{B_0}|}{[\lz(x_0,r)]^{\az}}>2nL\r\}\\
&\st\bigcup_{i\in I}\lf\{x\in B^{(i)}:\
\frac{|f(x)-f_{B^{(i)}}|}{[\lz(x_0,r)]^{\az}}>2(n-1)L\r\}.
\end{align}
Finally, by \eqref{6.13}, \eqref{6.14}, iterating with
the balls $B^{(i)}$ in place of $B_0$ and an argument
similar to that used in the proof of \cite[Proposition 6.1]{h10},
we conclude that
$$
\mu\lf(\lf\{x\in B_0:\ \frac{|f(x)-f_{B_0}|}{[\lz(x_0,r)]^{\az}}>t\r\}\r)
\le2\mu(\rho B_0)e^{-\frac{ct}{\|f\|_{*,\,\rho}}}
$$
with $c:=\frac{\ln 2}{2L}\|f\|_{*,\,\rho}$ and
$L:=2\max\{C_i:\ i\in\{1,\ldots,5\}\}$.
This finishes the proof of Proposition \ref{p6.15}.
\end{proof}

By Proposition \ref{p6.15}, we easily obtain the following conclusion.

\begin{corollary}\label{c6.16}
Suppose that $(\cx,d,\mu)$ is a non-homogeneous metric
measure space satisfying \eqref{6.1}.
Let $\az\in[0,\fz)$, $\rho\in(1,\fz)$ and $q\in[1,\fz)$. Then there exists
a positive constant $C$ such that, for any $f\in\cera$
and every ball $B:=B(c_B,r_B)$ with $c_B\in\cx$ and $r_B\in(0,\fz)$,
$$
\lf\{\frac1{\mu(\rho B)}\int_B|f(x)-f_B|^q\,d\mu(x)\r\}^{1/q}
\le C\|f\|_{*,\,\rho}[\lz(c_B,r_B)]^{\az},
$$
where $f_B$ is as in Proposition \ref{p6.10}\emph{(ii)}.
\end{corollary}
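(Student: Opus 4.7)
The plan is to derive this $L^q$-estimate from the exponential-decay John--Nirenberg inequality established in Proposition \ref{p6.15} by means of the classical layer-cake identity. Fix $f\in\cera$ and a ball $B:=B(c_B,r_B)$, and normalize by setting $g(x):=|f(x)-f_{B}|/[\lz(c_B,r_B)]^{\az}$ for $x\in B$. Then
$$
\frac1{\mu(\rho B)}\int_B|f(x)-f_{B}|^q\,d\mu(x)
=\frac{[\lz(c_B,r_B)]^{\az q}}{\mu(\rho B)}\int_B [g(x)]^q\,d\mu(x),
$$
so it suffices to bound the right-hand side by $C^q\|f\|^q_{*,\,\rho}[\lz(c_B,r_B)]^{\az q}$ for some constant $C$ that is independent of $f$ and $B$.

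First I would invoke the layer-cake formula to rewrite
$$
\int_B [g(x)]^q\,d\mu(x)=q\int_0^\fz t^{q-1}\mu\lf(\lf\{x\in B:\ g(x)>t\r\}\r)\,dt.
$$
Next I would apply Proposition \ref{p6.15} (with $B_0$ replaced by $B$) to control the distribution function, obtaining
$$
\mu\lf(\lf\{x\in B:\ g(x)>t\r\}\r)\le 2\mu(\rho B)e^{-ct/\|f\|_{*,\,\rho}}\quad\text{for all}\quad t\in(0,\fz),
$$
where $c$ is the absolute constant produced there. Substituting this estimate and evaluating the resulting Gamma-type integral via the change of variables $s:=ct/\|f\|_{*,\,\rho}$ then gives
$$
\int_B [g(x)]^q\,d\mu(x)\le 2\mu(\rho B)q\int_0^\fz t^{q-1}e^{-ct/\|f\|_{*,\,\rho}}\,dt
=2\bgz(q+1)\mu(\rho B)\lf(\frac{\|f\|_{*,\,\rho}}{c}\r)^q.
$$
Dividing by $\mu(\rho B)$, taking $q$-th roots, and multiplying through by $[\lz(c_B,r_B)]^{\az}$ yields the claimed inequality with $C:=[2\bgz(q+1)]^{1/q}/c$, which depends only on $q$ and on the constants implicit in Proposition \ref{p6.15}.

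The argument is essentially routine once Proposition \ref{p6.15} is in hand; no substantive obstacle arises. The only minor point to verify is that the representative $f_{B}$ appearing in the statement coincides with the one used in Proposition \ref{p6.15}, but both are furnished by Proposition \ref{p6.10}(b), so this is automatic. Observe also that the $q$-dependence of $C$ is $\bgz(q+1)^{1/q}$, which by Stirling grows only linearly in $q$; this reflects the sharpness of the exponential tail estimate and confirms, in particular, that $\cera$ is independent of the choice of $q\in[1,\fz)$, as promised in the introduction to this section.
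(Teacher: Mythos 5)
Your proof is correct and follows exactly the route the paper intends: the corollary is stated as an immediate consequence of Proposition \ref{p6.15}, and your layer-cake argument with the exponential tail bound is the standard (and evidently intended) way to deduce the $L^q$-estimate, with the constant $[2\Gamma(q+1)]^{1/q}/c$ depending only on $q$ and the constant from Proposition \ref{p6.15}. No issues.
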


\begin{remark}\label{r6.17}
(i) By Corollary \ref{c6.16} and Proposition \ref{p6.10},
together with the H\"older inequality,
we know that $\cear$ is independent
of the choice of $q$, the details being omitted.
From now on, unless explicitly pointed out, we \emph{always
assume} that $q=1$ in $\cear$ and \emph{write
${\mathcal E}^{\alpha,\,1}_{\rho}(\mu)$ simply by $\cera$}
and its \emph{norm $\|\cdot\|_{{\mathcal E}^{\alpha,\,1}_{\rho}(\mu)}$
simply by $\|\cdot\|_{\cera}$}.

(ii) It is still unknown whether
$\ceaeg$ is independent of the choice of $q$ or not
on general non-homogeneous metric measure spaces
without the assumption \eqref{6.1},
even on Euclidean spaces endowed with non-doubling measures.
\end{remark}

We establish
another characterization of $\cera$ which is needed in the later context.
To this end, we first recall
the so-called median value of
a function on balls in \cite{hyy,hmy13}.
Precisely, let $f$ be a measurable function.
The \emph{median value} of $f$ on any ball $B$,
denoted by $m_f(B)$, is defined
as follows.
If $f$ is real-valued, then, for any
ball $B$ with $\mu(B)\not=0$, let $m_f(B)$ be some {\it real number}
such that {$\inf_{c \in\rr}
\frac1{\mu(B)}\int_B|f(x)-c|\,d\mu(x)$
is attained}. It is known that $m_f(B)$ satisfies
\begin{equation*}
\mu(\{x\in B:\ \, f(x)>m_f(B)\})\le \mu(B)/2
\end{equation*}
and
\begin{equation*}
\mu(\{x\in B:\ \, f(x)<m_f(B)\})\le \mu(B)/2.
\end{equation*}
For all balls
$B$ with $\mu(B)=0$, let $m_f(B):=0$. If $f$ is complex-valued, we take
\begin{equation*}m_f(B):=[m_{\Re f}(B)]+i[m_{\Im f}(B)],
\end{equation*}
where $i^2=-1$ and, for any complex number $z$,
denote by $\Re z$ and $\Im z$
its {\it real part} and the {\it imaginary part}, respectively.

Let $\alpha\in[0, \fz)$, $\rho\in[2,\fz)$ and $q,\,\gz\in[1,\fz)$.
The norm $\|f\|_{\circ,\,\rho}$ of a suitable function $f$ is defined by
\begin{align*}
\|f\|_{\circ,\,\rho}
&:=\sup_{B:\ B\ (\rho,\, \bz_{\rho})-{\rm doubling\ ball}}
\frac1{\mu(B)}\frac1{[\lz(c_B,r_B)]^{\az}}
\int_B|f(y)-m_f(B)|\,d\mu(y)\\
&\quad+\sup_{B\st S:\ B,\,S\ (\rho, \,\bz_{\rho})-{\rm doubling\ balls}}
\frac{|m_f(B)-m_f(S)|}{[\lz(c_S,r_S)]^\alpha[\kbsa]^{\gz}}.
\end{align*}

Then we have the following equivalent
characterization of $\cera$.

\begin{proposition}\label{p6.18}
Suppose that $(\cx,d,\mu)$ is a non-homogeneous metric
measure space satisfying \eqref{6.1}.
Let $\alpha\in[0, \fz)$, $\rho\in[6/5,\fz)$ and $q,\,\gz\in[1,\fz)$.
Then the norms $\|\cdot\|_{\circ,\,\rho}$ and
$\|\cdot\|_{\cera}$ are equivalent.
\end{proposition}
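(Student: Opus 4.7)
The plan is to establish the two-sided inequality $\|f\|_{\circ,\rho} \sim \|f\|_{\cera}$.

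For the direction $\|f\|_{\circ,\rho} \ls \|f\|_{\cera}$, the key observation is that any $(\rho, \bz_\rho)$-doubling ball $B$ satisfies $\wz B^\rho = B$, so $m_{\wz B^\rho}(f) = m_B(f)$. Since the median $m_f(B)$ minimizes $c \mapsto \int_B |f(y) - c|\,d\mu(y)$, we get $\int_B |f - m_f(B)|\,d\mu \le \int_B |f - m_B(f)|\,d\mu$, which is controlled by Corollary \ref{c6.16} combined with $\mu(\rho B) \le \bz_\rho \mu(B)$. Averaging over $B$ then gives the auxiliary comparison $|m_f(B) - m_B(f)| \le C[\lz(c_B, r_B)]^\az \|f\|_{\cera}$; inserting this twice into $m_f(B) - m_f(S) = [m_f(B) - m_B(f)] + [m_B(f) - m_S(f)] + [m_S(f) - m_f(S)]$ and bounding the middle term by the definition of $\|f\|_{\cera}$ (using $[\lz(c_B, r_B)]^\az \le C[\lz(c_S, r_S)]^\az$ from upper doubling and $\kbsa \ge 1$) supplies the doubling-pair part of $\|f\|_{\circ, \rho}$.

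For the reverse direction $\|f\|_{\cera} \ls \|f\|_{\circ, \rho}$, I invoke the flexible characterization of Proposition \ref{p6.10} with $q = 1$, choosing $f_B := m_f(\wz B^\rho)$ for every ball $B$. The doubling-pair part of $\|f\|^{(1)}_{*, \rho}$ is handled by a median analogue of Proposition \ref{p6.7}(a): given $B \st S$, I compare $m_f(\wz B^\rho)$ and $m_f(\wz S^\rho)$ by routing through a common doubling enlargement of both (either $\wz{(2\wz S^\rho)}^\rho$ or $\wz{(2\wz B^\rho)}^\rho$ according to whether $r_{\wz B^\rho} < r_{\wz S^\rho}$), with each successive median difference controlled by the defining oscillation of $\|f\|_{\circ, \rho}$ and the coefficient $\wz K^{(\rho),\,1/(\az+1)}_{\wz B^\rho, \wz S^\rho}$ reduced to $\kbsa$ via Lemma \ref{l2.8} together with the $\rho$-weakly doubling condition through Remark \ref{r6.2}(ii).

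The delicate point is verifying the single-ball condition $\int_B |f - m_f(\wz B^\rho)|\,d\mu \ls \mu(\rho B)[\lz(c_B, r_B)]^\az \|f\|_{\circ, \rho}$ for every ball $B$. When $B$ is $(\rho, \bz_\rho)$-doubling, this is immediate from $\wz B^\rho = B$ and the defining bound of $\|f\|_{\circ, \rho}$ together with $\mu(B) \le \mu(\rho B)$. For non-doubling $B$ I use \eqref{6.1} to write $\wz B^\rho = \rho^N B$ with $1 \le N \le \wz C_1$; enlarging the integration domain to the doubling ball $\wz B^\rho$ and applying the defining bound of $\|f\|_{\circ, \rho}$ produces the upper bound $\mu(\wz B^\rho)[\lz(c_B, r_{\wz B^\rho})]^\az \|f\|_{\circ, \rho}$, with $[\lz(c_B, r_{\wz B^\rho})]^\az \le C[\lz(c_B, r_B)]^\az$ following from Remark \ref{r6.2}(ii) and upper doubling. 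The main obstacle is then to bound $\mu(\wz B^\rho) \ls \mu(\rho B)$ uniformly; this will be achieved by telescoping the upper doubling estimates $\mu(\rho^{j+1}B) \le C_{(\lz)}^{\log_2 \rho}\lz(c_B, \rho^j r_B)$ along the finite chain $\rho B \st \cdots \st \wz B^\rho$ of length at most $\wz C_1$, interpolated with intermediate half-step concentric doubling balls whose existence is guaranteed by $\rho \ge 6/5$. This completes the verification of Proposition \ref{p6.10}(b) and hence the norm equivalence.
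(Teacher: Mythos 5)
Your first direction and the doubling-pair part of the reverse direction follow the paper's argument and are fine. The genuine gap is in your treatment of the single-ball condition for a non-$(\rho,\bz_\rho)$-doubling ball $B$: you enlarge the domain of integration from $B$ to $\wz B^{\rho}$ and then need $\mu(\wz B^{\rho})\ls\mu(\rho B)$ uniformly, and you propose to get this by ``telescoping the upper doubling estimates'' along the chain $\rho B\st\cdots\st\wz B^{\rho}$. This cannot work. Upper doubling only gives $\mu(\rho^{j+1}B)\le\lz(c_B,\rho^{j+1}r_B)\ls\lz(c_B,\rho^{j}r_B)$; it never gives $\lz(c_B,\rho^{j}r_B)\ls\mu(\rho^{j}B)$, which is exactly the measure-doubling property that is \emph{not} assumed in this setting. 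Worse, every intermediate ball $\rho^{j}B$ with $j<N^{(\rho)}_{B,\wz B^{\rho}}$ is by definition non-$(\rho,\bz_\rho)$-doubling, so $\mu(\rho^{j+1}B)>\bz_\rho\mu(\rho^{j}B)$ and, since the measure may be non-doubling, each such jump can be arbitrarily large; the $\rho$-weakly doubling condition \eqref{6.1} only bounds the \emph{number} of steps (equivalently the radius ratio $r_{\wz B^{\rho}}/r_B$, as in Remark \ref{r6.2}(ii)), not the measure ratio. Example \ref{e6.3} shows precisely that \eqref{6.1} is compatible with a non-doubling measure, so a uniform bound $\mu(\wz B^{\rho})\le C\mu(\rho B)$ is not available from the hypotheses, and your ``intermediate half-step concentric doubling balls guaranteed by $\rho\ge 6/5$'' is not a mechanism that exists; $\rho\ge 6/5$ plays a different role (see below).

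The paper's proof circumvents this entirely: for non-doubling $B$, for each $x\in B$ it takes the largest $(5\rho,\bz_\rho)$-doubling ball $B_x$ centered at $x$ of radius $(5\rho)^{-k}r_B$, uses $\rho\ge 6/5$ to ensure $5B_x\st(6/5)B\st\rho B\st\wz B^{\rho}$, and extracts a disjoint subfamily $\{B_i\}_i$ with $B\st\bigcup_i 5B_i$ by the $5r$-covering lemma. Then $\int_B|f-m_{\wz B^{\rho}}(f)|\,d\mu$ is split through the medians $m_f(5B_i)$ and $m_f(\wz B^{\rho})$: the oscillation terms on the doubling balls $5B_i$ and the median differences for the doubling pairs $5B_i\st\wz B^{\rho}$ (with $\wz K^{(\rho),\,1/(\az+1)}_{5B_i,\,\wz B^{\rho}}\ls1$ via Lemma \ref{l2.8}) are all controlled by $\|f\|_{\circ,\rho}$, and the factor $\mu(\rho B)$ arises from the disjointness of the $B_i\st\rho B$ together with $\mu(5\rho B_i)\le\bz_\rho\mu(B_i)$, so that $\sum_i\mu(B_i)\le\mu(\rho B)$. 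You need this covering argument (or some substitute that never compares $\mu(\wz B^{\rho})$ with $\mu(\rho B)$); as written, your final step asserts an estimate that is false in general under the stated hypotheses.
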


\begin{proof}
Fix $\alpha\in[0, \fz)$ and $\rho\in[6/5,\fz)$.
For the sake of the simplicity, we assume that $\gz=1$.
The arguments here are still valid for the general case
with some minor modifications.
Let $f\in \cera$. Now we show that $\|f\|_{\circ,\,\rho}\ls\|f\|_{\cera}$.
For any $(\rho,\bz_{\rho})$-doubling ball $B$, by
the definition of $m_f(B)$, we conclude that
\begin{align}\label{6.15}
\noz|m_f(B)-m_B(f)|&\le\frac1{\mu(B)}\int_B|f(y)-m_f(B)|\,d\mu(y)\\
&\le\frac1{\mu(B)}\int_B|f(y)-m_B(f)|\,d\mu(y)
\ls\|f\|_{\cera}[\lz(c_B,r_B)]^{\az},
\end{align}
which implies that, for any
$(\rho,\bz_{\rho})$-doubling ball $B$,
$$
\frac1{\mu(B)}
\int_B|f(y)-m_f(B)|\,d\mu(y)\ls\|f\|_{\cera}[\lz(c_B,r_B)]^{\az}.
$$
On the other hand, by \eqref{6.15}, \eqref{2.1} and
\eqref{2.2}, we know that,
for all $(\rho,\bz_{\rho})$-doubling balls $B\subset S$,
\begin{align*}
|m_f(B)-m_f(S)|
&\le|m_f(B)- m_B(f)|+| m_B(f)-m_S(f)|
+|m_S(f)-m_f(S)|\\
&\ls\|f\|_{\cera}[\lz(c_S,r_S)]^{\az}\kbsa
+\|f\|_{\cera}\{[\lz(c_B,r_B)]^{\az}+[\lz(c_S,r_S)]^{\az}\}\\
&\ls\|f\|_{\cera}[\lz(c_S,r_S)]^{\az}\kbsa.
\end{align*}
Combining these two inequalities, we conclude that
$\|f\|_{\circ,\,\rho}\ls \|f\|_{\cera}$.

Conversely, let $\|f\|_{\circ,\,\rho}<\fz$.
We now prove that $\|f\|_{\ceag}\ls\|f\|_{\circ,\,\rho}$.
For any ball $B$, if $B$ is $(\rho,\bz_{\rho})$-doubling,
we see that
\begin{align*}
\frac1{\mu(\rho B)}\int_B|f(y)-m_{B}(f)|\,d\mu(y)
&\noz\le\frac1{\mu(B)}\int_B|f(y)-m_f(B)|\,d\mu(y)
+|m_f(B)-m_B(f)|\\
&\ls\frac1{\mu(B)}\int_B|f(y)-m_f(B)|\,d\mu(y)
\ls\|f\|_{\circ,\,\rho}[\lz(c_B,r_B)]^{\az}.
\end{align*}
Thus, we only need to consider the case that $B$ is
non-$(\rho,\bz_{\rho})$-doubling.

Assume that $B$ is a non-$(\rho,\bz_{\rho})$-doubling ball.
For any $x\in B$, let $B_x$ be the biggest
$(5\rho,\bz_{\rho})$-doubling ball centered at $x$
with radius $(5\rho)^{-k}r_B$ for some $k\in\nn$
(since $\bz_{\rho}>(5\rho)^{n_0}$ and \cite[Lemma 3.3]{h10}).
From $\rho\ge6/5$, it follows easily that
$5B_x\st2B\st(6/5)B\st\wz B^{\rho}$.
Moreover, by \cite[Theorem 1.2]{he} and \cite[Lemma 2.5]{h10},
we see that there exists a countable disjoint subfamily
$\{B_{x_i}\}_i$=:$\{B_i\}_i$ of $\{B_x\}_x$ such that
$x_i\in B$ for all $i$ and $B\st\cup_{x\in B}B_x\st\cup_i5B_i$.
For any $i$, by $5B_i\st B(x_i,r_B)\st (6/5)B\st \rho B\st\wz B^{\rho}$,
and (iv), (iii), (ii) and (v) of Lemma \ref{l2.8}, we see that
\begin{equation*}
{\wz K}^{(\rho),\,1/(\az+1)}_{5B_i,\,\wz B^{\rho}}
\ls{\wz K}^{(\rho),\,1/(\az+1)}_{5B_i,\,B(x_i,r_B)}
+{\wz K}^{(\rho),\,1/(\az+1)}_{B(x_i,r_B),\,\rho B}
+{\wz K}^{(\rho),\,1/(\az+1)}_{\rho B,\,\wz B^{\rho}}
\ls1.
\end{equation*}
From this, together with the fact that
$5B_i$ is a $(\rho,\bz_{\rho})$-doubling ball for any $i$,
\eqref{2.1} and \eqref{2.2},
Remark \ref{r6.2}(ii), $5B_i\st(6/5)B\st\rho B\st\wz B^{\rho}$ for any $i$
and the disjointness of $\{B_i\}_i$, it follows that
\begin{align*}
&\int_B\lf|f(y)-m_{\wz B^{\rho}}(f)\r|\,d\mu(y)\\
&\hs\le\sum_i\int_{5B_i}|f(y)-m_f(5B_i)|\,d\mu(y)
+\sum_i\mu(5B_i)\lf[\lf|m_f(5B_i)-m_f\lf(\wz B^{\rho}\r)\r|
+\lf|m_f\lf(\wz B^{\rho}\r)-m_{\wz B^{\rho}}(f)\r|\r]\\
&\hs\ls\|f\|_{\circ,\,\rho}
\sum_i\mu(5\rho B_i)\lf[\lz\lf(c_{B_i},r_{B_i}\r)\r]^{\az}
+\sum_i\mu(5B_i)\Bigg\{\|f\|_{\circ,\,\rho}
\lf[\lz\lf(c_B,r_{\wz B^{\rho}}\r)\r]^{\az}
\lf[{\wz K}^{(\rho),\,1/(\az+1)}_{5B_i,\,\wz B^{\rho}}\r]\\
&\hs\hs+\frac1{\mu(\wz B^{\rho})}
\int_{\wz B^{\rho}}\lf|f(y)-m_f\lf(\wz B^{\rho}\r)\r|\,d\mu(y)
\Bigg\}\\
&\hs\ls\|f\|_{\circ,\,\rho}\sum_i\mu(B_i)
\lf\{[\lz(c_B,r_B)]^{\az}+\lf[\lz\lf(c_B,r_{\wz B^{\rho}}\r)\r]^{\az}\r\}\\
&\hs\ls\|f\|_{\circ,\,\rho}\sum_i\mu(B_i)[\lz(c_B,r_B)]^{\az}
\ls\|f\|_{\circ,\,\rho}\mu(\rho B)[\lz(c_B,r_B)]^{\az}.
\end{align*}

On the other hand, for all $(\rho,\bz_{\rho})$-doubling balls
$B\st S$, by \eqref{2.1} and \eqref{2.2}, we have
\begin{align*}
&|m_B(f)-m_S(f)|\\
&\quad\le|m_B(f)- m_f(B)|+| m_f(B)-m_f(S)|
+|m_f(S)-m_S(f)|\\
&\quad\ls\lf\{[\lz(c_B,r_B)]^{\az}+[\lz(c_S,r_S)]^{\az}\kbsa
+[\lz(c_S,r_S)]^{\az}\r\}\|f\|_{\circ,\,\rho}\\
&\quad\ls\|f\|_{\circ,\,\rho}[\lz(c_S,r_S)]^{\az}\kbsa.
\end{align*}
These two inequalities show that
$\|f\|_{\cera}\ls\|f\|_{\circ,\,\rho}$,
which completes the proof of Proposition \ref{p6.18}.
\end{proof}

We point out that it is still unclear whether the range of $\rho$ in Proposition
\ref{p6.18} is sharp or not.

\section{Atomic Hardy Spaces $\hhp$ and
Molecular Hardy Spaces $\hmp$}\label{s7}

\hskip\parindent In this section, under the assumption of $\rho$-weakly
doubling conditions,
we introduce the atomic Hardy space $\hhp$ and the molecular Hardy space
$\hmp$, and show that the spaces $\hhp$
and $\hmp$ coincide with equivalent quasi-norms.

\begin{definition}\label{d7.1}
Let $\rho\in (1,\fz)$, $0<p\le1\le q\le\fz$, $p\neq q$ and $\gz\in[1,\fz)$.
A function
$b\in\lon$ is called  a \emph{$(p,q,\gz,\rho)_{\lz,\,1}$-atomic block} if
$b$ satisfies (i), (ii) and (iii) of Definition \ref{d3.2}.
Moreover, let $|b|_{\hhp}:=|\lz_1|+|\lz_2|$.
\end{definition}

\begin{remark}\label{r7.2}
It is easy to see that any $(1,q,\gz,\rho)_{\lz,\,1}$-atomic block
is also a $(1,q,\gz,\rho)_\lz$-atomic block
and vice versa. We point out that the difference between
the $(p,q,\gz,\rho)_\lz$-atomic block and the
$(p,q,\gz,\rho)_{\lz,\,1}$-atomic blocks exists in that
the former is an $\ltw$ function when $p\in(0,1)$,
while the latter is only an $\lon$ function.
\end{remark}

Observe that, for any $(p,q,\gz,\rho)_{\lz,\,1}$-atomic block $b$,
there exist some balls $B_j$ ($j\in\{1,\,2\}$) and $B$,
and some numbers $\lz_j\in\cc$ ($j\in\{1,\,2\}$) such that
$\supp(b)\st B$, $b=\lz_1a_1+\lz_2a_2$ and
$\supp(a_j)\st B_j\st B$, $j\in\{1,\,2\}$.
By $\int_\cx b(x)\,d\mu(x)=0$,
Proposition \ref{p6.7}(a), Definition \ref{d3.2}(iii), \eqref{2.1}
and \eqref{2.2}, we know that
\begin{align}\label{7.1}
\noz&\lf|\int_\cx f(x)b(x)\,d\mu(x)\r|\\
&\noz\hs=\lf|\int_\cx \lf[f(x)-m_{\wz B^{\rho}}(f)\r]b(x)\,d\mu(x)\r|\\
&\noz\hs\le\sum_{j=1}^2|\lz_j|
\int_{B_j}\lf|f(x)-m_{\wz B^{\rho}}(f)\r||a_j(x)|\,d\mu(x)\\
&\noz\hs\le\sum_{j=1}^2|\lz_j|
\lf[\int_{B_j}|a_j(x)|^q\,d\mu(x)\r]^{1/q}
\lf[\int_{B_j}\lf|f(x)-m_{\wz B^{\rho}}(f)\r|^{q'}\,d\mu(x)\r]^{1/q'}\\
&\noz\hs\le\sum_{j=1}^2|\lz_j|
[\mu(\rho B_j)]^{1/q-1}[\lz(c_B,r_B)]^{1-1/p}\lf[\kbjp\r]^{-\gz}\\
&\noz\hs\hs\times
\lf\{\lf[\int_{B_j}\lf|f(x)-m_{\wz B_j^{\rho}}(f)\r|^{q'}\,d\mu(x)\r]^{1/q'}
+[\mu(B_j)]^{1/q'}\lf|m_{\wz B_j^{\rho}}(f)
-m_{\wz B^{\rho}}(f)\r|\r\}\\
&\noz\hs\ls\sum_{j=1}^2|\lz_j|
[\mu(\rho B_j)]^{1/q-1}[\lz(c_B,r_B)]^{1-1/p}
\lf[\kbjp\r]^{-\gz}\\
&\noz\hs\hs\times\lf\{[\mu(\rho B_j)]^{1/q'}
+[\mu(B_j)]^{1/q'}\lf[\kbjp\r]^{\gz}\r\}[\lz(c_B,r_B)]^{1/p-1}\|f\|_{\cer}\\
&\hs\ls\sum_{j=1}^2|\lz_j|\|f\|_{\cer}
\sim|b|_{\hhp}\|f\|_{\cer}.
\end{align}
Thus, a $(p,q,\gz,\rho)_{\lz,\,1}$-atomic block $b$ can be seen as an
element in the \emph{dual space} $(\cer)^\ast$ of ${\cer}$.

\begin{definition}\label{d7.3}
Let $0<p\le1\le q\le\fz$ and $p\neq q$.
The \emph{atomic Hardy space $\hhp$} is defined as the subspace of
$(\cer)^\ast$ when $p<1$ and of $\lon$ when $p=1$,
consisting of those linear functional
admitting an atomic decomposition
\begin{equation}\label{7.2}
f=\sum_{i=1}^{\fz} b_{i}
\end{equation}
in $(\cer)^\ast$ when $p<1$ and in $\lon$ when $p=1$,
where $\{b_i\}_{i=1}^\fz$ are
$(p,q,\gz,\rho)_{\lz,\,1}$-atomic blocks such that
$$\sum_{i=1}^{\fz}|b_{i}|^p_{\hhp}<\fz.$$
Moreover, define
$$\|f\|_{\hhp}:=\inf
\lf\{\lf[\sum_{i=1}^{\fz}|b_{i}|^p_{\hhp}\r]^{1/p}\r\},$$
where the infimum is taken over all possible decompositions of
$f$ as above.
\end{definition}

\begin{remark}\label{r7.4}
(i) It follows from Remark \ref{r7.2} that
$\widehat H_{\rm{atb},\,\rho}^{1,\,q,\,\gz}(\mu)$ is the atomic Hardy space
defined via the discrete coefficients ${\wz K}_{B,\,S}^{(\rho)}$
introduced in \cite{fyy3}, where it was shown that
$\widehat H_{\rm{atb},\,\rho}^{1,\,q,\,\gz}(\mu)$ is independent
of the choices of $q$, $\rho$ and $\gz$. Hereafter,
$\widehat H_{\rm{atb},\,\rho}^{1,\,q,\,\gz}(\mu)$ is simply
denoted by $\widehat H_{\rm{atb}}^1(\mu)$.

(ii) Let $\rho\in(1,\fz)$, $\gz\in[1,\fz)$ and $q\in(1,\fz)$.
By Remarks \ref{r3.3}(ii) and (i) of this remark, we know that
$$
\widehat{H}^{1,\,q,\,\gz}_{\rm atb,\,\rho}(\mu)
=\wz{H}^{1,\,q,\,\gz}_{\rm atb,\,\rho}(\mu)
$$ over general non-homogeneous metric measure spaces.

(iii) Fix $p$, $\rho$ and $\gz$ as in Definition \ref{d7.1}.
For $1\le q_1\le q_2\le\fz$ and $q_1>p$, we notice that
$\widehat H^{p,\,q_2,\,\gz}_{{\rm atb},\,\rho}(\mu)
\st \widehat H^{p,\,q_1,\,\gz}_{{\rm atb},\,\rho}(\mu)$.

(iv) By the results in \cite{fyy3}, we know that the
Calder\'on-Zygmund operator is bounded on
$\widehat H^1_{{\rm atb}}(\mu)$. However, when $p\in(0,1)$, it is
still unclear whether the Calder\'on-Zygmund operator is bounded on
$\widehat H^{p,\,q,\,\gz}_{{\rm atb},\,\rho}(\mu)$ or not.
\end{remark}

We now introduce the notion of the molecular Hardy space
$\hmp$ in the non-homogeneous setting by first presenting
the following notion of $(p,q,\gz,\ez,\rho)_{\lz,\,1}$-molecular blocks.

\begin{definition}\label{d7.5}
Let $\rho\in (1,\fz)$, $0<p\le1\le q\le\fz$, $p\neq q$,
$\gz\in [1,\fz)$ and $\ez\in(0,\fz)$. A function
$b\in\lon$ is called  a
\emph{$(p,q,\gz,\ez,\rho)_{\lz,\,1}$-molecular block} if

(i) $\int_\cx b(x)\,d\mu(x)=0$;

(ii) there exist some ball $B$ and some constants
$\wz{M},\,M\in\nn$ such that,
for all $k\in\zz_+$ and $j\in\{1, \ldots, M_k\}$
with $M_k:=\wz{M}$ if $k=0$ and $M_k:=M$ if $k\in\nn$,
there exist functions $m_{k,\,j}$ supported
on some balls $B_{k,\,j}\st U_k(B)$
for all $k\in\zz_+$,
where $U_0(B):=\rho^2 B$ and $U_k(B):=\rho^{k+2}B\bh\rho^{k-2}B$
with $k\in\nn$, and
$\lz_{k,\,j}\in\cc$ such that
$b=\sum_{k=0}^{\fz}\sum_{j=1}^{M_k}\lz_{k,\,j}m_{k,\,j}$ in $\lon$
when $p=1$ and in both $\lon$ and $(\cer)^\ast$ when $p\in(0,1)$,
\begin{equation}\label{7.3}
\|m_{k,\,j}\|_{\lq}\le \rho^{-k\ez}\lf[\mu(\rho B_{k,\,j})\r]^{1/q-1}
\lf[\lz\lf(c_{B},\rho^{k+2}r_{B}\r)\r]^{1-1/p}
\lf[\wz K^{(\rho),\,p}_{B_{k,\,j},\,\rho^{k+2}B}\r]^{-\gz}
\end{equation}
and
$$|b|^p_{\hmp}:=\sum_{k=0}^{\fz}\sum_{j=1}^{M_k}|\lz_{k,\,j}|^p<\fz.$$
\end{definition}

\begin{remark}\label{r7.6}
Observe that any $(1,q,\gz,\ez,\rho)_{\lz,\,1}$-molecular block
is also a $(1,q,\gz,\ez,\rho)_\lz$-molecular block
and vice versa.
\end{remark}

\begin{definition}\label{d7.7}
Let $0<p\le1\le q\le\fz$, $p\neq q$ and $\ez\in(0,\fz)$.
The \emph{molecular Hardy space $\hmp$} is defined as the subspace of
$(\cer)^\ast$ when $p<1$ and of
$\lon$ when $p=1$, consisting of
those linear functional
admitting a molecular decomposition
\begin{equation}\label{7.4}
f=\sum_{i=1}^{\fz} b_{i}
\end{equation}
in $(\cer)^\ast$ when $p<1$ and in $\lon$ when $p=1$,
where $\{b_i\}_{i=1}^\fz$ are $(p,q,\gz,\ez,\rho)_{\lz,\,1}$-molecular
blocks such that
$$\sum_{i=1}^{\fz}|b_{i}|^p_{\hmp}<\fz.$$
Moreover, define
$$\|f\|_{\hmp}:=\inf
\lf\{\lf[\sum_{i=1}^{\fz}|b_{i}|^p_{\hmp}\r]^{1/p}\r\},$$
where the infimum is taken over all possible decompositions of
$f$ as above.
\end{definition}

\begin{remark}\label{r7.8}
(i) It follows from Remark \ref{r7.6} that
$\hp$ is the molecular Hardy space
defined via the discrete coefficients ${\wz K}_{B,\,S}^{(\rho)}$
introduced in \cite{fyy3}.

(ii) Let $\rho$, $p$, $q$, $\gz$ and $\ez$ be as in Definition \ref{d7.5}.
Then each $(p,q,\gz,\rho)_{\lz,\,1}$-atomic block
is a $(p,q,\gz,\ez,\rho)_{\lz,\,1}$-molecular
block and hence $\hhp\st\hmp$ and,
for all $f\in\hhp$,
$$
\|f\|_{\hmp}\le \|f\|_{\hhp}.
$$
\end{remark}

Moreover, we have the following relation between $\hhp$ and $\hmp$.

\begin{theorem}\label{t7.9}
Suppose that $(\cx,d,\mu)$ is a non-homogeneous
metric measure space satisfying \eqref{6.1}.
Let $\rho\in (1,\fz)$, $0<p\le1\le q\le\fz$, $p\neq q$,
$\gz\in [1,\fz)$ and $\ez\in(0,\fz)$.
Then $\hhp$ and $\hmp$ coincide with equivalent quasi-norms.
\end{theorem}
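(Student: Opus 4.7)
The plan is to establish the two inclusions separately and then invoke the standard reduction to single blocks. The easy direction, $\hhp\subset\hmp$ with $\|f\|_{\hmp}\le\|f\|_{\hhp}$, is immediate from Remark \ref{r7.8}(ii), since every $(p,q,\gz,\rho)_{\lz,\,1}$-atomic block is automatically a $(p,q,\gz,\ez,\rho)_{\lz,\,1}$-molecular block with the trivial decomposition into a single ``molecule''. The case $p=1$ of the reverse inclusion follows from \cite[Theorem 1.11]{fyy3} together with Remarks \ref{r7.2}, \ref{r7.4}(i)--(ii) and \ref{r7.6}, which identify $\widehat H_{\rm atb,\,\rho}^{1,\,q,\,\gz}(\mu)$ and $\widehat H_{\rm mb,\,\rho}^{1,\,q,\,\gz,\,\ez}(\mu)$ with the corresponding spaces defined there. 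It therefore remains to prove, for $p\in(0,1)$, that each $(p,q,\gz,\ez,\rho)_{\lz,\,1}$-molecular block $b$ admits a decomposition into a countable sum of $(p,q,\gz,\rho)_{\lz,\,1}$-atomic blocks $\{\wz b_i\}_{i=1}^\infty$, convergent in $(\cer)^\ast$, with
$$\sum_{i=1}^\infty|\wz b_i|_{\hhp}^p\ls|b|_{\hmp}^p.$$

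The main construction will follow the template of \cite[Theorem 1.11]{fyy3}, adapted to the $p$-scale. Write $b=\sum_{k=0}^\infty\sum_{j=1}^{M_k}\lz_{k,\,j}m_{k,\,j}$ with supports $B_{k,\,j}\subset U_k(B)$. For each pair $(k,\,j)$, I would first build a correction function $h_{k,\,j}$ supported on the doubling ball $\wz{(\rho^{k+2}B)}^\rho$ (whose radius is comparable to $\rho^{k+2}r_B$ by Remark \ref{r6.2}(ii) and the $\rho$-weakly doubling assumption \eqref{6.1}) with $\int h_{k,\,j}\,d\mu=\int m_{k,\,j}\,d\mu$ and with $L^q(\mu)$-norm controlled, using the natural normalization $h_{k,\,j}=\alpha_{k,\,j}\chi_{\wz{(\rho^{k+2}B)}^\rho}/\mu(\wz{(\rho^{k+2}B)}^\rho)$ times $\int m_{k,\,j}\,d\mu$. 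Then $m_{k,\,j}-h_{k,\,j}$ has vanishing integral and is supported in $\wz{(\rho^{k+2}B)}^\rho$, yielding, up to a normalizing constant, a $(p,q,\gz,\rho)_{\lz,\,1}$-atomic block whose ``outer'' ball equals $\wz{(\rho^{k+2}B)}^\rho$ and whose two ``inner'' balls are $B_{k,\,j}$ and $\wz{(\rho^{k+2}B)}^\rho$ itself. The size conditions follow from \eqref{7.3}, the estimates of Lemma \ref{l2.8} applied to the chain $B_{k,\,j}\subset\rho^{k+2}B\subset\wz{(\rho^{k+2}B)}^\rho$, and Lemma \ref{l2.9} to pass between different values of $\rho$. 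The telescoping of the corrections $\{h_{k,\,j}\}$ across levels $k$ --- forced by the cancelation $\int b\,d\mu=0$ --- is then regrouped into further $(p,q,\gz,\rho)_{\lz,\,1}$-atomic blocks living on the chain of doubling balls $\{\wz{(\rho^{k+2}B)}^\rho\}_{k\ge0}$; this step exploits Lemma \ref{l2.8}(iv)--(v) to bound the $\kbsp$ coefficients along the chain by a single $\kbsp$ coefficient at the top.

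The decay factor $\rho^{-k\ez}$ in \eqref{7.3} will be essential to absorb the losses coming from the $\mu$-measure and $\lz$-comparisons that arise when expressing everything in terms of $\wz{(\rho^{k+2}B)}^\rho$ instead of $B_{k,\,j}$; it gives a geometric series in $k$ which, when raised to the $p$-th power and summed, produces the bound $\sum_i|\wz b_i|_{\hhp}^p\ls\sum_{k,\,j}|\lz_{k,\,j}|^p\sim|b|_{\hmp}^p$. Convergence of the atomic expansion in $(\cer)^\ast$ follows from the duality estimate \eqref{7.1} applied term by term, together with the $p$-subadditivity of the $p$-quasi-norm. The overall estimate for $f\in\hmp$ is then obtained by applying the above block-level decomposition to each $b_i$ in the molecular decomposition of $f$ and assembling the resulting atomic blocks.

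The main obstacle, as in \cite[Theorem 1.11]{fyy3}, will be the bookkeeping of the $\kbsp$ coefficients across the three regimes $B_{k,\,j}\subset\rho^{k+2}B$, $\rho^{k+2}B\subset\wz{(\rho^{k+2}B)}^\rho$, and the telescoping chain $\wz{(\rho^{k+2}B)}^\rho\subset\wz{(\rho^{k+3}B)}^\rho\subset\cdots$; here the $\rho$-weakly doubling assumption \eqref{6.1} is decisive because it bounds $N^{(\rho)}_{\rho^{k+2}B,\,\wz{(\rho^{k+2}B)}^\rho}$ by a constant independent of $k$, so that $[\wz K^{(\rho),\,p}_{\rho^{k+2}B,\,\wz{(\rho^{k+2}B)}^\rho}]^p\ls1$ uniformly by Lemma \ref{l2.8}(iii). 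Without this uniformity the losses at each level would defeat the geometric decay $\rho^{-k\ez p}$ for small $\ez$, and the atomic sum would fail to converge. Once this uniform control is in place, the remainder of the argument is a careful but routine $p$-subadditive summation.
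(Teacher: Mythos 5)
Your plan is essentially the paper's own proof: after the trivial inclusion from Remark \ref{r7.8}(ii) and the $p=1$ case via \cite[Theorem 1.11]{fyy3}, you decompose each molecular block by subtracting mean corrections supported on companion doubling balls of $\rho^{k+2}B$ and then telescoping (Abel-summing) these corrections using $\int_\cx b\,d\mu=0$, with Lemmas \ref{l2.8} and \ref{l2.9}, Remark \ref{r6.2}(ii) and the decay $\rho^{-k\ez}$ doing the bookkeeping --- this is exactly the paper's splitting into the parts ${\rm I}$ and ${\rm II}$; note only that \eqref{6.1} is really needed for the radius comparability of Remark \ref{r6.2}(ii) (so that the $\lz$-factors with the negative exponent $1-1/p$ stay comparable), not for the uniform bound on $\wz K^{(\rho),\,p}_{\rho^{k+2}B,\,\wz{(\rho^{k+2}B)}^{\rho}}$, which Lemma \ref{l2.8}(iii) gives even without \eqref{6.1}. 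The one step you state too lightly is that the regrouped series equals $b$ in $(\cer)^\ast$: term-by-term use of \eqref{7.1} yields convergence of the atomic series but not its identification with $b$, for which one must check, as the paper does, that the Abel boundary term $N_{K+1}\,m_{\wz B^{\rho}_{K+2}}(g)$ tends to $0$ for every $g\in\cer$, by playing the decay $\rho^{-K\ez}$ of $N_{K+1}$ against the growth $K^{\gz/p}$ of the coefficient coming from Proposition \ref{p6.7}(a).
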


\begin{proof}
When $p=1$, the conclusion of Theorem \ref{t7.9} was obtained
in \cite[Theorem 1.11]{fyy3} without the assumption \eqref{6.1}.
Thus, we only need to consider the case $p\in(0,1)$.
Fix $\rho\in (1,\fz)$, $\gz\in [1,\fz)$, $\ez\in(0,\fz)$,
$0<p<1\le q\le\fz$ and $p\neq q$.
Let $I_B^{(\rho)}:=N^{(\rho)}_{B,\,\wz{B}^{\rho}}$
for any ball $B$.
By Remark \ref{r7.8}(ii), to show Theorem \ref{t7.9}
in this case, it suffices to prove that $\hmp\st\hhp$
and that, for any $f\in \hmp$, $f\in\hhp$ and $\|f\|_{\hhp}\ls \|f\|_{\hmp}$.
To this end, we first show that
any $(p,q,\gz,\ez,\rho)_{\lz,\,1}$-molecular block $b$
can be decomposed into a sum of some $(p,q,\gz,\rho)_{\lz,\,1}$-atomic blocks
and $(p,\fz,\gz,\rho)_{\lz,\,1}$-atomic blocks and
$\|b\|_{\hhp}\ls|b|_{\hmp}$.

For any $(p,q,\gz,\ez,\rho)_{\lz,\,1}$-molecular block $b$,
by Definition \ref{d7.5}, we know that
\begin{equation}\label{7.5}
b=\sum_{k=0}^{\fz}\sum_{j=1}^{M_k}
\lz_{k,\,j}m_{k,\,j}\quad {\rm in}\ \lon\ {\rm and\ } \lf(\cer\r)^\ast,
\end{equation}
where, for any $k\in\zz_+$ and $j\in\{1, \ldots, M_k\}$,
$\lz_{k,\,j}\in\cc$ and
$\supp(m_{k,\,j})\st B_{k,\,j}\st U_k(B)$ with the same notation as in
Definition \ref{d7.5}.  Moreover, observe that, by \eqref{7.5},
the H\"older inequality and \eqref{7.3}, we have
\begin{align}\label{7.6}
\noz\sum_{k=0}^\fz\sum_{j=1}^{M_k}\|\lz_{k,\,j}m_{k,\,j}\|_\lon
&\le\sum_{k=0}^\fz\sum_{j=1}^{M_k}|\lz_{k,\,j}| \rho^{-k\ez}
\lf[\lz\lf(c_{B},\rho^{k+2}r_{B}\r)\r]^{1-1/p}
\lf[\wz K^{(\rho),\,p}_{B_{k,\,j},\,\rho^{k+2}B}\r]^{-\gz}\\
&\ls\lf(\sum_{k=0}^\fz\sum_{j=1}^{M_k}|\lz_{k,\,j}| ^p\r)^{1/p}
[\lz(c_{B},r_{B})]^{1-1/p}<\fz.
\end{align}

For each $k\in\zz_+$, let $b_k:=\sum_{j=1}^{M_k}
\lz_{k,\,j}m_{k,\,j}, $ $B^{\rho}_{k+2}:=\rho^{k+2}B$ and
$\wz B^{\rho}_{k+2}:=\wz{(\rho^{k+2}B)^{2\rho}}$.
By \eqref{7.5}, we write
\begin{align*}
b&=\sum_{k=0}^{\fz}\lf[b_k-\frac{\chi_{\wz B^{\rho}_{k+2}}}
{\mu(\wz B^{\rho}_{k+2})}\int_{\cx}b_k(y)\,d\mu(y)\r]
+\sum_{k=0}^{\fz}\frac{\chi_{\wz B^{\rho}_{k+2}}}
{\mu(\wz B^{\rho}_{k+2})}\int_{\cx}b_k(y)\,d\mu(y)\\
&=\sum_{k=0}^{\fz}\sum_{j=1}^{M_k}\lz_{k,\,j}
\lf[m_{k,\,j}-\frac{\chi_{\wz B^{\rho}_{k+2}}}
{\mu(\wz B^{\rho}_{k+2})}\int_{B_{k,\,j}}m_{k,\,j}(y)\,d\mu(y)\r]
+\sum_{k=0}^{\fz}\frac{\chi_{\wz B^{\rho}_{k+2}}}
{\mu(\wz B^{\rho}_{k+2})}\int_{\cx}b_k(y)\,d\mu(y)\\
&=:\sum_{k=0}^{\fz}\sum_{j=1}^{M_k} b_{k,\,j}
+\sum_{k=0}^{\fz}\chi_k \wz M_k
=:{\rm I}+{\rm II},
\end{align*}
where, for all $k\in\zz_+$ and $j\in\{1,\ldots, M_k\}$,
$$b_{k,\,j}:=\lz_{k,\,j}\lf[m_{k,\,j}-\frac{\chi_{\wz B^{\rho}_{k+2}}}
{\mu(\wz B^{\rho}_{k+2})}\int_{B_{k,\,j}}m_{k,\,j}(y)\,d\mu(y)\r],$$
$\chi_k:=\frac{\chi_{\wz B^{\rho}_{k+2}}}
{\mu(\wz B^{\rho}_{k+2})}$
and $\wz M_k:=\int_{\cx}b_k(y)\,d\mu(y)$. From \eqref{7.6},
it follows that
$\sum_{k=0}^{\fz}\sum_{j=1}^{M_k} b_{k,\,j}$ and
$\sum_{k=0}^{\fz}\chi_k \wz M_k$ both converge in $\lon$.

To estimate I, we first show that, for any
$k\in\zz_+$ and $j\in\{1, \ldots, M_k\}$,
$b_{k,\,j}$ is a $(p,q,\gz,\rho)_{\lz,\,1}$-atomic block. Noticing that
$\supp(b_{k,\,j})\st2\wz B^{\rho}_{k+2}$ and
$\int_{\cx}b_{k,\,j}(y)\,d\mu(y)=0$, to show this,
it only needs to show that $b_{k,\,j}$ satisfies Definition \ref{d3.2}(iii).
To this end, we further decompose $b_{k,\,j}$ into
\begin{align*}
b_{k,\,j}&=\lz_{k,\,j}\lf[m_{k,\,j}-\frac{\chi_{\supp (m_{k,\,j})}}
{\mu(\wz B^{\rho}_{k+2})}\int_{B_{k,\,j}}m_{k,\,j}(y)\,d\mu(y)\r]\\
&\hs-\lz_{k,\,j}\frac{\chi_{\wz B^{\rho}_{k+2}\bh \supp (m_{k,\,j})}}
{\mu(\wz B^{\rho}_{k+2})}\int_{B_{k,\,j}}m_{k,\,j}(y)\,d\mu(y)
=:{\rm A}^{(1)}_{k,\,j}-{\rm A}^{(2)}_{k,\,j}.
\end{align*}
By the Minkowski inequality, the H\"older inequality,
\eqref{7.3}, \eqref{2.1}, Remark \ref{r6.2}(ii),
(iv) and (iii) of Lemma \ref{l2.8},
we know that
 \begin{align*}
&\lf\|{\rm A}_{k,\,j}^{(1)}\r\|_{\lq}\\
&\hs\le|\lz_{k,\,j}|\lf\{\|m_{k,\,j}\|_\lq
+\frac{[\mu(\supp (m_{k,\,j}))]^{1/q}}
{\mu(\wz B^{\rho}_{k+2})}\lf|\int_{B_{k,\,j}}
m_{k,\,j}(y)\,d\mu(y)\r|\r\}\\
&\hs\ls|\lz_{k,\,j}|\lf\{\|m_{k,\,j}\|_\lq
+\frac{[\mu(\supp (m_{k,\,j}))]^{1/q}[\mu(B_{k,\,j})]^{1/q'}}
{\mu(\wz B^{\rho}_{k+2})}
\|m_{k,\,j}\|_\lq\r\}\\
&\hs\ls|\lz_{k,\,j}|\|m_{k,\,j}\|_\lq\ls|\lz_{k,\,j}|\rho^{-k\ez}
\lf[\mu(\rho B_{k,\,j})\r]^{1/q-1}
\lf[\lz\lf(c_{B},r_{B^{\rho}_{k+2}}\r)\r]^{1-1/p}
\lf[\wz K^{(\rho),\,p}_{B_{k,\,j},\,B^{\rho}_{k+2}}\r]^{-\gz}\\
&\hs\ls|\lz_{k,\,j}|\rho^{-k\ez}
\lf[\mu(\rho B_{k,\,j})\r]^{1/q-1}
\lf[\lz\lf(c_{B},r_{2\wz B^{\rho}_{k+2}}\r)\r]^{1-1/p}
\lf[\wz K^{(\rho),\,p}_{B_{k,\,j},\,2\wz B^{\rho}_{k+2}}\r]^{-\gz}.
\end{align*}
Let $c_5$, independent of $k$ and $j$, be the implicit
positive constant of the above inequality,
$$
\mu^{(1)}_{k,\,j}:=c_5|\lz_{k,\,j}|\rho^{-k\ez}
$$
and
$a^{(1)}_{k,\,j}:=\frac1{\mu^{(1)}_{k,\,j}}{\rm A}^{(1)}_{k,\,j}$.
Then ${\rm A}^{(1)}_{k,\,j}=\mu^{(1)}_{k,\,j}a_{k,\,j}^{(1)}$,
$\supp(a_{k,\,j}^{(1)})\st B_{k,\,j}\st2\wz B^{\rho}_{k+2}$ and
$$\lf\|a_{k,\,j}^{(1)}\r\|_\lq\le\lf[\mu(\rho B_{k,\,j})\r]^{1/q-1}
\lf[\lz\lf(c_{B},r_{2\wz B^{\rho}_{k+2}}\r)\r]^{1-1/p}
\lf[\wz K^{(\rho),\,p}_{B_{k,\,j},\,2\wz B^{\rho}_{k+2}}\r]^{-\gz}.$$

From the H\"older inequality, \eqref{7.3}, the fact that
$\wz K^{(\rho),\,p}_{B_{k,\,j},\,B^{\rho}_{k+2}}\ge1$,
the $(2\rho,\bz_{2\rho})$-doubling property of $\wz B^{\rho}_{k+2}$,
Remark \ref{r6.2}(ii) and Lemma \ref{l2.8}(ii), it follows that
\begin{align*}
\lf\|{\rm A}_{k,\,j}^{(2)}\r\|_{\lq}&
=|\lz_{k,\,j}|
\frac{[\mu(\wz B^{\rho}_{k+2}\bh \supp (m_{k,\,j}))]^{1/q}}
{\mu(\wz B^{\rho}_{k+2})}
\lf|\int_{B_{k,\,j}}m_{k,\,j}(y)\,d\mu(y)\r|\\
&\le|\lz_{k,\,j}|\lf[\mu\lf(\wz B^{\rho}_{k+2}\r)\r]^{1/q-1}
\lf[\mu\lf(B_{k,\,j}\r)\r]^{1/q'}
\|m_{k,\,j}\|_\lq\\
&\ls|\lz_{k,\,j}|\lf[\mu\lf(\wz B^{\rho}_{k+2}\r)\r]^{1/q-1}
\lf[\mu\lf(B_{k,\,j}\r)\r]^{1/q'}\rho^{-k\ez}
\lf[\mu(\rho B_{k,\,j})\r]^{1/q-1}\lf[\lz\lf(c_{B},r_{B^{\rho}_{k+2}}\r)\r]^{1-1/p}\\
&\ls |\lz_{k,\,j}|
\lf[\mu\lf(2\rho\wz B^{\rho}_{k+2}\r)\r]^{1/q-1}\rho^{-k\ez}
\lf[\lz\lf(c_{B},r_{B^{\rho}_{k+2}}\r)\r]^{1-1/p}\\
&\ls|\lz_{k,\,j}|\rho^{-k\ez}
\lf[\mu\lf(2\rho\wz B^{\rho}_{k+2}\r)\r]^{1/q-1}
\lf[\lz\lf(c_{B},r_{2\wz B^{\rho}_{k+2}}\r)\r]^{1-1/p}
\lf[\wz K^{(\rho),\,p}_{2\wz B^{\rho}_{k+2},\,
2\wz B^{\rho}_{k+2}}\r]^{-\gz}.
\end{align*}
Let $c_6$, independent of $k$ and $j$, be the
implicit positive constant of the above inequality,
$$
\mu^{(2)}_{k,\,j}:=c_6|\lz_{k,\,j}|\rho^{-k\ez}
$$
and
$a^{(2)}_{k,\,j}:=\frac1{\mu^{(2)}_{k,\,j}}{\rm A}^{(2)}_{k,\,j}$.
Then ${\rm A}^{(2)}_{k,\,j}=\mu^{(2)}_{k,\,j}a_{k,\,j}^{(2)}$,
$\supp(a_{k,\,j}^{(2)})\st2\wz B^{\rho}_{k+2}$ and
$$\lf\|a_{k,\,j}^{(2)}\r\|_\lq
\le\lf[\mu\lf(2\rho\wz B^{\rho}_{k+2}\r)\r]^{1/q-1}
\lf[\lz\lf(c_{B},r_{2\wz B^{\rho}_{k+2}}\r)\r]^{1-1/p}
\lf[\wz K^{(\rho),\,p}_{2\wz B^{\rho}_{k+2},\,2
\wz B^{\rho}_{k+2}}\r]^{-\gz}.$$
Thus, $b_{k,\,j}=\mu_{k,\,j}^{(1)}a_{k,\,j}^{(1)}
+\mu_{k,\,j}^{(2)}a_{k,\,j}^{(2)}$
is a $(p,q,\gz,\rho)_{\lz,\,1}$-atomic block and
$$
|b_{k,\,j}|_{\hhp}\ls|\lz_{k,\,j}|\rho^{-k\ez}.
$$ Moreover, we have
\begin{equation}\label{7.7}
\|{\rm I}\|^p_{\hhp}
\ls\sum_{k=0}^{\fz}\sum_{j=1}^{M_k}|\lz_{k,\,j}|^p\rho^{-kp\ez}
\ls\sum_{k=0}^{\fz}\sum_{j=1}^{M_k}|\lz_{k,\,j}|^p\sim|b|^p_{\hmp}.
\end{equation}

Now we turn to estimate ${\rm II}$. Observe that, by \eqref{7.6} and
the H\"older inequality, we have
$$
\sum_{k=0}^\fz\lf|\wz{M}_k\r|\le\sum_{k=0}^\fz\|b_k\|_{\lon}<\fz.
$$
For each $k\in\zz_+$, let $N_k:=\sum_{i=k}^{\fz}\wz M_i$.
From the H\"older inequality and \eqref{7.3}, it follows that
\begin{align*}
\sum_{k=0}^\fz\|\chi_k N_k\|_\lon
&\le\sum_{k=0}^\fz\sum_{i=k}^\fz\lf\|\chi_k\wz{M}_i\r\|_\lon
\le\sum_{k=0}^\fz\sum_{i=k}^\fz\|b_i\|_\lon\\
&\le\sum_{k=0}^\fz\sum_{i=k}^\fz\sum_{j=1}^{M_i}
|\lz_{i,\,j}|\|m_{i,\,j}\|_\lq[\mu(B_{i,\,j})]^{1/q'}\\
&\le\sum_{k=0}^\fz\sum_{i=k}^\fz\sum_{j=1}^{M_i}
|\lz_{i,\,j}|\rho^{-i\ez}\lf[\lz\lf(c_B,\rho^{i+2}r_B\r)\r]^{1-1/p}\\
&\le\sum_{k=0}^\fz\rho^{-k\ez}\sum_{i=0}^\fz\sum_{j=1}^{M_i}
|\lz_{i,\,j}|[\lz(c_B,r_B)]^{1-1/p}\\
&\le\lf(\sum_{i=0}^\fz\sum_{j=1}^{M_i}
|\lz_{i,\,j}|^p\r)^{1/p}[\lz(c_B,r_B)]^{1-1/p}<\fz.
\end{align*}
Similarly, $\sum_{k=0}^\fz\|\chi_k N_{k+1}\|_\lon<\fz$.
By the above facts, we have
\begin{align*}
\sum_{k=0}^{\fz}\chi_k \wz M_k&=\sum_{k=0}^{\fz}\chi_k(N_k-N_{k+1})
=\sum_{k=0}^{\fz}(\chi_{k+1}-\chi_k)N_{k+1}+\chi_0 N_0\\
&=\sum_{k=0}^{\fz}(\chi_{k+1}-\chi_k)N_{k+1}\\
&=\sum_{k=0}^{\fz}\sum_{i=k+1}^{\fz}\sum_{j=1}^{M_i}\lz_{i,\,j}
(\chi_{k+1}-\chi_k)\int_{B_{i,\,j}}m_{i,\,j}(y)\,d\mu(y)\\
&=:\sum_{k=0}^{\fz}\sum_{i=k+1}^{\fz}\sum_{j=1}^{M_i} b_{k,\,j,\,i},
\end{align*}
where the summation in the last equality holds in $\lon$.

Now we prove that, for any $k\in\zz_+$, $i\in\{k+1,k+2,\ldots\}$
and $j\in\{1,\ldots,M_k\}$,
$b_{k,\,j,\,i}$ is a $(p,\fz,\gz,\rho)_{\lz,1}$-atomic block.
Observing that $\supp(b_{k,\,j,\,i})\st 2\wz B_{k+3}^{\rho}$ and
$\int_\cx b_{k,\,j,\,i}(y)\,d\mu(y)=0$, we only need to show that
$b_{k,\,j,\,i}$ satisfies Definition \ref{d3.2}(iii).

To this end, we further write
$$b_{k,\,j,\,i}=\lz_{i,\,j}\chi_{k+1}
\int_{B_{i,\,j}}m_{i,\,j}(y)\,d\mu(y)
-\lz_{i,\,j}\chi_k\int_{B_{i,\,j}}m_{i,\,j}(y)\,d\mu(y)
=:{\rm A}_{k,\,j,\,i}^{(1)}-{\rm A}_{k,\,j,\,i}^{(2)}.$$
From the H\"older inequality, \eqref{7.3}, the fact that
$\wz K^{(\rho),\,p}_{B_{i,\,j},\,B_{i+2}^{\rho}}\ge1$,
Remark \ref{r6.2}(ii) and Lemma \ref{l2.8}(ii), we deduce that
\begin{align*}
\lf\|{\rm A}_{k,\,j,\,i}^{(1)}\r\|_\li&\le|\lz_{i,\,j}|
\frac{[\mu(B_{i,\,j})]^{1/q'}}
{\mu(\wz B_{k+3}^{\rho})}\|m_{i,\,j}\|_\lq\\
&\le|\lz_{i,\,j}|
\frac{[\mu(B_{i,\,j})]^{1/q'}}
{\mu(\wz B_{k+3}^{\rho})}\rho^{-i\ez}
[\mu(\rho B_{i,\,j})]^{1/q-1}\lf[\lz\lf(c_{B_i},r_{B_{i+2}^{\rho}}\r)\r]^{1-1/p}
\lf[\wz K_{B_{i,\,j},\,B_{i+2}^{\rho}}^{(\rho),\,p}\r]^{-\gz}\\
&\ls|\lz_{i,\,j}|
\rho^{-i\ez}\lf[\mu\lf(2\rho\wz B_{k+3}^{\rho}\r)\r]^{-1}
\lf[\lz\lf(c_{B},r_{B_{k+3}^{\rho}}\r)\r]^{1-1/p}\\
&\ls|\lz_{i,\,j}|\rho^{-i\ez}
\lf[\mu\lf(2\rho\wz B_{k+3}^{\rho}\r)\r]^{-1}
\lf[\lz\lf(c_{B},r_{2\wz B_{k+3}^{\rho}}\r)\r]^{1-1/p}
\lf[\wz K^{(\rho),\,p}_{2\wz B_{k+3}^{\rho},\,
2\wz B_{k+3}^{\rho}}\r]^{-\gz}.
\end{align*}
Let $c_7$, independent of $k$ and $j$, be the
implicit positive constant of the above inequality,
$$
\mu^{(1)}_{k,\,j,\,i}:=c_7|\lz_{i,\,j}|\rho^{-i\ez}
$$ and
$a^{(1)}_{k,\,j,\,i}:=\frac1{\mu^{(1)}_{k,\,j,\,i}}
{\rm A}_{k,\,j,\,i}^{(1)}.$
Then we see that ${\rm A}_{k,\,j,\,i}^{(1)}
=\mu^{(1)}_{k,\,j,\,i}a^{(1)}_{k,\,j,\,i}$,
$\supp(a_{k,\,j,\,i}^{(1)})\st2\wz B_{k+3}^{\rho}$ and
$$\lf\|a_{k,\,j,\,i}^{(1)}\r\|_\li
\le\lf[\mu\lf(2\rho\wz B_{k+3}^{\rho}\r)\r]^{-1}
\lf[\lz\lf(c_{B},r_{2\wz B_{k+3}^{\rho}}\r)\r]^{1-1/p}
\lf[\wz K_{2\wz B_{k+3}^{\rho},\,
2\wz B_{k+3}^{\rho}}^{(\rho),\,p}\r]^{-\gz}.
$$

By an argument similar to that used in the estimate for
${\rm A}_{k,\,j,\,i}^{(1)}$, we conclude that
$$
\lf\|{\rm A}_{k,\,j,\,i}^{(2)}\r\|_\li\le c_8
|\lz_{i,\,j}|\rho^{-i\ez}\lf[\mu\lf(2\rho\wz B_{k+2}^{\rho}\r)\r]^{-1}
\lf[\lz\lf(c_{B},r_{2\wz B_{k+3}^{\rho}}\r)\r]^{1-1/p}
\lf[\wz K_{2\wz B_{k+2}^{\rho},\,
2\wz B_{k+3}^{\rho}}^{(\rho),\,p}\r]^{-\gz},
$$
where $c_8$ is a positive constant independent of $k$, $j$ and $i$.
Let $\mu^{(2)}_{k,\,j,\,i}:=c_8|\lz_{i,\,j}|\rho^{-i\ez}$ and
$$a^{(2)}_{k,\,j,\,i}:=\frac1{\mu^{(2)}_{k,\,j,\,i}}
{\rm A}_{k,\,j,\,i}^{(2)}.$$
Then ${\rm A}_{k,\,j,\,i}^{(2)}=\mu^{(2)}_{k,\,j,\,i}a^{(2)}_{k,\,j,\,i}$,
$\supp(a_{k,\,j,\,i}^{(2)})
\st2\wz B^{\rho}_{k+2} \st2\wz B_{k+3}^{\rho}$
and
$$\lf\|a_{k,\,j,\,i}^{(2)}\r\|_\li
\le\lf[\mu\lf(2\rho\wz B_{k+2}^{\rho}\r)\r]^{-1}
\lf[\lz\lf(c_{B},r_{2\wz B_{k+3}^{\rho}}\r)\r]^{1-1/p}
\lf[\wz K_{2\wz B_{k+2}^{\rho},\,
2\wz B_{k+3}^{\rho}}^{(\rho),\,p}\r]^{-\gz}.$$
Thus, $b_{k,\,j,\,i}=\mu_{k,\,j,\,i}^{(1)}a_{k,\,j,\,i}^{(1)}
+\mu_{k,\,j,\,i}^{(2)}
a_{k,\,j,\,i}^{(2)}$
is a $(p,\fz,\gz,\rho)_{\lz,\,1}$-atomic block and
$$|b_{k,\,j,\,i}|_{\widehat{H}^{p,\,\infty,\,\gz}_{\rm atb,\,\rho}\,(\mu)}
\ls|\lz_{i,\,j}|\rho^{-i\ez}.$$
 Moreover, we have
\begin{align*}
\|{\rm II}\|^p_{H^{p,\,\infty,\,\gz}_{\rm atb}\,(\mu)}
&\ls\sum_{k=0}^{\fz}\sum_{i=k+1}^{\fz}
\sum_{j=1}^{M_i}|b_{k,\,j,\,i}|^p_{\widehat{H}^{p,\,\infty,\,\gz}
_{\rm atb,\,\rho}\,(\mu)}\\
&\ls\sum_{k=0}^{\fz}\sum_{i=k+1}^{\fz}\rho^{-ip\ez}
\sum_{j=1}^{M_i}|\lz_{i,\,j}|^p
\sim\sum_{i=1}^{\fz}\rho^{-ip\ez}\sum_{k=0}^{i-1}
\sum_{j=1}^{M_i}|\lz_{i,\,j}|^p\\
&\ls\sum_{i=1}^{\fz}\sum_{j=1}^{M_i}|\lz_{i,\,j}|^p\sim|b|^p_{\hmp}.
\end{align*}
From this fact and \eqref{7.7}, we deduce that both
$\sum_{k=0}^{\fz}\sum_{j=1}^{M_k} b_{k,\,j}$
and $\sum_{k=0}^{\fz}\sum_{i=k+1}^{\fz}
\sum_{j=1}^{M_i} b_{k,\,j,\,i}$ converge in
$(\cer)^\ast$.

Now we claim that
$$
b=\sum_{k=0}^{\fz}\sum_{j=1}^{M_k} b_{k,\,j}
+\sum_{k=0}^{\fz}\sum_{i=k+1}^{\fz}\sum_{j=1}^{M_i} b_{k,\,j,\,i}\quad
{\rm in}\quad (\cer)^\ast.$$

Indeed, by $b=\sum_{k=0}^\fz\sum_{j=1}^{M_k}\lz_{k,\,j}m_{k,\,j}$
in $(\cer)^\ast$, we see that, for any $g\in\cer$,
\begin{align*}
\int_\cx b(x)g(x)\,d\mu(x)&=\lim_{K\to\fz}\sum_{k=0}^K\sum_{j=1}^{M_k}
\int_\cx \lz_{k,\,j}m_{k,\,j}(x)g(x)\,d\mu(x)\\
&=\lim_{K\to\fz}\sum_{k=0}^K\sum_{j=1}^{M_k}
\int_\cx b_{k,\,j}(x)g(x)\,d\mu(x)
+\lim_{K\to\fz}\sum_{k=0}^K\wz{M}_k\int_\cx \chi_k(x)g(x)\,d\mu(x).
\end{align*}
Moreover, by the fact that $|N_k|<\fz$ for any $k\in\zz_+$
and $N_0=0$, we further write
\begin{align*}
&\lim_{K\to\fz}\sum_{k=0}^K\wz{M}_k\int_\cx \chi_k(x)g(x)\,d\mu(x)\\
&\quad=\lim_{K\to\fz}\int_\cx\sum_{k=0}^K(N_k-N_{k+1})
\chi_k(x)g(x)\,d\mu(x)\\
&\quad=\lim_{K\to\fz}\int_\cx\lf[\chi_0(x)N_0-\chi_K(x)N_{K+1}
+\sum_{k=1}^KN_k\chi_k(x)-\sum_{k=0}^{K-1}N_{k+1}\chi_k(x)\r]g(x)\,d\mu(x)\\
&\quad=-\lim_{K\to\fz}\int_\cx\chi_K(x)N_{K+1}g(x)\,d\mu(x)
+\lim_{K\to\fz}\int_\cx\sum_{k=0}^{K-1}N_{k+1}
[\chi_{k+1}(x)-\chi_k(x)]g(x)\,d\mu(x)\\
&\quad=:{\rm A}+\lim_{K\to\fz}\sum_{k=0}^{K-1}
\sum_{i=K+1}^{\fz}\wz{M}_i\int_\cx
[\chi_{k+1}(x)-\chi_k(x)]g(x)\,d\mu(x)\\
&\quad={\rm A}+\lim_{K\to\fz}\sum_{k=0}^{K-1}\sum_{i=K+1}^{\fz}
\sum_{j=1}^{M_i}\int_\cx b_{k,\,j,\,i}(x)g(x)\,d\mu(x).
\end{align*}

Thus, to prove the above claim, it suffices to show that ${\rm A}=0$.
To this end, by the H\"older inequality and \eqref{7.3}, we conclude that,
for any $K\in\nn$,
\begin{align}\label{7.8}
\noz\lf|N_{K+1}\r|&\le\sum_{i=K+1}^{\fz}\lf|\wz{M}_i\r|
\le\sum_{i=K+1}^\fz\int_\cx|b_i(y)|\,d\mu(y)\\
&\noz\le\sum_{i=K+1}^\fz\sum_{j=1}^{M_i}|\lz_{i,\,j}|
\int_{B_{i,\,j}}|m_{i,\,j}(y)|\,d\mu(y)\\
&\noz\le\sum_{i=K+1}^\fz\sum_{j=1}^{M_i}|\lz_{i,\,j}|
\lf[\mu(B_{i,\,j})\r]^{1/q'}\|m_{i,\,j}\|_{\lq}\\
&\noz\le\sum_{i=K+1}^\fz\sum_{j=1}^{M_i}|\lz_{i,\,j}|
\lf[\mu(B_{i,\,j})\r]^{1/q'}\rho^{-i\ez}
\lf[\mu(\rho B_{i,\,j})\r]^{1/q-1}
\lf[\lz\lf(c_B,\rho^{i+2}r_B\r)\r]^{1-1/p}\\
&\noz\le\sum_{i=K+1}^\fz\sum_{j=1}^{M_i}|\lz_{i,\,j}|
\rho^{-K\ez}\lf[\lz\lf(c_B,\rho^{K+2}r_B\r)\r]^{1-1/p}\\
&\le\rho^{-K\ez}\lf[\lz\lf(c_B,\rho^{K+2}r_B\r)\r]^{1-1/p}
\lf(\sum_{i=K+1}^\fz\sum_{j=1}^{M_i}|\lz_{i,\,j}|^p\r)^{1/p}.
\end{align}
We write
\begin{align*}
\lf|\int_\cx\chi_{K}(x)N_{K+1}g(x)\,d\mu(x)\r|
&=\lf|N_{K+1}\r|\lf|m_{\wz{B}_K^{\rho}}(g)\r|\\
&\le\lf|N_{K+1}\r|\lf|m_{\wz{B}_K^{\rho}}(g)-m_{\wz{B}_0^{\rho}}(g)\r|
+\lf|N_{K+1}\r|\lf|m_{\wz{B}_0^{\rho}}(g)\r|\\
&=:{\rm I}_K+{\rm II}_K.
\end{align*}
By \eqref{7.8}, $g\in L^1_{\loc}(\mu)$ and
$\sum_{k=0}^{\fz}\sum_{j=1}^{M_k}|\lz_{k,\,j}|^p<\fz$, we know that
$$
{\rm II}_K\le\lf|m_{\wz{B}_0^{\rho}}(g)\r|\rho^{-K\ez}
\lf[\lz(c_B,r_B)\r]^{1-1/p}
\lf(\sum_{i=K+1}^\fz\sum_{j=1}^{M_i}|\lz_{i,\,j}|^p\r)^{1/p}\to0
\quad {\rm as}\quad K\to\fz.
$$
From Proposition \ref{p6.7}(a), $g\in\cer$,
\eqref{7.8} and $\sum_{k=0}^{\fz}\sum_{j=1}^{M_k}|\lz_{k,\,j}|^p<\fz$,
we further deduce that
\begin{align*}
{\rm I}_K&\le\lf|N_{K+1}\r|
\lf[\wz K_{\rho^2B,\,\rho^{K+2}B}^{(\rho),\,p}\r]^{\gz}
\lf[\lz\lf(c_B,\rho^{K+2}r_B\r)\r]^{1/p-1}\|g\|_{\cer}\\
&\ls\lf|N_{K+1}\r|K^{\gz/p}\lf[\lz\lf(c_B,\rho^{K+2}r_B\r)\r]^{1/p-1}
\|g\|_{\cer}\\
&\ls\rho^{-K\ez}K^{\gz/p}\lf(\sum_{i=K+1}^\fz\sum_{j=1}^{M_i}
|\lz_{i,\,j}|^p\r)^{1/p}\|g\|_{\cer}
\to 0 \quad {\rm as}\quad K\to\fz,
\end{align*}
which, together with the estimate of ${\rm I}_K$, completes
the proof of the above claim.

By Remark \ref{r7.4}(ii) and the estimates for I and II,
we see that $b\in\hhp$ and
$$\|b\|_{\hhp}^p\le\|{\rm I}\|_{\hhp}^p+\|{\rm II}\|_{\hhp}^p
\ls\|{\rm I}\|_{\hhp}^p+\|{\rm II}\|_{H^{p,\,\infty,\,\gz}
_{\rm atb}\,(\mu)}^p\ls|b|^p_{\hmp},$$
which, together with some standard arguments, then
completes the proof of Theorem \ref{t7.9}.
\end{proof}

\begin{remark}\label{r7.10}
(i) As was pointed out in the proof of Theorem \ref{t7.9},
if $\rho\in(1,\fz)$, $q\in(1,\fz]$, $\gz\in[1,\fz)$
and $\ez\in(0,\fz)$, then $\widehat H_{\rm{atb},\,\rho}^{1,\,q,\,\gz}(\mu)$
and $\widehat H_{\rm{mb},\,\rho}^{1,\,q,\,\gz,\,\ez}(\mu)$
coincide with equivalent norms, which is just \cite[Theorem 1.11]{fyy3};
namely, in this case, the assumption \eqref{6.1} is superfluous.
However, when $p\in(0,1)$, without \eqref{6.1}, it is still unclear whether
Theorem \ref{t7.9} holds true or not.

(ii) By Theorem \ref{t7.9}, we see that $\hmp$ is independent
of the choice of $\ez$ under the assumption \eqref{6.1}.
\end{remark}

The following result is an easy consequence of Theorem \ref{t7.9}
and Remark \ref{r6.2}(i), the details being omitted.

\begin{corollary}\label{c7.11}
Let $(\cx,d,\mu)$ be a space of homogeneous type with
the dominating function
$$\lz(x,r):=\mu(B(x,r))\quad \mathrm{for\ all}\ x\in\cx\
\mathrm{and}\ r\in(0,\fz),$$
and $\rho$, $p$,
$q$, $\gz$ and $\ez$ be as in Theorem \ref{t7.9}.
Then the conclusions in Theorem \ref{t7.9} and Remark \ref{r7.10}
also hold true in this setting.
\end{corollary}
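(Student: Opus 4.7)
The plan is to reduce Corollary \ref{c7.11} directly to Theorem \ref{t7.9} and Remark \ref{r7.10} by verifying that every space of homogeneous type, equipped with the dominating function $\lz(x,r):=\mu(B(x,r))$, falls inside the setting in which those results have already been established. Thus the task is essentially a bookkeeping verification of three hypotheses: geometric doubling, upper doubling with the specified $\lz$, and the $\rho$-weakly doubling condition \eqref{6.1}.

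First I would recall that, by Coifman--Weiss \cite[pp.\,66--68]{cw71}, any space of homogeneous type is automatically geometrically doubling in the sense of Definition \ref{d2.1}. Next, with the choice $\lz(x,r):=\mu(B(x,r))$, the doubling condition \eqref{1.1} becomes exactly the inequality $\lz(x,r)\le C_{(\mu)}\lz(x,r/2)$, while $\mu(B(x,r))\le\lz(x,r)$ is a trivial equality; also $r\mapsto\lz(x,r)$ is non-decreasing. Hence $(\cx,d,\mu)$ is upper doubling in the sense of Definition \ref{d2.3}, and in fact a non-homogeneous metric measure space in the sense of this paper. Note that, as pointed out in Remark \ref{r2.4}(ii), one may further assume \eqref{2.2} holds; here this is automatic because the doubling condition, combined with the inclusion $B(y,r)\st B(x,2r)$ for $d(x,y)\le r$, yields $\mu(B(y,r))\le C_{(\mu)}\mu(B(x,r))$.

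The heart of the verification is the $\rho$-weakly doubling condition \eqref{6.1}, as announced in Remark \ref{r6.2}(i). Fix $\rho\in(1,\fz)$ and a ball $B$. By iterating the doubling condition a fixed number of times depending only on $\rho$ and $C_{(\mu)}$, one gets $\mu(\rho B)\le C_{(\mu,\rho)}\mu(B)$ for some constant $C_{(\mu,\rho)}$ depending only on $\mu$ and $\rho$. Choosing $\bz_\rho$ in the definition of $\wz B^{\rho}$ as the larger of the constant appearing in Section \ref{s2} and $C_{(\mu,\rho)}$, we see that $B$ itself is $(\rho,\bz_\rho)$-doubling, so $\wz B^{\rho}=B$ and $N^{(\rho)}_{B,\,\wz B^{\rho}}=0$. (If one keeps the ambient definition of $\bz_\rho$, then at most a finite number of iterations $\rho^k B$, with the number bounded only in terms of $\rho$, $C_{(\mu)}$ and $\bz_\rho$, is needed before a $(\rho,\bz_\rho)$-doubling ball is reached.) In either case, $N^{(\rho)}_{B,\,\wz B^{\rho}}\le\wz C_1$ for some constant $\wz C_1$ depending only on $\rho$, which is \eqref{6.1}.

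Having checked all the assumptions of Theorem \ref{t7.9}, that theorem and Remark \ref{r7.10} apply, yielding both the coincidence $\hhp=\hmp$ with equivalent quasi-norms and the independence of $\hmp$ of the choice of $\ez$ in the setting $(\cx,d,\mu)$ with $\lz(x,r)=\mu(B(x,r))$. The main (and only) potential obstacle is ensuring that the definition of $\bz_\rho$ used throughout Sections \ref{s6} and \ref{s7} is large enough to make the first few dilations of $B$ automatically $(\rho,\bz_\rho)$-doubling in the homogeneous setting; this is routine given the explicit form of $\bz_\rho$ and the observation that $\nu=\log_2 C_{(\mu)}$ here, so every aspect of the construction behaves uniformly. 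This completes the reduction and hence proves the corollary.
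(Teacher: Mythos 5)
Your proposal is correct and follows the same route as the paper, which proves the corollary simply by invoking Remark \ref{r6.2}(i) (a space of homogeneous type with $\lz(x,r)=\mu(B(x,r))$ is a non-homogeneous metric measure space satisfying the $\rho$-weakly doubling condition \eqref{6.1}) and then applying Theorem \ref{t7.9}. The only small caveat is that $\bz_\rho$ is fixed in Section \ref{s2} and cannot be "chosen" larger, but your parenthetical check with the ambient $\bz_\rho$ (using $\nu=\log_2 C_{(\mu)}$, so $\mu(\rho B)\le(2\rho)^{\nu}\mu(B)\le\bz_\rho\mu(B)$, hence every ball is already $(\rho,\bz_\rho)$-doubling and $N^{(\rho)}_{B,\wz B^{\rho}}=0$) settles this.
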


\section{Duality between $\hhp$ and $\cer$}\label{s8}

\hskip\parindent
In this section, we show that $\cer$ is the dual space of $\hhp$.
To this end,  assuming that $(\cx,d,\mu)$
satisfies the assumption \eqref{6.1},
we show that $\hhp$ is independent
of the choices of $\rho$ and $\gz$.
We point out that \emph{all conclusions
in this section hold true for the case $p=1$
without the assumption \eqref{6.1}}; see \cite{fyy3,hyy} for the details.
Thus, we mainly focus on $p\in(0,1)$ in this section.

\begin{proposition}\label{p8.1}
Suppose that $(\cx,d,\mu)$ is a non-homogeneous metric
measure space satisfying \eqref{6.1}.
Let $\rho\in (1,\fz)$, $0<p<1\le q\le\fz$ and $\gz\in[1,\fz)$.
Then the space $\hhp$ is independent of the choice of $\rho\in(1,\fz)$.
\end{proposition}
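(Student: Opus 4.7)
The plan is to show that, for any two $\rho_1,\rho_2\in(1,\fz)$, the atomic Hardy spaces $\widehat H_{\rm{atb},\,\rho_1}^{p,\,q,\,\gz}(\mu)$ and $\widehat H_{\rm{atb},\,\rho_2}^{p,\,q,\,\gz}(\mu)$ coincide with equivalent quasi-norms. By symmetry we may assume $\rho_1<\rho_2$. The inclusion $\widehat H_{\rm{atb},\,\rho_2}^{p,\,q,\,\gz}(\mu)\hookrightarrow\widehat H_{\rm{atb},\,\rho_1}^{p,\,q,\,\gz}(\mu)$ is essentially immediate: for any $(p,q,\gz,\rho_2)_{\lz,\,1}$-atomic block $b=\lz_1 a_1+\lz_2 a_2$ with $\supp(a_j)\st B_j\st B$, the monotonicity $\mu(\rho_1B_j)\le\mu(\rho_2B_j)$ combined with $1/q-1\le0$ yields $[\mu(\rho_1B_j)]^{1/q-1}\ge[\mu(\rho_2B_j)]^{1/q-1}$, while Lemma \ref{l2.9} gives $\wz K^{(\rho_1),\,p}_{B_j,\,B}\sim\wz K^{(\rho_2),\,p}_{B_j,\,B}$. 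Hence, up to a multiplicative constant depending only on $\rho_1,\rho_2,p,\nu$, the same $b$ qualifies as a $(p,q,\gz,\rho_1)_{\lz,\,1}$-atomic block, and summing over an arbitrary atomic decomposition gives the corresponding quasi-norm bound.

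The reverse inclusion is the serious step, because when $B_j$ is highly non-$(\rho_2,\bz_{\rho_2})$-doubling, $\mu(\rho_2B_j)$ can be arbitrarily larger than $\mu(\rho_1B_j)$, so the $\rho_2$-size condition is strictly more stringent. I will show that each $(p,q,\gz,\rho_1)_{\lz,\,1}$-atomic block $b=\lz_1a_1+\lz_2a_2$, with $\supp(a_j)\st B_j\st B$, can be decomposed into a sum of $(p,q,\gz,\rho_2)_{\lz,\,1}$-atomic blocks (and $(p,\fz,\gz,\rho_2)_{\lz,\,1}$-atomic blocks) whose $p$-th quasi-norm sum is bounded by $|b|^p_{\widehat H_{\rm{atb},\,\rho_1}^{p,\,q,\,\gz}(\mu)}$. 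The central device is the $\rho_2$-weakly doubling condition \eqref{6.1} together with Remark \ref{r6.2}(ii): for each $B_j$, the smallest $(\rho_2,\bz_{\rho_2})$-doubling expansion $\wz B_j^{\rho_2}=\rho_2^{k_j}B_j$ satisfies $r_{\wz B_j^{\rho_2}}\le C_{\rho_2}r_{B_j}$ and $\mu(\rho_2\wz B_j^{\rho_2})\le\bz_{\rho_2}\mu(\wz B_j^{\rho_2})$, so $\mu(\rho_2\wz B_j^{\rho_2})$ is kept comparable to $\mu(\wz B_j^{\rho_2})$ and controlled by $\mu(\rho_1B_j)$ after adjusting for $\lz$ via \eqref{2.1}--\eqref{2.2}. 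Borrowing the splitting
$$
a_j=\lf[a_j-\frac{\chi_{\wz B_j^{\rho_2}}}{\mu(\wz B_j^{\rho_2})}
\int_\cx a_j(y)\,d\mu(y)\r]+\frac{\chi_{\wz B_j^{\rho_2}}}{\mu(\wz B_j^{\rho_2})}
\int_\cx a_j(y)\,d\mu(y)
$$
used in the proof of Theorem \ref{t7.9} (in the style of \cite[Proposition 3.3(ii)]{hyy}), the first summand is supported on $\wz B_j^{\rho_2}$, has vanishing integral, and its $\lq$-norm is controlled in the $(p,q,\gz,\rho_2)_{\lz,\,1}$-form via Lemma \ref{l2.8}(ii),(iv),(v) and Lemma \ref{l2.9}; the two mean-value corrections (from $j=1,2$) combine, by the cancellation $\int_\cx b(y)\,d\mu(y)=0$, into a single $(p,\fz,\gz,\rho_2)_{\lz,\,1}$-atomic block exactly as in the last step of the proof of Theorem \ref{t7.9}.

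The hard part will be verifying that the newly appearing balls---the doubling hulls $\wz B_j^{\rho_2}$, and, in the borderline case where $r_{B_j}$ is comparable to $r_B$, the doubling hull $\wz B^{\rho_2}$ of the outer ball itself---sit inside a common outer ball for which $\lz(c_B,\cdot)$ and the $\wz K^{(\rho_2),\,p}$-coefficients match those associated with $(B_j,B)$ up to the constants permitted by \eqref{6.1}. This is precisely where (iii), (iv) and (v) of Lemma \ref{l2.8}, Lemma \ref{l2.9}, inequalities \eqref{2.1}--\eqref{2.2}, and Remark \ref{r6.2}(ii) must all interact cleanly; without \eqref{6.1} these hulls could blow up and the argument would collapse. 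Once the size estimates are secured, summing the resulting $p$-th powers of the block quasi-norms yields $|b|^p_{\widehat H_{\rm{atb},\,\rho_2}^{p,\,q,\,\gz}(\mu)}\ls|b|^p_{\widehat H_{\rm{atb},\,\rho_1}^{p,\,q,\,\gz}(\mu)}$, and a standard density argument in the completion extends the estimate from individual atomic blocks to all of $\widehat H_{\rm{atb},\,\rho_1}^{p,\,q,\,\gz}(\mu)$, completing the proof of the proposition.
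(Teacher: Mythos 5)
Your easy inclusion and your treatment of the combined mean-value correction are fine, but the main terms in your hard direction do not work: passing to the doubling hulls $\wz B_j^{\rho_2}$ goes in the wrong direction. After your splitting, the piece $a_j-\frac{\chi_{\wz B_j^{\rho_2}}}{\mu(\wz B_j^{\rho_2})}\int_\cx a_j\,d\mu$ is supported in $\wz B_j^{\rho_2}$, so to make it part of a $(p,q,\gz,\rho_2)_{\lz,\,1}$-atomic block you must verify a size bound whose right-hand side carries the factor $[\mu(\rho_2\wz B_j^{\rho_2})]^{1/q-1}$; since $1/q-1<0$ and all you are given is $\|a_j\|_{L^q(\mu)}\le[\mu(\rho_1 B_j)]^{1/q-1}[\lz(c_B,r_B)]^{1-1/p}[\wz K^{(\rho_1),\,p}_{B_j,\,B}]^{-\gz}$, you need $\mu(\rho_2\wz B_j^{\rho_2})\ls\mu(\rho_1B_j)$. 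This is false in general: condition \eqref{6.1} bounds the number of $\rho_2$-dilations needed to reach a doubling ball, hence the radius $r_{\wz B_j^{\rho_2}}\ls r_{B_j}$, but it does not bound measure ratios under a single dilation --- Example \ref{e6.3} of the paper exhibits a measure satisfying \eqref{6.1} for which $\mu(2B)/\mu(B)$ is arbitrarily large. Thus $\mu(\wz B_j^{\rho_2})$, and a fortiori $\mu(\rho_2\wz B_j^{\rho_2})$, can be arbitrarily larger than $\mu(\rho_1 B_j)$, and neither the $\lz$-factor (which concerns the outer ball $B$ and, via \eqref{2.1}--\eqref{2.2}, only compares values of $\lz$ at comparable radii, never measures) nor the $\wz K$-factors (comparable by Lemmas \ref{l2.8} and \ref{l2.9}) can absorb this deficit. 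The same obstruction already defeats the naive variant of keeping $B_j$ as supporting ball and asking $\mu(\rho_2B_j)\ls\mu(\rho_1B_j)$.

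The paper's proof resolves this by shrinking, not enlarging, the inner balls: with your labeling $\rho_1<\rho_2$, one uses the geometrically doubling condition (Remark \ref{r2.2}(ii)) to cover $B_j$ by a bounded number $N$ of balls $B_{k,\,j}$ centered in $B_j$ with radius $\frac{\rho_1-1}{10\rho_0(\rho_2+1)}r_{B_j}$, $\rho_0\in(1,\rho_2)$, so that $\rho_2\rho_0B_{k,\,j}\st\rho_1B_j$ and hence $[\mu(\rho_1B_j)]^{1/q-1}\le[\mu(\rho_2\rho_0B_{k,\,j})]^{1/q-1}$; then $a_j$ is split over this covering and the lost cancellation of each piece is restored by adding a multiple of $\chi_B$, the corrections summing to zero because $\int_\cx b\,d\mu=0$. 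The doubling-hull idea does appear, but only for the outer ball: one may assume $B$ is doubling by replacing it with its hull, which is harmless because the outer ball enters the size condition only through $\lz(c_B,r_B)$, kept comparable by \eqref{6.1} via Remark \ref{r6.2}(ii), and this doubling of $B$ is exactly what makes the $\chi_B$-corrections admissible, since for a characteristic function the $L^q(\mu)$-norm is tied to the measure of its own support. For a general $a_j$ no such link exists, which is why your hull-based decomposition of the inner balls cannot be repaired along the lines you indicate.
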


\begin{proof}
Let $0<p<1\le q\le\fz$ and $\gz\in[1,\fz)$.
Assume that $\rho\ge\rho_1>\rho_2>1$.
It is easy to see that $\widehat H^{p,\,q,\,\gz}_{{\rm atb},\,\rho_1}(\mu)
\st \widehat H^{p,\,q,\,\gz}_{{\rm atb},\,\rho_2}(\mu)$ and,
for all $f\in \widehat H^{p,\,q,\,\gz}_{{\rm atb},\,\rho_1}(\mu)$,
$$
\|f\|^p_{\widehat H^{p,\,q,\,\gz}_{{\rm atb},\,\rho_2}\,(\mu)}
\le\|f\|^p_{\widehat H^{p,\,q,\,\gz}_{{\rm atb},\,\rho_1}\,(\mu)}.
$$

On the other hand, to show that
$\widehat H^{p,\,q,\,\gz}_{{\rm atb},\,\rho_2}(\mu)
\st \widehat H^{p,\,q,\,\gz}_{{\rm atb},\,\rho_1}(\mu)$, let
$$
b=\sum_{j=1}^2\lz_ja_j\in \widehat H^{p,\,q,\,\gz}_{{\rm atb},\,\rho_2}(\mu)
$$
be a $(p,q,\gz,\rho_2)_{\lz,\,1}$-atomic block, where, for any $j\in\{1,2\}$,
$a_j$ is a function supported on $B_j\st B$ for some
balls $B_j$ and $B$ as in Definition \ref{d7.1}.

Now we claim that, without loss of generality, we may assume that $B$ is
$(\rho^2,\bz_{\rho^2})$-doubling. The reasons are as follows:
if $B$ is non-$(\rho^2,\bz_{\rho^2})$-doubling, by Lemma \ref{l2.9},
(iv) and (ii) of Lemma \ref{l2.8}, \eqref{2.1} and Remark \ref{r6.2}(ii),
we see that
\begin{align*}
\|a_j\|_\lq&\le [\mu(\rho_2 B_j)]^{1/q-1}[\lz(c_B,r_B)]^{1-1/p}
\lf[\wz K^{(\rho_2),\,p}_{B_j,\,B}\r]^{-\gz}\\
&\ls [\mu(\rho_2 B_j)]^{1/q-1}
\lf[\lz\lf(c_B,\rho^{2N^{(\rho^2)}_{B,\,{\wz B}^{\rho^2}}}r_B\r)\r]^{1-1/p}
\lf[\wz K^{(\rho_2),\,p}_{B_j,\,\wz{B}^{\rho^2}}\r]^{-\gz}\\
&\ls [\mu(\rho_2 B_j)]^{1/q-1}\lf[\lz\lf(c_B,r_{\wz{B}^{\rho^2}}\r)\r]^{1-1/p}
\lf[\wz K^{(\rho_2),\,p}_{B_j,\,\wz{B}^{\rho^2}}\r]^{-\gz}.
\end{align*}
Thus, we can replace $B$ by $\wz{B}^{\rho^2}$, which shows the claim.

Then, for each $j\in\{1,2\}$, we have
\begin{equation}\label{8.1}
\|a_j\|_\lq\le [\mu(\rho_2 B_j)]^{1/q-1}[\lz(c_B,r_B)]^{1-1/p}
\lf[\wz K^{(\rho_2),\,p}_{B_j,\,B}\r]^{-\gz}.
\end{equation}
From Remark \ref{r2.2}(ii), it follows that there exists a sequence
$\{B_{k,\,j}\}_{k=1}^N$ of balls such that
$$
B_j\st\bigcup_{k=1}^N B_{k,\,j}=:\bigcup_{k=1}^N B
\lf(c_{B_{k,\,j}},\frac{\rho_2-1}
{10\rho_0(\rho_1+1)}r_{B_j}\r)
$$
and $c_{B_{k,\,j}}\in B_j$ for all $k\in\{1,\ldots,N\}$,
where $\rho_0\in(1,\rho_1)$. Observe that
$\rho_1\rho_0 B_{k,\,j}\st\rho_2 B_j$.
For any $k\in\{1,\ldots,N\}$, define
$a_{k,\,j}:=a_j\frac{\chi_{B_{k,\,j}}}{\sum_{k=1}^N\chi_{B_{k,\,j}}}$
and $\lz_{k,\,j}:=\lz_j$.
Then we have
$$
\supp (a_{k,\,j})\st \rho_0B_{k,\,j}\quad {\rm and}\quad
b=\sum_{j=1}^2\lz_ja_j=\sum_{j=1}^2\sum_{k=1}^N\lz_{k,\,j}a_{k,\,j}.
$$
Moreover,
by \eqref{8.1}, the fact that $\rho_2 B_j\st3\rho B$, \eqref{2.1},
Lemma \ref{l2.9}, (i), (ii), (iv) and (v) of Lemma \ref{l2.8}
and the fact that $\rho_0B_{k,\,j}\st\rho B$,
we know that
\begin{align}\label{8.2}
\noz\|a_{k,\,j}\|_{\lq}&\le\|a_j\|_{\lq}\le[\mu(\rho_2 B_j)]^{1/q-1}
[\lz(c_B,r_B)]^{1-1/p}
\lf[\wz K^{(\rho_2),\,p}_{B_j,\,B}\r]^{-\gz}\\
&\noz\ls[\mu(\rho_1\rho_0 B_{k,\,j})]^{1/q-1}
[\lz(c_B,\rho r_B)]^{1-1/p}
\lf[\wz K^{(\rho_1),\,p}_{B_j,\,3\rho B}\r]^{-\gz}\\
&\ls[\mu(\rho_1\rho_0 B_{k,\,j})]^{1/q-1}
[\lz(c_B,\rho r_B)]^{1-1/p}
\lf[\wz K^{(\rho_1),\,p}_{\rho_0B_{k,\,j},\,\rho B}\r]^{-\gz}.
\end{align}
Let $C_{k,\,j}:=\lz_{k,\,j}(a_{k,\,j}+\gz_{k,\,j}\chi_{B})$, where
$\gz_{k,\,j}:=-\frac1{\mu(B)}\int_{\cx}a_{k,\,j}(x)\,d\mu(x)$.
Now we claim that $C_{k,\,j}$ is a
$(p,q,\gz,\rho_1)_{\lz,\,1}$-atomic block.
Indeed, $\supp(C_{k,\,j})\st\rho B$
and $\int_{\cx}C_{k,\,j}(x)\,d\mu(x)=0$.
Moreover, since $B_{k,\,j}\st\rho B$,
by the H\"older inequality,
\eqref{8.2}, $B$ is $(\rho^2,\bz_{\rho^2})$-doubling, $\rho>\rho_1$,
\eqref{2.1} and Lemma \ref{l2.8}(ii),
we conclude that
\begin{align*}
\|\gz_{k,\,j}\chi_{B}\|_{\lq}&\le[\mu(B)]^{1/q-1}\|a_{k,\,j}\|_{\lq}
\lf[\mu\lf(B_{k,\,j}\r)\r]^{1-1/q}\\
&\ls[\mu(\rho_1\rho B)]^{1/q-1}[\lz(c_B,\rho r_B)]^{1-1/p}
\lf[\wz K^{(\rho_1),\,p}_{B_{k,\,j},\,\rho B}\r]^{-\gz}\\
&\ls[\mu(\rho_1\rho B)]^{1/q-1}[\lz(c_B,\rho r_B)]^{1-1/p}
\lf[\wz K^{(\rho_1),\,p}_{\rho B,\,\rho B}\r]^{-\gz}.
\end{align*}
This, together with \eqref{8.2}, \eqref{2.1},
Lemma \ref{l2.8}(ii), implies that
$|C_{k,\,j}|_{\widehat H^{p,\,q,\,\gz}_{{\rm atb},\,\rho_1}(\mu)}
\ls|\lz_{k,\,j}|$. Thus, the claim holds true.

By the above claim and $\int_{\cx}b(x)\,d\mu(x)=0$, we see that
\begin{equation}\label{8.3}
b=\sum_{j=1}^2\sum_{k=1}^N C_{k,\,j}\in
\widehat H^{p,\,q,\,\gz}_{{\rm atb},\,\rho_1}(\mu)
\end{equation}
and
\begin{equation}\label{8.4}
\|b\|^p_{\widehat H^{p,\,q,\,\gz}_{{\rm atb},\,\rho_1}(\mu)}
\ls\sum_{j=1}^2\sum_{k=1}^N|C_{k,\,j}|^p_{\widehat H^{p,\,q,\,\gz}
_{{\rm atb},\,\rho_1}(\mu)}
\ls\sum_{j=1}^2|\lz_j|^p\sim|b|^p_{\widehat{H}^{p,\,q,\,\gz}
_{{\rm atb},\,\rho_2}(\mu)}.
\end{equation}

For all $f\in \widehat H^{p,\,q,\,\gamma}_{{\rm atb},\,\rho_2}\,(\mu)$,
by Proposition \ref{p6.8}, we know that there exists a sequence $\{b_i\}_i$
of $(p,q,\gz,\rho_2)_{\lz,\,1}$-atomic blocks such that
$f=\sum_{i=1}^\fz b_i$ in $(\mathcal E^{1/p-1}_{\rho_2}(\mu))^\ast
=(\mathcal E^{1/p-1}_{\rho_1}(\mu))^\ast$
and $$\sum_{i=1}^\fz|b_i|^p_{\widehat H^{p,\,q,\,\gamma}
_{{\rm atb},\,\rho_2}\,(\mu)}
\ls\|f\|^p_{\widehat H^{p,\,q,\,\gamma}_{{\rm atb},\,\rho_2}\,(\mu)}.$$
From this fact, \eqref{8.3} and \eqref{8.4},
we further deduce that
$f=\sum_{i=1}^\fz\sum_{j=1}^2\sum_{k=1}^N C^i_{k,\,j}$
in $(\mathcal E^{1/p-1}_{\rho_1}(\mu))^\ast$,
where $\{C^i_{k,\,j}\}_{i,\,j,\,k}$ are all $(p,q,\gz,\rho_1)
_{\lz,\,1}$-atomic
blocks as in
\eqref{8.3} satisfying
$$\sum_{i=1}^\fz\sum_{j=1}^2\sum_{k=1}^N
|C^i_{k,\,j}|^p_{\widehat H^{p,\,q,\,\gamma}_{{\rm atb},\,\rho_1}\,(\mu)}
\ls \sum_{i=1}^\fz
|b_i|^p_{\widehat H^{p,\,q,\,\gamma}_{{\rm atb},\,\rho_2}\,(\mu)}
\ls\|f\|^p_{\widehat H^{p,\,q,\,\gamma}_{{\rm atb},\,\rho_2}\,(\mu)},$$
which implies that $f\in \widehat H^{p,\,q,\,\gamma}
_{{\rm atb},\,\rho_1}\,(\mu)$ and
$$
\|f\|_{\widehat H^{p,\,q,\,\gamma}_{{\rm atb},\,\rho_1}
\,(\mu)}\ls\|f\|_{\widehat H^{p,\,q,\,\gamma}_{{\rm atb},\,\rho_2}\,(\mu)}.
$$
This finishes the proof of Proposition \ref{p8.1}.
\end{proof}

\begin{proposition}\label{p8.2}
Let $\rho\in(1,\fz)$, $0<p<1\le q\le\fz$ and $\gz\in[1,\fz)$.
Then the space $\hhp$ is independent of the choice of $\gz\in[1,\fz)$.
\end{proposition}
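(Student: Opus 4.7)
The plan is to show that for any $1 \leq \gz_1 \leq \gz_2 < \fz$, the spaces $\widehat H^{p,\,q,\,\gz_1}_{\rm atb,\,\rho}(\mu)$ and $\widehat H^{p,\,q,\,\gz_2}_{\rm atb,\,\rho}(\mu)$ coincide with equivalent quasi-norms. One direction is immediate. Since $\wz K^{(\rho),\,p}_{B_j,\,B} \geq 1$, any $(p,q,\gz_2,\rho)_{\lz,\,1}$-atomic block is automatically a $(p,q,\gz_1,\rho)_{\lz,\,1}$-atomic block with the same coefficients (the size estimate for $\gz_2$ is stronger than that for $\gz_1$), so $\widehat H^{p,\,q,\,\gz_2}_{\rm atb,\,\rho}(\mu) \st \widehat H^{p,\,q,\,\gz_1}_{\rm atb,\,\rho}(\mu)$ with the corresponding norm inequality.

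For the reverse inclusion, the plan is to decompose any $(p,q,\gz_1,\rho)_{\lz,\,1}$-atomic block $b = \lz_1 a_1 + \lz_2 a_2$ with $\supp(a_j) \st B_j \st B$ into a sum of $(p,q,\gz_2,\rho)_{\lz,\,1}$-atomic blocks with controlled $p$-summed block quasi-norms. Exactly as in the proof of Proposition \ref{p8.1}, we may assume that $B$ is $(\rho^2,\bz_{\rho^2})$-doubling. The heart of the argument is the construction, for each $j \in \{1,2\}$, of a finite sequence of concentric $(\rho,\bz_{\rho})$-doubling balls
\begin{equation*}
B_j \st Q_{j,\,0} \st Q_{j,\,1} \st \cdots \st Q_{j,\,N_j}, \qquad Q_{j,\,N_j} \supset B,
\end{equation*}
each $Q_{j,\,i}$ of the form $\wz{(\rho^{m_{j,\,i}} B_j)}^{\rho}$ for some non-negative integer $m_{j,\,i}$, satisfying
\begin{equation*}
(3 + \lfloor\log_{\rho}2\rfloor)^{1/p} < \wz K^{(\rho),\,p}_{Q_{j,\,i},\,Q_{j,\,i+1}} \leq C_0
\end{equation*}
for all $i\in\{0,\ldots,N_j-1\}$ and a fixed constant $C_0$. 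This is a refinement of the chain appearing in the proof of \cite[Lemma 9.3]{t01a}; the weakly doubling condition \eqref{6.1} together with Remark \ref{r6.2}(ii) ensures both that the chain terminates in finitely many steps and that the operation $S\mapsto \wz S^{\rho}$ enlarges the radius by only a uniformly bounded factor, which is what produces the upper bound on the consecutive $\wz K$-coefficients. Applying Lemma \ref{l6.11} to such a chain then yields
\begin{equation*}
\sum_{i=0}^{N_j-1} \lf[\wz K^{(\rho),\,p}_{Q_{j,\,i},\,Q_{j,\,i+1}}\r]^{p} \ls \lf[\wz K^{(\rho),\,p}_{B_j,\,B}\r]^{p}.
\end{equation*}

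With the chain in hand, setting $\az_{j,\,i} := [\mu(Q_{j,\,i})]^{-1} \int a_j\,d\mu$, we telescope
\begin{equation*}
\lz_j a_j = \lz_j \lf(a_j - \az_{j,\,0}\chi_{Q_{j,\,0}}\r)
+ \sum_{i=0}^{N_j-1} \lz_j \lf(\az_{j,\,i}\chi_{Q_{j,\,i}} - \az_{j,\,i+1}\chi_{Q_{j,\,i+1}}\r)
+ \lz_j \az_{j,\,N_j}\chi_{Q_{j,\,N_j}}.
\end{equation*}
Each of the first two types of terms has zero $\mu$-integral; the two terminal terms (over $j\in\{1,2\}$) combine, by $\int b\,d\mu=0$, into a single mean-zero block supported in a ball comparable to $B$. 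Using the H\"older inequality, the $(\rho,\bz_{\rho})$-doubling property of each $Q_{j,\,i}$, the inequality $\lz(c_{Q_{j,\,i+1}},r_{Q_{j,\,i+1}})\ls\lz(c_B,r_B)$ (so that, since $1-1/p<0$, $[\lz(c_B,r_B)]^{1-1/p}\ls[\lz(c_{Q_{j,\,i+1}},r_{Q_{j,\,i+1}})]^{1-1/p}$), and the atomic size estimate for $a_j$, each mean-zero piece is realized as a $(p,q,\gz_2,\rho)_{\lz,\,1}$-atomic block with block quasi-norm bounded by
\begin{equation*}
|\lz_j| \cdot \lf[\wz K^{(\rho),\,p}_{Q_{j,\,i},\,Q_{j,\,i+1}}\r]^{\gz_2}
\cdot \lf[\wz K^{(\rho),\,p}_{B_j,\,B}\r]^{-\gz_1}.
\end{equation*}
Raising to the $p$-th power, using the uniform upper bound $\wz K^{(\rho),\,p}_{Q_{j,\,i},\,Q_{j,\,i+1}}\leq C_0$ to extract a factor $C_0^{p(\gz_2-1)}$, applying the chain estimate above, and finally invoking $\gz_1\ge 1$, give
\begin{equation*}
\sum_{j=1}^{2}\sum_{i=0}^{N_j-1} |\lz_j|^p \lf[\wz K^{(\rho),\,p}_{Q_{j,\,i},\,Q_{j,\,i+1}}\r]^{p\gz_2}\lf[\wz K^{(\rho),\,p}_{B_j,\,B}\r]^{-p\gz_1} \ls |\lz_1|^p + |\lz_2|^p \sim |b|^p_{\widehat H^{p,\,q,\,\gz_1}_{\rm atb,\,\rho}(\mu)}.
\end{equation*}
A standard density argument, analogous to the one at the end of the proof of Proposition \ref{p8.1}, extends this estimate from atomic blocks to arbitrary elements, yielding $\widehat H^{p,\,q,\,\gz_1}_{\rm atb,\,\rho}(\mu) \st \widehat H^{p,\,q,\,\gz_2}_{\rm atb,\,\rho}(\mu)$ with the required norm control.

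The main technical obstacle will be the simultaneous lower and upper control of consecutive $\wz K$-coefficients along the chain: the lower bound is dictated by the hypothesis of Lemma \ref{l6.11}, while the upper bound is what allows us to convert the factor $[\wz K^{(\rho),\,p}]^{p\gz_2}$ into a bounded constant multiple of the quantity $[\wz K^{(\rho),\,p}]^{p}$ that Lemma \ref{l6.11} actually controls. The weakly doubling hypothesis \eqref{6.1} is indispensable here, for without it the operation $S\mapsto\wz S^{\rho}$ could enlarge the radius by an arbitrarily large factor and the upper bound on consecutive $\wz K$-coefficients would fail; this is precisely why, as noted in Remark \ref{r7.10}, the $\gz$-independence of $\hhp$ is open on general non-homogeneous metric measure spaces.
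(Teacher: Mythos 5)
Your plan is essentially the paper's own argument: both prove the nontrivial inclusion by chaining doubling hulls of the form $\wz{(\rho^{m}B_j)}^{\rho^2}$ (resp.\ $\wz{(\rho^m B_j)}^{\rho}$) from $B_j$ up to a bounded dilate of $B$, with consecutive coefficients bounded below and above, bounding the chain length/sum by $[\wz K^{(\rho),\,p}_{B_j,\,B}]^p$ via Lemma \ref{l6.11} together with \eqref{6.1}, telescoping averages into mean-zero $(p,q,\gz_2,\rho)_{\lz,\,1}$-atomic blocks, and absorbing one power of $\wz K^{(\rho),\,p}_{B_j,\,B}$ using $\gz_1\ge1$; the paper merely organizes the bookkeeping differently (normalizing $a_1$ by $\wz C_b\sim\wz K^{(\rho),\,p}_{B_1,\,B}$, counting the $m\ls[\wz C_b]^p$ blocks, and splitting into four cases according to the size of $\wz K^{(\rho),\,p}_{B_j,\,B}$). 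Two routine details to smooth out: the final link of your chain need not satisfy the lower bound required by Lemma \ref{l6.11} (apply the lemma to the preceding links and add $O(1)$, as the paper does), and the uniform upper bound $C_0$ on consecutive coefficients actually comes from Lemma \ref{l2.8}(ii)--(iii) and the minimality of the chosen exponents rather than from \eqref{6.1}, whose real role is to keep the hulls' radii, hence the factors $\lz(c_B,r_B)$ and the containment of the chain in a fixed dilate of $B$, under control.
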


\begin{proof}
Assume that $1\le\gz_1<\gz_2$.
Notice that $\lf[{\wz K}^{(\rho),\,p}_{B,\,S}\r]^{-\gz_2}
\le\lf[{\wz K}^{(\rho),\,p}_{B,\,S}\r]^{-\gz_1}$ for all balls $B\subset S$.
From this, we deduce that
$\widehat H^{p,\,q,\,\gamma_2}_{{\rm atb},\,\rho}\,(\mu)
\st \widehat H^{p,\,q,\,\gamma_1}_{{\rm atb},\,\rho}\,(\mu)$ and,
for all $f\in \widehat H^{p,\,q,\,\gamma_2}_{{\rm atb},\,\rho}\,(\mu)$,
$f\in \widehat H^{p,\,q,\,\gamma_1}_{{\rm atb},\,\rho}\,(\mu)$ and
$$
\|f\|_{\widehat H^{p,\,q,\,\gamma_1}_{{\rm atb},\,\rho}\,(\mu)}
\le\|f\|_{\widehat H^{p,\,q,\,\gamma_2}_{{\rm atb},\,\rho}\,(\mu)}.
$$

Now we consider the following converse inclusion that
$\widehat H^{p,\,q,\,\gamma_1}_{{\rm atb},\,\rho}\,(\mu)
\st \widehat H^{p,\,q,\,\gamma_2}_{{\rm atb},\,\rho}\,(\mu).$
Let
$$
b=\dsum_{j=1}^2\lz_j a_j
\in \widehat H^{p,\,q,\,\gamma_1}_{{\rm atb},\,\rho}\,(\mu)
$$
be a $(p,\,q,\,\gz_1,\,\rho)_{\lz,\,1}$-atomic block,
where, for any $j\in\{1,\,2\}$,
$a_j$ is a function supported on $B_j\subset B$ for some balls $B_j$ and
$B$ as in Definition \ref{d7.1}.
We first show that any $(p,q,\gz_1,\rho)_{\lz,\,1}$-atomic block
can be decomposed into a sum of some
$(p,q,\gz_2,\rho)_{\lz,\,1}$-atomic blocks and
\begin{equation}\label{8.5}
\|b\|_{\widehat H^{p,\,q,\,\gamma_2}_{{\rm atb},\,\rho}\,(\mu)}
\ls|b|_{\widehat H^{p,\,q,\,\gamma_1}_{{\rm atb},\,\rho}\,(\mu)}.
\end{equation}

To prove \eqref{8.5}, we consider the following four cases:

{\bf Case (I)} For any $j\in\{1,\,2\}$, $\wz K^{(\rho),\,p}_{B_j,\,B}
\le [(3+\lfloor\log_{\rho}2\rfloor)C_{(\rho)}]^{1/p}$,
where $C_{(\rho)}$ is as in Lemma \ref{l2.8}(i);

{\bf Case (II)} $\wz K^{(\rho),\,p}_{B_1,\,B}>
[(3+\lfloor\log_{\rho}2\rfloor)C_{(\rho)}]^{1/p}$
and $\wz K^{(\rho),\,p}_{B_2,\,B}
\le [(3+\lfloor\log_{\rho}2\rfloor)C_{(\rho)}]^{1/p}$;

{\bf Case (III)} $\wz K^{(\rho),\,p}_{B_1,\,B}
\le [(3+\lfloor\log_{\rho}2\rfloor)C_{(\rho)}]^{1/p}$
and $\wz K^{(\rho),\,p}_{B_2,\,B}
>[(3+\lfloor\log_{\rho}2\rfloor)C_{(\rho)}]^{1/p}$;

{\bf Case (IV)} For any $j\in\{1,\,2\}$,
$\wz K^{(\rho),\,p}_{B_j,\,B}
>[(3+\lfloor\log_{\rho}2\rfloor)C_{(\rho)}]^{1/p}$.

In {\bf Case (I)}, for any $j\in\{1,\,2\}$, we have
\begin{align*}
\lf[\wz K^{(\rho),\,p}_{B_j,\,B}\r]^{-\gz_1}&<1
=\lf[\lf(3+\lfloor\log_{\rho}2\rfloor\r)C_{(\rho)}\r]^{\gz_2/p}
\cdot\lf[\lf(3+\lfloor\log_{\rho}2\rfloor\r)C_{(\rho)}\r]^{-\gz_2/p}\\
&\le\lf[\lf(3+\lfloor\log_{\rho}2\rfloor\r)C_{(\rho)}\r]^{\gz_2/p}
\lf[\wz K^{(\rho),\,p}_{B_j,\,B}\r]^{-\gz_2}.
\end{align*}
For any $j\in\{1,\,2\}$,
let $\wz \lz_j:=[(3+\lfloor\log_{\rho}2\rfloor)C_{(\rho)}]^{\gz_2/p}\lz_j$ and
$\wz a_j:=[(3+\lfloor\log_{\rho}2\rfloor)C_{(\rho)}]^{-\gz_2/p}a_j$.
Then $b=\wz\lz_1\wz a_1+\wz\lz_2\wz a_2$. From this, it is easy
to see that $b$ is a $(p,\,q,\,\gz_2,\,\rho)_{\lz,\,1}$-atomic block,
which implies that $b\in \widehat H^{p,\,q,\,\gamma_2}
_{{\rm atb},\,\rho}\,(\mu)$ and
$$
\|b\|_{\widehat H^{p,\,q,\,\gamma_2}_{{\rm atb},\,\rho}\,(\mu)}
\le \lf[\lf(3+\lfloor\log_{\rho}2\rfloor\r)C_{(\rho)}\r]
^{\gz_2/p}(|\lz_1|+|\lz_2|)
\sim|b|_{\widehat H^{p,\,q,\,\gamma_1}_{{\rm atb},\,\rho}\,(\mu)}$$
in this case.

The proofs of {\bf Case (II)}, {\bf Case (III)} and {\bf Case(IV)} are similar.
For brevity, we only prove {\bf Case (II)}.

In {\bf Case (II)}, we have $\wz K^{(\rho),\,p}_{B_1,\,B}>
[(3+\lfloor\log_{\rho}2\rfloor)C_{(\rho)}]^{1/p}$.
We now choose a sequence $\{B^{(i)}_1\}_{i=0}^m$ of balls
with certain $m\in\nn$ as follows. Let $B_1^{(0)}:=B_1$
and $B_0:=\wz{\rho^{N^{(\rho)}_{B_1,\,B}}B_1}^{\rho^2}$.
To choose $B^{(1)}_1$, let $N_1$ be the smallest
positive integer satisfying
$\wz K^{(\rho),\,p}_{B_1^{(0)},\,\rho^{N_1}B_1^{(0)}}
>[(3+\lfloor\log_{\rho}2\rfloor)C_{(\rho)}]^{1/p}$. If
$r_{\wz{\rho^{N_1}B_1^{(0)}}^{\rho^2}}\ge r_{B_0}$, then we let $B^{(1)}_1:=B_0$
and the selection process terminates.
Otherwise, we let $B_1^{(1)}:=\wz{\rho^{N_1}B_1^{(0)}}^{\rho^2}$.
To choose $B^{(2)}_1$, if, for any $N\in\nn$,
$\wz K^{(\rho),\,p}_{B_1^{(1)},\,\rho^{N}B_1^{(1)}}
\le[(3+\lfloor\log_{\rho}2\rfloor)C_{(\rho)}]^{1/p}$,
let $B^{(2)}_1:=B_0$
and the selection process terminates. Otherwise,
let $N_2$ be the smallest
positive integer satisfying
$\wz K^{(\rho),\,p}_{B_1^{(1)},\,\rho^{N_2}B_1^{(1)}}
>[(3+\lfloor\log_{\rho}2\rfloor)C_{(\rho)}]^{1/p}$. If
$r_{\wz{\rho^{N_2}B_1^{(1)}}^{\rho^2}}\ge r_{B_0}$,
then we let $B^{(2)}_1:=B_0$ and the selection process terminates.
Otherwise, we let $B^{(2)}_1:=\wz{\rho^{N_2}B_1^{(1)}}^{\rho^2}$.
We continue as long as this selection process is possible;
clearly, finally the condition
$r_{\wz{\rho^{N_{}i+1}B^{(i)}_1}^{\rho^2}}< r_{B_0}$ is violated
after finitely many steps. Without loss of generality, we may assume
that the process will stop after $m$ ($m\in\nn\cap(1,\fz)$) steps.
Now we conclude that $\{B^{(i)}_1\}_{i=0}^m$ have the following properties:

(i) $B_1^{(0)}:=B_1$, $B^{(i)}_1:=\wz{\rho^{N_i}B^{(i-1)}_1}^{\rho^2}$
for any $i\in\{1,\,\ldots,\,m-1\}$, and $B^{(m)}_1:=B_0$;

(ii) for any $i\in\{1,\,\ldots,\,m-1\}$, by Lemma \ref{l2.8}(i)
and the definition of $N_i$, we have
$$
\wz K^{(\rho),\,p}_{B^{(i-1)}_1,\,B^{(i)}_1}\ge\lf[C_{(\rho)}\r]^{-1/p}
\wz K^{(\rho),\,p}_{B^{(i-1)}_1,\,\rho^{N_i}B^{(i-1)}_1}
>\lf(3+\lfloor\log_{\rho}2\rfloor\r)^{1/p};
$$

(iii) there exists a positive constant $C$ such that,
for any $i\in\{1,\,\ldots,\,m\}$,
$\wz K^{(\rho),\,p}_{B^{(i-1)}_1,\,B^{(i)}_1}\le C$.
Indeed, if, for any $N\in\nn$,
$\wz K^{(\rho),\,p}_{B^{(m-1)}_1,\,\rho^{N}B_1^{m-1}}
\le[(3+\lfloor\log_{\rho}2\rfloor)C_{(\rho)}]^{1/p}$, then,
from the choice of $B^{(m)}_1$, we have
$\wz K^{(\rho),\,p}_{B^{(m-1)}_1,\,B^{(m)}_1}\ls 1$. Otherwise,
by Lemma \ref{l2.9}, (iv), (ii) and (iii) of Lemma \ref{l2.8}
and the definition of $N_i$, we see that, for any $i\in\{1.\ldots,m\}$,
\begin{align*}
\wz K^{(\rho),\,p}_{B^{(i-1)}_1,\,B^{(i)}_1}&\le2^{1-p}
\lf[\wz K^{(\rho),\,p}_{B^{(i-1)}_1,\,\rho^{N_i}B^{(i-1)}_1}
+c_{(\rho,\,p,\,\nu)}{\wz K}^{(\rho),\,p}_{\rho^{N_i}B^{(i-1)}_1,\,
\wz{\rho^{N_i}B^{(i-1)}_1}^{\rho^2}}\r]\\
&\le2^{1-p}\lf[K^{(\rho),\,p}_{B^{(i-1)}_1,\,\rho^{N_i-1}B^{(i-1)}_1}
+c_{(\rho,\,p,\,\nu)}{\wz K}^{(\rho),\,p}_{\rho^{N_i-1}B^{(i-1)}_1,
\,\rho^{N_i}B^{(i-1)}_1}
+c_{(\rho,\,p,\,\nu)}\r]\le C;
\end{align*}

(iv) by (ii), Lemma \ref{l6.11}, the fact that
$B_1^{m-1}\subset B^{(m)}_1\subset2\rho^{2{\wz C}_1+1}B$,
and (i), (iv) and (ii) of Lemma \ref{l2.8},
where ${\wz C}_1$ is as in \eqref{6.1},
we know that
\begin{align*}
m=(m-2)+2&\le\sum_{i=1}^{m-2}\lf[{\wz K}^{(\rho),\,p}
_{B^{(i)}_1,\,B_1^{i+1}}\r]^p+2
<\lf(3+\lfloor\log_{\rho}2\rfloor\r)\lf[\wz K^{(\rho),\,p}
_{B_1,\,B_1^{m-1}}\r]^p+2\\
&\ls\lf[\wz K^{(\rho),\,p}_{B_1,\,2\rho^{2{\wz C}_1+1}B}\r]^p
\ls\lf[\wz K^{(\rho),\,p}_{B_1,\,B}\r]^p+
\lf[\wz K^{(\rho),\,p}_{B,\,2\rho^{2{\wz C}_1+1}B}\r]^p
\ls\lf[\wz K^{(\rho),\,p}_{B_1,\,B}\r]^p.
\end{align*}
Let $C$ be the implicit positive constant of the above inequality,
$(\wz C_b)^p:=C[\wz K^{(\rho),\,p}_{B_1,\,B}]^p$ and
$\wz c_0:=\wz C_b a_1$. For any $i\in\{1,\,\ldots,\,m\}$, let
$$
\wz c_i:=\dfrac{\chi_{B^{(i)}_1}}{\mu(B^{(i)}_1)}
\dint_{\cx}\wz c_{i-1}(y)\,d\mu(y).
$$
If $i=0$, by Definition \ref{d3.2}(iii), \eqref{2.2},
$r_{B_1^{(1)}}\le\rho^{2{\wz C}_1+1}r_B$, \eqref{2.1} and (iii), we have
\begin{align}\label{8.6}
\|\wz c_0\|_{L^q(\mu)}&\ls[\mu(\rho B_1)]^{1/q-1}[\lz(c_B,r_B)]^{1-1/p}
\lf[\wz K^{(\rho),\,p}_{B_1,\,B}\r]^{-\gz_1+1}\nonumber\\
&\ls\lf[\mu\lf(\rho B_1^{(0)}\r)\r]^{1/q-1}
\lf[\lz\lf(c_{B_1},r_B\r)\r]^{1-1/p}\nonumber\\
&\ls\lf[\mu\lf(\rho B_1^{(0)}\r)\r]^{1/q-1}
\lf[\lz\lf(c_{B_1},r_{\rho B_1^{(1)}}\r)\r]^{1-1/p}
\lf[\wz K^{(\rho),\,p}_{B^{0}_1,\,\rho B_1^{(1)}}\r]^{-\gz_2},
\end{align}
where the implicit positive constant is
independent of $\wz K^{(\rho),\,p}_{B_1,\,B}$.
For $i=1$, by the H\"older inequality, Definition \ref{d3.2}(iii), \eqref{2.2},
the facts that $r_{B_1^{(1)}}\le\rho^{2{\wz C}_1+1}r_B$
and $B_1^{(1)}$ is doubling, \eqref{2.1} and Lemma \ref{l2.8}(ii),
we conclude that
\begin{align}\label{8.7}
\|\wz c_1\|_{L^q(\mu)}&\le\lf[\mu\lf(B_1^{(1)}\r)\r]^{1/q-1}[\mu(B_1)]^{1-1/q}
\lf\|\wz C_b a_1\r\|_{L^q(\mu)}\nonumber\\
&\ls\lf[\mu\lf(B_1^{(1)}\r)\r]^{1/q-1}[\mu(B_1)]^{1-1/q}[\mu(\rho B_1)]^{1/q-1}
[\lz(c_B,r_B)]^{1-1/p}
\lf[\wz K^{(\rho),\,p}_{B_1,\,B}\r]^{-\gz_1+1}\nonumber\\
&\ls\lf[\mu\lf(B_1^{(1)}\r)\r]^{1/q-1}
\lf[\lz\lf(c_{B_1},r_B\r)\r]^{1-1/p}\nonumber\\
&\ls\lf[\mu\lf(\rho^2 B_1^{(1)}\r)\r]^{1/q-1}
\lf[\lz\lf(c_{B_1},r_{\rho B_1^{(1)}}\r)\r]^{1-1/p}
\lf[\wz K^{(\rho),\,p}_{\rho B^{(1)}_1,\,\rho B_1^{(1)}}\r]^{-\gz_2}.
\end{align}
Similar to \eqref{8.6} and \eqref{8.7}, respectively,
for any $i\in\{2,\,\ldots,\,m\}$, we have
\begin{equation}\label{8.8}
\|\wz c_{i-1}\|_{L^q(\mu)}\ls
\lf[\mu\lf(\rho^2 B^{(i-1)}_1\r)\r]^{1/q-1}
\lf[\lz\lf(c_{B_1},r_{\rho B_1^{i}}\r)\r]^{1-1/p}
\lf[\wz K^{(\rho),\,p}_{\rho B^{(i-1)}_1,\,\rho B_1^{i}}\r]^{-\gz_2}
\end{equation}
and
\begin{equation}\label{8.9}
\|\wz c_i\|_{L^q(\mu)}\ls
\lf[\mu\lf(\rho^2 B^{(i)}_1\r)\r]^{1/q-1}
\lf[\lz\lf(c_{B_1},r_{\rho B_1^{i}}\r)\r]^{1-1/p}
\lf[\wz K^{(\rho),\,p}_{\rho B^{(i)}_1,\,\rho B_1^{i}}\r]^{-\gz_2}.
\end{equation}
For any $i\in\{1,\,\ldots,\,m\}$, let $c_i:=\frac{\lz_1}
{\wz C_b}(\wz c_{i-1}-\wz c_i)$.
Then $\supp(c_i)\subset\rho B^{(i)}_1$ and
$$
\dint_\cx c_i(x)\,d\mu(x)=0,
$$
which, together with \eqref{8.8} and \eqref{8.9},
implies that $c_i$ is a $(p,\,q,\,\gz_2,\,\rho)_{\lz,\,1}$-atomic block
associated with the ball $\rho B^{(i)}_1$ and
\begin{equation}\label{8.10}
|c_i|_{\widehat H^{p,\,q,\,\gamma_2}_{{\rm atb},\,\rho}\,(\mu)}
\ls \dfrac{|\lz_1|}{\wz C_b}.
\end{equation}

Now we see that
\begin{equation*}
b=\dsum_{i=1}^m c_i
+\dfrac{\lz_1}{\wz C_b}\wz c_m+\lz_2 a_2.
\end{equation*}
Notice that $\int_\cx b(x)\,d\mu(x)=0$ and $\int_\cx c_i(x)\,d\mu(x)=0$.
It then follows that
$$
\dint_\cx\lf[\dfrac{\lz_1}{\wz C_b}\wz c_m(x)+\lz_2 a_2(x)\r]\,d\mu(x)=0.
$$
On the other hand, we have $\supp(\wz c_m)\subset
\rho B^{(m)}_1\subset 2\rho^{{\wz C}_1+2}B=:B'$,
$r_{B'}=2\rho^{{\wz C}_1+2}r_B\le2\rho^{{\wz C}_1+2}r_{B^{(m)}_1}$
and $\supp(a_2)\subset B_2\subset B'$.
An argument similar to that used in the estimate of \eqref{8.7} shows that
\begin{equation*}
\|\wz c_m\|_{L^q(\mu)}\ls\lf[\mu\lf(\rho B^{(m)}_1\r)\r]^{1/q-1}
\lf[\lz\lf(c_{B_1},r_{B'}\r)\r]^{1-1/p}
\lf[\wz K^{(\rho),\,p}_{B^{m}_1,\,B'}\r]^{-\gz_2}.
\end{equation*}
From Definition \ref{d3.2}(iii), \eqref{2.1},
$\wz K^{(\rho),\,p}_{B_2,\,B}
\le [(3+\lfloor\log_{\rho}2\rfloor)C_{(\rho)}]^{1/p}$,
$B'=2\rho^{{\wz C}_1+2}B$ and (iv) and (ii) of Lemma \ref{l2.8},
it follows that
\begin{align*}
\|a_2\|_\lq&\ls [\mu(\rho B_2)]^{1/q-1}[\lz(c_B,r_B)]^{1-1/p}
\lf[\wz K^{(\rho),\,p}_{B_2,\,B}\r]^{-\gz_2}\\
&\ls[\mu(\rho B_2)]^{1/q-1}[\lz(c_B,r_{B'})]^{1-1/p}
\lf[\wz K^{(\rho),\,p}_{B_2,\,B'}\r]^{-\gz_2}.
\end{align*}
Thus, $c_{m+1}:=\dfrac{\lz_1}{\wz C_b}\wz c_m+\lz_2 a_2$ is a
$(p,\,q,\,\gz_2,\,\rho)_{\lz,\,1}$-atomic block
associated with the ball $B'$ and
$$
|c_{m+1}|_{\widehat H^{p,\,q,\,\gamma_2}_{{\rm atb},\,\rho}
\,(\mu)}\ls |\lz_1|+|\lz_2|.
$$
By this fact, the definition of $\wz C_b$, \eqref{8.10} and (iv),
we obtain
$b=\sum_{i=1}^{m+1} c_i
\in \widehat H^{p,\,q,\,\gamma_2}_{{\rm atb},\,\rho}\,(\mu)$
and
\begin{equation*}
\|b\|^p_{\widehat H^{p,\,q,\,\gamma_2}_{{\rm atb},\,\rho}\,(\mu)}
\ls \sum_{i=1}^{m+1} |c_i|^p_{\widehat H^{p,\,q,\,\gamma_2}
_{{\rm atb},\,\rho}\,(\mu)}
\ls (|\lz_1|+|\lz_2|)^p
\sim |b|^p_{\widehat H^{p,\,q,\,\gamma_1}_{{\rm atb},\,\rho}\,(\mu)},
\end{equation*}
where the implicit positive constant is independent of $m$.
This finishes the proof of \eqref{8.5}.

Let $f\in \widehat H^{p,\,q,\,\gamma_1}_{{\rm atb},\,\rho}\,(\mu)$.
Then, by Proposition \ref{p6.13} and Definition \ref{d7.1},
there exists a sequence $\{b_j\}_j$
of $(p,q,\gz_1,\rho)_{\lz,\,1}$-atomic blocks such that
$f=\sum_{j=1}^\fz b_j$ in
$({{\mathcal E}^{\alpha,\,q}_{\rho,\,\gamma_1}(\mu)})^\ast
=({{\mathcal E}^{\alpha,\,q}_{\rho,\,\gamma_2}(\mu)})^\ast$
and $$\sum_{j=1}^\fz|b_j|^p_{\widehat H^{p,\,q,\,\gamma_1}
_{{\rm atb},\,\rho}\,(\mu)}
\ls\|f\|^p_{\widehat H^{p,\,q,\,\gamma_1}_{{\rm atb},\,\rho}\,(\mu)}.$$
From this fact and \eqref{8.5},
we further deduce that
$f=\sum_{j=1}^\fz\sum_{i=1}^{m_j+1} c_{j,\,i}$
in $({{\mathcal E}^{\alpha,\,q}_{\rho,\,\gamma_2}(\mu)})^\ast$,
where $\{c_{j,\,i}\}_{j,\,i}$ are all
$(p,q,\gz_2,\rho)_{\lz,\,1}$-atomic blocks as in
\eqref{8.5} satisfying
$$\sum_{j=1}^\fz\sum_{i=1}^{m_j+1}|c_{j,\,i}|^p_{\widehat H^{p,\,q,\,\gamma_2}
_{{\rm atb},\,\rho}\,(\mu)}\ls \sum_{j=1}^\fz
|b_j|^p_{\widehat H^{p,\,q,\,\gamma_1}_{{\rm atb},\,\rho}\,(\mu)}
\ls\|f\|^p_{\widehat H^{p,\,q,\,\gamma_1}_{{\rm atb},\,\rho}\,(\mu)},$$
which implies that
$f\in \widehat H^{p,\,q,\,\gamma_2}_{{\rm atb},\,\rho}\,(\mu)$ and
$$
\|f\|_{\widehat H^{p,\,q,\,\gamma_2}_{{\rm atb},\,\rho}\,(\mu)}
\ls\|f\|_{\widehat H^{p,\,q,\,\gamma_1}_{{\rm atb},\,\rho}\,(\mu)}.
$$
This finishes the proof of Proposition \ref{p8.2}.
\end{proof}

Now we are ready to show that $\cer$ is the dual space of $\hhp$.

\begin{theorem}\label{t8.3}
Let $p\in (0,1]$, $\rho\in(1,\fz)$, $\gz\in[1,\fz)$ and
$q\in(1, \fz)$. Then
$$
\cer=(\hhp)^\ast.
$$
\end{theorem}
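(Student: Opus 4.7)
The plan is to establish two continuous inclusions, $\cer \hookrightarrow (\hhp)^\ast$ and $(\hhp)^\ast \hookrightarrow \cer$, with comparable norms, so that the map $g \mapsto L_g$ where $L_g(f) := f(g)$ is an isomorphism. The case $p=1$ is already a direct consequence of \cite[Theorem 1.11]{fyy3} via Remark \ref{r7.4}(i) together with the duality $(\widehat H^1_{\rm atb}(\mu))^\ast = \rbmo = {\mathcal E}^0_\rho(\mu)$, so I may restrict attention to $p\in(0,1)$. Throughout, I use Propositions \ref{p8.1} and \ref{p8.2} (independence of $\hhp$ of $\rho$ and $\gz$) and Propositions \ref{p6.8}, \ref{p6.13} and Corollary \ref{c6.16} (independence of $\cer$ of $\rho,\eta,\gz,q$) to pick whichever parameters are convenient at each step.

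For the direction $\cer\hookrightarrow(\hhp)^\ast$, given $g\in\cer$, the functional $L_g(f):=f(g)$ is well defined on $\hhp$ because $\hhp\st(\cer)^\ast$ by Definition \ref{d7.3}. For any $(p,q,\gz,\rho)_{\lz,1}$-atomic block $b=\lz_1 a_1+\lz_2 a_2$, the chain of inequalities already written out in \eqref{7.1} yields $|b(g)|\le C|b|_{\hhp}\|g\|_{\cer}$. For $f=\sum_i b_i$ converging in $(\cer)^\ast$ with atomic blocks $b_i$, one has $L_g(f)=\sum_i\int_\cx b_i g\,d\mu$, so combining \eqref{7.1} with the elementary embedding $\sum_i|b_i|_{\hhp}\le(\sum_i|b_i|_{\hhp}^p)^{1/p}$ (valid for $p\in(0,1]$) and taking infimum over all admissible decompositions gives $\|L_g\|_{(\hhp)^\ast}\le C\|g\|_{\cer}$.

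For the converse, given $L\in(\hhp)^\ast$, I construct a representative $g\in L^{q'}_{\loc}(\mu)$ with $\|g\|_{\cer}\ls\|L\|_{(\hhp)^\ast}$. The key observation is that, for any $(\rho,\bz_\rho)$-doubling ball $B$, every $f$ in $L^q_0(B):=\{f\in\lq:\ \supp(f)\st B,\ \int_\cx f\,d\mu=0\}$ is by itself a $(p,q,\gz,\rho)_{\lz,1}$-atomic block with the single ball $B_1=B$; since $\wz K^{(\rho),p}_{B,B}\sim1$ by Lemma \ref{l2.8}(ii) and $\mu(\rho B)\sim\mu(B)$ by doubling, one checks that
$$
\|f\|_{\hhp}\le C\|f\|_{\lq}[\mu(\rho B)]^{1-1/q}[\lz(c_B,r_B)]^{1/p-1}.
$$
Hence $L|_{L^q_0(B)}$ is a bounded functional and the Hahn--Banach theorem together with the Riesz representation theorem produces $g_B\in L^{q'}(B)$, unique modulo constants, with $L(f)=\int_B f g_B\,d\mu$ for every $f\in L^q_0(B)$ and $\inf_{c\in\cc}\|g_B-c\|_{L^{q'}(B)}\le C\|L\|_{(\hhp)^\ast}[\mu(\rho B)]^{1-1/q}[\lz(c_B,r_B)]^{1/p-1}$. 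For two doubling balls $B_1\st B_2$, the restrictions $g_{B_2}|_{B_1}$ and $g_{B_1}$ induce the same functional on $L^q_0(B_1)$ and therefore differ by a constant. Using \cite[Lemma 3.3]{h10} to produce an increasing sequence $\{B_k\}_{k=1}^\fz$ of $(\rho,\bz_\rho)$-doubling balls with $\cx=\bigcup_k B_k$, I adjust representatives so that $g_{B_{k+1}}|_{B_k}=g_{B_k}$ and define $g:=g_{B_k}$ on $B_k$.

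To show $g\in\cer$ it is convenient to use Proposition \ref{p6.18}: the oscillation bound above, after dividing by $\mu(B)$ and invoking doubling, yields the first term of $\|g\|_{\circ,\rho}$ with $\az=1/p-1$; for the second term, given doubling balls $B\st S$, I apply $L$ to the atomic block $b:=\chi_B/\mu(B)-\chi_S/\mu(S)$, which has $\int_\cx b\,d\mu=0$ and admits the explicit decomposition into normalized atoms supported on $B$ and $S$ with scaling factors bounded by $C[\lz(c_S,r_S)]^{1/p-1}[\kbsa]^\gz$ and $C[\lz(c_S,r_S)]^{1/p-1}$, respectively (using that $B,S$ are doubling, $\wz K^{(\rho),p}_{S,S}\sim1$ and Lemma \ref{l2.8}). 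Since $L(b)=m_B(g)-m_S(g)$ by construction, this gives the required control on $|m_B(g)-m_S(g)|$ and hence $\|g\|_{\cer}\ls\|L\|_{(\hhp)^\ast}$ via Proposition \ref{p6.18}. It then remains to check that $L=L_g$ on all of $\hhp$: the identity $L(b)=\int_\cx b g\,d\mu$ is known for atomic blocks associated with doubling reference balls, extends to general atomic blocks by replacing a non-doubling $B$ with $\wz B^\rho$ while controlling atomic constants via Lemma \ref{l2.8}, and propagates to all of $\hhp$ by the atomic decomposition in $(\cer)^\ast$ together with the already-proven continuity $|L_g(f)|\ls\|g\|_{\cer}\|f\|_{\hhp}$. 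The principal obstacle I anticipate is not any single estimate but the globalization step: consistently gluing the locally defined $g_B$'s into a single function on $\cx$ and, above all, converting the oscillation and mean-difference bounds which a priori hold only over $(\rho,\bz_\rho)$-doubling balls into full membership in $\cer$, where Proposition \ref{p6.18} (and through it the $\rho$-weakly doubling assumption) plays the decisive role.
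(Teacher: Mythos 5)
Your proof is correct, and its skeleton coincides with the paper's: treat $p=1$ by citation, prove $\cer\st(\hhp)^\ast$ exactly via \eqref{7.1} and the inequality $\sum_i|b_i|_{\hhp}\le(\sum_i|b_i|_{\hhp}^p)^{1/p}$, and for the converse represent $\ell$ on $L^q_0$ of balls by the Riesz theorem, glue along an exhaustion, and verify membership in $\cer$ through the median characterization of Proposition \ref{p6.18}. Where you genuinely diverge is in the two key estimates, and both of your shortcuts work. For the oscillation bound over a doubling ball $B$, the paper constructs an explicit test function (built from $|f-m_f(B)|^{q'-1}$ on a superlevel set plus a balancing constant, exploiting the defining property of the median) to bound the $L^{q'}$ oscillation, whereas you read the same bound off the isometric duality $(L^q_0(B))^\ast=L^{q'}(B)/\cc$, i.e.\ the quotient norm $\inf_c\|g_B-c\|_{L^{q'}(B)}$ of the Riesz representative equals the norm of $\ell|_{L^q_0(B)}$; this is shorter and loses nothing. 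For the mean-difference bound over doubling balls $B\st S$, the paper builds a bespoke atomic block from $\frac{|f-m_f(S)|^{q'}}{f-m_f(S)}$ together with a constant multiple of $\chi_S$ (which pushes the supports out to $\sqrt{\rho}B$ and $(2\sqrt{\rho}+1)S$ and forces an appeal to Proposition \ref{p8.1} mid-proof), obtaining a stronger intermediate $L^{q'}$-bound on $\int_B|f-m_f(S)|^{q'}\,d\mu$; you instead test $\ell$ against the single block $\chi_B/\mu(B)-\chi_S/\mu(S)$, whose block norm is indeed $\ls[\lz(c_S,r_S)]^{1/p-1}[\wz K^{(\rho),\,p}_{B,\,S}]^{\gz}$ because $B$ and $S$ are $(\rho,\bz_\rho)$-doubling and $\wz K^{(\rho),\,p}_{S,\,S}\sim1$; this is cleaner and directly yields $|m_B(g)-m_S(g)|$. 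Two small points to tidy up: Proposition \ref{p6.18} is phrased with medians, so add the one-line conversion $|m_g(B)-m_B(g)|\le\frac1{\mu(B)}\int_B|g-m_g(B)|\,d\mu\ls\|L\|_{(\hhp)^\ast}[\lz(c_S,r_S)]^{1/p-1}$ (using your first estimate, $\lz(c_B,r_B)\ls\lz(c_S,r_S)$ and $\wz K^{(\rho),\,p}_{B,\,S}\ge1$); and the increasing exhaustion by doubling balls should be obtained from the existence of doubling dilates $\rho^jB$ (\cite[Lemma 3.2]{h10}, recalled after Definition \ref{d2.5}), not from \cite[Lemma 3.3]{h10}, which produces small doubling balls --- alternatively, as in the paper, the exhaustion balls need not be doubling at all, since every atomic block lies in $L^q_0(B_k)$ for $k$ large, and doubling balls are only needed in the two estimates themselves; your final globalization step ($L=L_g$ on all of $\hhp$ via the atomic decomposition and continuity) is then routine.
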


\begin{proof}
When $p=1$, by \cite[Remark 2.6(iii)]{fyy3},
the conclusion of Theorem \ref{t8.3} holds true without the assumption
\eqref{6.1}.
Thus, it remains to consider the case when $p\in(0, 1)$.
Let $p\in (0,1)$, $\rho\in(1,\fz)$, $\gz\in[1,\fz)$
and $q\in(1, \fz)$.
We first show that $\cer\st(\hhp)^\ast$.
To this end, let $f\in\cer$.
Recall that any $h\in\hhp$ is, by Definition \ref{d7.3},
a continuous linear functional in $(\cer)^\ast$.
Let us write $\langle h, f\rangle$ to denote the value of
the linear functional $h$ at $f\in \cer$. Then the mapping
$\ell_f: h \to \langle h, f\rangle$ is a well defined
linear functional on $\hhp$. If $h =\sum_{i=1}^\fz b_i $ is
an atomic decomposition
of $h$ in terms of $(p,q,\gz,\rho)_{\lz,\,1}$-atomic
blocks $\{b_i\}_i$ such that
$\sum_{i=1}^\fz|b_i|^p_{\hhp}\ls \|h\|^p_{\hhp}$,
by \eqref{7.1}, we then have
\begin{align*}
\lf|\langle h, f\rangle\r|&=\lf|\sum_{i=1}^\fz
\dint_\cx b_i(x)f(x)\,d\mu(x)\r|\\
&\ls\lf[\sum_{i=1}^\fz|b_i|^p_{\hhp}\r]^{1/p}\|f\|_{\cer}
\ls \|h\|_{\hhp}\|f\|_\cer.
\end{align*}
Therefore, we conclude that
$$|\ell_f(h)|\ls \|f\|_\cer\|h\|_{\hhp}.$$
This shows that  $\cer\st(\hhp)^\ast$.

To see the converse, for any $q\in(1,\fz)$,
we first claim that, if $\ell\in(\hhp)^\ast$, then
there exists a function $f\in L^{q'}_{\rm loc}(\mu)$
such that, for all $g\in \cup_B L^q_0(B)$,
$$\ell(g)=\int_\cx f(x)g(x)\,d\mu(x),$$
where, for all balls $B\st \cx$, $L^q_0(B)$ denotes
the \emph{subspace} of $L^q(B)$ consisting
of functions having integral zero. Indeed,
let $\{B_k\}_k$ be an increasing sequence of
balls which exhausts $\cx$. For each $k$, let $\cc(B_k)$ denote
the space of functions those are constants on $B_k$. Suppose
that $q\in (1, \fz)$ and $\ell\in (\hhp)^\ast$.
Then $\ell\in (L^q_0(B_k))^\ast=L^{q'}(B_k)/\cc(B_k)$.
Indeed, if $g\in L^q_0(B_k)$, then
$g\in\hhp$ and
$$\|g\|_{\hhp}\le [\mu(\rho B_k)]^{1-1/q}
\lf[\lz\lf(c_{B_k}, r_{B_k}\r)\r]^{1/p-1}\|g\|_{L^q(\mu)}.$$
We further see that
\begin{align*}
|\ell(g)|&\le \|\ell\|_{(\hhp)^\ast}\|g\|_{\hhp}\\
&\le\|\ell\|_{(\hhp)^\ast}[\mu(\rho B_k)]^{1-1/q}
[\lz(c_{B_k}, r_{B_k})]^{1/p-1}\|g\|_{L^q(\mu)}.
\end{align*}
Hence, by the Riesz representation theorem,
there exists a unique $f_k\in L^{q'}(B_k)/\cc(B_k)$ such that,
for every $g\in L^q_0(B_k)$,
$$\ell(g)=\int_{B_k}f_k(x) g(x)\,d\mu(x).$$
Since $\{B_k\}_k$ is increasing, by a standard argument, we see that
there exists a unique function $f\in L^{q'}_{\rm loc}(\mu)$
such that, for all $g\in \cup_B L^q_0(B)$,
$$\ell(g)=\int_\cx f(x)g(x)\,d\mu(x).$$
This proves the claim.

We now show that, if $f\in L^{q'}_{\rm loc}(\mu)$
such that $\ell\in(\hhp)^\ast$,
then $f\in\cer$ and
$$\|f\|_\cer\ls \|\ell\|_{(\hhp)^\ast}.$$
To this end, by Proposition \ref{p6.18},
it suffices to show that, for any
$(\rho,\bz_{\rho})$-doubling ball $B$,
\begin{equation}\label{8.11}
\frac1{\mu(\rho B)}\frac1{[\lz(c_B,\,r_B)]^{1/p-1}}
\int_B|f(x)-m_f(B)|
\,d\mu(x)\ls \|\ell\|_{(\hhp)^\ast}
\end{equation}
and, for all $(\rho ,\bz_\rho )$-doubling balls $B\st S$,
\begin{equation}\label{8.12}
|m_f(B)-m_f(S)|
\ls  \|\ell\|_{(\hhp)^\ast}\lf[\kbsp\r]^{\gz}[\lz(c_S,\,r_S)]^{1/p-1}.
\end{equation}
We first prove \eqref{8.11}. Let $B$ be a $(\rho,\bz_{\rho})$-doubling
ball. Assume that
\begin{align}\label{8.13}
\noz&\int_{\{x\in B:\ f(x)>m_f(B)\}}|f(x)-m_f(B)|^{q'}\,d\mu(x)\\
&\hs\ge \int_{\{x\in B:\ f(x)<m_f(B)\}}|f(x)-m_f(B)|^{q'}\,d\mu(x).
\end{align}
Consider
$$a(x):=\left\{
  \begin{array}{ll}
    |f(x)-m_f(B)|^{q'-1}, \quad & \hbox{if $x\in
    \{x\in B:\ f(x)> m_f(B)\}$;} \\
    \wz C_B, \quad & \hbox{if $x\in \{x\in B:\ f(x)< m_f(B)\}$;} \\
    0, \quad & \hbox{otherwise,}
  \end{array}
\right.
$$
where $\wz C_B$ denotes the constant such that $\int_\cx a(x)\,d\mu(x)=0$.
By the definition of $m_f(B)$, we have
\begin{equation}\label{8.14}
\mu(\{x\in B:\ f(x)>m_f(B)\})\le \mu(B)/2
\le \mu(\{x\in B:\ f(x)\le m_f(B)\}).
\end{equation}
From this fact, we deduce that $\supp (a)\st\sqrt{\rho}B$,
$a$ is a $(p,q,\gz,\sqrt{\rho})_{\lz}$-atomic block and
\begin{align*}
\|a\|_{{\wz H}^{p,\,q,\,\gz}_{{\rm atb},\,\sqrt{\rho}}(\mu)}&
\le \|a\|_{L^q(\mu)}[\mu(\sqrt{\rho}\times\sqrt{\rho} B)]^{1/q'}
[\lz(c_B, r_B)]^{1/p-1}\\
&\le [\mu(\rho B)]^{1/q'}[\lz(c_B, r_B)]^{1/p-1}
\lf[\int_{\{x\in B:\ f(x)>m_f(B)\}}|f(x)-m_f(B)|^{q'}\,d\mu(x)\r.\\
&\quad\lf.+\int_{\{x\in B:\ f(x)\le m_f(B)\}}
\lf|\wz C_B\r|^{q}\,d\mu(x)\r]^{1/q}.
\end{align*}
By \eqref{8.14}, the definition of $\wz C_B$
and the H\"older inequality, we have
\begin{align*}
&\int_{\{x\in B:\ f(x)\le m_f(B)\}}\lf|\wz C_B\r|^{q}\,d\mu(x)\\
&\hs=\lf|\int_{\{x\in B:\ f(x)\le m_f(B)\}}\wz C_B\,d\mu(x)\r|^q
\lf[\mu(\{x\in B:\ f(x)\le m_f(B)\})\r]^{1-q}\\
&\hs\ls\lf|\int_{\{x\in B:\ f(x)>m_f(B)\}}|f(x)-m_f(B)|^{q'-1}\,d\mu(x)\r|^q
[\mu(B)]^{1-q}\\
&\hs\ls\int_{\{x\in B:\ f(x)>m_f(B)\}}|f(x)-m_f(B)|^{q'}\,d\mu(x).
\end{align*}
From this, $\supp (a)\st\sqrt{\rho}B$ and Proposition \ref{p8.1},
it follows that
\begin{align}\label{8.15}
\noz\|a\|_{\hhp}&\sim\|a\|_{{\widehat H}^{p,\,q,\,\gz}
_{{\rm atb},\,\sqrt{\rho}}(\mu)}\\
&\ls [\mu(\rho B)]^{1/q'}[\lz(c_B, r_B)]^{1/p-1}
\lf[\int_{\{x\in B:\ f(x)>m_f(B)\}}
|f(x)-m_f(B)|^{q'}\,d\mu(x)\r]^{1/q}.
\end{align}
On the other hand, by the definition of $a$ and \eqref{8.13}, we see that
\begin{align*}
\int_B f(x)a(x)\,d\mu(x)&=\int_B[f(x)-m_f(B)]a(x)\,d\mu(x)\\
&\ge \int_{\{x\in B:\ f(x)>m_f(B)\}}|f(x)-m_f(B)|^{q'}\,d\mu(x)\\
&\ge \frac12\int_{B}|f(x)-m_f(B)|^{q'}\,d\mu(x),
\end{align*}
which, together with \eqref{8.15}, implies that
\begin{align*}
&\lf[\int_B|f(x)-m_f(B)|^{q'}\,d\mu(x)\r]^{1/q'}\|a\|_{\hhp}\\
&\quad\ls [\mu(\rho B)]^{1/q'}[\lz(c_B, r_B)]^{1/p-1}
\int_B|f(x)-m_f(B)|^{q'}\,d\mu(x)\\
&\quad\ls [\mu(\rho B)]^{1/q'}[\lz(c_B, r_B)]^{1/p-1}
\int_B f(x)a(x)\,d\mu(x)\\
&\quad\ls [\mu(\rho B)]^{1/q'}[\lz(c_B, r_B)]^{1/p-1}
\|\ell\|_{(\hhp)^\ast}\|a\|_{\hhp}.
\end{align*}
From this and the H\"older inequality,
it then follows that
\begin{align*}
&\frac1{\mu(\rho B)}\frac1{[\lz(c_B,\,r_B)]^{1/p-1}}
\int_B|f(x)-m_f(B)|\,d\mu(x)\\
&\quad\le \frac{[\mu(\rho B)]^{-1/q'}}{[\lz(c_B,\,r_B)]^{1/p-1}}
\lf[\int_B|f(x)-m_f(B)|^{q'}
\,d\mu(x)\r]^{1/q'}\ls \|\ell\|_{(\hhp)^\ast}.
\end{align*}
Thus, \eqref{8.11} holds true.

To show \eqref{8.12}, for all $(\rho ,\bz_\rho )$-doubling balls $B\st S$, let
$$a_1:=\frac{|f-m_f(S)|^{q'}}{f-m_f(S)}\chi_{\{x\in S:\ f(x)\not=m_f(S)\}}$$
and $a_2:=\wz C_S\chi_S$, where $\wz C_S$ denotes the constant such that
$\int_\cx[a_1(x)+a_2(x)]\,d\mu(x)=0$.
Observe that
\begin{equation}\label{8.16}
\lf|\wz C_S\r|\le [\mu(S)]^{-1}[\mu(B)]^{1-1/q}
\lf[\int_B|f(x)-m_f(S)|^{q'}\,d\mu(x)\r]^{1/q}.
\end{equation}
From this, together with the fact that $B$ and $S$
are $(\rho,\bz_{\rho})$-doubling and Proposition \ref{p8.1},
it follows that $\supp (b)\st(2\sqrt{\rho}+1)S$,
$$b:=\lz_1\wz a_1+\lz_2\wz a_2\in
{\widehat H}^{p,\,q,\,\gz}_{{\rm atb},\,\sqrt{\rho}}(\mu)
\subset {\widehat H}^{p,\,q,\,\gz}_{{\rm atb},\,\rho}(\mu)$$
and
\begin{align}\label{8.17}
\noz\|b\|_{\hhp}\sim\|b\|_{{\widehat H}^{p,\,q,\,\gz}
_{{\rm atb},\,\sqrt{\rho}}(\mu)}
&\ls [\mu(\sqrt{\rho}\times\sqrt{\rho}B)]^{1-1/q}
\lf[\lz\lf(c_S,\,r_{(2\sqrt{\rho}+1)S}\r)\r]^{1/p-1}\\
&\hs\times\lf[\widetilde{K}^{(\rho),\,p}
_{\sqrt{\rho}B,\,(2\sqrt{\rho}+1)S}\r]^{\gz}
\lf[\int_B|f(x)-m_f(S)|^{q'}\,d\mu(x)\r]^{1/q},
\end{align}
where
\begin{align*}
\wz a_1:=&a_1[\mu(\rho B)]^{1/q-1}\lf[\int_B|f(x)-m_f(S)|^{q'}\,d\mu(x)\r]^{-1/q}\\
&\times\lf[\widetilde{K}^{(\rho),\,p}
_{\sqrt{\rho}B,\,(2\sqrt{\rho}+1)S}\r]^{-\gz}
\lf[\lz(c_S,\,r_{(2\sqrt{\rho}+1)S})\r]^{1-1/p},
\end{align*}
$$\wz a_2:=[\mu(\rho S)]^{-1}\lf[\lz\lf(c_S, r_{(2\sqrt{\rho}+1)S}\r)
\r]^{1-1/p}\lf[\widetilde{K}^{(\rho),\,p}
_{\sqrt{\rho}S,\,(2\sqrt{\rho}+1)S}\r]^{-\gz}\chi_S,$$
\begin{align*}
\lz_1&:=\lf[\int_B|f(x)-m_f(S)|^{q'}\,d\mu(x)\r]^{1/q}
\lf[\widetilde{K}^{(\rho),\,p}
_{\sqrt{\rho}B,\,(2\sqrt{\rho}+1)S}\r]^{\gz}[\mu(\rho B)]^{1-1/q}\\
&\hs\times\lf[\lz\lf(c_S,\,r_{(2\sqrt{\rho}+1)S}\r)\r]^{1/p-1}
\end{align*}
and
$$\lz_2:=\wz C_S\mu(\rho S)\lf[\lz(x_S, r_{(2\sqrt{\rho}+1)S})\r]^{1/p-1}
\lf[\widetilde{K}^{(\rho),\,p}
_{\sqrt{\rho}S,\,(2\sqrt{\rho}+1)S}\r]^{\gz}.$$
Then $\supp (\wz a_1)\st\sqrt{\rho}B\st(2\sqrt{\rho}+1)S$
and $\supp (\wz a_2)\st\sqrt{\rho} S\st(2\sqrt{\rho}+1)S$.
By the definition of $a_1$, the $(\rho,\bz_{\rho})$-doubling
property of $B$ and $S$, the vanishing moment of $b$,
\eqref{8.16}, \eqref{8.11} and \eqref{8.17}, we see that
\begin{align*}
&\int_B|f(x)-m_f(S)|^{q'}\,d\mu(x)\\
&\quad=\int_B a_1(x)[f(x)-m_f(S)]\,d\mu(x)\\
&\quad\le \lf|\int_\cx f(x)b(x)\,d\mu(x)\r|
+\lf|\wz C_S\r|\int_S|f(x)-m_f(S)|\,d\mu(x)\\
&\quad\ls \|\ell\|_{(\hhp)^\ast}\|b\|_{\hhp}
+\lf|\wz C_S\r|\|\ell\|_{(\hhp)^\ast}\mu(\rho S)[\lz(c_S,r_S)]^{1/p-1}\\
&\quad\ls\|\ell\|_{(\hhp)^\ast}\lf[\int_B|f(x)-m_f(S)|^{q'}
\,d\mu(x)\r]^{1/q}\lf[\kbsp\r]^{\gz}
[\mu(B)]^{1-1/q}[\lz(c_S,\,r_S)]^{1/p-1}.
\end{align*}
This implies that
$$\lf[\int_B|f(x)-m_f(S)|^{q'}\,d\mu(x)\r]^{1/q'}
\ls \|\ell\|_{(\hhp)^\ast}\lf[\kbsp\r]^{\gz}
[\mu(B)]^{1-1/q}[\lz(c_S,\,r_S)]^{1/p-1}.$$
Thus, from this, \eqref{8.11}, the $(\rho, \bz_{\rho})$-doubling
property of $B$ and $S$, and the H\"older inequality, it follows that
\begin{align*}
|m_f(B)-m_f(S)|&=\frac1{\mu(B)}\int_B|m_f(B)-m_f(S)|\,d\mu(x)\\
&\le \frac1{\mu(B)}\int_{B}|f(x)-m_f(B)|\,d\mu(x)
+\frac1{\mu(B)}\int_{B}|f(x)-m_f(S)|\,d\mu(x)\\
&\le \lf[\frac1{\mu(B)}\int_{B}|f(x)-m_f(B)|^{q'}\,d\mu(x)\r]^{1/q'}
+\lf[\frac1{\mu(B)}\int_{B}|f(x)-m_f(S)|^{q'}\,d\mu(x)\r]^{1/q'}\\
&\ls \|\ell\|_{(\hhp)^\ast}[\lz(c_B,\,r_B)]^{1/p-1}
+[\lz(c_S,\,r_S)]^{1/p-1}\lf[\kbsp\r]^{\gz}\|\ell\|_{(\hhp)^\ast}\\
&\ls [\lz(c_S,\,r_S)]^{1/p-1}\lf[\kbsp\r]^{\gz}\|\ell\|_{(\hhp)^\ast},
\end{align*}
which implies \eqref{8.12}, and hence completes the
proof of Theorem \ref{t8.3}.
\end{proof}

\begin{remark}\label{r8.4}
It is still unclear whether Theorem \ref{t8.3}
holds true or not for $q=1$ and $p\in(0,1)$, or
$q=\fz$ and $p\in(0,1]$ on non-homogeneous metric measure
spaces satisfying the $\rho$-weakly doubling condition \eqref{6.1}.
\end{remark}

\section{Relations between ${\mathcal E}^{\az}_\rho(\mu)$
and $\lip_{\az,\,q}(\mu)$ or
between $\widehat H^{p,\,q}_{\rm atb}(\mu)$ and
$H^{p,\,q}_{\rm at}(\mu)$}\label{s9}

\hskip\parindent In this section, we investigate the relations
between $\ceaeg$ and $\lip_{\az,\,q}(\mu)$, and between
$\hhp$ and the atomic Hardy space $H^{p,\,q}_{\rm at}(\mu)$
introduced by Coifman and Weiss  \cite{cw77}
over spaces of homogeneous type.

Let $({\mathcal X},d,\mu)$ be a space of homogeneous type with
$\lambda(x,r):=\mu(B(x,r))$
for all $x\in\cx$ and $r\in(0,\fz)$. Recall that \eqref{6.1} holds true
in spaces of homogeneous type.
Thus, all the results obtained in Sections \ref{s6},
\ref{s7} and \ref{s8} are still
valid in this setting and we denote $\ceaeg$ simply by $\cera$.
We first establish an equivalent characterization of $\cera$.
To this end, we recall the notions of spaces $\lip_{\az,\,q}(\mu)$
and $\lip_\az(\mu)$ in \cite{ms1}.
To be precise, let $\alpha\in[0, \fz)$, $q\in[1,\fz)$, $\rho\in(1,\fz)$
and $\delta$ be a quasi-distance on $\cx$.
A function $\phi$ is said to be in the \emph {Lipschitz space}
$\lip_{\az,\,q}(\mu;\dz)$ if
\begin{align*}
\|\phi\|^{(\delta)}_{\alpha,\,q}
:=\sup_{B_{\dz}}\lf\{\frac1{[\mu(B_{\dz})]^{1+q\alpha}}
\int_{B_{\dz}}\lf|f(y)-m_{B_{\dz}}(f)\r|^q\,d\mu(y)\r\}^{1/q}<\fz,
\end{align*}
where the supremum is taken over all balls $B_{\dz}$ from $(\cx,\dz,\mu)$,
and a function $\psi$ is said to be in the \emph{space}
$\lip_\az(\mu;\dz)$ if
\begin{equation}\label{9.1}
\|\psi\|^{(\delta)}_{\alpha}:=\sup_{x\not= y}
\frac{|\psi(x)-\psi(y)|}{[\delta(x, y)]^\alpha}<\fz.
\end{equation}
Then we let $\lip_{\az,\,q}(\mu):=\lip_{\az,\,q}(\mu;d)$
and $\lip_\az(\mu):=\lip_\az(\mu;d)$, respectively.

\begin{remark}\label{r9.1}
By \cite[Theorem 5]{ms1}, we see that, for any $\alpha\in(0, \fz)$,
there exists a quasi-distance $\delta$ on
$\cx$, defined by setting, for all $x,\,y\in\cx$,
$$
\dz(x,y):=\inf\{\mu(B):\ B\ {\rm is\ a\ ball\ containing}\ x\ {\rm and}\ y\}
$$
such that $(\cx, \delta, \mu)$
is a normal space. Namely, there exist
two positive constants, $c_9$ and $c_{10}$, such that
$c_9 r\le\mu(B_{\dz}(x,r))\le c_{10} r$ for every
$x\in\cx$, $r\in(\mu({x}),\mu(\cx))$
and
$$
B_{\dz}(x,r):=\{x\in\cx:\ \dz(x,y)<r\}
$$
and, for any $q\in[1, \fz)$, a function $\phi$ is in the
Lipschitz space $\lip_{\az,\,q}(\mu)$ of $(\cx, d, \mu)$
if and only if there exists a function $\psi$ in the
space $\lip_\az(\mu;\dz)$ of $(\cx, \delta, \mu)$ such that
$\phi=\psi$ for $\mu$-almost every $x\in\cx$.
Moreover, $\|\phi\|^{(d)}_{\alpha,\,q}\sim \|\psi\|^{(\delta)}_{\alpha}$.
\end{remark}

Hereafter, we \emph{always let $\dz$ be as in Remark \ref{r9.1}}.

Now we discuss the relation between $\cera$ and
$\lip_{\az,\,q}(\mu)$.

\begin{proposition}\label{p9.2}
Suppose that $(\cx,d,\mu)$ is a space of homogeneous type,
$\az\in[0,\fz)$, $\rho\in(1,\fz)$ and $q\in[1,\fz)$.
Then $\cera$ and $\lip_{\az,\,q}(\mu)$ coincide with equivalent norms.
\end{proposition}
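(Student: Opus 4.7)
The plan is to exploit two simplifications afforded by the space-of-homogeneous-type hypothesis with $\lz(x,r):=\mu(B(x,r))$: every ball $B$ is $(\rho,\bz_\rho)$-doubling, so $\wz B^\rho=B$ and $\mu(\rho B)\sim\mu(B)$; moreover, $\wz K^{(\rho),\,p}_{B,\,S}$ is comparable to $(1+N^{(\rho)}_{B,\,S})^{1/p}$. By Proposition \ref{p6.13} and Corollary \ref{c6.16}, $\cera$ is independent of $\gz$ and of the $L^q$-exponent used in its defining oscillation; by Remark \ref{r9.1}, $\lip_{\az,\,q}(\mu)$ is independent of $q\in[1,\fz)$. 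Hence we fix $\gz=1$ and the same $q$ on both sides throughout.

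For the inclusion $\cera\st\lip_{\az,\,q}(\mu)$, given $f\in\cera$, Corollary \ref{c6.16} together with $\mu(\rho B)\sim\mu(B)$ yields
$$
\lf\{\frac1{\mu(B)}\int_B\lf|f(y)-f_B\r|^q\,d\mu(y)\r\}^{1/q}
\le C\|f\|_{\cera}[\mu(B)]^\az
$$
for every ball $B$, where $f_B$ is as in Proposition \ref{p6.10}. Since the $q=1$ case combined with the triangle inequality gives $|f_B-m_B(f)|\le C\|f\|_{\cera}[\mu(B)]^\az$, replacing $f_B$ by $m_B(f)$ in the previous display produces $\|f\|^{(d)}_{\az,\,q}\ls\|f\|_{\cera}$.

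For the reverse inclusion, let $f\in\lip_{\az,\,q}(\mu)$. H\"older's inequality immediately gives
$$
\frac1{\mu(\rho B)[\mu(B)]^\az}\int_B|f(y)-m_B(f)|\,d\mu(y)
\ls\|f\|^{(d)}_{\az,\,q}
$$
(using $\wz B^\rho=B$ and $\mu(\rho B)\sim\mu(B)$), controlling the first part of $\|f\|_{\cera}$. For the second part, fix balls $B\st S$, set $N:=N^{(\rho)}_{B,\,S}$ and $B_k:=\rho^k B$, so that $B_N\supset S$ and $r_{B_N}\le\rho r_S$; telescoping yields
$$
|m_B(f)-m_S(f)|\le\sum_{k=0}^{N-1}|m_{B_k}(f)-m_{B_{k+1}}(f)|+|m_{B_N}(f)-m_S(f)|.
$$
H\"older applied on $B_{k+1}$ (and $B_N$) bounds each summand by $\ls\|f\|^{(d)}_{\az,\,q}[\mu(B_{k+1})]^\az\ls\|f\|^{(d)}_{\az,\,q}[\mu(S)]^\az$, so the total is $\ls N\|f\|^{(d)}_{\az,\,q}[\mu(S)]^\az\ls\wz K^{(\rho),\,1/(\az+1)}_{B,\,S}\|f\|^{(d)}_{\az,\,q}[\mu(S)]^\az$, since $N\ls(1+N)^{\az+1}\sim\wz K^{(\rho),\,1/(\az+1)}_{B,\,S}$.

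The main technical point is arranging the chain of $\rho$-dilates so that its length is exactly $N^{(\rho)}_{B,\,S}$ and every intermediate ball has measure $\ls\mu(S)$; this is immediate from the definition of $N^{(\rho)}_{B,\,S}$ combined with the doubling of $\mu$, and no delicate construction of subchains of doubling balls as in the proof of Proposition \ref{p8.2} is required, because every ball is already $(\rho,\bz_\rho)$-doubling. If one wished to parallel more closely the independence-of-$\gz$ argument in Section \ref{s8}, one could instead split into the subcases $\wz K^{(\rho),\,1/(\az+1)}_{B,\,S}\le C_0$ and $\wz K^{(\rho),\,1/(\az+1)}_{B,\,S}>C_0$, handling the large-coefficient case via Lemmas \ref{l6.11} and \ref{l6.12} and following ideas from \cite[Proposition 4.7]{h10}.
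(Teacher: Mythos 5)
Your proof is correct in substance, and in the harder inclusion it takes a genuinely more direct route than the paper. The easy inclusion is handled essentially as in the paper (which reads it off Definition \ref{d6.5}, tacitly using the independence of $q$ from Remark \ref{r6.17}, while you invoke Corollary \ref{c6.16} explicitly; both rest on the same John--Nirenberg input). For the reverse inclusion the paper likewise reduces matters to the estimate \eqref{9.2}, but it splits into the cases $\mu(S)\le 4C_{(\mu)}\mu(B)$ and $\mu(S)>4C_{(\mu)}\mu(B)$ and, in the second case, constructs a chain $B=:B^{(0)}\st\cdots\st B^{(m)}$ ending near $S$ whose consecutive measures satisfy $2\mu(B^{(i-1)})<\mu(B^{(i)})\le 2C_{(\mu)}\mu(B^{(i-1)})$, with $m\le N^{(\rho)}_{B,\,S}+1$, mirroring the constructions of Section \ref{s8} and \cite[Proposition 4.7]{h10}; you instead telescope over the consecutive dilates $\rho^kB$, $0\le k\le N^{(\rho)}_{B,\,S}$, and use the doubling condition at every step to compare averages over $\rho^kB$ and $\rho^{k+1}B$. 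Since in a space of homogeneous type the ratio $\mu(\rho^{k+1}B)/\mu(\rho^kB)$ is uniformly bounded, each dilate is contained in $(1+\rho)S$, and every term of $\wz K^{(\rho),\,p}_{B,\,S}$ equals $1$, your chain produces the same final factor $1+N^{(\rho)}_{B,\,S}\ls\kbsa\le[\kbsa]^{\gz}$, so the argument closes; what the paper's measure-doubling chain buys is a template that does not depend on controlling per-step measure ratios, hence parallels the non-doubling arguments elsewhere in the paper, while your version is shorter precisely because doubling is available. One small slip: with $B_N:=\rho^{N^{(\rho)}_{B,\,S}}B$ the inclusion $S\st B_N$ is false in general (the balls need not be concentric); one only has $S\st 2B_N$, so you should end the chain at $\rho^{k_0}B_N$ for a fixed $k_0$ with $\rho^{k_0}\ge 2$ (or at $2B_N$), which adds a bounded number of steps and changes nothing.
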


\begin{proof}
Fix $\az\in[0,\fz)$ and $q\in[1,\fz)$.
By Proposition \ref{p6.8}(ii), without loss of generality,
we may assume that $\rho=2$. By Definition \ref{d6.5},
we know that $\cera\st\lip_{\az,\,q}(\mu)$ and, for all $f\in\cera$,
$\|f\|_{\az,\,q}^{(d)}\le\|f\|_{\cera}$.

Conversely, by Definition \ref{d6.5},
it suffices to prove that, for all $f\in\lip_{\az,\,q}(\mu)$
and balls $B\st S$,
\begin{equation}\label{9.2}
|m_B(f)-m_S(f)|
\ls\|f\|_{\az,\,q}^{(d)}\kbsp[\mu(S)]^{\az}.
\end{equation}
To this end, we consider the following two cases.

\textbf{Case (I)} $\mu(S)\le4C_{(\mu)}\mu(B)$,
where $C_{(\mu)}$ is as in \eqref{1.1}.
Thus, by this and the H\"older inequality, we have
\begin{align*}
|m_B(f)-m_S(f)|&\le\frac1{\mu(B)}\int_B|f(x)-m_S(f)|\,d\mu(x)
\ls\frac1{\mu(S)}\int_S|f(x)-m_S(f)|\,d\mu(x)\\
&\ls\frac1{[\mu(S)]^{1/q}}\lf[\int_S|f(x)-m_S(f)|^q\,d\mu(x)\r]^{1/q}
\ls[\mu(S)]^{\az}\|f\|_{\az,\,q}^{(d)},
\end{align*}
which implies \eqref{9.2} in \textbf{Case (I)}.

\textbf{Case (II)}
\begin{equation}\label{9.3}
\mu(S)>4C_{(\mu)}\mu(B).
\end{equation}
Now we show \eqref{9.2}. Let $N$ be the smallest integer
such that $2^Nr_B\ge r_S$. Let $B_*:=2^{N+1}B$. Then $S\st B_*\st 6S$,
which implies that
\begin{equation}\label{9.4}
\mu(S)\le\mu(B_*)\le\mu(6S)\le \lf[C_{(\mu)}\r]^3\mu(S).
\end{equation}
Furthermore, let $B^{(0)}:=B$. By \eqref{9.3} and \eqref{9.4}, we see that
$$
\mu\lf(2^{N+1}B\r)=\mu(B_*)\ge\mu(S)>4C_{(\mu)}\mu(B)>2\mu\lf(B^{(0)}\r).
$$
Thus, let $B^{(1)}:=2^{N_1}B^{(0)}$, $N_1\in\nn$, be the smallest ball
in the form of $2^kB^{(0)}$ ($k\in\nn$)
such that $\mu(2^kB^{(0)})>2\mu(B^{(0)})$.
Moreover, by \eqref{9.3} and \eqref{9.4}, we know that
$r_{B^{(1)}}\le r_{B_*}$ and
$$
\mu\lf(B^{(1)}\r)\le C_{(\mu)}\mu\lf(2^{-1}B^{(1)}\r)
\le2C_{(\mu)}\mu\lf(B^{(0)}\r).
$$
We further consider the following two subcases.

\textbf{Subcase i)}. There exists $k\in\nn$ such that
$\mu(2^kB^{(1)})>2\mu(B^{(1)})$.
In this case, we let $2^{N_2}B^{(1)}$, $N_2\in\nn$,
be the smallest ball
in the form of $2^kB^{(1)}$, $k\in\nn$, such that
$\mu(2^kB^{(1)})>2\mu(B^{(1)})$.
Now we divide this subcase into two parts:

(a) $r_{2^{N_2}B^{(1)}}\le r_{B_*}$. Let $B^{(2)}:=2^{N_2}B^{(1)}$. Then
$\mu(B^{(2)})\le C_{(\mu)}\mu(2^{-1}B^{(2)})\le2C_{(\mu)}\mu(B^{(1)})$;

(b) $r_{2^{N_2}B^{(1)}}>r_{B_*}$. Let $B^{(2)}:=B_*$.
Then $\mu(B^{(2)})\le2\mu(B^{(1)})$,
where we terminate the construction in this subcase.

\textbf{Subcase ii)}. For any $k\in\nn$,
$\mu(2^kB^{(1)})\le2\mu(B^{(1)})$. Let
$B^{(2)}:=B_*$. Then $\mu(B^{(2)})\le2\mu(B^{(1)})$ and we terminate the
construction in this subcase.

We continue to choose the balls $\{B^{(i)}\}_i$ in this way.
Clearly, finally the condition
$r_{2^{N_{i+1}}B^{(i)}}\le r_{B_*}$ ($i\in\nn$) is violated
after finitely many steps. Without loss of generality, we may assume
that the process stops after $m$ ($m\in\nn\cap(1,\fz)$) steps.
Then we obtain a sequence of balls, $\{B^{(i)}\}_{i=0}^m$, such that

(i) $B=:B^{(0)}\st\cdots\st B^{(m)}:=B_*$;

(ii) for any $i\in\{1,\ldots,m-1\}$,
$2\mu(B^{(i-1)})<\mu(B^{(i)})\le2C_{(\mu)}\mu(B^{(i-1)})$;

(iii) $\mu(B^{(m)})\le2C_{(\mu)}\mu(B^{(m-1)})$ and
$\mu(S)\le\mu(B^{(m)})\le [C_{(\mu)}]^3\mu(S)$.

Observe that $m\le N+1$.
Thus, by the fact that $S\st B_*\st 6S$, the H\"older inequality,
(i), (ii) and (iii), we have
\begin{align*}
&|m_B(f)-m_S(f)|\\
&\hs\le\sum_{i=1}^m\lf|m_{B^{(i-1)}}(f)-m_{B^{(i)}}(f)\r|
+\lf|m_{B^{(m)}}(f)-m_S(f)\r|\\
&\hs\le\sum_{i=1}^m\frac1{\mu(B^{(i-1)})}\int_{B^{(i-1)}}
\lf|f(x)-m_{B^{(i)}}(f)\r|\,d\mu(x)
+\frac1{\mu(S)}\int_{S}\lf|f(x)-m_{B^{(m)}}(f)\r|\,d\mu(x)\\
&\hs\ls\sum_{i=1}^m\frac1{\mu(B^{(i)})}
\int_{B^{(i)}}\lf|f(x)-m_{B^{(i)}}(f)\r|\,d\mu(x)
+\frac1{\mu(B^{(m)})}\int_{B^{(m)}}\lf|f(x)-m_{B^{(m)}}(f)\r|\,d\mu(x)\\
&\hs\ls\sum_{i=1}^m\lf[\frac1{\mu(B^{(i)})}
\int_{B^{(i)}}\lf|f(x)-m_{B^{(i)}}(f)\r|^q\,d\mu(x)\r]^{1/q}
+\lf[\frac1{\mu(B^{(m)})}\int_{B^{(m)}}
\lf|f(x)-m_{B^{(m)}}(f)\r|^q\,d\mu(x)\r]^{1/q}\\
&\hs\ls\sum_{i=1}^m\lf[\mu\lf(B^{(i)}\r)\r]^{\az}\|f\|_{\az,\,q}^{(d)}
+\lf[\mu\lf(B^{(m)}\r)\r]^{\az}\|f\|_{\az,\,q}^{(d)}\\
&\hs\ls(1+m)\lf[\mu\lf(B^{(m)}\r)\r]^{\az}\|f\|_{\az,\,q}^{(d)}
\ls(1+m)[\mu(6S)]^{\az}\|f\|_{\az,\,q}^{(d)}\\
&\hs\ls(1+N)[\mu(6S)]^{\az}\|f\|_{\az,\,q}^{(d)}
\sim\lf[\kbsp\r]^p[\mu(S)]^{\az}\|f\|_{\az,\,q}^{(d)}.
\end{align*}
This finishes the proof of \eqref{9.2}
in \textbf{Case (II)} and hence Proposition \ref{p9.2}.
\end{proof}

\begin{remark}\label{r9.3}
When $\az=0$, Proposition \ref{p9.2} is just \cite[Proposition 4.7]{h10}
with $\lz(x,r)=\mu(B(x,r))$ for all $x\in\cx$ and $r\in(0,\fz)$.
\end{remark}

Now we recall the notion of the atomic Hardy space $H^{p,\,q}_{\rm at}(\mu)$
from \cite{cw77}.
Suppose that $p\in(0, 1]$ and $q\in[1, \fz]\cap(p, \fz]$. A function $a$ on
$\cx$ is called  a {\it $(p, q)$-atom} if

(i) $\supp(a)\subset B$ for some ball $B\subset\cx$;

(ii) $\|a\|_{L^q(\mu)}\le [\mu(B)]^{1/q-1/p}$;

(iii) $\int_\cx a(x)\, d\mu(x)=0$.

A function $f\in L^1(\mu)$ or a linear functional
$f\in(\lip_{1/p-1}(\mu))^*$
when $p\in(0,1)$ is said to be in the {\it Hardy space
$H_{\rm at}^{1,\,q}(\mu)$}
when $p=1$ or $H_{\rm at}^{p,\,q}(\mu)$ when $p\in(0,1)$ if there exist
$(p,\,q)$-atoms $\{a_j\}_{j=1}^\fz$ and
$\{\lz_j\}_{j=1}^\fz\subset\cc$ such that
$$
f=\sum_{j\in\nn}\lz_j a_j,
$$
which converges in $L^1(\mu)$ when $p=1$ or in $(\lip_{1/p-1}(\mu))^*$
when $p\in(0,1)$, and
$$\sum_{j\in\nn}|\lz_j|^p<\fz.$$
Moreover, the {\it norm} of $f$ in $H_{\rm at}^{p,\,q}(\mu)$
with $p\in(0,1]$ and $q\in[1, \fz]\cap(p, \fz]$ is defined by
$$
\|f\|_{H_{\rm at}^{p,\,q}(\mu)}
:=\inf\lf\{\lf(\sum_{j\in\nn}|\lz_j|^p\r)^{1/p}\r\},
$$
where the infimum is taken over all possible decompositions of
$f$ as above.

Coifman and Weiss \cite{cw77} proved that $H_{\rm at}^{p,\,q}(\mu)$ and
$H^{p,\,\fz}_{\rm at}(\mu)$ coincide with equivalent norms for all $p\in(0, 1]$
and $q\in[1, \fz)\cap(p, \fz)$. Thus, we denote $H^{p,\,q}_{\rm at}(\mu)$
simply by $H^p_{\rm at}(\mu)$.

Let $p\in(0,1]$, $q\in(1,\fz]$,
$\gz\in[1,\fz)$ and $\rho\in(1,\fz)$.
Recall that the space $\hhp$ is independent of the choices of
$\gz\in[1,\,\fz)$ and $\rho\in(1,\,\fz)$;
see Propositions \ref{p8.1} and \ref{p8.2}.
Denote \emph{$\hhp$ simply by $\widehat H^{p,\,q}_{\rm atb}(\mu)$}. Moreover,
without loss of generality, we may let $\gz=1/p$ and $\rho=2$.

Now we show that $\widehat H^{p,\,q}_{\rm atb}(\mu)$
and $H^p_{\rm at}(\mu)$ coincide
with equivalent quasi-norms.

\begin{theorem}\label{t9.4}
Let $(\cx,d,\mu)$ be a space of homogeneous type,
$p\in(0,\,1]$ and $q\in(1,\fz]$.
Then the spaces $\widehat H^{p,\,q}_{\rm atb}(\mu)$
and $H^p_{\rm at}(\mu)$ coincide with equivalent quasi-norms.
\end{theorem}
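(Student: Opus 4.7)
The case $p=1$ of Theorem \ref{t9.4} is essentially already in hand: by Remark \ref{r7.4}(i)--(ii), $\widehat{H}^{1,q}_{\rm atb}(\mu)$ coincides with $\wz{H}^{1,q,\gz}_{\rm atb,\rho}(\mu)$, which in the homogeneous-type setting (with $\lz(x,r)=\mu(B(x,r))$) reduces via the doubling condition to the classical Coifman--Weiss atomic Hardy space. Thus I would focus on $p\in(0,1)$, fixing $\rho=2$ and $\gz=1/p$ without loss of generality by Propositions \ref{p8.1} and \ref{p8.2}. The strategy has two parts: an easy inclusion $H^p_{\rm at}(\mu)\subset\widehat{H}^{p,q}_{\rm atb}(\mu)$, and a harder reverse inclusion obtained by decomposing each atomic block into a finite sum of classical $(p,q)$-atoms.

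First I would show that every $(p,q)$-atom $a$ supported in a ball $B$ is itself (up to a bounded constant) a $(p,q,1/p,2)_{\lz,1}$-atomic block: take $\lz_1=1$, $a_1=a$, $\lz_2=0$. By the doubling condition and Lemma \ref{l2.8}(ii)--(iii), one has $\mu(2B)\sim\mu(B)=\lz(c_B,r_B)$ and $\wz{K}^{(2),p}_{B,B}\sim 1$, so the atomic-block size estimate reduces to the atom size condition $\|a\|_\lq\le[\mu(B)]^{1/q-1/p}$. Since Proposition \ref{p9.2} identifies $(\cer)^\ast=\lip_{1/p-1,q}(\mu)=\lip_{1/p-1}(\mu)$ (and this is precisely the dual for $H^p_{\rm at}(\mu)$), convergence in the two atomic Hardy norms takes place in the same distributional space, and the inclusion plus norm estimate pass to countable sums.

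For the reverse inclusion, let $b=\lz_1a_1+\lz_2a_2$ be a $(p,q,1/p,2)_{\lz,1}$-atomic block with $\supp(a_j)\subset B_j\subset B$. For each $j\in\{1,2\}$ I would construct, as in the proof of Proposition \ref{p8.2}, a chain $B_j=B_j^{(0)}\subset B_j^{(1)}\subset\cdots\subset B_j^{(m_j)}=B$ of $(2,\bz_2)$-doubling balls with $\wz{K}^{(2),p}_{B_j^{(k-1)},B_j^{(k)}}$ bounded above by a fixed constant and below (except at the last link) by $(3+\lfloor\log_2 2\rfloor)^{1/p}$; Lemma \ref{l6.11} then yields $m_j\le C\lf[\wz{K}^{(2),p}_{B_j,B}\r]^p$. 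Define $\wz c_0^{(j)}:=a_j$ and, for $k\in\{1,\ldots,m_j\}$,
$$
\wz c_k^{(j)}:=\frac{\chi_{B_j^{(k)}}}{\mu(B_j^{(k)})}\int_{\cx}a_j(y)\,d\mu(y),
$$
so that
$$
a_j=\sum_{k=1}^{m_j}\lf(\wz c_{k-1}^{(j)}-\wz c_k^{(j)}\r)+\wz c_{m_j}^{(j)}.
$$
Each difference $\wz c_{k-1}^{(j)}-\wz c_k^{(j)}$ has vanishing integral, is supported in $B_j^{(k)}$, and by the H\"older inequality combined with the atomic-block size condition and the doubling of $B_j^{(k)}$, satisfies $\|\wz c_{k-1}^{(j)}-\wz c_k^{(j)}\|_\lq\lesssim[\mu(B_j^{(k)})]^{1/q-1}[\mu(B)]^{1-1/p}\lf[\wz{K}^{(2),p}_{B_j,B}\r]^{-1/p}$, which rewrites as a constant multiple (of size $\lf[\wz{K}^{(2),p}_{B_j,B}\r]^{-1/p}\cdot[\mu(B_j^{(k)})/\mu(B)]^{1/p-1}\lesssim\lf[\wz{K}^{(2),p}_{B_j,B}\r]^{-1/p}$) of a $(p,q)$-atom on $B_j^{(k)}$. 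Finally, $\sum_j\lz_j\wz c_{m_j}^{(j)}$ is supported in $B$, has vanishing integral (since $\int b\,d\mu=0$), and, by the same H\"older argument together with the doubling of $B$, is a bounded multiple of a $(p,q)$-atom on $B$.

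Assembling the decomposition $b=\sum_{j,k}\mu_{j,k}\alpha_{j,k}+\mu_\ast\alpha_\ast$ into $(p,q)$-atoms, the $p$-th power sum of coefficients obeys
$$
\sum_{j,k}|\mu_{j,k}|^p+|\mu_\ast|^p\lesssim\sum_{j=1}^{2}|\lz_j|^p m_j\lf[\wz{K}^{(2),p}_{B_j,B}\r]^{-1}+|\lz_1|^p+|\lz_2|^p\lesssim|\lz_1|^p+|\lz_2|^p,
$$
where the key cancellation is that $m_j\lesssim[\wz{K}^{(2),p}_{B_j,B}]^p$ absorbs the factor $[\wz{K}^{(2),p}_{B_j,B}]^{-1}$ coming from $\gz=1/p$, leaving a non-positive power of $\wz{K}$. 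Passing from a single block to a countable sum (in the common dual $\lip_{1/p-1}(\mu)$) yields $\|b\|_{H^p_{\rm at}(\mu)}\lesssim|b|_{\widehat{H}^{p,q}_{\rm atb}(\mu)}$, completing the proof. The main obstacle is the careful execution of the ball-chain construction together with the bookkeeping of the $\wz K$-weighted estimates: one must verify that at each step the intermediate ball can be chosen doubling with the required two-sided control on $\wz K$ (a refinement of the argument in the proof of Proposition \ref{p8.2}), and that no extra logarithmic loss enters when summing the geometric-like $\ell^p$-bound across the chain.
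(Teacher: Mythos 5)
Your treatment of $p\in(0,1)$ is essentially the paper's own proof. The easy inclusion is the same observation: since $\lz(x,r)=\mu(B(x,r))$ and $\mu$ is doubling, a $(p,q)$-atom is, up to a harmless constant, a $(p,q,1/p,2)_{\lz,\,1}$-atomic block, and Proposition \ref{p9.2} identifies the space in which both atomic series converge. The hard inclusion is also the paper's: telescope $a_j$ along a chain of balls climbing from $B_j$ to a fixed dilate of $B$, so that each link produces a $(p,q)$-atom whose coefficient carries the factor $[\wz K^{(2),\,p}_{B_j,\,B}]^{-1/p}$ supplied by the choice $\gz=1/p$, and the chain length $m_j$ is absorbed through exactly the cancellation $m_j[\wz K^{(2),\,p}_{B_j,\,B}]^{-1}\ls[\wz K^{(2),\,p}_{B_j,\,B}]^{p-1}\le1$ that the paper uses. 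The only real differences are organizational: you treat $j=1,2$ symmetrically where the paper splits into four cases according to whether $\mu(B)/\mu(B_j)\le 4C_{(\mu)}$, and you propose to build the chain and bound $m_j$ via the doubling-ball selection of Proposition \ref{p8.2} together with Lemma \ref{l6.11}, whereas the paper uses a simpler chain in which $\mu(B_1^{(i)})$ roughly doubles at each step and gets $m\le N^{(2)}_{B_1,\,B}+1\le[\wz K^{(2),\,p}_{B_1,\,B}]^{p}$ for free, because in the homogeneous setting every summand in the definition of $\wz K^{(2),\,p}_{B_1,\,B}$ equals $1$. Your route is valid but imports machinery that is unnecessary here.

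The one genuine weak point is your dismissal of $p=1$. Remark \ref{r7.4}(i)--(ii) only gives $\widehat H^{1,\,q,\,\gz}_{\rm atb,\,\rho}(\mu)=\wz H^{1,\,q,\,\gz}_{\rm atb,\,\rho}(\mu)$ and parameter-independence; the asserted reduction ``via the doubling condition to the classical Coifman--Weiss atomic Hardy space'' is precisely the $p=1$ case of the theorem and is established nowhere earlier in the paper (indeed Remark \ref{r9.5}(i) records it as a new consequence of Theorem \ref{t9.4}). This is easily repaired, since your chain construction and coefficient bookkeeping work verbatim at $p=1$ (with convergence taken in $\lon$, and $[\wz K]^{p-1}\le1$ still holding), but as written the $p=1$ case is assumed rather than proved.
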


\begin{proof}
Let $p\in(0,1]$ and $q\in(1,\fz]$. We first show
that $H^p_{\rm at}(\mu)\st \hhpq$.
To this end, let $f\in H^p_{\rm at}(\mu)$. Then there exist sequences
of $(p,q)$-atoms, $\{b_k\}_{k=1}^\fz$, and
numbers, $\{\lz_k\}_{k=1}^\fz\st\cc$, such that
$f=\sum_{k=1}^\fz\lz_k b_k$ in $(\lip_{1/p-1}(\mu;\dz))^{\ast}$
and
\begin{equation}\label{9.5}
\sum_{k=1}^\fz|\lz_k|^p\ls\|f\|^p_{H^p_{\rm at}(\mu)}.
\end{equation}
We then claim that, for each $k$, $\lz_k b_k$ is a
$(p,q,\frac1p,2)_{\lz,\,1}$-atomic block
and
\begin{equation}\label{9.6}
|\lz_kb_k|_{\widehat H^{p,\,q}_{\rm atb}(\mu)}\ls |\lz_k|.
\end{equation}
Indeed, let $\rho\in(1,\fz)$ and $\gz\in[1,\fz)$.
It suffices to prove that, if $b$ is a $(p, q)$-atom,
then $b$ is a $(p,q,1/p,2)_{\lz,\,1}$-atomic block.
Suppose that $\supp(b)\subset B(c_B,\,r_B)=: B$, then
$$
\|b\|_{L^q(\mu)}\le [\mu(B)]^{1/q-1/p}.
$$
Let
$$
a_1=a_2:=\lf[\frac{\mu(\rho B)}{\mu(B)}\r]^{1/q-1}
\lf[\wz K^{(\rho),\,p}_{B,\,B}\r]^{-1/p}b
$$
and
$$
\lz_1=\lz_2:=\frac{1}{2}\lf[\frac{\mu(B)}{\mu(\rho B)}\r]^{1/q-1}
\lf[\wz K^{(\rho),\,p}_{B,\,B}\r]^{1/p}.
$$
It then follows that $\supp(a_1)\subset B_1:=B$, $\supp(a_2)
\subset B_2:=B$, $b=\lz_1 a_1+\lz_2 a_2$ and, for $j\in\{1,2\}$,
$$
\|a_j\|_\lq\le [\mu(\rho B_j)]^{1/q-1}[\mu(B)]^{1-1/p}
\lf[\wz K^{(\rho),\,p}_{B_j,\,B}\r]^{-1/p},
$$
which further implies that $b$ is a $(p,q,1/p,2)_{\lz,\,1}$-atomic
block and, moreover,
\begin{equation*}
|b|_{\widehat H^{p,\,q,\,1/p}_{\rm atb,\,2}(\mu)}=|\lz_1|+|\lz_2|\ls1.
\end{equation*}
This finishes the proof of the above claim.
Moreover, from Remark \ref{r9.1} and Proposition \ref{p9.2},
we deduce that $f=\sum_{k=1}^\fz\lz_k b_k$ in $(\cer)^{\ast}$,
which, together with \eqref{9.5} and \eqref{9.6}, further
implies that  $f\in \widehat{H}^{p,\,q}_{\rm atb}(\mu)$ and
$\|f\|_{\widehat{H}^{p,\,q}_{\rm atb}\,(\mu)}\ls \|f\|_{H^p_{\rm at}(\mu)}$.

Now we consider the converse inclusion that $\hhpq\st H^p_{\rm at}(\mu)$.
Let $b=\sum_{j=1}^2\lz_j a_j$
be a $(p,q,1/p,2)_{\lz,\,1}$-atomic block, where, for any $j\in\{1,\,2\}$,
$a_j$ is a function supported on $B_j\subset B$ for some balls $B_j$ and
$B$ as in Definition \ref{d7.1}, and
\begin{equation}\label{9.7}
\|a_j\|_\lq\le [\mu(2 B_j)]^{1/q-1}[\mu(B)]^{1-1/p}
\lf[\wz K^{(2),\,p}_{B_j,\,B}\r]^{-1/p}.
\end{equation}
We consider the following four cases:

{\bf Case (I)} For any $j\in\{1,\,2\}$,
$\frac{\mu(B)}{\mu(B_j)}\le 4C_{(\mu)}$,
where $C_{(\mu)}$ is as in \eqref{1.1};

{\bf Case (II)} $\frac{\mu(B)}{\mu(B_1)}> 4C_{(\mu)}$
and $\frac{\mu(B)}{\mu(B_2)}\le 4C_{(\mu)}$;

{\bf Case (III)} $\frac{\mu(B)}{\mu(B_1)}\le 4C_{(\mu)}$
and $\frac{\mu(B)}{\mu(B_2)}> 4C_{(\mu)}$;

{\bf Case (IV)} For any $j\in\{1,\,2\}$,
$\frac{\mu(B)}{\mu(B_j)}> 4C_{(\mu)}$.

In {\bf Case (I)}, we see that
$$
\|b\|_{L^q(\mu)}\le|\lz_1|\|a_1\|_{L^q(\mu)}+|\lz_2|\|a_2\|_{L^q(\mu)}
\le\lf[4C_{(\mu)}\r]^{1-1/q}(|\lz_1|+|\lz_2|)[\mu(B)]^{1/q-1/p},
$$
which implies that
$\lf[4C_{(\mu)}\r]^{1/q-1}(|\lz_1|+|\lz_2|)^{-1}b$ is a $(p,\,q)$-atom and
$$
\lf\|\lf[4C_{(\mu)}\r]^{1/q-1}(|\lz_1|+|\lz_2|)^{-1}b\r\|
_{H^p_{\rm at}(\mu)}\le 1.
$$

The proofs of {\bf Case (II)}, {\bf Case (III)}
and {\bf Case(IV)} are similar.
For brevity, we only prove {\bf Case (II)}.

In {\bf Case (II)}, $\frac{\mu(B)}{\mu(B_1)}> 4C_{(\mu)}$ and
$\frac{\mu(B)}{\mu(B_2)}\le 4C_{(\mu)}$.
We now choose a sequence of balls, $\{B^{(i)}_1\}_{i=0}^m$
with certain $m\in\nn$, as follows. Let $B_1^{(0)}:=B_1$
and $B_0:=2^{N^{(2)}_{B_1,\,B}+1}B_1$. Then
$$
B\subset B_0\subset 6B,
$$
which, together with \eqref{1.1}, shows that
\begin{equation}\label{9.8}
\mu(B)\le\mu(B_0)\le\mu(6B)\le \lf[C_{(\mu)}\r]^3\mu(B).
\end{equation}

To choose $B^{(1)}_1$, let $N_1$ be the smallest
positive integer satisfying
$\mu(2^{N_1}B_1^{(0)})>2\mu(B_1^{(0)})$. We let $B_1^{(1)}:=2^{N_1}B_1^{(0)}$.
By \eqref{9.8}, \eqref{1.1} and the choice of $B_1^{(1)}$, we have
$r_{B_1^{(1)}}<r_{B_0}$ and
$$
2\mu\lf(B_1^{(0)}\r)<\mu\lf(B_1^{(1)}\r)
\le C_{(\mu)}\mu\lf(2^{-1}B_1^{(1)}\r)\le 2C_{(\mu)}\mu\lf(B_1^{(0)}\r).
$$

To choose $B^{(2)}_1$, if, for any $N\in\nn$,
$\mu(2^{N}B_1^{(1)})\le 2\mu(B_1^{(1)})$,
let $B^{(2)}_1:=B_0$
and the selection process terminates. Otherwise,
let $N_2$ be the smallest
positive integer satisfying
$\mu(2^{N_2}B_1^{(1)})>2\mu(B_1^{(1)})$. If
$r_{2^{N_2}B_1^{(1)}}\ge r_{B_0}$,
then we let $B^{(2)}_1:=B_0$ and the selection process terminates.
Otherwise, we let $B^{(2)}_1:=2^{N_2}B_1^{(1)}$.
We continue as long as this selection process is possible;
clearly, finally the condition
$r_{2^{N_{i+1}}B^{(i)}_1}< r_{B_0}$ is violated
after finitely many steps. Without loss of generality, we may assume
that the process stops after $m$
($m\in\nn\cap(1,N_{B_1,\,B}^{(2)}+1]$) steps.
Then we obtain a family of balls, $\{B^{(i)}_1\}_{i=1}^m$, such that

(i) $B_1^{(0)}:=B_1\subset B$,
$B^{(i)}_1:=2^{N_i}B^{(i-1)}_1\st B^{(m)}_1:=B_0\st 6B$
for any $i\in\{1,\,\ldots,\,m-1\}$;

(ii) for any $i\in\{1,\,\ldots,\,m-1\}$, by \eqref{1.1}
and the definition of $N_i$, we have
$$
2\mu\lf(B^{(i-1)}_1\r)<\mu\lf(B^{(i)}_1\r)\le 2C_{(\mu)}\mu\lf(B^{(i-1)}_1\r);
$$

(iii) $\mu(B^{(m)}_1)\le 2C_{(\mu)}\mu(B_1^{m-1})$
and $\mu(B)\le\mu(B^{(m)}_1)\le[C_{(\mu)}]^3\mu(B)$;

(iv) from the above selection process and the definition of
$\wz K^{(2),\,p}_{B_1,\,B}$, we conclude that
\begin{equation}\label{9.9}
m\le N^{(2)}_{B_1,\,B}+1\le \lf[1+N^{(2)}_{B_1,\,B}\r]^{1/p}
\le\wz K^{(2),\,p}_{B_1,\,B}=:\lf[\widehat{C}_b\r]^p.
\end{equation}
Let $\wz c_0:=\widehat{C}_b a_1$. For any $i\in\{1,\,\ldots,\,m\}$, let
$$
\wz c_i:=\dfrac{\chi_{B^{(i)}_1}}{\mu(B^{(i)}_1)}
\dint_{\cx}\wz c_{i-1}(y)\,d\mu(y).
$$
For $i\in\{1,\,\ldots,\,m\}$, we claim that
\begin{equation}\label{9.10}
\|\wz c_{i-1}\|_{L^q(\mu)}\ls
\lf[\mu(B_1^{i})\r]^{1/q-1/p}
\end{equation}
and
\begin{equation}\label{9.11}
\|\wz c_i\|_{L^q(\mu)}\ls
\lf[\mu(B_1^{i})\r]^{1/q-1/p},
\end{equation}
where the implicit positive constant is independent of $\widehat{C}_b$
and hence $B_1$, $B_2$ and $B$.
Indeed, we prove \eqref{9.10} and \eqref{9.11} by induction.
By \eqref{9.7}, $B_1^{(1)}\subsetneqq B_0\subset 6B$,
\eqref{1.1} and (ii), we have
\begin{equation*}
\|\wz c_0\|_{L^q(\mu)}\ls\lf[\mu\lf(2 B^{(0)}_1\r)\r]^{1/q-1}
[\mu(B)]^{1-1/p}
\lf[\wz K^{(2),\,p}_{B_1,\,B}\r]^{-1/p+1/p}
\ls\lf[\mu\lf(B_1^{(1)}\r)\r]^{1/q-1/p}.
\end{equation*}
For $i=1$, by the H\"older inequality, \eqref{9.7},
$B_1^{(1)}\subsetneqq B_0\subset 6B$, \eqref{1.1} and (ii),
we conclude that
\begin{align*}
\|\wz c_1\|_{L^q(\mu)}&\le\lf[\mu\lf(B_1^{(1)}\r)\r]^{1/q-1}
\lf[\mu\lf(B^{(0)}_1\r)\r]^{1-1/q}\lf\|\widehat{C}_b a_1\r\|_{L^q(\mu)}\\
&\ls\lf[\mu\lf(B_1^{(1)}\r)\r]^{1/q-1}\lf[\mu\lf(B^{(0)}_1\r)\r]^{1-1/q}
\lf[\mu\lf(2 B^{(0)}_1\r)\r]^{1/q-1}
[\mu(B)]^{1-1/p}
\lf[\wz K^{(2),\,p}_{B_1,\,B}\r]^{-1/p+1/p}\\
&\ls\lf[\mu\lf(B_1^{(1)}\r)\r]^{1/q-1/p}.
\end{align*}
Moreover, by (ii) (if $m=2$, we use (iii)), we have
\begin{equation*}
\|\wz c_1\|_{L^q(\mu)}\ls[\mu(B^{(2)}_1)]^{1/q-1/p}.
\end{equation*}
Now, we assume that \eqref{9.10} and \eqref{9.11}
hold true for $i\in\nn\cap[1,\,m)$.
It then follows, from the H\"older inequality,
\eqref{9.11} and (ii) (if $i+1=m$, we use (iii)), that
\begin{align*}
\|\wz c_{i+1}\|_{L^q(\mu)}&\ls\lf[\mu\lf(B_1^{i+1}\r)\r]^{1/q-1}
\lf[\mu\lf(B^{(i)}_1\r)\r]^{1-1/q}\|\wz c_i\|_{L^q(\mu)}\\
&\ls\lf[\mu\lf(B_1^{i+1}\r)\r]^{1/q-1}\lf[\mu\lf(B^{(i)}_1\r)\r]
^{1-1/q}\lf[\mu\lf(B^{(i)}_1\r)\r]^{1/q-1/p}
\ls\lf[\mu\lf(B_1^{i+1}\r)\r]^{1/q-1/p}
\end{align*}
and, moreover, if $i+1<m$, we further have
$$
\|\wz c_{i+1}\|_{L^q(\mu)}\ls\lf[\mu\lf(B_1^{i+2}\r)\r]^{1/q-1/p}.
$$
By induction, we conclude that \eqref{9.10} and \eqref{9.11} hold true.

For $i\in\{1,\,\ldots,\,m\}$, let
$c_i:=[\wz C]^{-1}(\wz c_{i-1}-\wz c_i)$, where
$\wz C$ is a positive constant to be fixed later.
Then $\supp(c_i)\subset2 B^{(i)}_1$ and
$
\int_\cx c_i(x)\,d\mu(x)=0,
$
which, together with \eqref{9.10}, \eqref{9.11} and \eqref{1.1},
implies that $c_i$ is a multiple of a $(p,\,q)$-atom
associated with the ball $2 B^{(i)}_1$ provided $\wz C$ large enough.
We now write
\begin{equation*}
b=\dfrac{\wz C\lz_1}{\widehat{C}_b}\dsum_{i=1}^m c_i
+\dfrac{\lz_1}{\widehat{C}_b}\wz c_m+\lz_2 a_2.
\end{equation*}
Let $c_{m+1}:=[\wz C(|\lz_1|+|\lz_2|)]^{-1}(\frac{\lz_1}
{\widehat{C}_b}\wz c_m+\lz_2 a_2)$.
Notice that $\int_\cx b(x)\,d\mu(x)=0$ and, for each
$i\in\{1,\,\ldots,m\}$, $\int_\cx c_i(x)\,d\mu(x)=0$.
It then follows that
$
\int_\cx c_{m+1}(x)\,d\mu(x)=0.
$
On the other hand, we have $\supp(\wz c_m)\subset
2 B^{(m)}_1\subset 12B=:B'$, $\mu(B')\sim\mu(B)$
and $\supp(a_2)\subset B_2\subset B'$.
By \eqref{9.11}, (iii) and $\mu(B')\sim\mu(B)$, we have
\begin{equation*}
\|\wz c_m\|_{L^q(\mu)}\ls\lf[\mu\lf(B^{(m)}_1\r)\r]
^{1/q-1/p}\ls\lf[\mu(B')\r]^{1/q-1/p}.
\end{equation*}
From \eqref{9.7}, $\frac{\mu(B)}{\mu(B_2)}\le 4C_{(\mu)}$
and $\mu(B')\sim\mu(B)$, it follows that
\begin{equation*}
\|a_2\|_\lq\ls [\mu(2 B_2)]^{1/q-1}[\mu(B)]^{1-1/p}
\lf[\wz K^{(2),\,p}_{B_2,\,B}\r]^{-1/p}\ls\lf[\mu\lf(B'\r)\r]^{1/q-1/p}.
\end{equation*}
Let $\wz C$ be a positive constant, which is independent of
$\widehat C_b$ and $m$, such that
$$\|c_{i}\|_{L^q(\mu)}\le \wz C\lf[\mu\lf(B_1^{i+2}\r)\r]^{1/q-1/p}$$
for each $i\in\{1,\,\ldots,m\}$, and
$$\lf\|\frac{\lz_1}{\widehat{C}_b}\wz c_m+\lz_2 a_2\r\|_{L^q(\mu)}
\le \wz C(|\lz_1|+|\lz_2|)\lf[\mu\lf(B'\r)\r]^{1/q-1/p}.$$
Then we see that $c_{m+1}$ is a $(p,\,q)$-atom associated with the ball $B'$.
From this and (iv), we conclude that
$$
b=\dfrac{\wz C\lz_1}{\widehat{C}_b}\dsum_{i=1}^m c_i
+\wz C(|\lz_1|+|\lz_2|)c_{m+1}\in H^p_{\rm at}(\mu)
$$
and
\begin{equation}\label{9.12}
\|b\|^p_{H^p_{\rm at}(\mu)}\ls m \dfrac{|\lz_1|^p}
{[{\widehat C}_b]^p}+(|\lz_1|+|\lz_2|)^p\ls(|\lz_1|+|\lz_2|)^p\sim |b|^p_{\hhpq},
\end{equation}
where the implicit positive constant is independent of $\widehat C_b$ and $m$.

Moreover, for any $f\in\hhpq$, by Definition \ref{d7.3}
with $\gz=1/p$ and $\rho=2$, we know that
there exists a sequence $\{b_k\}_{k\in\nn}$ of
$(p,q,1/p,2)_{\lz,\,1}$-atomic blocks such that $f=\sum_{k\in\nn}b_k$
in $(\cer)^\ast$ and
\begin{equation}\label{9.13}
\sum_{k\in\nn}|b_k|^p_{\hhpq}
\ls\|f\|^p_{\hhpq}.
\end{equation}
For each $k$, assume that $b_k=\lz_{k,\,1}a_{k,\,1}
+\lz_{k,\,2}a_{k,\,2}$, where
$\supp(b_k)\st B_k$, $\supp(a_{k,\,j})\st B_{k,\,j}$
for $j\in\{1,\,2\}$. Let $[\widehat{C}_k]^p
:=\wz K^{(2),\,p}_{B_{k,\,1},\,B_k}$.
From Remark \ref{r9.1} and Proposition \ref{p9.2},
\eqref{9.9}, \eqref{9.12} and \eqref{9.13}, we deduce that
$$
f=\sum_{k\in\nn}b_k=\sum_{k\in\nn}\lf[\dsum_{i=1}^{m_k}
\dfrac{\wz C\lz_{k,\,1}}{\widehat{C}_k}c_{k,\,i}
+\wz C(|\lz_{k,\,1}|+|\lz_{k,\,2}|)c_{m_k+1}\r]
$$
in $(\cer)^\ast=(\lip_{1/p-1}(\mu;\dz))^{\ast}$ and
$$\sum_{k\in\nn}\lf[\dsum_{i=1}^{m_k}\lf|\dfrac{\lz_{k,\,1}}{\widehat{C}_k}\r|^p
+(|\lz_{k,\,1}|+|\lz_{k,\,2}|)^p\r]\ls \sum_{k\in\nn}
|b_k|^p_{\hhpq}\ls\|f\|^p_{\hhpq}.$$
This implies that $f\in H^p_{\rm at}(\mu)$ and
$\|f\|_{H^p_{\rm at}(\mu)}\ls \|f\|_{\hhpq}$, which completes the proof
of Theorem \ref{t9.4}.
\end{proof}

\begin{remark}\label{r9.5}
(i) Theorem \ref{t9.4} implies that, if $(\cx,d,\mu)$ is a space of
homogeneous type with
$$\lz(x,r):=\mu(B(x,r))$$
for all $x\in\cx$ and
$r\in(0,\fz)$, then $\widehat H^1_{\rm atb}(\mu)$
and $H^1_{\rm at}(\mu)$ coincide with equivalent norms. We notice that
it is not a paradox of the example given by Tolsa
\cite[Example 5.6]{t01a} since $\lz(x,r)\sim r$
for all $x\in\cx$ and $r\in(0,\sqrt{2}]$,
which is not equivalent to $\mu(B(x,r))\sim r^2$, in that example.

(ii) Let $\rho\in(1,\fz)$, $\gz\in[1,\fz)$ and $q\in(1,\fz]$.
Combining Theorem \ref{t9.4} and Remark \ref{r7.4}(ii), we obtain
$$
\widehat{H}^{1,\,q,\,\gz}_{\rm atb,\,\rho}(\mu)
=H^{1}_{\rm at}(\mu)=\wz{H}^{1,\,q,\,\gz}_{\rm atb,\,\rho}(\mu)
$$
over spaces of homogeneous type.

(ii) From Theorem \ref{t9.4} and \cite[Theorem A]{cw77}, it follows that
$\widehat H^{p,\,q}_{\rm atb}(\mu)$ is independent of the choice of
$q$ in spaces of homogeneous type with $\lz(x,r):=\mu(B(x,r))$
for all $x\in\cx$ and $r\in(0,\fz)$.
\end{remark}

\section{Relation between $\nhp$ and $H^{p}_{\rm at}(\mu)$
over RD-spaces}\label{s10}

\hskip\parindent In this section, we investigate the relations
among $\nhp$, $\mhp$ and $H^{p,\,q}_{\rm at}(\mu)$ over RD-spaces.
As a corollary, the relations among $\nhp$, $\mhp$, $H^{p}(\rr^D)$,
$\widehat{H}^{p,\,q,\,\gz}_{\rm atb,\,\rho}(\mu)$
and $\widehat{H}_{\rm{mb},\,\rho}^{p,\,q,\,\gamma,\,\epsilon}(\mu)$
over Euclidean spaces $(\rr^D,|\cdot|)$ endowed
with the $D$-dimensional Lebesgue measure $dx$ are also presented.

Let $(\cx,d,\mu)$ be a space of homogeneous type with $\lz(x,r):=\mu(B(x,r))$
for all $x\in\cx$ and $r\in(0,\fz)$.
The following notion of RD-spaces was introduced by Han, M\"uller and Yang
\cite{hmy08}. A space of homogeneous type is called an
RD-\emph{space} if there exist constants
$\kz\in(0,\,\nu]$ and $C\in[1,\,\fz)$ such that,
for all $x\in\cx$, $r\in(0,\,\diam(\cx)/2)$ and $\lz\in[1,\,\diam(\cx)/(2r))$,
\begin{equation}\label{10.1}
C^{-1}\lz^\kz\mu(B(x,r))\le\mu(B(x,\lz r))\le C\lz^{\nu}\mu(B(x,r)),
\end{equation}
where $\diam(\cx):=\sup_{x,\,y\in\cx}d(x,y)$ and
$\nu:=\log_2C_{(\lz)}$ is as in Section \ref{s1}.
We point out that the RD-space is also a space of homogeneous type.
In the remainder of this section, we \emph{always assume} that $(\cx, d, \mu)$
is an {\rm RD}-space with $\mu(\cx)=\fz$ and let $V_r(x):=\mu(B(x, r))$ and
$V(x, y):=\mu(B(x, d(x, y)))$ for all $x,\,y\in\cx$ and $r\in(0,\fz)$.

The following {\it space of test functions on $\cx$} was
introduced by Han, M\"uller and Yang \cite{hmy06,hmy08}.
Throughout this section, we fix $x_1\in\cx$.

Let $\bz\in(0,\,1]$ and $\gz\in(0,\fz)$. A function $f$ on $\cx$
is said to belong to the
{\it space of test functions, $\cg(\bz,\gz)$}, if
there exists a non-negative constant $\wz C$ such that
\begin{itemize}

\item[(A6)] for all $x\in\cx$,
$$|f(x)|\le \wz{C}\frac{1}{V_1(x_1)+V(x_1,x)}
\lf[\frac{1}{1+d(x_1,x)}\r]^\gz;$$

\item[(A7)] for all $x,\,y\in\cx$ satisfying $d(x,y)\le(1+d(x_1,x))/2$,
$$|f(x)-f(y)|\le \wz{C}\lf[\frac{d(x,y)}{1+d(x_1,x)}\r]^\bz
\frac{1}{V_1(x_1)+V(x_1,x)}\lf[\frac{1}{1+d(x_1,x)}\r]^\gz.$$

\end{itemize}
Moreover, for $f\in\cg(\bz,\gz)$, its {\it norm} is defined by
$$
\|f\|_{\cg(\bz,\gz)}:=\inf\{\wz{C}:\ {\wz C}\ {\rm satisfies\
(A6)\ and\ (A7)}\}.
$$

The \emph{space} $\mathring{\cg}(\bz,\gz)$ is defined as the set of
all functions $f\in\cg(\bz,\gz)$ satisfying $\int_\cx f(x)\,d\mu(x)=0$.
Moreover, we endow the space $\mathring{\cg}(\bz,\gz)$ with
the same norm as the space $\cg(\bz,\gz)$.
Furthermore, $\mathring{\cg}(\bz,\gz)$ is a Banach space.

For any given $\ez\in(0,\,1]$, let $\mathring{\cg}^\ez_0(\bz,\gz)$
be the completion of the set $\mathring{\cg}(\ez,\,\ez)$
in $\mathring{\cg}(\bz,\gz)$ when $\bz,\gz\in(0,\,\ez]$.
Moreover, if $f\in\mathring{\cg}^\ez_0(\bz,\gz)$, we then define
$\|f\|_{\mathring{\cg}^\ez_0(\bz,\gz)}:=\|f\|_{\mathring{\cg}(\bz,\gz)}$.
We define the \emph{dual space} $(\mathring{\cg}^\ez_0(\bz,\gz))^*$
to be the set of all continuous linear functionals $\cl$ from
$\mathring{\cg}^\ez_0(\bz,\gz)$ to $\cc$, and
endow it with the weak$^*$ topology.

Suppose that $\ez_1\in(0,1]$ and $\ez_2,\,\ez_3\in(0,\fz)$.
Let $\{D_t\}_{t\in(0,\fz)}$ be a family of bounded linear operators
on $L^2(\mu)$ such that,
for all $t\in(0,\fz)$, $D_t(x, y)$, the kernel of $D_t$,
is a measurable function from $\cx\times\cx$ to
$\cc$ satisfying the following estimates:
there exists a positive constant $L_0$ such that, for all
$t\in(0,\fz)$ and all $x,\, \wz x,\, y\in\cx$ with
$d(x, \wz x)\le [t+d(x, y)]/2$,

\begin{itemize}

\item[(A1)] $|D_t(x, y)|\le L_0\dfrac{1}{V_t(x)+V_t(y)+V(x, y)}
\lf[\dfrac{t}{t+d(x, y)}\r]^{\ez_2}$;

\item[(A2)] $|D_t(x, y)-D_t(\wz x, y)|
\le L_0\lf[\dfrac{d(x, \wz x)}{t+d(x, y)}\r]^{\ez_1}
\dfrac{1}{V_t(x)+V_t(y)+V(x, y)}\lf[\dfrac{t}{t+d(x, y)}\r]^{\ez_3}$;

\item[(A3)] Property (A2) also holds true
with the roles of $x$ and $y$ interchanged;

\item[(A4)] $\int_\cx D_t(x, y)\,d\mu(x)=0$;

\item[(A5)] $\int_\cx D_t(x, y)\,d\mu(y)=0$.

\end{itemize}

Now we recall the following \emph{Calder\'on reproducing formula}
which is a continuous variant of
\cite[Theorem 3.10]{hmy08}. Hereafter, we let $a\wedge b:=\min\{a,b\}$
and $a\vee b:=\max\{a,b\}$ for all $a,\,b\in\rr$.

\begin{lemma}\label{l10.1}
Let $\ez_1:=1$, $\ez_2,\,\ez_3\in(0,\fz)$,
$\ez\in(0,\ez_1\wedge\ez_2)$ and $\{D_t\}_{t\in(0,\fz)}$ be as above.
Then there exists a family of linear operators
$\{\wz{D}_t\}_{t\in(0,\fz)}$ such that, for all
$f\in\mathring{\cg}^{\ez}_0(\bz,\gz)$ with $\bz,\,\gz\in(0,\ez)$,
$$
f=\int_0^\fz\wz{D}_tD_t(f)\,\frac{dt}{t}
$$
in $\mathring{\cg}^{\ez}_0(\bz,\gz)$ and in $\lq$ for all $q\in(1,\fz)$.
Moreover, the kernels of the operators $\wz{D}_t$ satisfy the conditions
\emph{(A1), (A2), (A4)} and \emph{(A5)} with $\ez_1$ and $\ez_2$ replaced by
$\wz{\ez}\in(\ez,\ez_1\wedge\ez_2)$.
\end{lemma}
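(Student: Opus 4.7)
The plan is to construct $\wz{D}_t$ via a Coifman-type trick and then verify the kernel bounds and convergence in the prescribed spaces. First I would study the operator
$$T:=\int_0^\fz D_t^2\,\frac{dt}{t},$$
defined a priori on the dense subspace $\mathring{\cg}(\ez_1,\ez_2)$ via strong $L^2$-limits of truncations $T_N:=\int_{1/N}^N D_t^2\,\frac{dt}{t}$. Boundedness of $T$ on $\ltw$ follows from the almost-orthogonality estimates built into (A1)--(A5): combining (A2)/(A3) with the cancellation (A4)/(A5), a Cotlar--Stein or direct $TT^*$ computation gives
$$\|D_sD_t^*\|_{\ltw\to\ltw}\ls\lf(\frac{s\wedge t}{s\vee t}\r)^{\eta}$$
for some $\eta\in(0,\ez_1\wedge\ez_2)$, and integration against $\frac{ds\,dt}{st}$ yields $L^2$-boundedness of $T$. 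To invert $T$, I would normalize so that $T=c_0 I-R$ with $c_0>0$ coming from an explicit Calder\'on-type identity for the radial scale of $\{D_t\}$, and show $\|R\|_{\ltw\to\ltw}<c_0$ by using the cancellation of $D_t$ against $D_t^*$ to absorb the non-orthogonal part. This gives $T^{-1}=c_0^{-1}\sum_{j=0}^\fz(R/c_0)^j$ as a Neumann series converging on $\ltw$.

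Having $T^{-1}$, I would set $\wz{D}_t:=T^{-1}D_t$, so that formally
$$f=T^{-1}Tf=\int_0^\fz T^{-1}D_t\cdot D_tf\,\frac{dt}{t}=\int_0^\fz\wz{D}_tD_tf\,\frac{dt}{t}.$$
The kernel estimates (A1), (A2), (A4), (A5) for $\wz{D}_t$ are verified term-by-term along the Neumann series: each iterate is a composition of kernels satisfying the hypotheses, and a standard molecular-composition argument on RD-spaces (of the same type used in Han--M\"uller--Yang \cite{hmy08} for the discrete case) propagates the estimates, with each composition costing an arbitrarily small amount of regularity; geometric convergence of the series in the $L^2$-operator norm then controls the losses uniformly, so that any prescribed $\wz\ez\in(\ez,\ez_1\wedge\ez_2)$ can be achieved. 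Cancellation in $y$ for $\wz{D}_t$ follows from cancellation in $y$ for $D_t$ (conditions (A4), (A5) are preserved under left multiplication by a bounded operator acting in the $x$-variable).

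Finally I would prove convergence of $\int_0^\fz\wz{D}_tD_t(f)\,\frac{dt}{t}$ to $f$ both in $\mathring{\cg}_0^{\ez}(\bz,\gz)$ and in $\lq$ for every $q\in(1,\fz)$. For $\lq$ convergence, the $\ltw$ identity combined with the molecular kernel bounds on $\wz{D}_tD_t$ yields uniform $\lq$-boundedness of the truncated reproducing operators $\int_{1/N}^N\wz{D}_tD_t\,\frac{dt}{t}$, and a density argument with Schur-type estimates upgrades $\ltw$ convergence to $\lq$ convergence. For convergence in $\mathring{\cg}_0^{\ez}(\bz,\gz)$, I would first approximate $f\in\mathring{\cg}_0^{\ez}(\bz,\gz)$ by elements of $\mathring{\cg}(\ez,\ez)$, and then verify that $\int_{1/N}^N\wz{D}_tD_t(f)\,\frac{dt}{t}$ is Cauchy in the $\cg(\bz,\gz)$-norm with tail estimates coming directly from the size/regularity bounds on $\wz{D}_tD_t$.

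The main obstacle is step two: producing quantitative kernel estimates for $\wz{D}_t$ with the prescribed regularity $\wz\ez$. The difficulty is that $T^{-1}$ is nonlocal and each composition in the Neumann series incurs a small loss of regularity; one must organize the iteration so that the cumulative loss stays strictly below $\ez_1\wedge\ez_2-\ez$ and the sum still converges. The discrete dyadic analogue is handled in \cite[Theorem 3.10]{hmy08}, and the continuous version requires replacing sums over dyadic scales by integrals and controlling $D_sD_t$ for all continuous scales $s,t\in(0,\fz)$, which is where the most technical work lies.
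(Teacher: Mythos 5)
The paper does not prove this lemma at all: it is recalled, without proof, as ``a continuous variant of \cite[Theorem 3.10]{hmy08}'', so there is no in-paper argument to match your proposal against. Your outline is in fact the standard construction behind that cited result (Coifman's trick: set $T:=\int_0^\fz D_t^2\,\frac{dt}{t}$, invert by a Neumann series, put $\wz D_t:=T^{-1}D_t$, and track kernel estimates along the series), so the overall route is the right one and is essentially how the literature proves it.

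There are, however, two genuine gaps in the way you have set it up. First, the invertibility step cannot be extracted from (A1)--(A5) alone: the family $D_t\equiv 0$ satisfies all of (A1)--(A5), and then no $\wz D_t$ can reproduce $f$, so the conclusion is simply false without an additional approximate-identity hypothesis (e.g.\ $D_t=-t\partial_tS_t$ for an approximation of the identity $S_t$, so that $\int_0^\fz D_t\,\frac{dt}{t}=I$ in a suitable sense). Your sentence ``normalize so that $T=c_0I-R$ \dots and show $\|R\|_{\ltw\to\ltw}<c_0$ by using the cancellation of $D_t$ against $D_t^*$'' is exactly where this structure must enter; almost-orthogonality by itself only gives $\|T\|_{\ltw\to\ltw}\ls 1$, not a lower bound, and the smallness of $R$ in \cite{hmy08} comes from the approximate-identity decomposition (and a truncation parameter), not from (A1)--(A5). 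Since the paper states the lemma with the same imprecision, this is partly inherited, but within your proof it is an unproved and, as stated, unprovable step. Second, your justification of the cancellation conditions is wrong for (A4): writing $\wz D_t(x,y)=\int_\cx T^{-1}(x,z)D_t(z,y)\,d\mu(z)$, left composition with $T^{-1}$ preserves the cancellation in the second variable (A5), but $\int_\cx\wz D_t(x,y)\,d\mu(x)$ involves $\int_\cx T^{-1}(x,z)\,d\mu(x)$, which is not controlled by ``boundedness in the $x$-variable''; a naive computation along the Neumann series even produces a divergent formal constant. The correct argument establishes size, regularity and \emph{both} cancellations for each iterate $R^jD_t$ at the kernel level (each $R^j$ is built from kernels with cancellation in both variables) and passes to the limit using absolute convergence of the kernel series; this is precisely the technical core of \cite[Theorem 3.10]{hmy08} and needs to be carried out, not asserted. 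The convergence in $\mathring{\cg}^{\ez}_0(\bz,\gz)$ likewise needs the quantitative kernel estimates on $\wz D_tD_t$ and is the other substantial piece of work your sketch defers.
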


To the best of our knowledge, the following useful property
is well known but there exists no complete proof. We present full details here.

\begin{lemma}\label{l10.2}
Let $\ez_1$ be as in {\rm (A2)}, $\ez\in(0,\ez_1]$, $\bz,\,\gz\in(0,\ez]$
and $q\in(1,\fz)$. Then $\mathring{\cg}^\ez_0(\bz,\gz)$ is dense
in $\lq$.
\end{lemma}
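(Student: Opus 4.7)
The plan is to reduce the problem to the density of $\mathring{\cg}(\ez,\ez)$ itself in $\lq(\mu)$. Note that (A6) combined with the RD-condition \eqref{10.1} yields $\|f\|_{\lq(\mu)}\ls\|f\|_{\mathring{\cg}(\bz,\gz)}$ for all $q\in(1,\fz)$, so the inclusion $\mathring{\cg}(\bz,\gz)\hookrightarrow\lq(\mu)$ is continuous; it extends to a continuous inclusion $\mathring{\cg}^\ez_0(\bz,\gz)\hookrightarrow\lq(\mu)$. Since $\mathring{\cg}(\ez,\ez)\st\mathring{\cg}^\ez_0(\bz,\gz)$ by definition of the completion, it suffices to show that $\mathring{\cg}(\ez,\ez)$ is dense in $\lq(\mu)$. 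By a standard truncation argument, bounded measurable functions with bounded support are already dense in $\lq(\mu)$ for $q\in(1,\fz)$, so I only need to approximate any such $f$ in $\lq$-norm by elements of $\mathring{\cg}(\ez,\ez)$.

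To do the approximation, I would pick an approximation to the identity $\{S_k\}_{k\in\zz}$ on $(\cx,d,\mu)$ as constructed in \cite{hmy06,hmy08}, so that $S_kf\to f$ in $\lq(\mu)$ as $k\to\fz$ and $S_kf\to 0$ in $\lq(\mu)$ as $k\to-\fz$ (the latter using $\mu(\cx)=\fz$). Setting $D_k:=S_k-S_{k-1}$, the kernels $D_k(x,y)$ satisfy (A1)--(A5) with $t$ replaced by $2^{-k}$, and telescoping yields $f=\sum_{k\in\zz}D_kf$ in $\lq(\mu)$. Define the truncations
$$f_N:=\sum_{|k|\le N}D_kf,\qquad N\in\nn,$$
so that $f_N\to f$ in $\lq(\mu)$. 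The crux is to show $f_N\in\mathring{\cg}(\ez,\ez)$. The mean-zero condition is immediate from (A4) applied to each $D_k$. For the size estimate (A6), I would use that $\supp f\st B(x_1,R)$ for some $R\in(0,\fz)$ and $\|f\|_{L^\fz(\mu)}<\fz$, together with the size estimate (A1) on $D_k$ (with parameter $\ez_2\ge\ez$), splitting the estimate of $|D_kf(x)|$ according to whether $d(x,x_1)\le 2R$ (where \eqref{10.1} controls $[V_1(x_1)+V(x_1,x)]^{-1}$ by a constant) or $d(x,x_1)>2R$ (where the factor $[t/(t+d(x,y))]^{\ez_2}$ delivers the decay in $d(x_1,x)$). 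For the regularity (A7), I would invoke (A2) for $D_k$ with $\ez_1=1\ge\ez$ and $\ez_3\ge\ez$ in place of (A1), handled by the same case-split.

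The main obstacle is precisely this verification that each $f_N$ lies in $\mathring{\cg}(\ez,\ez)$ with the prescribed parameter $\ez$, since the constants in (A6) and (A7) for $D_kf$ will depend on $\|f\|_{L^\fz(\mu)}$, on $R$, and (through the factors $2^{-k\ez_2}$ or $V_{2^{-k}}$) on $k$; summing over $|k|\le N$ inflates the constants further, but for each fixed $N$ these remain finite, which is all that is required for membership (not for uniform bounds). Once $f_N\in\mathring{\cg}(\ez,\ez)$ is established, combining this with $f_N\to f$ in $\lq(\mu)$ and the continuous inclusion of the first paragraph concludes the proof.
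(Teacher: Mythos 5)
Your proposal is correct in outline but follows a genuinely different route from the paper's. The paper reduces the problem to the chain $\mathring{\lip}_{\ez,\,b}(\mu)\st\mathring{\cg}(\ez,\ez)\st\mathring{\cg}^\ez_0(\bz,\gz)\st\lq$ and then shows directly that compactly supported Lipschitz functions with vanishing integral are dense in $\lq$: it quotes the density of $\lip_{\ez,\,b}(\mu)$ in $\lq$ from \cite{hmy08} and repairs the mean-zero constraint by subtracting $A\,h$, where $h$ is the bump from Lemma \ref{l10.4} (integral one, support away from a prescribed ball, arbitrarily small $\lq$-norm); that construction is exactly where $\mu(\cx)=\fz$ enters. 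You instead manufacture approximants through an approximation to the identity, $f_N=S_Nf-S_{-N-1}f$, whose vanishing integral comes for free from (A4)/(A5), with $\mu(\cx)=\fz$ entering through $S_{-N-1}f\to0$ in $\lq$. Two caveats you should make explicit: the family $\{D_t\}$ fixed at the beginning of Section \ref{s10} only has $\ez_2,\,\ez_3\in(0,\fz)$ with no relation to $\ez$, so your parenthetical requirement ``$\ez_2\ge\ez$'' is a condition you must impose on the ATI you choose (for instance a $1$-ATI with bounded support as constructed in \cite{hmy06,hmy08}), not something granted by the standing hypotheses of the section; and your membership check $f_N\in\mathring{\cg}(\ez,\ez)$, while completable with $N$-dependent constants as you say, in effect amounts to verifying that $f_N$ is a bounded, compactly supported (or $\ez$-decaying), H\"older-$\ez$ function with mean zero, i.e.\ an element of the very class $\mathring{\lip}_{\ez,\,b}(\mu)$ that the paper feeds into the inclusion above. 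In terms of trade-offs, the paper's argument is shorter and needs only the elementary Lemmas \ref{l10.3} and \ref{l10.4}, whereas yours dispenses with those lemmas and is in the spirit of the Calder\'on reproducing formula already used in this section, at the price of importing the ATI machinery and the bookkeeping of $k$-dependent constants in (A6) and (A7).
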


To prove this lemma, we need the following two technical conclusions.
Hereafter, for any $\ez\in(0,\fz)$,
we denote $\|\cdot\|_{\ez}^{(d)}$, which is as in \eqref{9.1}
with $\dz$ and $\az$ replaced respectively by $d$ and $\ez$,
simply by $\|\cdot\|_{\ez}$.

\begin{lemma}\label{l10.3}
Let $\ez\in(0,1]$, $F$ be a nonempty closed
set and $G$ an open set containing $F$.
Then there exists $f\in\lip_\ez(\mu)$ such that
$$
f=1\quad {\rm on}\quad F;\quad \supp(f)\st\overline{G};\quad 0\le f\le 1
\quad {\rm on}\quad \cx,
$$
where, for any set $A\st\cx$,
$\overline{A}$ represents the smallest closed set containing $A$.
\end{lemma}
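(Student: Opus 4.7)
The plan is to construct $f$ explicitly from the distance-to-$F$ function, truncated at a suitable rate. Set $\dz:=\dist(F,G^\com)$. The natural candidate is
\[
f(x):=\max\lf\{0,\ 1-\lf[\dist(x,F)/\dz\r]^{\ez}\r\},\quad x\in\cx,
\]
which uses only the $1$-Lipschitz distance function $\dist(\cdot,F)$ composed with the truncated power $t\mapsto\max\{0,\,1-(t/\dz)^{\ez}\}$ on $[0,\fz)$.

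First I would verify the three pointwise properties. On $F$ one has $\dist(x,F)=0$, so $f=1$; on $G^\com$ the definition of $\dz$ forces $\dist(x,F)\ge\dz$, so the argument of the $\max$ is $\le 0$ and $f=0$, whence $\{x\in\cx:\ f(x)\ne 0\}\st G$ and therefore $\supp(f)\st\ov G$; the bound $0\le f\le 1$ is immediate from the formula. Next I would check the $\ez$-H\"older estimate: the auxiliary function $h(t):=\max\{0,\,1-(t/\dz)^{\ez}\}$ is $\ez$-H\"older on $[0,\fz)$ with constant $\dz^{-\ez}$, which follows from the elementary subadditivity inequality $|a^{\ez}-b^{\ez}|\le|a-b|^{\ez}$ valid for all $a,b\ge 0$ and $\ez\in(0,1]$, applied piecewise on $[0,\dz]$ and on $[\dz,\fz)$. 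Composing with the $1$-Lipschitz map $\dist(\cdot,F)$ then yields $|f(x)-f(y)|\le\dz^{-\ez}[d(x,y)]^{\ez}$ for all $x,y\in\cx$, so $f\in\lip_{\ez}(\mu)$ with $\|f\|_{\ez}\le\dz^{-\ez}$.

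The main point on which the whole argument hinges is that $\dz=\dist(F,G^\com)>0$; otherwise the scaling factor blows up and in fact no $\ez$-H\"older (indeed, no uniformly continuous) function with the required boundary values can exist, since any $\ez$-H\"older function with Hölder norm $L$ separating $F$ and $G^\com$ by values $1$ and $0$ would force $1\le L[d(F,G^\com)]^{\ez}$. Consequently the only nontrivial step in the plan is to observe that the hypothesis ``$F$ closed, $G$ open, $F\st G$'' is being used under the implicit requirement $\dist(F,G^\com)>0$ (which is automatic in the intended application of this lemma, where $G$ is a dilation of an enclosing ball of $F$); granted this, the remainder of the proof is the direct verification outlined above.
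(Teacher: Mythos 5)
Your proof is correct, and it is essentially the same kind of argument as the paper's: an explicit $\ez$-H\"older ``Urysohn function'' built from distance functions, with the H\"older estimate reduced to the elementary inequality $|a^{\ez}-b^{\ez}|\le|a-b|^{\ez}$. The only difference is the formula: the paper takes
$f(x):=\bigl[d(x,G^{\complement})/\bigl(d(x,F)+d(x,G^{\complement})\bigr)\bigr]^{\ez}$,
whose pointwise properties are immediate and whose H\"older norm is then bounded by $[d(F,G^{\complement})]^{-\ez}$ (using that the denominator is bounded below by $d(F,G^{\complement})$), whereas you truncate the distance-to-$F$ function at the scale $\dz=\dist(F,G^{\complement})$, obtaining the same bound $\|f\|_{\ez}\le\dz^{-\ez}$. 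Your observation that everything hinges on $\dist(F,G^{\complement})>0$ is accurate and worth making explicit: the paper's construction has the same limitation (its norm bound degenerates as the gap shrinks, and, as you note, no H\"older function with these boundary values can exist when the gap is zero), so the lemma is really being used, as in Lemma \ref{l10.4}, in situations where the gap is bounded below (there it is at least $1$). In short: same strategy, a slightly different explicit formula, identical quantitative outcome.
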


\begin{proof}
For $x\in\cx$, let
$$f(x):=\lf[\frac{d(x,\,G^{\complement})}
{d(x,\,F)+d(x,\,G^{\complement})}\r]^{\ez},$$
here and hereafter, for any two sets $A,\,B\st\cx$,
$A^{\complement}:=\cx\bh A$ and
$$d(A,B):=\inf\{d(a,b):\ a\in A\ {\rm and}\ b\in B\}.$$
It is easy to show that $f$ has all the
required properties in Lemma \ref{l10.3} with
$\|f\|_{\ez}\le\frac1{[d(F,\,G^{\complement})]^{\ez}}$,
which completes the proof of Lemma \ref{l10.3}.
\end{proof}

Moreover, we have the following conclusion.

\begin{lemma}\label{l10.4}
Let $\ez\in(0,1]$. For any ball $B(x_0,r_0)$ and $\eta\in(0,\fz)$, there
exists
$$h\in\lip_{\ez,\,b}(\mu)
:=\{f\in\lip_\ez(\mu):\ \supp(f)\ {\rm is\ bounded}\}$$
such that
$$
\supp(h)\st (B(x_0,r_0))^{\complement},\quad
\int_\cx h(x)\,d\mu(x)=1\quad
{\rm and}\quad \|h\|_{\lq}<\eta.
$$
\end{lemma}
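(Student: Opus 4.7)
The plan is to take $h$ as a suitable normalization of a Lipschitz approximation to the indicator function of a large annulus centered at $x_0$ whose inner radius exceeds $r_0$. Since $(\cx,d,\mu)$ is an RD-space with $\mu(\cx)=\fz$ and $\mu$ is locally finite, balls centered at $x_0$ of large radius have arbitrarily large measure; combined with the doubling condition on $\mu$, this forces the $\lq$-norm of the normalized function to tend to zero while its integral remains equal to $1$.

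To carry this out, I would fix $R\in(6r_0,\fz)$ to be chosen later and set
$$F:=\overline{B(x_0,R)}\bh B(x_0,3r_0)\quad{\rm and}\quad G:=B(x_0,2R)\bh\overline{B(x_0,2r_0)}.$$
Then $F$ is closed, $G$ is open, $F\st G$, and $\overline{G}\st\overline{B(x_0,2R)}\cap(B(x_0,r_0))^{\com}$. Lemma~\ref{l10.3} then produces $f\in\lip_\ez(\mu)$ with $f\equiv 1$ on $F$, $\supp(f)\st\overline{G}$ and $0\le f\le 1$ on $\cx$; in particular, $\supp(f)$ is bounded and contained in $(B(x_0,r_0))^{\com}$, so $f\in\lip_{\ez,\,b}(\mu)$.

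Next I would estimate $\int_\cx f(x)\,d\mu(x)\ge\mu(F)\ge\mu(B(x_0,R))-\mu(B(x_0,3r_0))$ from below and, using $\supp(f)\st\overline{B(x_0,2R)}$ together with \eqref{1.1}, $\|f\|_\lq\le[\mu(B(x_0,2R))]^{1/q}\le[C_{(\mu)}\mu(B(x_0,R))]^{1/q}$ from above. Since $\mu(\cx)=\fz$ and $\mu$ is locally finite, $\mu(B(x_0,R))\to\fz$ as $R\to\fz$; hence, for $R$ sufficiently large, $\mu(F)\ge\tfrac12\mu(B(x_0,R))$. Setting $h:=f\big/\int_\cx f(x)\,d\mu(x)$, we obtain $h\in\lip_{\ez,\,b}(\mu)$, $\supp(h)\st(B(x_0,r_0))^{\com}$, $\int_\cx h(x)\,d\mu(x)=1$, and
$$\|h\|_\lq\le\frac{2[C_{(\mu)}]^{1/q}}{[\mu(B(x_0,R))]^{1-1/q}},$$
which can be made smaller than $\eta$ by enlarging $R$.

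The construction is essentially immediate once Lemma~\ref{l10.3} is in hand, so there is no substantive obstacle. The only points worth checking carefully are the inclusions $F\st G\st(B(x_0,r_0))^{\com}$ and the divergence $\mu(B(x_0,R))\to\fz$ as $R\to\fz$; the latter follows from $\mu(\cx)=\fz$ together with the local finiteness of $\mu$ (or, equivalently, from the reverse doubling half of \eqref{10.1}).
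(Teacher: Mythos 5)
Your construction is correct and is essentially the paper's own argument: both apply Lemma \ref{l10.3} to a large annulus avoiding $B(x_0,r_0)$, normalize by the integral, and make the $L^q(\mu)$-norm small using $\mu(\cx)=\fz$ together with doubling (and, implicitly, $q>1$). The only differences are cosmetic — you take the support ball to be a fixed dilate $B(x_0,2R)$ so that plain doubling \eqref{1.1} controls the ratio, whereas the paper grows the radii by additive unit steps and bounds the corresponding measure ratio via a limsup computation with \eqref{10.1}; just note explicitly that $F\neq\emptyset$ for large $R$ (which follows since $\mu(F)>0$) before invoking Lemma \ref{l10.3}.
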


\begin{proof}
For any $k\in\nn$, let $F_k:=\overline{B(x_0,k+r_0+1)}\bh B(x_0,r_0+1)$ and
$$
G_k:={B(x_0,k+r_0+2)}\bh\overline{B(x_0,r_0)}.
$$
From \cite[Remark 1.2(i)]{hmy08}, it follows that
$G_k\supset F_k\neq\emptyset$
for some sufficiently large $k$.
By Lemma \ref{l10.3}
with $F$ and $G$ replaced by $F_k$ and $G_k$, respectively, we conclude
that there exists $\wz{h}_k\in\lip_\ez(\mu)$ ($\|\wz h_k\|_\ez\le1$) such that
$\wz{h}_k=1$ on $F_k$, $\supp(\wz{h}_k)\st\overline{G_k}$
and $0\le\wz h_k\le1$ on $\cx$. Let $h_k:=\frac{\wz{h}_k}{{\rm J}_k}$,
where ${\rm J}_k:=\int_{\cx}\wz{h}_k(x)\,d\mu(x)$.
Thus, $\int_\cx h_k(x)\,d\mu(x)=1$ and
\begin{equation}\label{10.2}
{\rm J}_k\ge\int_{F_k}\,d\mu(x)\ge\mu(B(x_0,k+r_0+1)\bh B(x_0,r_0+1)).
\end{equation}
We only need to show that $\lim_{k\to\fz}\|h_k\|_{\lq}^q=0$.
By \eqref{10.2} and $0\le \wz h_k\le1$ on $\cx$, we have
$$
\|h_k\|_{\lq}^q\le\frac{\mu(B(x_0,k+r_0+3)\bh B(x_0,r_0))}
{\mu(B(x_0,k+r_0+1)\bh B(x_0,r_0+1))}
\frac1{[\mu(B(x_0,k+r_0+1)\bh B(x_0,r_0+1))]^{q-1}}.
$$
Notice that $\mu(\cx)=\fz$, we obtain
$$\lim_{k\to\fz}\frac1{[\mu(B(x_0,k+r_0+1)\bh B(x_0,r_0+1))]^{q-1}}=0,$$
which reduces the proof to the fact that
$$\limsup_{k\to\fz}\frac{\mu(B(x_0,k+r_0+3)\bh B(x_0,r_0))}
{\mu(B(x_0,k+r_0+1)\bh B(x_0,r_0+1))}\le C,$$
where $C$ is as in \eqref{10.1}.

Indeed, from $\mu(\cx)=\fz$ and \eqref{10.1}, we deduce that
\begin{align*}
&\limsup_{k\to\fz}\frac{\mu(B(x_0,k+r_0+3)\bh B(x_0,r_0))}
{\mu(B(x_0,k+r_0+1)\bh B(x_0,r_0+1))}\\
&\hs\le1+\limsup_{k\to\fz}\frac{\mu(B(x_0,k+r_0+3)\bh B(x_0,k+r_0+1))}
{\mu(B(x_0,k+r_0+1)\bh B(x_0,r_0+1))}\\
&\hs\le1+\limsup_{k\to\fz}\frac{[C(\frac{k+r_0+3}{k+r_0+1})^{\nu}-1]
\mu(B(x_0,k+r_0+1))}{\mu(B(x_0,k+r_0+1)\bh B(x_0,r_0+1))}\le C,
\end{align*}
where $\nu$ and $C$ is as in \eqref{10.1}.
This finishes the proof of Lemma \ref{l10.4}.
\end{proof}

Now we are ready to prove Lemma \ref{l10.2}.

\begin{proof}[Proof of Lemma \ref{l10.2}]
By some arguments as in \cite[p.\,19]{hmy08}, we know that
$$
\mathring{\lip}_{\ez,\,b}(\mu)
:=\lf\{f\in\lip_{\ez,\,b}(\mu):\ \int_\cx f(x)\,d\mu(x)=0\r\}
\st\mathring{\cg}(\ez,\ez)\st\mathring{\cg}^\ez_0(\bz,\gz)\st\ltw.
$$
Thus, to show Lemma \ref{l10.2}, it suffices to
prove that $\mathring{\lip}_{\ez,\,b}(\mu)$ is dense in $\lq$.
By the fact that $\lip_{\ez,\,b}(\mu)$ is dense in $\lq$ (see, for example,
\cite[Corollary 2.11(ii)]{hmy08}), we know that, for any  $\eta\in(0,\fz)$
and $f\in\lq$, there exists $g\in \lip_{\ez,\,b}(\mu)$ such that
$\|g-f\|_{\lq}<\eta/2$. Now we show that there exists
$\wz g\in\mathring{\lip}_{\ez,\,b}(\mu)$
such that $\|\wz{g}-f\|_{\lq}<\eta$. We consider the following two cases:

\textbf{Case (i)} $\int_\cx g(x)\,d\mu(x)=0$. The result holds true immediately.

\textbf{Case (ii)} $\int_\cx g(x)\,d\mu(x)=A\neq0$. Let a ball $B(x_0,r_0)
\supset\supp(g)$. By Lemma \ref{l10.4}, there exists $h\in\lip_{\ez,\,b}(\mu)$
such that $\supp(h)\st(B(x_0,r_0))^{\complement}$, $\int_\cx h(x)\,d\mu(x)=1$
and $\|h\|_{\lq}<\frac{\eta}{2|A|}$, which implies that
$\wz{g}:=g-Ah\in\mathring{\lip}_{\ez,\,b}(\mu)$ and
$$
\|f-\wz{g}\|_{\lq}\le\|f-g\|_{\lq}+|A|\|h\|_{\lq}
<\frac{\eta}{2}+|A|\frac{\eta}{2|A|}=\eta.
$$
This, together with \textbf{Case (i)}, finishes the proof of
Lemma \ref{l10.2}.
\end{proof}

Recall that the {\it Littlewood-Paley S-function $S(f)(x)$} for any
$f\in L^q(\mu)$ with $q\in(1, \fz)$ and $x\in\cx$  is defined by
$$
S(f)(x):=\lf\{\int_{\Gamma(x)}|D_t(f)(y)|^2
\frac{d\mu(y)\,dt}{V_t(x)\,t}\r\}^{1/2},
$$
where
$$\Gamma(x):=\{(y, t)\in\cx\times(0, \fz):\  d(y, x)<t\}$$
generalizes the notion of a cone with vertex at $x$ and aperture 1.

In RD-spaces, Han, M\"uller and Yang \cite{hmy06}
introduced the \emph{Hardy space $H^p(\mu)$} defined by
$$
H^p(\mu):=\lf\{f\in(\mathring{\cg}^\ez_0(\bz,\gz))^*:\  S(f)\in L^p(\mu)\r\}
$$
endowed with the quasi-norm
$$
\|f\|_{H^p(\mu)}:=\|S(f)\|_{L^p(\mu)}.
$$
Moreover, Grafakos, Liu and Yang \cite{gly} proved that
$H_{\rm at}^{p}(\cx)$ and $H^p(\mu)$  coincide with equivalent quasi-norms.

Before dealing with the relation between
$\wz H^{p,\,q,\,\gz}_{\rm atb,\,\rho}(\mu)$ and
$H_{\rm at}^p(\mu)$, we need the following construction of dyadic cubes
on spaces of homogeneous type from \cite{c90}; see also \cite{hmy06}.

\begin{lemma}\label{l10.5}
Let $\cx$ be a space of homogeneous type. Then there exists a collection
$$
\lf\{Q_\bz^k\st\cx:\ k\in\zz,\,\bz\in I_k\r\}
$$ of open subsets, where $I_k$ is some index set, and positive
constants $\dz\in(0,1)$ and $L_1,\,L_2$ such that

\rm{(i)} $\mu(\cx\bh\cup_\bz Q_\bz^k)=0$ for each fixed $k$,
and $Q_\bz^k\cap Q_\gz^k=\emptyset$ if $\bz\neq\gz$;

\rm{(ii)} for any $\bz$, $\gz$, $k$ and $l$ with $l\ge k$,
either $Q_\gz^l\st Q_\bz^k$ or $Q_\gz^l\cap Q_\bz^k=\emptyset$;

\rm{(iii)} for each $(k,\bz)$ and each $l<k$, there exists a unique $\gz$
such that $Q_\bz^k\st Q_\gz^l$;

\rm{(iv)} $\diam(Q_\bz^k)\le L_1\dz^k$;

\rm{(v)} each $Q_\bz^k$ contains some ball $B(z_\bz^k,L_2\dz^k)$, where
$z_\bz^k\in\cx$.
\end{lemma}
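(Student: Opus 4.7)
The plan is to reproduce Christ's classical construction \cite{c90}, which works on any space of homogeneous type, so in particular on the RD-space $(\cx,d,\mu)$ of interest here. First I would fix a sufficiently small parameter $\dz\in(0,1)$, whose smallness is dictated by the geometrically doubling constant $N_0$ and a prescribed safety factor. Then, for each $k\in\zz$, I would select, via a standard maximality/Zorn argument, a maximal $\dz^k$-separated set of points $\{z_\bz^k\}_{\bz\in I_k}\st\cx$; by maximality one has $\bigcup_\bz B(z_\bz^k,\dz^k)=\cx$ while the balls $\{B(z_\bz^k,\dz^k/2)\}_\bz$ are pairwise disjoint. The first covering property will later yield (v), the separation property will later yield (iv) with $L_1$ and $L_2$ depending only on $\dz$.

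Second, I would organize the collection $\{(k,\bz):\ k\in\zz,\,\bz\in I_k\}$ into a genealogical tree by assigning to each $(k,\bz)$ a unique parent $(k-1,\wz\bz)$ chosen so that $d(z_\bz^k,z_{\wz\bz}^{k-1})<C\dz^{k-1}$, where $C$ depends only on $\dz$. Such a parent exists by the covering property at level $k-1$, and uniqueness is forced by a deterministic tie-breaking rule. From this tree I would define preliminary cubes
\begin{equation*}
\wh{Q}_\bz^k:=\mathop{\rm int}\lf(\overline{\bigcup\nolimits_{(m,\gz)\succeq(k,\bz)}B\lf(z_\gz^m,\dz^m/3\r)}\r),
\end{equation*}
the union being over all descendants of $(k,\bz)$ at every finer level. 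The nesting properties (ii) and (iii) are then immediate from the tree structure, and the size bounds (iv) and (v) follow from elementary triangle-inequality computations combined with the separation and covering of the $\dz^m$-nets; the constants $L_1,\,L_2$ depend only on $\dz$ and the geometrically doubling constant.

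The hard part will be property (i): one needs the level-$k$ cubes to be pairwise disjoint and to exhaust $\cx$ up to a $\mu$-null set. The obstruction is that the naive definition above leaves a ``transition region'' near the Voronoi-type boundaries between sibling cubes. I would handle this exactly as Christ does: introduce, for each pair of neighbouring points at scale $m$, a thin tubular neighbourhood of width $\eta\dz^m$ (for a small $\eta>0$) about their bisector, estimate the $\mu$-measure of each such tube using the upper doubling/doubling condition, sum these estimates across all scales $m\ge k$, and then apply a Borel--Cantelli-type argument to show that the set of points eventually lying in such transition tubes at every scale has measure zero. Once this ``boundary set'' is excised, one can refine each $\wh{Q}_\bz^k$ to an open set $Q_\bz^k$ that is consistent across levels with the parent/child relation and in which siblings are disjoint, while preserving (ii)--(v). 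This refinement step is the most delicate piece of the argument but is purely geometric and does not use any specific RD-space structure beyond the doubling property, so the full result of \cite{c90} carries over verbatim.
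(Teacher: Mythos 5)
The paper never proves this lemma: it is recalled verbatim from Christ \cite{c90} (see also \cite{hmy06}), so the only meaningful comparison is with Christ's construction, which is what you set out to reproduce. Your skeleton is the right one — maximal $\delta^k$-separated nets, a deterministic parent assignment giving a tree, preliminary cubes built from descendants, and a final refinement discarding a null exceptional set — and for $\delta$ small the bounds (iv) and (v) do follow from geometric series and the net properties as you indicate, with $L_1,L_2$ depending only on $\delta$ and the doubling data.

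Two steps in your sketch would not survive as written. First, with your preliminary cubes $\wh{Q}_\bz^k$ the claim that (ii) is ``immediate from the tree structure'' is not correct: level-$m$ net points are $\delta^m$-separated only within the same level, so a ball $B(z_\gz^m,\delta^m/3)$ from one level-$k$ tree can perfectly well meet a ball $B(z_{\gz'}^{m'},\delta^{m'}/3)$, $m'\neq m$, from an incomparable tree; disjointness of incomparable cubes is precisely what the refinement must manufacture, so it cannot be asserted beforehand. Second, and more seriously, your mechanism for the null set — bound the $\mu$-measure of thin ``bisector tubes'' of width $\eta\delta^m$ directly from the doubling condition, sum over scales, and apply Borel--Cantelli — is the genuine gap: on a general space of homogeneous type doubling gives no absolute smallness for the measure of a thin neighbourhood of a bisector (the measure may concentrate exactly there), so the individual tube estimates you propose to sum are not available. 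Christ's argument is of a different nature: he proves the \emph{relative} small-boundary inequality $\mu(\{x\in Q_\bz^k:\ d(x,\cx\setminus Q_\bz^k)\le t\delta^k\})\le C\,t^{\eta}\mu(Q_\bz^k)$ by iterating the net construction at successively finer scales (a self-improving estimate), and the measure-zero statement in (i), together with the disjointness needed for (i) and (ii), is a consequence of that relative bound (alternatively one can argue through Lebesgue density points, which are available on doubling spaces). If you intend to ``handle this exactly as Christ does'', the tube-plus-Borel--Cantelli step must be replaced by that iteration; everything else in your outline then goes through.
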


We further introduce some notation from \cite{hmy06}. Let
$$
\mr:=\lf\{Q_\bz^k\st\cx:\ k\in\zz,\,\bz\in I_k\r\}.
$$
For any $k\in\zz$, let $\Omega_k:=\{x\in\cx:\ S(f)(x)>2^k\}$ and
$$
\mr_k:=\lf\{Q\in\mr:\ \mu(Q\cap\Omega_k)>\frac12\mu(Q)
\ {\rm and}\ \mu(Q\cap\Omega_{k+1})\le\frac12\mu(Q)\r\}.
$$
Moreover, for any $Q_\bz^k\in\mr$, let
$$
\widehat{Q}_\bz^k:=\lf\{(x,t)\in\cx\times(0,\fz):\ x\in Q_\bz^k\ {\rm and}\
L_1\dz^k<t\le L_1\dz^{k-1}\r\},
$$
$$
\mr_k^{\rm mc}:=\lf\{Q\in\mr_k:\ {\rm if}\ {\wz Q}\supset Q\
{\rm and}\ {\wz Q}\in\mr,\ {\rm then}\ {\wz Q}\notin\mr_k\r\},
$$
and, for any $Q_k^{\rm mc}\in\mr_k^{\rm mc}$,
$$
\wz{Q}_k^{\rm mc}:=
\bigcup_{Q\in\mr_k,\,Q\st Q_k^{\rm mc}}\widehat{Q},
$$
here and hereafter, ``mc'' means \emph{maximal cubes}.

We need the following useful lemma.

\begin{lemma}\label{l10.6}
Let $k,\,j\in\zz$ and $k<j$. Then
$$
\lf(\bigcup_{Q_k^{\rm mc}\in\mr_k^{\rm mc}}\wz{Q}_k^{\rm mc}\r)
\cap\lf(\bigcup_{Q_j^{\rm mc}\in\mr_j^{\rm mc}}\wz{Q}_j^{\rm mc}\r)=\emptyset.
$$
\end{lemma}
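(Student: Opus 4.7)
The plan is to argue by contradiction. Suppose there exists a point $(x,t)\in\cx\times(0,\fz)$ lying in both unions. Then, by the definition of $\wz{Q}_k^{\mathrm{mc}}$ and $\wz{Q}_j^{\mathrm{mc}}$, we can locate cubes $Q\in\mr_k$ with $Q\st Q_k^{\mathrm{mc}}$ for some $Q_k^{\mathrm{mc}}\in\mr_k^{\mathrm{mc}}$ and $Q'\in\mr_j$ with $Q'\st Q_j^{\mathrm{mc}}$ for some $Q_j^{\mathrm{mc}}\in\mr_j^{\mathrm{mc}}$ such that $(x,t)\in\widehat{Q}\cap\widehat{Q'}$. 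The goal is then to show that the two dyadic cubes $Q$ and $Q'$ must actually be equal, and that this equality contradicts the definitions of $\mr_k$ and $\mr_j$ when $k<j$.

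First, I would use the definition of $\widehat{Q}_\bz^m$ to pin down the dyadic scale forced by the $t$-coordinate. Writing $Q=Q_\az^m$ and $Q'=Q_\gz^{m'}$, membership $(x,t)\in\widehat{Q}\cap\widehat{Q'}$ forces
\begin{equation*}
t\in(L_1\dz^m,\,L_1\dz^{m-1}]\cap(L_1\dz^{m'},\,L_1\dz^{m'-1}].
\end{equation*}
Since $\dz\in(0,1)$, the family of intervals $\{(L_1\dz^m,L_1\dz^{m-1}]\}_{m\in\zz}$ is pairwise disjoint (it in fact partitions $(0,\fz)$), so $m=m'$. Combining this with $x\in Q_\az^m\cap Q_\gz^m$ and Lemma \ref{l10.5}(i), which asserts the pairwise disjointness of cubes at a common dyadic level, yields $\az=\gz$ and therefore $Q=Q'$.

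The contradiction then follows from comparing the two Whitney-type conditions satisfied by this single cube. On the one hand, $Q\in\mr_k$ gives $\mu(Q\cap\Omega_{k+1})\le\mu(Q)/2$; on the other hand, $Q=Q'\in\mr_j$ gives $\mu(Q\cap\Omega_j)>\mu(Q)/2$. However, since $k<j$ implies $k+1\le j$ and hence $2^{k+1}\le 2^j$, we have the monotonicity
\begin{equation*}
\Omega_j=\{x\in\cx:\ S(f)(x)>2^j\}\st\{x\in\cx:\ S(f)(x)>2^{k+1}\}=\Omega_{k+1},
\end{equation*}
which gives $\mu(Q\cap\Omega_j)\le\mu(Q\cap\Omega_{k+1})\le\mu(Q)/2$, contradicting $\mu(Q\cap\Omega_j)>\mu(Q)/2$.

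There is no genuine obstacle here: the argument is a one-shot verification. The only mild pitfall to be careful about is notational — the index $k$ in $\mr_k$ refers to the level set $\Omega_k$ rather than to the dyadic generation of the cubes, and the cubes in $\mr_k$ may sit at arbitrary dyadic levels. Once this distinction is kept straight, the disjointness reduces to the elementary observation that a single dyadic cube cannot simultaneously be more than half covered by $\Omega_j$ and at most half covered by $\Omega_{k+1}\supset\Omega_j$.
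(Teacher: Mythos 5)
Your proof is correct and follows essentially the same route as the paper: force the two cubes to lie at the same dyadic generation via the disjointness of the $t$-intervals, conclude they coincide by Lemma \ref{l10.5}(i), and derive the contradiction from $\Omega_j\st\Omega_{k+1}$ together with the two half-covering conditions defining $\mr_k$ and $\mr_j$. The only cosmetic difference is that the paper first invokes Lemma \ref{l10.5}(ii) to get nesting of the cubes before matching generations, a step your argument shows to be unnecessary.
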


\begin{proof}
Let $k,\,j\in\zz$ and $k<j$. To prove this lemma,
it suffices to prove that, for any two dyadic cubes $Q$ and $P$
satisfying $Q\in\mr_k$, $Q\st Q_k^{\rm mc}\in\mr_k^{\rm mc}$,
$P\in\mr_j$ and $P\st Q_j^{\rm mc}\in\mr_j^{\rm mc}$, it holds true that
$\widehat{Q}\cap\widehat{P}=\emptyset$.
We prove it by contradiction. Suppose that
$\widehat{Q}\cap\widehat{P}\neq\emptyset$. Then $Q\cap P\neq\emptyset$.
By Lemma \ref{l10.5}(ii), $Q\st P$ or $P\st Q$. Without loss of generality,
we may assume that $Q\st P$. Let $Q=Q_\bz^m$ and $P=Q_\gz^n$
for some $m,\,n\in\zz$, $\bz\in I_m$ and $\gz\in I_n$.
If $m\neq n$, then $\widehat{Q}\cap\widehat{P}=\emptyset$, which contradicts
to the assumption that $\widehat{Q}\cap\widehat{P}\neq\emptyset$.
Thus, $m=n$, which, together with Lemma \ref{l10.5}(i),
implies that $Q_\bz^m=Q_\gz^n$. Moreover,
from $Q\in\mr_k$, it follows that $\mu(Q\cap\Omega_k)>\frac12\mu(Q)$
and $\mu(Q\cap\Omega_{k+1})\le\frac12\mu(Q)$. On the other hand,
by $P\in\mr_j$, we see that $\mu(P\cap\Omega_j)>\frac12\mu(P)$
and $\mu(P\cap\Omega_{j+1})\le\frac12\mu(P)$. Meanwhile,
$k<j$ implies that $\Omega_j\st \Omega_{k+1}$ and hence
$\mu(P\cap\Omega_{k+1})>\frac12\mu(P)$.
Thus, $P\neq Q$, which contradicts to the fact that
$Q=Q_\bz^m=Q_\gz^n=P$. This finishes the proof
of Lemma \ref{l10.6}.
\end{proof}

Now we introduce some useful decompositions of $\wz{D}_t(x,y)$
in Lemma \ref{l10.1} which are easy consequences of
\cite[Proposition 2.9]{hmy08}, the details being omitted.

\begin{lemma}\label{l10.7}
Let $\ez_1\in(0,1]$, $\ez_2\in(0,\fz)$, $\ez\in(0,\ez_1\wedge\ez_2)$,
$\wz{\ez}\in(\ez,\ez_1\wedge\ez_2)$ and
$\{\wz{D}_t\}_{t\in(0,\fz)}$ be as in Lemma \ref{l10.1}. Then,
for any $N\in(0,\wz{\ez}]$, $t\in(0,\fz)$ and $x,\,y\in\cx$,
$$
\wz{D}_t(x,y)=\sum_{\ell=0}^{\fz}2^{-N\ell}\vz_{2^{\ell}t}(x,y),
$$
where $\vz_{2^{\ell}t}(x,y)$ is an \emph{adjust bump function in
$x$ associated with the ball $B(y,2^{\ell}t)$}, which means that
there exists a positive constant $C$ such that, for all
$t\in(0,\fz)$ and $y\in\cx$,

\emph{(i)} $\supp(\vz_{2^{\ell}t}(\cdot,y))\st B(y,2^{\ell}t)$;

\emph{(ii)} $|\vz_{2^{\ell}t}(x,y)|\le C\frac1{V_{2^{\ell}t}(y)}$
for all $x\in\cx$;

\emph{(iii)} $\|\vz_{2^{\ell}t}(\cdot,y)\|_{\ez}
\le C(2^{\ell}t)^{-\eta}\frac1{V_{2^{\ell}t}(y)}$ for all $0<\eta\le\wz{\ez}$;

\emph{(iv)} $\int_{\cx}\vz_{2^{\ell}t}(x,y)\,d\mu(x)=0$.
\end{lemma}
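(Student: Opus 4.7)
The plan is to localize $\wz D_t$ in the first variable via a dyadic annular partition of unity around $y$, and then to add a telescoping correction to restore the cancellation condition (iv). First, invoking Lemma \ref{l10.3}, I would construct Lipschitz cut-offs $\{\eta_\ell(\cdot,y)\}_{\ell\in\zz_+}$ in $x$, with $\eta_0(\cdot,y)$ supported in $B(y,t)$ and $\eta_0=1$ on $B(y,t/2)$, and, for $\ell\ge 1$, with $\eta_\ell(\cdot,y)$ supported in $B(y,2^{\ell}t)\bh B(y,2^{\ell-2}t)$, forming a partition of unity $\sum_{\ell=0}^{\fz}\eta_\ell\equiv 1$ and satisfying $\|\eta_\ell(\cdot,y)\|_{\eta}\ls(2^{\ell}t)^{-\eta}$ for every $\eta\in(0,1]$. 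Setting $f_\ell(x,y):=\wz D_t(x,y)\eta_\ell(x,y)$, the conditions (A1) and (A2) for $\wz D_t$ (with the improved exponent $\wz\ez$) combined with doubling yield both $|f_\ell(x,y)|\ls 2^{-\ell\wz\ez}/V_{2^{\ell}t}(y)$ and the corresponding $\eta$-H\"older estimate $\|f_\ell(\cdot,y)\|_\eta\ls(2^{\ell}t)^{-\eta}/V_{2^{\ell}t}(y)$ for every $\eta\in(0,\wz\ez]$.

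To restore cancellation, set $m_\ell(y):=\int_\cx f_\ell(x,y)\,d\mu(x)$ and the partial sums $M_\ell(y):=\sum_{k=0}^{\ell-1}m_k(y)$, with $M_0:=0$. By (A4) for $\wz D_t$, $\sum_{\ell\ge 0}m_\ell(y)=0$, hence $M_\ell(y)\to 0$; moreover the size bound on $f_\ell$ gives $|m_\ell(y)|\ls 2^{-\ell\wz\ez}$, and therefore $|M_\ell(y)|\ls 2^{-\ell\wz\ez}$ via the tail representation $M_\ell=-\sum_{k\ge\ell}m_k$. Choose auxiliary bumps $\{\psi_\ell(\cdot,y)\}_{\ell\in\zz_+}$, each built from Lemma \ref{l10.3}, supported in $B(y,2^{\ell}t)$, with $\int_\cx\psi_\ell(\cdot,y)\,d\mu=1$, $\|\psi_\ell(\cdot,y)\|_{\li}\ls 1/V_{2^{\ell}t}(y)$, and $\|\psi_\ell(\cdot,y)\|_\eta\ls(2^{\ell}t)^{-\eta}/V_{2^{\ell}t}(y)$ for $\eta\in(0,1]$, and set $\psi_{-1}\equiv 0$. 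Define
$$
\vz_{2^{\ell}t}(x,y):=2^{N\ell}\lf[f_\ell(x,y)-M_{\ell+1}(y)\psi_\ell(x,y)+M_\ell(y)\psi_{\ell-1}(x,y)\r].
$$
Using $M_{\ell+1}-M_\ell=m_\ell$ one immediately checks $\int_\cx\vz_{2^{\ell}t}(\cdot,y)\,d\mu=0$, which is (iv); and the telescoping identity $\sum_{\ell=0}^{L}[M_\ell\psi_{\ell-1}-M_{\ell+1}\psi_\ell]=-M_{L+1}\psi_L$, together with $M_{L+1}\to 0$, yields $\sum_{\ell=0}^{\fz}2^{-N\ell}\vz_{2^{\ell}t}(x,y)=\wz D_t(x,y)$.

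The remaining assertions (i)--(iii) are then routine. The support condition (i) is built into the definitions of $\eta_\ell$ and $\psi_\ell$. For (ii), each of the three ingredients of $2^{-N\ell}\vz_{2^{\ell}t}$ is bounded pointwise by $2^{-\ell\wz\ez}/V_{2^{\ell}t}(y)$, invoking doubling to compare $V_{2^{\ell-1}t}(y)$ with $V_{2^{\ell}t}(y)$ in the $\psi_{\ell-1}$-term, and (ii) follows whenever $N\le\wz\ez$. For (iii), the H\"older estimate follows from the $\eta$-H\"older bounds on $f_\ell$, $\psi_\ell$, and $\psi_{\ell-1}$ together with the bound on $|M_\ell|$, so the prefactor $2^{N\ell}$ is again absorbed by the $2^{-\ell\wz\ez}$ decay when $N\le\wz\ez$.

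The main delicate point is the balancing of exponents, which is what forces the restriction $N\le\wz\ez$ stated in the lemma: one must check that every correction term $M_{\ell\pm 1}(y)\psi_\ell(x,y)$ inherits the same geometric decay $2^{-\ell\wz\ez}$ as the truncated kernel $f_\ell$ itself. This relies on the tail bound $|M_\ell(y)|\ls 2^{-\ell\wz\ez}$, which in turn requires combining the cancellation $\sum_k m_k(y)=0$ with the sharp size estimate $|m_k(y)|\ls 2^{-k\wz\ez}$; any looser estimate on $m_k$ would leave only $|M_\ell|\ls 1$ and destroy the summability that permits $N>0$. Once these single-$\ell$ bounds are established, (i)--(iv) reduce to standard estimates in the non-homogeneous Calder\'on-Zygmund framework.
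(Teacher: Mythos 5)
Your overall construction---a dyadic annular partition of unity in the first variable, followed by a telescoping correction with normalized bumps to restore the cancellation---is the standard route for this kind of decomposition; the paper itself gives no argument here (it only cites \cite[Proposition 2.9]{hmy08} and omits the details), and your verification of the identity $\sum_{\ell}2^{-N\ell}\vz_{2^{\ell}t}=\wz D_t$, of (i), (ii), (iv), and of the tail bound $|M_\ell(y)|\ls 2^{-\ell\wz\ez}$ is sound.

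The gap is in (iii). You only establish $\|f_\ell(\cdot,y)\|_\eta\ls(2^{\ell}t)^{-\eta}/V_{2^{\ell}t}(y)$, with no geometric decay in $\ell$, and then assert that ``the prefactor $2^{N\ell}$ is again absorbed by the $2^{-\ell\wz\ez}$ decay''. For the two correction terms this is fine, since the decay sits in $|M_{\ell+1}(y)|,\,|M_\ell(y)|\ls2^{-\ell\wz\ez}$; but for the term $2^{N\ell}f_\ell$ itself nothing absorbs the prefactor: from the bound you stated one only gets $\|\vz_{2^{\ell}t}(\cdot,y)\|_\eta\ls 2^{N\ell}(2^{\ell}t)^{-\eta}/V_{2^{\ell}t}(y)$, which is not (iii). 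What is really needed is the decay-improved estimate $\|f_\ell(\cdot,y)\|_\eta\ls 2^{-\ell\wz\ez}(2^{\ell}t)^{-\eta}/V_{2^{\ell}t}(y)$, and this does not follow from ``(A1) and (A2) combined with doubling'' in the form you wrote. In the product rule, the term $|\wz D_t(x,y)-\wz D_t(x',y)|\,|\eta_\ell(x,y)|$ on the annulus $d(x,y)\sim2^{\ell}t$ yields, via (A2), the factor $[d(x,x')/(2^{\ell}t)]^{\wz\ez}\,[t/(t+d(x,y))]^{\ez_3}\sim[d(x,x')/(2^{\ell}t)]^{\wz\ez}\,2^{-\ell\ez_3}$; so the decay available there is $2^{-\ell\ez_3}$ (or, interpolating smoothness against size, $2^{-\ell[\frac{\eta}{\wz\ez}\ez_3+(1-\frac{\eta}{\wz\ez})\wz\ez]}$), and this dominates $2^{-\ell N}$ for all $\eta,N\le\wz\ez$ only when $\ez_3\ge\wz\ez$. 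To close the argument you must either invoke that the Han-M\"uller-Yang kernels $\wz D_t$ of Lemma \ref{l10.1} satisfy the regularity estimate with decay exponent $\wz\ez$ as well (which their construction does give, but which is not what you used), or restrict $N$ and $\eta$ accordingly; moreover the regime $d(x,x')\gs2^{\ell}t$, where (A2) is unavailable, must be handled separately by the size bound, which is what supplies the $2^{-\ell\wz\ez}$ factor there. Once $\|f_\ell(\cdot,y)\|_\eta$ is upgraded in this way, the rest of your proof goes through.
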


Then we introduce a useful criteria for the boundedness of
some integral operators. Hereafter, we denote the inner
product of $\ltw$ by $(\cdot,\cdot)$.

\begin{lemma}\label{l10.8}
Let $K_t(\cdot,\cdot)$ for $t\in(0,\fz)$ be a measurable function from
$\cx\times\cx$ to $\cc$ and $\{K_t\}_{t\in(0,\fz)}$
a set of $\ltw$-bounded linear operators defined by
$$
K_t(f)(x):=\int_\cx K_t(x,y)f(y)\,d\mu(y) \quad for\ all\
t\in(0,\fz),\ x\in\cx\ and\ f\in\ltw.
$$
If there exist positive constants $\ez_1$,
$\ez_2$ and $C$ such that, for all $x,\,y\in\cx$ and $s,\,t\in(0,\fz)$,
\begin{equation}\label{10.3}
|K_t(x,y)|\le C\frac{1}{V_t(x)+V_t(y)+V(x,y)}\lf[\frac{t}{t+d(x,y)}\r]^{\ez_2},
\end{equation}
and
\begin{equation}\label{10.4}
|K_tK^{*}_s(x,y)|\le C\lf(\frac{t}{s}\wedge\frac{s}{t}\r)^{\ez_1}
\frac{1}{V_{t\vee s}(x)+V_{t\vee s}(y)+V(x,y)}
\lf[\frac{t\vee s}{t\vee s+d(x,y)}\r]^{\ez_2},
\end{equation}
where $K_t^*$ is the adjoint operator of $K_t$.
Then there exists a positive constant $\wz C$ such that, for all
$f\in\ltw$,
$$
\|G(f)\|_{\ltw}\le {\wz C}\|f\|_{\ltw},
$$
where, for all $x\in\cx$,
$$G(f)(x):=\lf\{\int_0^\fz|K_t(f)(x)|^2\,\frac{dt}{t}\r\}^{1/2}.$$
\end{lemma}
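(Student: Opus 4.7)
The plan is to establish the $L^2$-boundedness of $G$ via the standard $TT^*$ trick. Define the operator
$$
T:L^2(\mu)\longrightarrow L^2\!\left(\cx\times(0,\fz),\,d\mu\otimes\tfrac{dt}{t}\right),
\qquad Tf(x,t):=K_t(f)(x),
$$
so that $\|Tf\|^2=\|G(f)\|_{\ltw}^2$, and it suffices to bound $T$. Its formal adjoint is $T^*g(x)=\int_0^\fz K_t^*(g(\cdot,t))(x)\,\frac{dt}{t}$, so
$$
TT^*g(x,t)=\int_0^\fz K_tK_s^*\lf(g(\cdot,s)\r)(x)\,\frac{ds}{s}.
$$
Since $\|T\|^2=\|TT^*\|$, the whole problem reduces to bounding $TT^*$ on $L^2(\cx\times(0,\fz),\,d\mu\otimes dt/t)$.

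The first step is to upgrade the pointwise estimate \eqref{10.4} into the operator-norm bound
\begin{equation*}
\|K_tK_s^*\|_{\ltw\to\ltw}\ls\lf(\frac{t}{s}\wedge\frac{s}{t}\r)^{\ez_1}
\qquad\text{uniformly in }t,s\in(0,\fz).
\end{equation*}
This will come from the Schur test applied to the pointwise kernel in \eqref{10.4}. Writing $r:=t\vee s$ and splitting the integration in $y$ into $\{d(x,y)<r\}$ and the dyadic annuli $\{2^{k-1}r\le d(x,y)<2^kr\}$ ($k\in\nn$), the factor $V_r(x)+V_r(y)+V(x,y)$ controls $\mu$ on each piece, the RD-space inequality \eqref{10.1} provides the doubling needed to compare $\mu(B(x,2^kr))$ with $V_{2^{k-1}r}(x)$, and the factor $[r/(r+d(x,y))]^{\ez_2}$ yields geometric decay $2^{-k\ez_2}$; summing gives $\int_\cx|K_tK_s^*(x,y)|\,d\mu(y)\ls(t/s\wedge s/t)^{\ez_1}$ uniformly in $x$, and symmetry gives the analogous bound with the roles of $x,y$ interchanged.

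The second step combines this with Minkowski's inequality: for each $t>0$,
\begin{equation*}
\|(TT^*g)(\cdot,t)\|_{\ltw}
\ls\int_0^\fz\lf(\frac{t}{s}\wedge\frac{s}{t}\r)^{\ez_1}
\|g(\cdot,s)\|_{\ltw}\,\frac{ds}{s}.
\end{equation*}
Setting $\vz(t):=\|g(\cdot,t)\|_{\ltw}$, it remains to prove that the integral operator
$\mathcal{K}\vz(t):=\int_0^\fz(t/s\wedge s/t)^{\ez_1}\vz(s)\,ds/s$
is bounded on $L^2((0,\fz),dt/t)$. This is immediate from the Schur test with weight $1$: the change of variable $u=s/t$ yields
$$
\int_0^\fz\lf(\frac{t}{s}\wedge\frac{s}{t}\r)^{\ez_1}\,\frac{ds}{s}
=2\int_0^1 u^{\ez_1}\,\frac{du}{u}=\frac{2}{\ez_1},
$$
independent of $t$. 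Combining yields $\|TT^*\|\ls 1/\ez_1$, hence $\|Tf\|\ls\|f\|_{\ltw}$, which is the claimed estimate.

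The main obstacle is the first step, i.e., converting the pointwise almost-orthogonality bound \eqref{10.4} into an $\ltw$-operator bound for $K_tK_s^*$; it is here that the RD-space structure enters essentially through the reverse-doubling half of \eqref{10.1}, which forces $\mu(B(x,2^kr))/V_{2^{k-1}r}(x)$ to remain bounded on each annulus. Hypothesis \eqref{10.3} plays only a background role, ensuring that $K_t$ admits an integral kernel so that $K_tK_s^*$ is literally an integral operator whose kernel can be estimated by \eqref{10.4}; it is not used in the $L^2$ estimate itself.
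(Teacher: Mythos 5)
Your argument is correct, and it reaches the conclusion by a somewhat different mechanism than the paper. Both proofs turn on the same core step, namely using the Schur lemma to convert the kernel bound \eqref{10.4} into the operator bound $\|K_tK_s^*\|_{\ltw\to\ltw}\ls(\frac{t}{s}\wedge\frac{s}{t})^{\ez_1}$; after that the paper argues by duality, writing $\|G(f)\|_{\ltw}^2$ as a limit of pairings $(\int_{1/N}^N K_t^*K_t(f)\,\frac{dt}{t},f)$ and invoking a continuous Cotlar--Stein-type result (the cited \cite[p.\,237, Exercise 8.5.8]{g09}) for the self-adjoint family $K_t^*K_t$, together with $\|K_t^*K_tK_s^*K_s\|\ls\|K_tK_s^*\|$, whereas you set up the vector-valued operator $T:\ltw\to L^2(\cx\times(0,\fz),d\mu\otimes\frac{dt}{t})$, compute $TT^*$ explicitly, and close with Minkowski's inequality plus a one-dimensional Schur test for the kernel $(\frac{t}{s}\wedge\frac{s}{t})^{\ez_1}$ on $L^2((0,\fz),\frac{dt}{t})$. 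Your route is more self-contained (it replaces the cited exercise by an elementary computation), at the modest price of having to justify the formal identities $\|Tf\|=\|G(f)\|_{\ltw}$, $\|T\|=\|T^*\|$ and the Fubini-type exchanges on a dense class, which the paper handles through the truncations $\int_{1/N}^N$. Two small corrections to your commentary: the ratio $\mu(B(x,2^kr))/V_{2^{k-1}r}(x)$ in your annular decomposition is controlled by the \emph{doubling} (upper) half of \eqref{10.1}, not by the reverse-doubling half, so only the homogeneous-type structure is really used there; and \eqref{10.3} does a bit more work than you credit it with, since via the Schur lemma it yields the \emph{uniform} $\ltw$-boundedness of the $K_t$ (this is exactly how the paper uses it in $\|K_t^*K_tK_s^*K_s\|\ls\|K_tK_s^*\|$), and this uniformity is also what makes the limiting and Fubini steps in either proof painless.
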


\begin{proof}
For any $f\in\ltw$, by the Fubini theorem,
we write
\begin{align*}
\|G(f)\|_{\ltw}^2&=(G(f),G(f))=\int_\cx
\int_0^\fz|K_t(f)(x)|^2\,\frac{dt}{t}d\mu(x)\\
&=\int_0^\fz\int_\cx|K_t(f)(x)|^2\,d\mu(x)\frac{dt}{t}
=\lim_{N\to\fz}\int_{1/N}^N(K_t(f),K_t(f))\frac{dt}{t}\\
&=\lim_{N\to\fz}\int_{1/N}^N\int_\cx
K_t^*K_t(f)(x)f(x)\,d\mu(x)\frac{dt}{t}\\
&=\lim_{N\to\fz}\int_\cx\int_{1/N}^N K_t^*K_t(f)(x)\,\frac{dt}{t}f(x)d\mu(x)
=\lim_{N\to\fz}\lf(\int_{1/N}^N K_t^*K_t(f)\frac{dt}{t},f\r).
\end{align*}
Moreover, from \eqref{10.3}, \eqref{10.4} and the Schur lemma
(see \cite[p.\,457]{g08}), we deduce that, for all $s,\,t\in(0,\fz)$,
$K^*_t$ and $K^{**}_s=K_s$ are bounded on $\ltw$ and
$$
\lf\|K_t^*K_tK^*_sK_s\r\|_{\ltw\to\ltw}
\ls\lf\|K_tK^*_s\r\|_{\ltw\to\ltw}
\ls\lf(\frac{t}{s}\wedge\frac{s}{t}\r)^{\ez_1}.
$$
By the above two inequalities and \cite[p.\,237,\ Exercise 8.5.8]{g09},
we conclude that
$$\|G(f)\|^2_{\ltw}\le\liminf_{N\to\fz}
\lf\|\int_{1/N}^N K_t^*K_t(f)\frac{dt}{t}\r\|_{\ltw}
\|f\|_{\ltw}\ls\|f\|^2_{\ltw},$$
which completes the proof of Lemma \ref{l10.8}.
\end{proof}

Before showing the main result of this section, we introduce another
technical lemma which gives a sufficient condition to the fact
that $f=g$ in $\ltw$ for all $f,\,g\in(\mathring{\cg}^\ez_0(\bz,\gz))^*$.

\begin{lemma}\label{l10.9}
Let $\ez_1$ be as in \emph{(A2)}
and $\bz,\,\gz\in(0,\ez_1)$. If $f,\,g\in\ltw$
and $f=g$ in $(\mathring{\cg}^\ez_0(\bz,\gz))^*$, then $f=g$ in $\ltw$.
\end{lemma}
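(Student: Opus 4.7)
The plan is to convert the statement into a standard density argument. Set $h:=f-g\in\ltw$. Since the natural embedding $\ltw\hookrightarrow(\mathring{\cg}_0^{\ez}(\bz,\gz))^\ast$ is realized via integration against test functions, the hypothesis $f=g$ in $(\mathring{\cg}_0^{\ez}(\bz,\gz))^\ast$ is equivalent to
$$
\int_{\cx}h(x)\varphi(x)\,d\mu(x)=0\quad\text{for every}\ \varphi\in\mathring{\cg}_0^{\ez}(\bz,\gz).
$$
Hence the task reduces to showing that any $h\in\ltw$ annihilating the whole test space must vanish in $\ltw$.

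To accomplish this I would invoke Lemma \ref{l10.2} with $q=2$, which asserts that $\mathring{\cg}_0^{\ez}(\bz,\gz)$ is dense in $\ltw$ (the compatibility of parameters is immediate from $\bz,\gz\in(0,\ez_1)\st(0,\ez]$ after fixing any admissible $\ez\in[\bz\vee\gz,\ez_1]$). Applying this density to the function $\overline{h}\in\ltw$, I would extract a sequence $\{\varphi_n\}_{n\in\nn}\st\mathring{\cg}_0^{\ez}(\bz,\gz)$ with $\varphi_n\to\overline{h}$ in $\ltw$. The Cauchy--Schwarz inequality then yields
$$
\lf|\int_{\cx}h(x)\lf[\varphi_n(x)-\overline{h(x)}\r]\,d\mu(x)\r|
\le\|h\|_{\ltw}\lf\|\varphi_n-\overline{h}\r\|_{\ltw}\to 0
\quad\text{as}\ n\to\fz,
$$
so that $\int_{\cx}h(x)\varphi_n(x)\,d\mu(x)\to\|h\|_{\ltw}^2$. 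Since the left-hand side vanishes identically by hypothesis, this forces $\|h\|_{\ltw}=0$ and hence $f=g$ in $\ltw$.

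The only mildly subtle point to watch is that the pairing $\ltw\times\mathring{\cg}_0^{\ez}(\bz,\gz)\to\cc$ used to identify $\ltw$ functions with elements of $(\mathring{\cg}_0^{\ez}(\bz,\gz))^\ast$ is complex-linear in $\varphi$ without any conjugation, which is why one must approximate $\overline{h}$ rather than $h$ itself in order to recover $\|h\|_{\ltw}^2$ in the limit. Beyond this bookkeeping, the argument is essentially one line of Hilbert-space density; no serious obstacle is anticipated once Lemma \ref{l10.2} is in hand.
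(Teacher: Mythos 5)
Your proof is correct and follows essentially the same route as the paper: both reduce the claim to the density of $\mathring{\cg}^\ez_0(\bz,\gz)$ in $\ltw$ from Lemma \ref{l10.2} together with the vanishing of the integral pairing of $f-g$ against test functions (you approximate $\overline{f-g}$ and pass to the limit by Cauchy--Schwarz, while the paper writes $\|f-g\|_{\ltw}$ as a supremum of pairings over test functions of $\ltw$-norm at most one --- a cosmetic difference). The only point the paper spells out that you fold into the phrase ``natural embedding'' is the verification, via (A6) and the H\"older inequality, that integration against an $\ltw$ function is indeed a bounded functional on $\mathring{\cg}^\ez_0(\bz,\gz)$, which is what makes the hypothesis $f=g$ in $(\mathring{\cg}^\ez_0(\bz,\gz))^\ast$ mean exactly $\int_\cx(f-g)\varphi\,d\mu=0$ for all test functions $\varphi$.
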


\begin{proof}
For any $f,\,g\in\ltw$, let $(f,g):=\int_{\cx}f(x)g(x)\,d\mu(x)$.
Now we claim that $(f,\cdot)$ is a bounded functional on
$\mathring{\cg}^\ez_0(\bz,\gz)\st\ltw$. Indeed, by (A6) and
the H\"older inequality, we conclude that,
for any $f\in\ltw$ and $h\in\mathring{\cg}^\ez_0(\bz,\gz),$
\begin{align*}
|(f,h)|&\le\int_{\cx}|f(x)||h(x)|\,d\mu(x)\\
&\ls\|h\|_{\cg(\bz,\gz)}
\int_{\cx}\frac1{V_1(x_1)+V(x_1,x)}
\lf[\frac1{1+d(x,x_1)}\r]^{\gz}|f(x)|\,d\mu(x)\\
&\ls\|h\|_{\cg(\bz,\gz)}\|f\|_{\ltw}
\int_{\cx}\lf\{\frac1{V_1(x_1)+V(x_1,x)}
\lf[\frac1{1+d(x,x_1)}\r]^{\gz}\r\}^2\,d\mu(x)\\
&\ls\|h\|_{\cg(\bz,\gz)}\|f\|_{\ltw}\\
&\hs\times\lf\{\int_{B(x_1,1)}\lf[\frac1{V_1(x_1)}\r]^2\,d\mu(x)
+\frac1{V_1(x_1)}\int_{\cx\bh B(x_1,1)}\frac1{V(x_1,x)}
\lf[\frac1{d(x,x_1)}\r]^{2\gz}\,d\mu(x)\r\}\\
&\ls\|h\|_{\cg(\bz,\gz)}\|f\|_{\ltw}\frac1{V_1(x_1)},
\end{align*}
which implies the claim.

From this, a density argument, Lemma \ref{l10.2}
and $f=g$ in $(\mathring{\cg}^\ez_0(\bz,\gz))^*$, it follows that
\begin{align*}
\|f-g\|_{\ltw}&=\sup\lf\{|(f-g,h)|:\ \|h\|_{\ltw}\le1\r\}\\
&=\sup\lf\{|(f-g,h)|:\ h\in\mathring{\cg}^\ez_0(\bz,\gz)\ {\rm and}\
\|h\|_{\ltw}\le1\r\}\\
&=\sup\lf\{|(f,h)-(g,h)|:\ h\in\mathring{\cg}^\ez_0(\bz,\gz)\ {\rm and}\
\|h\|_{\ltw}\le1\r\}=0,
\end{align*}
which completes the proof of Lemma \ref{l10.9}.
\end{proof}

Now we are ready to prove the following main result of this section.

\begin{theorem}\label{t10.10}
Let $(\cx,\,d,\,\mu)$ be an \emph{RD}-space
with $\mu(\cx)=\fz$, $\frac{\nu}{\nu+1}<p\le1<q\le2$,
$\rho\in(1,\fz)$, $\gz\in[1,\fz)$ and $\ez\in(0,\fz)$,
where $\nu$ is as \eqref{10.1}.
Then $\wz H^{p,\,q,\,\gz,\,\ez}_{\rm mb,\,\rho}(\mu)$,
$\wz H^{p,\,q,\,\gz}_{\rm atb,\,\rho}(\mu)$
and $H_{\rm at}^p(\mu)$ coincide with equivalent quasi-norms.
\end{theorem}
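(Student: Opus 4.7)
The plan is to establish the chain of continuous embeddings
$$
\bigl(H^{p,\,q}_{\rm at}(\mu)\cap\ltw\bigr)\st\pnhp\st\pmhp\st\bigl(H^{p,\,q}_{\rm at}(\mu)\cap\ltw\bigr)
$$
with matching quasi-norm estimates, realized inside the common ambient $(\mathring\cg_0^{\ez}(\bz,\gz))^\ast$ (equalities in $\ltw$ and in this dual being identified via Lemma \ref{l10.9}). Since $H^{p,\,q}_{\rm at}(\mu)\cap\ltw$ is dense in $H^p_{\rm at}(\mu)$ by definition, and $\pnhp$, $\pmhp$ are dense in $\nhp$, $\mhp$ respectively, the theorem will follow from this chain by a standard completion argument, invoking $H^p(\mu)=H^p_{\rm at}(\mu)$ of \cite{gly} to identify the outer space with the one appearing in the $S$-function characterization.

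For the first inclusion, fix $f\in H^{p,\,q}_{\rm at}(\mu)\cap\ltw$. Apply the Calder\'on reproducing formula of Lemma \ref{l10.1} to expand $f=\int_0^\fz \wz D_t D_t(f)\,dt/t$ in $\ltw$, then discretize by the pairwise disjoint family $\{\wz Q_k^{\rm mc}\}$, whose disjointness is guaranteed by Lemma \ref{l10.6} and whose union covers $\cx\times(0,\fz)$. Setting
$$
b_{Q_k^{\rm mc}}(x):=\int\!\!\int_{\wz Q_k^{\rm mc}}\wz D_t(x,y)\,D_t(f)(y)\,d\mu(y)\,\frac{dt}{t},
$$
each $b_{Q_k^{\rm mc}}$ is supported in a controlled dilate of $Q_k^{\rm mc}$ and has mean zero by property (A5). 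To show each $b_{Q_k^{\rm mc}}$ is a $(p,q,\gz,\rho)_\lz$-atomic block with the correct norm, test it against $g\in L^{q'}(\mu)$ supported in a dyadic inner ball of $Q_k^{\rm mc}$; by duality one must bound the quadratic form $\int\!\!\int_{\wz Q_k^{\rm mc}} D_t(f)(y)\,\overline{D_t^\ast g(y)}\,d\mu(y)\,dt/t$. The Cauchy--Schwarz inequality turns the first factor into the restricted $g$-norm $[\int\!\!\int_{\wz Q_k^{\rm mc}}|D_t(f)|^2\,d\mu\,dt/t]^{1/2}$, which the defining property of $\mr_k^{\rm mc}$ (via $\mu(Q_k^{\rm mc}\cap\Omega_{k+1}^{\complement})\sim\mu(Q_k^{\rm mc})$) controls by $2^{k+1}[\mu(Q_k^{\rm mc})]^{1/2}$; the second factor is handled by the integral-operator criterion Lemma \ref{l10.8} applied to $\{D_t^\ast\}$, whose hypotheses \eqref{10.3} and \eqref{10.4} are verified from (A1)--(A5) after expanding $D_t^\ast$ into adjust bump functions via Lemma \ref{l10.7}. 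Summing produces $\sum_{k,\,Q_k^{\rm mc}}|b_{Q_k^{\rm mc}}|^p_{\nhp}\ls\sum_k 2^{kp}\mu(\Omega_k)\sim\|S(f)\|^p_\lp$, as required.

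For the remaining two inclusions, $\pnhp\st\pmhp$ is Proposition \ref{p4.3}, while $\pmhp\st H^p(\mu)\cap\ltw$ reduces by linearity of $S$ to the estimate $\|S(b)\|_\lp\ls|b|_{\mhp}$ for every $(p,q,\gz,\ez,\rho)_\lz$-molecular block $b$. This is proved annulus by annulus in complete parallel with Theorem \ref{t4.8}: on the central annuli $U_0(B),\ldots,U_4(B)$ one uses the $\ltw$-boundedness of the Littlewood--Paley $S$-function from \cite{hmy06} in place of the $\ltw$-boundedness of $T$, and on the far annuli one exploits $\int b=0$ together with the kernel regularity (A2) to pick up a decay $\rho^{-k\ez}$. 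The numerical constraint $p>\nu/(\nu+1)$ appears as the analogue of $p>\nu/(\nu+\dz)$ in Theorem \ref{t4.8}, with $\dz=1$ coming from the Lipschitz regularity $\ez_1=1$ of Lemma \ref{l10.1}.

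The principal obstacle is Step 1, specifically the size estimate for each $b_{Q_k^{\rm mc}}$. Two tensions must be reconciled. First, the atomic-block norm of Definition \ref{d3.2}(iii) carries the factor $[\lz(c_B,r_B)]^{1-1/p}[\wz K^{(\rho),\,p}_{B_j,\,B}]^{-\gz}$, while the tent-space duality delivers only bare $L^{q'}(\mu)$ estimates; to absorb the discrepancy one must decompose $b_{Q_k^{\rm mc}}=\lz_1 a_1+\lz_2 a_2$ with the inner supports $B_1,B_2$ chosen equal to a fixed dyadic reference ball inside $Q_k^{\rm mc}$, collapsing the $\wz K^{(\rho),\,p}$-factor by Lemma \ref{l2.8}(ii). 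Second, and more seriously, the duality argument requires $L^{q'}$-control of $D_t^\ast g$ via Lemma \ref{l10.8}, which is inherently an $L^2$-boundedness criterion; this is precisely why the statement is restricted to $q\in(1,2]$ and why, as the authors point out, the case $q\in(2,\fz]$ remains open---it would demand a genuinely new $L^{q'}$-version of Lemma \ref{l10.8} together with an $L^{q'}$-bound for the $g$-function that is not presently available in the non-homogeneous framework.
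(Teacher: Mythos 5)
Your overall architecture is the same as the paper's: the chain $(H^{p,\,q}_{\rm at}(\mu)\cap\ltw)\st\pnhp\st\pmhp\st(H^p(\mu)\cap\ltw)$ plus density, with Step 2 handled by Proposition \ref{p4.3} and an annulus-by-annulus estimate $\|S(b)\|_{\lp}\ls|b|_{\mhp}$ parallel to Theorem \ref{t4.8} (and $p>\nu/(\nu+1)$ entering exactly as you say). The genuine gap is in your Step 1, in the claim that $b_{Q_k^{\rm mc}}(x)=\int\!\!\int_{\wz Q_k^{\rm mc}}\wz D_t(x,y)D_t(f)(y)\,d\mu(y)\,\frac{dt}{t}$ ``is supported in a controlled dilate of $Q_k^{\rm mc}$''. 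It is not: Lemma \ref{l10.1} gives the kernels $\wz D_t$ only decay and smoothness of type (A1)--(A2), not compact support in $x$, so $b_{Q_k^{\rm mc}}$ spreads over all of $\cx$ and Definition \ref{d3.2}(i) fails for it; as defined it is at best molecule-like, which breaks the claimed inclusion into $\pnhp$. This is precisely why the paper inserts Lemma \ref{l10.7}: expanding $\wz D_t(x,y)=\sum_{\ell\ge0}2^{-N\ell}\vz_{2^{\ell}t}(x,y)$ into compactly supported adjusted bumps, for each $\ell$ the corresponding piece is genuinely supported in the $2^{\ell}$-dilated ball $B_k^{\rm mc}$ and is a multiple of a $(p,2)$-atom, while the factor $2^{-N\ell}$ keeps the coefficients $p$-summable. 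You invoke Lemma \ref{l10.7} only to verify the almost-orthogonality hypothesis \eqref{10.4} for Lemma \ref{l10.8}, which does not repair the support; the fix forces the extra $\ell$-sum through your whole coefficient estimate.

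Two further points. First, you apply Lemma \ref{l10.1} ``in $\ltw$'' to $f\in H^{p,\,q}_{\rm at}(\mu)\cap\ltw$, but that lemma gives convergence only for $f\in\mathring{\cg}^\ez_0(\bz,\gz)$; the paper must prove the $\ltw$-convergence of the discretized expansion (its \eqref{10.5}) separately, via the uniform square-function bound \eqref{10.6} for the bumps (through Lemma \ref{l10.8}), a duality argument and Lemma \ref{l10.9}, and this is not addressed in your outline (also, the vanishing moment of the pieces comes from the cancellation in the first variable, Lemma \ref{l10.7}(iv), i.e.\ the (A4)-type condition, not (A5)). Second, your $L^{q'}$-duality testing of each piece is unnecessary and is where your account of the restriction $q\in(1,2]$ drifts from the actual mechanism: the paper simply produces $(p,2)$-atoms, checks as in \eqref{9.6} that they are $(p,2,\gz,\rho)_\lz$-atomic blocks, and then uses Remark \ref{r3.3}(iii), namely $\wz{\mathbb{H}}_{\rm atb,\,\rho}^{p,\,2,\,\gz}(\mu)\st\wz{\mathbb{H}}_{\rm atb,\,\rho}^{p,\,q,\,\gz}(\mu)$ for $q\le2$, which is exactly where $q\in(1,2]$ enters.
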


\begin{proof}
Let $p$, $q$, $\rho$, $\gz$ and $\ez$
be as in assumptions of Theorem \ref{t10.10}.
We first claim that $H_{\rm at}^p(\mu)\cap\ltw$
is dense in $H_{\rm at}^p(\mu)$. Indeed, for any $f\in H^p_{\rm at}(\mu)$,
by Theorem \ref{t9.4}, we have
$f=\sum_{i=1}^\fz\lz_ib_i$ in $(\lip_{1/p-1}(\mu))^*$,
where $\{b_i\}_i$ is  a sequence of $(p,2)$-atoms,
$\supp(b_i)\st B_i$ for some ball $B_i$ and
$\|b_i\|_{\ltw}\le[\mu(B_i)]^{1/2-1/p}$.
Let $f_N:=\sum_{i=1}^N\lz_ib_i$, $N\in\nn$. Then $f_N\in\ltw$ for all $N\in\nn$.
Meanwhile, $f-f_N=\sum_{i=N+1}^\fz\lz_i b_i$ in $(\lip_{1/p-1}(\mu))^*$,
and $\|f-f_N\|^p_{H_{\rm at}^p(\mu)}\le\sum_{i=N+1}^\fz|\lz_i|^p\to 0$
as $N\to\fz$. Thus, $H_{\rm at}^p(\mu)\cap\ltw$
is dense in $H_{\rm at}^p(\mu)$, which completes the proof of this claim.

We easily observe that $\ltw\st (\mathring{\cg}^\ez_0(\bz,\gz))^*$.
By \cite[Remark 5.5(ii)]{gly}, we know that $H^p_{\rm at}(\mu)=H^p(\mu)$
with equivalent quasi-norms. By this, the above claim and
a standard density argument, to show Theorem \ref{t10.10},
it suffices to prove that
$$
\lf(H^p_{\rm at}(\mu)\cap\ltw\r)\st\pnhp\st
\wz{\mathbb{H}}^{p,\,q,\,\gz,\,\ez}_{\rm mb,\,\rho}(\mu)
\st \lf(H^p(\mu)\cap\ltw\r).$$
We show this by two steps.

\textbf{Step 1}. Now we show that $(H^p_{\rm at}(\mu)\cap\ltw)\st
\wz{\mathbb{H}}^{p,\,q,\,\gz}_{\rm atb,\,\rho}(\mu)$.
By Remark \ref{r3.3}(iii) and $q\in(1,2]$, it suffices to
show that $(H^p_{\rm at}(\mu)\cap\ltw)\st
\wz{\mathbb{H}}^{p,\,2,\,\gz}_{\rm atb,\,\rho}(\mu)$.
To this end, by Lemma \ref{l10.1} and an argument similar to
that used in the proof of \cite[p.\,1524,\ (2.30)]{hmy08},
we know that, for any
$f\in (\mathring{\cg}^\ez_0(\bz,\gz))^*$ with
$0<\bz,\,\gz<\ez<1\wedge\ez_2$, $\ez_3\in(0,\fz)$
and $\wz{\ez}\in(\ez,1\wedge\ez_2)$
($\ez_2$ and $\ez_3$ are as in (A2)),
$$
f(x)=\sum_{\ell=0}^{\fz}2^{-N\ell}\sum_{k\in\zz}
\sum_{Q_k^{\rm mc}\in\mr_k^{\rm mc}}
\int_{\wz{Q}_k^{\rm mc}}\vz_{2^{\ell}t}(x,y)D_t(f)(y)\,\frac{d\mu(y)dt}{t}
\quad {\rm in}\quad (\mathring{\cg}^\ez_0(\bz,\gz))^*,
$$
where $\vz_{2^{\ell}t}(x,y)$ is as in Lemma \ref{l10.7}.
Then we show that, for any $f\in H_{\rm at}^p(\mu)\cap\ltw$,
\begin{equation}\label{10.5}
f(x)=\sum_{\ell=0}^{\fz}2^{-N\ell}\sum_{k\in\zz}
\sum_{Q_k^{\rm mc}\in\mr_k^{\rm mc}}
\int_{\wz{Q}_k^{\rm mc}}\vz_{2^{\ell}t}(x,y)D_t(f)(y)\,\frac{d\mu(y)dt}{t}
\quad {\rm in}\quad \ltw.
\end{equation}

By Lemma \ref{l10.9}, it suffices to prove that
$$
\lf\|\sum_{\ell=0}^\fz 2^{-N\ell}\sum_{k\in\zz}
\sum_{Q_k^{\rm mc}\in\mr_k^{\rm mc}}\int_{\wz{Q}_k^{\rm mc}}
\vz_{2^{\ell}t}(\cdot,y)D_t(f)(y)\,\frac{d\mu(y)dt}{t}\r\|_{\ltw}<\fz.
$$
To this end, for any $x,\,y\in\cx$, $t,\,s\in(0,\fz)$
and $f\in\ltw$, let
$$
\vz_{2^{\ell}t}(f)(x):=\int_{\cx}\vz_{2^{\ell}t}(x,z)f(z)\,d\mu(z).
$$
Thus
$$
\vz_{2^{\ell}t}^*(f)(x):=\int_{\cx}\vz_{2^{\ell}t}(z,x)f(z)\,d\mu(z)
$$
and
$$
\vz_{2^{\ell}t}^*\vz_{2^{\ell}s}(x,y)
:=\int_{\cx}\vz_{2^{\ell}t}(z,x)\vz_{2^{\ell}t}(z,y)\,d\mu(z).
$$
By a duality method, Lemma \ref{l10.2}, the Fubini theorem,
the H\"older inequality,
Lemma \ref{l10.6} and \cite[Proposition 2.14]{hmy06}, we obtain
\begin{align*}
&\lf\|\sum_{\ell=0}^\fz 2^{-N\ell}\sum_{k\in\zz}
\sum_{Q_k^{\rm mc}\in\mr_k^{\rm mc}}\int_{\wz{Q}_k^{\rm mc}}
\vz_{2^{\ell}t}(\cdot,y)D_t(f)(y)\,\frac{d\mu(y)dt}{t}\r\|_{\ltw}\\
&\hs=\sup_{\begin{subarray}{c}
\|h\|_{\ltw}\le1\\
h\in\mathring{\cg}^\ez_0(\bz,\gz)\end{subarray}}
\lf|\lf\langle\sum_{\ell=0}^\fz
2^{-N\ell}\sum_{k\in\zz}
\sum_{Q_k^{\rm mc}\in\mr_k^{\rm mc}}\int_{\wz{Q}_k^{\rm mc}}
\vz_{2^{\ell}t}(\cdot,y)D_t(f)(y)\,\frac{d\mu(y)dt}{t},h\r\rangle\r|\\
&\hs=\sup_{\begin{subarray}{c}
\|h\|_{\ltw}\le1\\
h\in\mathring{\cg}^\ez_0(\bz,\gz)\end{subarray}}
\lf|\sum_{\ell=0}^\fz
2^{-N\ell}\sum_{k\in\zz}
\sum_{Q_k^{\rm mc}\in\mr_k^{\rm mc}}\lf\langle\int_{\wz{Q}_k^{\rm mc}}
\vz_{2^{\ell}t}(\cdot,y)D_t(f)(y)\,\frac{d\mu(y)dt}{t},h\r\rangle\r|\\
&\hs=\sup_{\begin{subarray}{c}
\|h\|_{\ltw}\le1\\
h\in\mathring{\cg}^\ez_0(\bz,\gz)\end{subarray}}
\lf|\sum_{\ell=0}^\fz 2^{-N\ell}
\sum_{k\in\zz}\sum_{Q_k^{\rm mc}\in\mr_k^{\rm mc}}
\int_{\wz{Q}_k^{\rm mc}}
\lf[\int_\cx\vz_{2^{\ell}t}(x,y)h(x)\,d\mu(x)\r]
D_t(f)(y)\,\frac{d\mu(y)dt}{t}\r|\\
&\hs\le\sup_{\|h\|_{\ltw}\le1}\sum_{\ell=0}^\fz 2^{-N\ell}
\sum_{k\in\zz}\sum_{Q_k^{\rm mc}\in\mr_k^{\rm mc}}
\lf|\int_{\wz{Q}_k^{\rm mc}}
\lf[\int_\cx\vz_{2^{\ell}t}(x,y)h(x)\,d\mu(x)\r]
D_t(f)(y)\,\frac{d\mu(y)dt}{t}\r|\\
&\hs\le\sup_{\|h\|_{\ltw}\le1}\sum_{\ell=0}^\fz 2^{-N\ell}
\lf\{\int_\cx\int_{0}^\fz\lf|\vz_{2^{\ell}t}^*(h)(y)\r|^2\,
\frac{dt}{t}d\mu(y)\r\}^{1/2}
\lf\{\int_\cx\int_{0}^\fz|D_t(f)(y)|^2
\,\frac{dt}{t}d\mu(y)\r\}^{1/2}\\
&\hs\ls\sup_{\|h\|_{\ltw}\le1}\sum_{\ell=0}^\fz 2^{-N\ell/2}
\lf\{\int_\cx\int_{0}^\fz\lf|2^{-N\ell/2}\vz_{2^{\ell}t}^*(h)(y)\r|^2\,
\frac{dt}{t}d\mu(y)\r\}^{1/2}\|f\|_{\ltw},
\end{align*}
where, in the third equality of the above equation, we used the
fact that, for any $Q_k^{\rm mc}\in\mr_k^{\rm mc}$, $f\in\ltw$
and $h\in\mathring{\cg}^\ez_0(\bz,\gz)$,
$$
{\rm O}:=\int_\cx\int_{\wz{Q}_k^{\rm mc}}
\lf|\vz_{2^{\ell}t}(x,y)\r||h(x)|
|D_t(f)(y)|\,\frac{dt}{t}d\mu(y)\,d\mu(x)<\fz.
$$
Indeed, let $Q_k^{\rm mc}:=Q_{\bz_0}^{k_0}$ for some $k_0\in\zz$
and some $\bz_0\in I_{k_0}$.
By the Fubini-Tonelli theorem, (i) and (ii) of Lemma \ref{l10.7},
(A6), $\wz{Q}_k^{\rm mc}\st Q_{\bz_0}^{k_0}\times(0,L_1\dz^{k_0-1}]$
(see \cite[p.\,1524]{hmy06})
and \cite[Proposition 2.14]{hmy06}, we conclude that
\begin{align*}
{\rm O}&=\int_{\wz{Q}_k^{\rm mc}}
\lf[\int_\cx\lf|\vz_{2^{\ell}t}(x,y)\r||h(x)|\,d\mu(x)\r]
|D_t(f)(y)|\,\frac{dt}{t}d\mu(y)\\
&\ls\int_{\wz{Q}_k^{\rm mc}}
\lf[\int_{B(y,2^{\ell}t)}\frac{|h(x)|}{V_{2^{\ell}t}(y)}\,d\mu(x)\r]
|D_t(f)(y)|\,\frac{dt}{t}d\mu(y)\\
&\ls\|h\|_{\mathring{\cg}^\ez_0(\bz,\gz)}
\frac{1}{V_{1}(x_1)}\int_{\wz{Q}_k^{\rm mc}}
|D_t(f)(y)|\,\frac{dt}{t}d\mu(y)\\
&\ls\|h\|_{\mathring{\cg}^\ez_0(\bz,\gz)}
\frac{1}{V_{1}(x_1)}\lf[\int_{\wz{Q}_k^{\rm mc}}\,\frac{dt}{t}d\mu(y)\r]^{1/2}
\lf[\int_{\wz{Q}_k^{\rm mc}}
|D_t(f)(y)|^2\,\frac{dt}{t}d\mu(y)\r]^{1/2}\\
&\ls\|h\|_{\mathring{\cg}^\ez_0(\bz,\gz)}
\frac{1}{V_{1}(x_1)}\lf[\mu\lf(Q_{\bz_0}^{k_0}\r)L_1\dz^{k_0-1}\r]^{1/2}
\lf[\int_{\cx}\int_{0}^{\fz}
|D_t(f)(y)|^2\,\frac{dt}{t}d\mu(y)\r]^{1/2}\\
&\ls\|h\|_{\mathring{\cg}^\ez_0(\bz,\gz)}
\frac{1}{V_{1}(x_1)}\lf[\mu\lf(Q_{\bz_0}^{k_0}\r)L_1\dz^{k_0-1}\r]^{1/2}
\|f\|_{\ltw}<\fz,
\end{align*}
which implies the desired result.

Let $\Phi_\ell(h)(y):=\{\int_{0}^\fz|2^{-N\ell/2}\vz_{2^{\ell}t}^*(h)(y)|^2\,
\frac{dt}{t}\}^{1/2}$ for all $y\in\cx$.
To prove \eqref{10.5}, we only need to show that
\begin{equation}\label{10.6}
\|\Phi_\ell(h)\|_{\ltw}\ls\|h\|_{\ltw},
\end{equation}
where the implicit positive constant is independent of $\ell$.

By Lemma \ref{l10.8}, we need to show that
$\{\varphi_{2^{\ell}t}\}_{t\in(0,\fz)}$ satisfy \eqref{10.3}
and \eqref{10.4}. From Lemma \ref{l10.7}, we easily deduce that
\eqref{10.3} holds true for $\{\varphi_{2^{\ell}t}\}_{t\in(0,\fz)}$.
Thus, it suffices to show that, for all $x,\,y\in\cx$,
$s,\,t\in(0,\fz)$,
\begin{align*}
&\lf|\lf(2^{-N\ell/2}\varphi_{2^{\ell}t}\r)^*
\lf(2^{-N\ell/2}\varphi_{2^{\ell}s}\r)(x,y)\r|\\
&\quad\ls\lf(\frac{t}{s}\wedge\frac{s}{t}\r)^{\eta}
\frac{1}{V_{t\vee s}(x)+V_{t\vee s}(y)+V(x,y)}
\lf[\frac{t\vee s}{t\vee s+d(x,y)}\r]^{N/2}.
\end{align*}

Due to the symmetry of $t$ and $s$, without loss of generality,
we may assume that $s<t$. Thus, we only need to show that,
for all $x,\,y\in\cx$, $0<s<t<\fz$,
$$
\lf|\varphi_{2^{\ell}t}^*\varphi_{2^{\ell}s}(x,y)\r|
\ls2^{N\ell}\lf(\frac{s}{t}\r)^{\eta}
\frac{1}{V_{t}(x)+V_{t}(y)+V(x,y)}
\lf[\frac{t}{t+d(x,y)}\r]^{N/2}.
$$
By Lemma \ref{l10.7}(iv), we write
\begin{align*}
\lf|\varphi_{2^{\ell}t}^*\varphi_{2^{\ell}s}(x,y)\r|
&\le\int_\cx\lf|\varphi_{2^{\ell}t}(z,x)-\varphi_{2^{\ell}t}(y,x)\r|
\lf|\varphi_{2^{\ell}s}(z,y)\r|\,d\mu(z)\\
&\le\int_{\lf\{z\in\cx:\ d(z,y)\le\frac{2^{\ell}t+d(x,y)}2\r\}}
\lf|\varphi_{2^{\ell}t}(z,x)-\varphi_{2^{\ell}t}(y,x)\r|
\lf|\varphi_{2^{\ell}s}(z,y)\r|\,d\mu(z)\\
&\quad+\int_{\lf\{z\in\cx:\ d(z,y)>\frac{2^{\ell}t+d(x,y)}2\r\}}
\lf|\varphi_{2^{\ell}t}(z,x)\r|\lf|\varphi_{2^{\ell}s}(z,y)\r|\,d\mu(z)\\
&\quad+\lf|\varphi_{2^{\ell}t}(y,x)\r|
\int_{\lf\{z\in\cx:\ d(z,y)>\frac{2^{\ell}t+d(x,y)}2\r\}}
\lf|\varphi_{2^{\ell}s}(z,y)\r|\,d\mu(z)\\
&=:{\rm I_1}+{\rm I_2}+{\rm I_3}.
\end{align*}

We first estimate $\rm I_1$.
Observe that, if $z\in B(x,2^{\ell}t)$
and $\frac{2^{\ell}t+d(x,y)}{2}\ge d(y,z)\ge d(x,y)-d(x,z)$,
then $y\in B(x,2^{\ell+2}t)$; if $d(x,y)<2^{\ell+2}t$, then
$$
\lf(\frac{t}{t+d(x,y)}\r)^{N/2}\ge\lf(\frac{t}{t+d(x,y)}\r)^{N}
\gtrsim 2^{-N\ell}.
$$
From the above two facts, (i), (ii) and (iii) of Lemma \ref{l10.7}
and Remark \ref{r2.4}(ii), it follows that
\begin{align*}
{\rm I_1}&\ls\int_{\lf\{z\in\cx:\ d(z,y)\le\frac{2^{\ell}t+d(x,y)}2\r\}}
\lf\|\varphi_{2^{\ell}t}(\cdot,x)\r\|_{\eta}[d(y,z)]^{\eta}
\chi_{B(x,2^{\ell+2}t)}(y)\frac{\chi_{B(y,2^{\ell}s)}(z)}
{V_{2^{\ell}s}(y)}\,d\mu(z)\\
&\ls\int_{\lf\{z\in\cx:\ d(z,y)\le\frac{2^{\ell}t+d(x,y)}2\r\}}
\frac1{(2^{\ell}t)^{\eta}}\frac{\chi_{B(x,2^{\ell+2}t)}(y)}{V_{2^{\ell}t}(x)}
(2^{\ell}s)^{\eta}\frac{\chi_{B(y,2^{\ell}s)}(z)}{V_{2^{\ell}s}(y)}\,d\mu(z)\\
&\ls\lf(\frac{s}{t}\r)^{\eta}\int_{\lf\{z\in\cx:\ d(z,y)
\le\frac{2^{\ell}t+d(x,y)}2\r\}}
\frac{\chi_{B(x,2^{\ell+2}t)}(y)}{V_{2^{\ell+2}t}(x)}
\frac{\chi_{B(y,2^{\ell}s)}(z)}{V_{2^{\ell}s}(y)}\,d\mu(z)\\
&\ls\lf(\frac{s}{t}\r)^{\eta}\frac{\chi_{B(x,2^{\ell+2}t)}(y)}
{V_{2^{\ell+2}t}(x)+V_{2^{\ell+2}t}(y)+V(x,y)}\\
&\ls2^{N\ell}\lf(\frac{s}{t}\r)^{\eta}\frac1
{V_{t}(x)+V_{t}(y)+V(x,y)}\lf[\frac{t}{t+d(x,y)}\r]^{N/2}.
\end{align*}

Now we turn to estimate ${\rm I_2}$. Observe that, if
$\{z\in\cx:\ d(z,y)>\frac{2^{\ell}t+d(x,y)}2\}\neq\emptyset$
and $z\in B(x,2^{\ell}t)\cap B(y,2^{\ell}s)$, then
$y\in B(x,2^{\ell+1}t)$ and $1\ls(\frac{s}{t})^{\eta}$.
By the above facts, (i) and (ii) of Lemma \ref{l10.7}
and some arguments similar to those used in the estimate for $\rm I_1$,
we further have
\begin{align*}
{\rm I_2}&\ls\int_{\lf\{z\in\cx:\ d(z,y)>\frac{2^{\ell}t+d(x,y)}2\r\}}
\frac{\chi_{B(x,2^{\ell}t)}(z)}{V_{2^{\ell}t}(x)}
\frac{\chi_{B(y,2^{\ell}s)}(z)}{V_{2^{\ell}s}(y)}\,d\mu(z)\\
&\ls\frac{\chi_{B(x,2^{\ell+1}t)}(y)}{V_{2^{\ell+1}t}(x)}
\int_{\lf\{z\in\cx:\ d(z,y)>\frac{2^{\ell}t+d(x,y)}2\r\}}
\frac{\chi_{B(y,2^{\ell}s)}(z)}{V_{2^{\ell}s}(y)}\,d\mu(z)\\
&\ls\lf(\frac{s}{t}\r)^{\eta}\frac1
{V_{2^{\ell+1}t}(x)+V_{2^{\ell+1}t}(y)+V(x,y)}\\
&\ls2^{N\ell}\lf(\frac{s}{t}\r)^{\eta}\frac1
{V_{t}(x)+V_{t}(y)+V(x,y)}\lf[\frac{t}{t+d(x,y)}\r]^{N/2}.
\end{align*}

For $\rm I_3$, by some arguments similar to those used in the
estimate for $\rm I_2$, we see that
\begin{align*}
{\rm I_3}&\ls\frac{\chi_{B(x,2^{\ell}t)}(y)}{V_{2^{\ell}t}(x)}
\int_{\lf\{z\in\cx:\ d(z,y)>\frac{2^{\ell}t+d(x,y)}2\r\}}
\frac{\chi_{B(y,2^{\ell}t)}(z)}{V_{2^{\ell}s}(y)}\,d\mu(z)\\
&\ls2^{N\ell}\lf(\frac{s}{t}\r)^{\eta}\frac1
{V_{t}(x)+V_{t}(y)+V(x,y)}\lf[\frac{t}{t+d(x,y)}\r]^{N/2}.
\end{align*}
Combining the estimates for $\rm I_1$, $\rm I_2$ and $\rm I_3$,
we finish the proof of \eqref{10.6} and hence \eqref{10.5}.

Moreover, from the proof of \cite[Theorem 2.21]{hmy06}, it follows that,
for any $f\in H_{\rm at}^p(\mu)\cap\ltw$, there exists a positive
constant $L_3$ such that
$$
f(x)=\sum_{\ell=0}^{\fz}2^{-N\ell}\sum_{k\in\zz}
\sum_{Q_k^{\rm mc}\in\mr_k^{\rm mc}}
\lz^{\ell}_{Q_k^{\rm mc}}a^{\ell}_{Q_k^{\rm mc}}
\quad {\rm in}\quad \ltw,
$$
where
$$
\lz^{\ell}_{Q_k^{\rm mc}}=L_3\lf[\mu\lf(B_k^{\rm mc}\r)\r]^{1/p-1/2}
\lf[\int_{{\wz Q}_k^{\rm mc}}|D_t(f)(y)|^2\,\frac{d\mu(y)dt}{t}\r]^{1/2},
$$
$Q_k^{\rm mc}:=Q^{k_0}_{\bz_0}$, $B_k^{\rm mc}:=B(z^{k_0}_{\bz_0},
(\frac1{\dz}+1)L_12^{\ell}\dz^{k_0})$ and, for all $x\in\cx$,
$$
a^{\ell}_{Q_k^{\rm mc}}(x):=\frac1{\lz^{\ell}_{Q_k^{\rm mc}}}
\int_{\wz{Q}_k^{\rm mc}}\vz_{2^{\ell}t}(x,y)D_t(f)(y)\,\frac{d\mu(y)dt}{t}
$$
is a $(p,2)$-atom supported on $B_k^{\rm mc}$.
By an argument similar to that used in the proof of \eqref{9.6},
we further conclude that
$a^{\ell}_{Q_k^{\rm mc}}$ is also a $(p,2,\gz,\rho)_\lz$-atomic block
and $|a^{\ell}_{Q_k^{\rm mc}}|_{{\wz H}^{p,\,2,\,\gz}_{\rm atb,\,\rho}(\mu)}
\ls|\lz^{\ell}_{Q_k^{\rm mc}}|$.
Thus, $f\in\wz{\mathbb{H}}^{p,\,2,\,\gz}_{\rm atb,\,\rho}(\mu)$ and
$$
\|f\|_{{\wz H}^{p,\,2,\,\gz}_{\rm atb,\,\rho}(\mu)}\ls\|f\|_{H_{\rm at}^p(\mu)},
$$
which completes the proof of \textbf{Step 1}.

\textbf{Step 2}. In this step, we show that
$\pnhp\st\pmhp\st (H^p(\mu)\cap\ltw)$ for any $q\in(1,\fz)$.
By Proposition \ref{p4.3}, we see that
$\pnhp\st\pmhp$ for any $q\in(1,\fz)$.
Thus, to prove the desired conclusion, it suffices to show that
$\pmhp\st (H^p(\mu)\cap\ltw)$ for any $q\in(1,\fz)$.

We first reduce the proof to showing that,
if $b$ is a $(p,q,\gz,\ez,\rho)_\lz$-molecular block,
then
\begin{equation}\label{10.7}
S(b)\in L^p(\mu)\quad {\rm and}\quad \|S(b)\|_{\lp}
\ls|b|_{\wz{H}^{p,\,q,\,\gz,\,\ez}_{\rm mb,\,\rho}(\mu)}.
\end{equation}

Indeed, assume that \eqref{10.7} holds true. For any
$f\in\wz{\mathbb{H}}^{p,\,q,\,\gz,\,\ez}_{\rm mb,\,\rho}(\mu)$,
by Definition \ref{d4.1}, we know that
there exists a sequence $\{b_i\}_{i\in\nn}$ of
$(p,q,\gz,\ez,\rho)_\lz$-molecular blocks such that $f=\sum_{i=1}^\fz b_i$
in $\ltw$ and
\begin{equation}\label{10.8}
\sum_{i=1}^\fz|b_i|^p_{\wz{H}^{p,\,q,\,\gz,\,\ez}_{\rm mb,\,\rho}(\mu)}
\sim\|f\|_{\wz{H}^{p,\,q,\,\gz,\,\ez}_{\rm mb,\,\rho}(\mu)}^p.
\end{equation}
Notice that $D_t(y,\cdot)\in\ltw$ for any $y\in\cx$ and $t\in(0,\fz)$.
Thus, for any $y\in\cx$, we have
$$
|D_t(f)(y)|=|(D_t(y,\cdot),f)|\le\sum_{i=0}^\fz|(D_t(y,\cdot),b_i)|
=\sum_{i=0}^\fz|D_t(b_i)(y)|.
$$
From this, the Fatou lemma, \eqref{10.7} and \eqref{10.8}, we deduce that
$$
\|S(f)\|^p_{\lp}\le\sum_{i=1}^\fz\|S(b_i)\|^p_{\lp}
\ls\sum_{i=1}^\fz|b_i|^p_{\wz{H}^{p,\,q,\,\gz,\,\ez}_{\rm mb,\,\rho}(\mu)}
\sim\|f\|_{\wz{H}^{p,\,q,\,\gz,\,\ez}_{\rm mb,\,\rho}(\mu)}^p,
$$
which completes the proof of \textbf{Step 2}.

Now we prove \eqref{10.7} following the ideas
of the proof of Theorem \ref{t4.8}.
For the sake of simplicity, we assume that
$\gz=1$ and $\rho=2$. Let $b=\sum_{k=0}^\fz\sum_{j=1}^{M_k}\lz_{k,\,j}
a_{k,\,j}$ be a $(p,q,1,\ez,2)_\lz$-molecular block
as in Definition \ref{d4.1} with $\gz=1$ and $\rho=2$,
where, for any $k\in\zz_+$ and $j\in\{1,\ldots,M_k\}$,
$\supp(a_{k,\,j})\subset B_{k,\,j}\subset U_k(B)$ for some $B_{k,\,j}$
and $U_k(B)$ as in Definition \ref{d4.1}. Without loss of generality,
we may assume that $\wz M=M$ in Definition \ref{d4.1}.
Since $S$ is sublinear, we write
\begin{align*}
\|S(b)\|^p_{\lp}&\le\sum_{\ell=5}^\fz\int_{U_\ell(B)}
\lf|S\lf(\sum_{k=0}^{\ell-5}
\sum_{j=1}^M\lz_{k,\,j}m_{k,\,j}\r)(x)\r|^p\,d\mu(x)\\
&\hs+\sum_{\ell=5}^\fz\int_{U_\ell(B)}\lf|S\lf(\sum_{k=\ell-4}^{\ell+4}
\sum_{j=1}^M\lz_{k,\,j}m_{k,\,j}\r)(x)\r|^p\,d\mu(x)\\
&\hs+\sum_{\ell=5}^\fz\int_{U_\ell(B)}\lf|S\lf(\sum_{k=\ell+5}^{\fz}
\sum_{j=1}^M\lz_{k,\,j}m_{k,\,j}\r)(x)\r|^p\,d\mu(x)+\sum_{\ell=0}^4\int_{U_\ell(B)}|S(b)(x)|^p\,d\mu(x)\\
&=:{\rm I}+{\rm II}+{\rm III}+{\rm IV}.
\end{align*}
Now we first estimate III. For any $x\in U_{\ell}(B)$ and
$\ell\in\nn\cap[5,\fz)$, by the Minkowski inequality,
we see that
\begin{align*}
&S\lf(\sum_{k=\ell+5}^{\fz}
\sum_{j=1}^M\lz_{k,\,j}m_{k,\,j}\r)(x)\\
&\hs\le\sum_{k=\ell+5}^{\fz}\sum_{j=1}^M|\lz_{k,\,j}|
\lf\{\int_{\Gamma(x)}\lf[\int_{\cx}|m_{k,\,j}(z)||D_t(y,z)|\,d\mu(z)\r]^2
\frac{d\mu(y)dt}{V_t(x)t}\r\}^{1/2}\\
&\hs\le\sum_{k=\ell+5}^{\fz}\sum_{j=1}^M|\lz_{k,\,j}|
\int_{B_{k,\,j}}\lf[\int_{\Gamma(x)}|m_{k,\,j}(z)|^2|D_t(y,z)|^2
\frac{d\mu(y)dt}{V_t(x)t}\r]^{1/2}\,d\mu(z)\\
&\hs\le\sum_{k=\ell+5}^{\fz}\sum_{j=1}^M|\lz_{k,\,j}|
\int_{B_{k,\,j}}|m_{k,\,j}(z)|[M_1(x,z)+M_2(x,z)]\,d\mu(z),
\end{align*}
where, for all $x\in\cx$ and $z\in B_{k,\,j}$ with $k\in\nn\cap[\ell+5,\fz)$
and $j\in\{1,\ldots,M\}$,
$$
{\rm M_1}(x,z):=\lf[\int_{\gfz{\Gamma(x)\cap}{\{(y,\,t)
\in\cx\times(0,\fz):\ t\le\frac{d(x,z)}2\}}}
|D_t(y,z)|^2\frac{d\mu(y)dt}{V_t(x)t}\r]^{1/2},
$$
$$
{\rm M_2}(x,z):=\lf[\int_{\gfz{\Gamma(x)\cap}{\{(y,\,t)\in
\cx\times(0,\fz):\ t>\frac{d(x,z)}2\}}}
|D_t(y,z)|^2\frac{d\mu(y)dt}{V_t(x)t}\r]^{1/2}.
$$
For any $x,\,y,\,z\in\cx$ satisfying $d(y, x)<t$ and $t\le d(x, z)/2$,
it is easy to see that
$$d(y, z)\ge d(x, z)-d(y, x)\ge\frac{1}{2}d(x, z).$$
It then follows, from this, (A1) and \eqref{10.1}, that
$$
{\rm M_1}(x,z)\ls\lf[\frac{1}{[V(x, z)]^2}\int^{d(x,\,z)/2}_0
\lf(\int_{B(x,t)}\frac{d\mu(y)}{V_t(x)}\r)
\lf(\dfrac{t}{d(x, z)}\r)^{\ez_2}\frac{dt}{t}\r]^{1/2}\ls\frac{1}{V(x, z)}
$$
and
\begin{align*}
{\rm M_2}(x,z)&\ls\lf[\int_{d(x,\,z)/2}^\fz\lf(\int_{B(x,t)}
\frac{d\mu(y)}{V_t(x)}\r)
\frac{dt}{[V_{2t}(z)]^2t}\r]^{1/2}\\
&\ls\frac{1}{V(x, z)}\lf[\int_{d(x,\,z)/2}^\fz
\frac{[d(x,z)]^{2\kz}}{(2t)^{2\kz} t}\,dt\r]^{1/2}\ls\frac{1}{V(x, z)}.
\end{align*}
Moreover, by $z\in B_{k,\,j}\st 2^{k+2}B\bh 2^{k-2}B$, $k\ge \ell+5$,
$x\in2^{\ell+2}B\bh 2^{\ell-2}B$, we have
$d(x,c_B)<2^{\ell+2}r_B$, $d(z,c_B)\ge2^{k-2}r_B\ge2^{\ell+3}r_B$
and
$$
d(x,z)\ge d(z,c_B)-d(x,c_B)\ge2^{\ell+3}r_B
-2^{\ell+2}r_B=2^{\ell+2}r_B>d(x,c_B),
$$
where $c_B$ and $r_B$ denote the center and the radius of $B$,
respectively. Thus, for all $x\in\cx$,
\begin{equation}\label{10.9}
S\lf(\sum_{k=\ell+5}^{\fz}
\sum_{j=1}^M\lz_{k,\,j}m_{k,\,j}\r)(x)
\ls\sum_{k=\ell+5}^{\fz}\sum_{j=1}^M|\lz_{k,\,j}|\frac1{V(x,c_B)}
\|m_{k,\,j}\|_{\lon}.
\end{equation}

From \eqref{10.9}, the H\"older inequality, \eqref{4.1}
and \eqref{10.1}, we deduce that
\begin{align*}
{\rm III}&\ls\sum_{\ell=5}^\fz\sum_{k=\ell+5}^\fz\sum_{j=1}^M
|\lz_{k,\,j}|^p\int_{U_\ell(B)}\frac1{[V(x,c_B)]^p}
\,d\mu(x)\|m_{k,\,j}\|^p_{\lon}\\
&\ls\sum_{\ell=5}^\fz\sum_{k=\ell+5}^\fz\sum_{j=1}^M
|\lz_{k,\,j}|^p\frac{V_{2^{\ell+2}r_B}(c_B)}{[V_{2^{\ell-2}r_B}(c_B)]^p}
\|m_{k,\,j}\|^p_{\lq}[\mu(B_{k,\,j})]^{p/q'}\\
&\ls\sum_{\ell=5}^\fz\sum_{k=\ell+5}^\fz\sum_{j=1}^M
|\lz_{k,\,j}|^p\lf[V_{2^{\ell+2}r_B}(c_B)\r]^{1-p}
2^{-k\ez p}\lf[\lz\lf(c_B,2^{k+2}r_B\r)\r]^{p-1}\\
&\ls\sum_{\ell=5}^\fz\sum_{k=\ell+5}^\fz\sum_{j=1}^M
2^{-k\ez p}|\lz_{k,\,j}|^p
\sim\sum_{j=1}^M\sum_{k=10}^\fz\sum_{\ell=5}^{k-5}
2^{-k\ez p}|\lz_{k,\,j}|^p\\
&\ls\sum_{j=1}^M\sum_{k=10}^\fz k
2^{-k\ez p}|\lz_{k,\,j}|^p\ls\sum_{k=0}^\fz\sum_{j=1}^M|\lz_{k,\,j}|^p
\sim|b|^p_{\mhpe}.
\end{align*}

In order to estimate I, for all $x\in\cx$, we write
\begin{align*}
&S\lf(\sum_{k=0}^{\ell-5}
\sum_{j=1}^M\lz_{k,\,j}m_{k,\,j}\r)(x)\\
&\hs\le\lf\{\int_{\Gamma(x)}
\lf|\int_{\cx}\sum_{k=0}^{\ell-5}
\sum_{j=1}^M\lz_{k,\,j}m_{k,\,j}(z)[D_t(y,z)-D_t(y,c_B)]
\,d\mu(z)\r|^2\frac{d\mu(y)dt}{V_t(x)t}\r\}^{1/2}\\
&\hs\hs+\lf\{\int_{\Gamma(x)}
\lf|\int_{\cx}\sum_{k=0}^{\ell-5}
\sum_{j=1}^M\lz_{k,\,j}m_{k,\,j}(z)D_t(y,c_B)
\,d\mu(z)\r|^2\frac{d\mu(y)dt}{V_t(x)t}\r\}^{1/2}
=:{\rm M}_3(x)+{\rm M}_4(x).
\end{align*}
To estimate ${\rm M}_3(x)$, by the Minkowski inequality,
we further write, for all $x\in\cx$,
\begin{align*}
{\rm M}_3(x)&\le\sum_{k=0}^{\ell-5}
\sum_{j=1}^M|\lz_{k,\,j}|\lf\{\int_{\Gamma(x)}
\lf[\int_{B_{k,\,j}}|m_{k,\,j}(z)||D_t(y,z)-D_t(y,c_B)|
\,d\mu(z)\r]^2\frac{d\mu(y)dt}{V_t(x)t}\r\}^{1/2}\\
&\le\sum_{k=0}^{\ell-5}
\sum_{j=1}^M|\lz_{k,\,j}|\int_{B_{k,\,j}}|m_{k,\,j}(z)|
\lf\{\int_{\Gamma(x)}|D_t(y,z)-D_t(y,c_B)|^2
\frac{d\mu(y)dt}{V_t(x)t}\r\}^{1/2}\,d\mu(z)\\
&\le\sum_{k=0}^{\ell-5}
\sum_{j=1}^M|\lz_{k,\,j}|\int_{B_{k,\,j}}|m_{k,\,j}(z)|
[{\rm M}_{3,\,1}(x,z)+{\rm M}_{3,\,2}(x,z)]\,d\mu(z),
\end{align*}
where, for all $x\in\cx$ and $z\in B_{k,\,j}$ with $k\in\zz_+\cap[0,\ell-5]$
and $j\in\{1,\ldots,M\}$,
$$
{\rm M}_{3,\,1}(x,z):=\lf[\int_{\gfz{\Gamma(x)\cap}{\{(y,\,t)
\in\cx\times(0,\fz):\ t\le\frac{d(x,c_B)}8\}}}
|D_t(y,z)-D_t(y,c_B)|^2\frac{d\mu(y)dt}{V_t(x)t}\r]^{1/2},
$$
$$
{\rm M}_{3,\,2}(x,z):=\lf[\int_{\gfz{\Gamma(x)\cap}{\{(y,\,t)\in
\cx\times(0,\fz):\ t>\frac{d(x,c_B)}8\}}}
|D_t(y,z)-D_t(y,c_B)|^2\frac{d\mu(y)dt}{V_t(x)t}\r]^{1/2}.
$$
Now we give some observations. For any $z\in B_{k,\,j}
\st 2^{k+2}B\bh 2^{k-2}B$,
$k\in\zz_+\cap[0,\ell-5]$, $j\in\{1,\ldots,M\}$
and $y\in\Gamma(x)$, we have
$d(z,c_B)<2^{k+2}r_B$ and $d(y,c_B)\ge d(x,c_B)-d(y,x)\ge2^{\ell-2}r_B-t$
and hence
$$
d(y,c_B)+t\ge2^{\ell-2}r_B\ge2^{k+3}r_B>2d(z,c_B).
$$
Meanwhile, for any $y\in\Gamma(x)\cap\{(y,\,t)
\in\cx\times(0,\fz):\ t\le\frac{d(x,c_B)}8\}$, we have
$$
d(y,c_B)\ge d(x,c_B)-d(x,y)\ge d(x,c_B)-\frac{d(x,c_B)}8=\frac78 d(x,c_B).
$$
From these observations, (A3) and \eqref{10.1}, it follows that,
for all $x\in\cx$ and $z\in B_{k,\,j}$ with
$k\in\zz_+\cap[0,\ell-5]$ and $j\in\{1,\ldots,M\}$,
\begin{align*}
{\rm M}_{3,\,1}(x,z)
&\ls\frac{2^kr_B}{d(x,c_B)}
\lf\{\int^{\frac{d(x,\,c_B)}8}_0\int_{B(x,t)}
\frac1{[V(c_B,x)]^2}\frac{t^{\ez_3-1}}{[d(x,c_B)]^{\ez_3}}
\frac{d\mu(y)dt}{V_t(x)}\r\}^{1/2}\\
&\sim\frac{2^kr_B}{d(x,c_B)}
\frac1{V(c_B,x)}.
\end{align*}
and
\begin{align*}
{\rm M}_{3,\,2}(x,z)
&\ls\frac{2^{k}r_B}{d(x,c_B)}
\lf\{\int^{\fz}_{\frac{d(x,\,c_B)}8}\int_{B(x,t)}
\frac1{[V_t(c_B)]^2}\frac{d\mu(y)dt}{V_t(x)t}\r\}^{1/2}\\
&\ls\frac{2^kr_B}{d(x,c_B)}
\frac1{V(c_B,x)}\lf\{\int^{\fz}_{\frac{d(x,\,c_B)}8}
\lf[\frac{t}{d(x,c_B)}\r]^{-2\kz}\frac{dt}{t}\r\}^{1/2}\\
&\sim\frac{2^kr_B}{d(x,c_B)}
\frac1{V(c_B,x)}.
\end{align*}
Combining the estimates of ${\rm M}_{3,\,1}(x,z)$ and
${\rm M}_{3,\,2}(x,z)$, we find that, for all $x\in\cx$,
$$
{\rm M}_3(x)\ls\sum_{k=0}^{\ell-5}
\sum_{j=1}^M|\lz_{k,\,j}|\|m_{k,\,j}\|_{\lon}
\frac{2^{k}r_B}{d(x,c_B)}
\frac1{V(c_B,x)}.
$$
By this, the H\"older inequality, \eqref{4.1}, \eqref{10.1}
and $p>\frac{\nu}{\nu+1}$, we conclude that
\begin{align*}
&\sum_{\ell=5}^{\fz}\int_{U_{\ell}(B)}[{\rm M}_3(x)]^p\,d\mu(x)\\
&\hs\ls\sum_{\ell=5}^{\fz}\sum_{k=0}^{\ell-5}
\sum_{j=1}^M|\lz_{k,\,j}|^p\|m_{k,\,j}\|^p_{\lon}
\int_{U_{\ell}(B)}\frac{2^{kp}r_B^{p}}{[d(x,c_B)]^{p}}
\frac1{[V(c_B,x)]^p}\,d\mu(x)\\
&\hs\ls\sum_{\ell=5}^{\fz}\sum_{k=0}^{\ell-5}
\sum_{j=1}^M|\lz_{k,\,j}|^p[\mu(B_{k,\,j})]^{p/q'}\|m_{k,\,j}\|^p_{\lq}
\frac{2^{kp}r_B^{p}}{2^{(\ell-2)p}r_B^{p}}
\frac{V_{2^{\ell+2}r_B}(c_B)}{[V_{2^{\ell-2}r_B}(c_B)]^p}\\
&\hs\ls\sum_{\ell=5}^{\fz}\sum_{k=0}^{\ell-5}
\sum_{j=1}^M|\lz_{k,\,j}|^p 2^{-k\ez p}2^{(k-\ell)p}
\lf[V_{2^{\ell+2}r_B}(c_B)\r]^{1-p}
\lf[\lz\lf(c_B,2^{k+2}r_B\r)\r]^{p-1}\\
&\hs\ls\sum_{\ell=5}^{\fz}\sum_{k=0}^{\ell-5}
\sum_{j=1}^M|\lz_{k,\,j}|^p 2^{-k\ez p}2^{(k-\ell)p}
2^{(\ell-k)(1-p)\nu}\\
&\hs\sim\sum_{\ell=5}^{\fz}\sum_{k=0}^{\ell-5}
\sum_{j=1}^M|\lz_{k,\,j}|^p 2^{-k\ez p}
2^{(\ell-k)[(1-p)\nu-p]}\\
&\hs\ls\sum_{k=0}^{\fz}
\sum_{j=1}^M|\lz_{k,\,j}|^p\sim|b|^p_{\mhpe}.
\end{align*}

By $\int_\cx b(x)\,d\mu(x)=0$ and some arguments similar
to those used in the estimate of \eqref{10.9}, we see that,
for all $x\in\cx$,
\begin{align*}
{\rm M}_4(x)&=\lf\{\int_{\Gamma(x)}
\lf|\int_{\cx}\sum_{k=\ell-4}^{\fz}
\sum_{j=1}^M\lz_{k,\,j}m_{k,\,j}(z)D_t(y,c_B)
\,d\mu(z)\r|^2\frac{d\mu(y)dt}{V_t(x)t}\r\}^{1/2}\\
&\ls\sum_{k=\ell-4}^{\fz}
\sum_{j=1}^M|\lz_{k,\,j}|\|m_{k,\,j}\|_{\lon}
\frac1{V(c_B,x)}.
\end{align*}
Again, by some arguments similar
to those used in the estimate of III, we know that
$$
\sum_{\ell=5}^{\fz}\int_{U_\ell(B)}[{\rm M}_4(x)]^p\,d\mu(x)
\ls|b|^p_{\mhpe}.
$$
Thus,
$$
{\rm I}\ls\sum_{\ell=5}^{\fz}\int_{U_\ell(B)}[{\rm M}_3(x)]^p\,d\mu(x)
+\sum_{\ell=5}^{\fz}\int_{U_\ell(B)}[{\rm M}_4(x)]^p\,d\mu(x)
\ls|b|^p_{\mhpe}.
$$

Then we turn to estimate II. We first write
\begin{align*}
{\rm II}&\le\sum_{\ell=5}^\fz\sum_{k=\ell-4}^{\ell+4}
\sum_{j=1}^M|\lz_{k,\,j}|^p\int_{U_\ell(B)}
\lf[S\lf(m_{k,\,j}\r)(x)\r]^p\,d\mu(x)\\
&\le\sum_{\ell=5}^\fz\sum_{k=\ell-4}^{\ell+4}
\sum_{j=1}^M|\lz_{k,\,j}|^p\int_{2B_{k,\,j}}
\lf[S\lf(m_{k,\,j}\r)(x)\r]^p\,d\mu(x)\\
&\hs+\sum_{\ell=5}^\fz\sum_{k=\ell-4}^{\ell+4}
\sum_{j=1}^M|\lz_{k,\,j}|^p\int_{U_\ell(B)\bh2B_{k,\,j}}
\lf[S\lf(m_{k,\,j}\r)(x)\r]^p\,d\mu(x)
=:{\rm II_1}+{\rm II_2}.
\end{align*}
By the H\"older inequality, the $\lq$-boundedness ($q\in(1,\fz)$)
of $S(f)$ and \eqref{4.1}, we conclude that
\begin{align*}
{\rm II}_1&\ls\sum_{\ell=5}^\fz\sum_{k=\ell-4}^{\ell+4}
\sum_{j=1}^M|\lz_{k,\,j}|^p\lf[\mu\lf(2B_{k,\,j}\r)\r]^{1-\frac{p}{q}}
\lf\|S\lf(m_{k,\,j}\r)\r\|^p_{\lq}\\
&\ls\sum_{\ell=5}^\fz\sum_{k=\ell-4}^{\ell+4}
\sum_{j=1}^M|\lz_{k,\,j}|^p\lf[\mu\lf(2B_{k,\,j}\r)\r]^{1-\frac{p}{q}}
\lf\|m_{k,\,j}\r\|^p_{\lq}\\
&\ls\sum_{\ell=5}^\fz\sum_{k=\ell-4}^{\ell+4}
\sum_{j=1}^M|\lz_{k,\,j}|^p2^{-k\ez p}
\ls\sum_{k=0}^{\fz}\sum_{j=1}^M|\lz_{k,\,j}|^p
\sim|b|^p_{\mhpe}.
\end{align*}

To estimate $\rm II_2$, fix $\ell\in\nn\cap[5,\fz)$,
$k\in\{\ell-4,\ldots,\ell+4\}$, $j\in\{1,\ldots,M\}$
and $x\in U_{\ell}(B)\bh 2B_{k,\,j}$.
Notice that, for any $z\in B_{k,\,j}$ and $x\notin 2B_{k,\,j}$,
$$
d(x,z)\ge d(x,c_{B_{k,\,j}})-d(z,c_{B_{k,\,j}})\ge
\frac12 d(x,c_{B_{k,\,j}}).
$$
By this, the Minkowski inequality and an argument similar
to that used in the estimate of \eqref{10.9}, we further obtain
\begin{align*}
S\lf(m_{k,\,j}\r)(x)
&\le\lf\{\int_{\Gamma(x)}\lf[\int_{B_{k,\,j}}|m_{k,\,j}(z)||D_t(y,z)|
\,d\mu(z)\r]^2\frac{d\mu(y)dt}{V_t(x)t}\r\}^{1/2}\\
&\le\int_{B_{k,\,j}}|m_{k,\,j}(z)|\lf[\int_{\Gamma(x)}|D_t(y,z)|^2
\frac{d\mu(y)dt}{V_t(x)t}\r]^{1/2}\,d\mu(z)\\
&\ls\frac1{V(x,c_{B_{k,\,j}})}\|m_{k,\,j}\|_{\lon}.
\end{align*}
From this, the H\"older inequality, \eqref{4.1} and \eqref{10.1},
we deduce that
\begin{align*}
{\rm II_2}&\ls\sum_{\ell=5}^\fz\sum_{k=\ell-4}^{\ell+4}
\sum_{j=1}^M|\lz_{k,\,j}|^p\int_{U_\ell(B)\bh2B_{k,\,j}}
\frac1{[V(x,c_{B_{k,\,j}})]^p}\,d\mu(x)\|m_{k,\,j}\|^p_{\lon}\\
&\ls\sum_{\ell=5}^\fz\sum_{k=\ell-4}^{\ell+4}
\sum_{j=1}^M|\lz_{k,\,j}|^p\lf[\int_{2^{\ell+7}B\bh2B_{k,\,j}}
\frac1{V(x,c_{B_{k,\,j}})}\,d\mu(x)\r]^p
\lf[\mu\lf(2^{\ell+2}B\r)\r]^{1-p}\\
&\hs\times\lf[\mu\lf(B_{k,\,j}\r)\r]^{p/q'}\|m_{k,\,j}\|^p_{\lq}\\
&\ls\sum_{\ell=5}^\fz\sum_{k=\ell-4}^{\ell+4}
\sum_{j=1}^M|\lz_{k,\,j}|^p
\lf[\wz K^{(\rho),\,p}_{B_{k,\,j},\,2^{\ell+6}B}\r]^p
\lf[\mu\lf(2^{\ell+2}B\r)\r]^{1-p}\\
&\hs\times2^{-k\ez p}\lf[\lz\lf(c_B,2^{k+2}r_B\r)\r]^{p-1}
\lf[\wz K^{(\rho),\,p}_{B_{k,\,j},\,2^{k+2}B}\r]^{-p}\\
&\ls\sum_{\ell=5}^\fz\sum_{k=\ell-4}^{\ell+4}
\sum_{j=1}^M|\lz_{k,\,j}|^p2^{-k\ez p}\sim|b|^p_{\mhpe},
\end{align*}
which, together with the estimate for ${\rm II_1}$,
implies that ${\rm II}\ls|b|^p_{\mhpe}$.

To estimate ${\rm IV}$, observe that
\begin{align*}
{\rm IV}&\le\sum_{\ell=0}^4\int_{U_\ell(B)}\lf|S\lf(\sum_{k=0}^{\ell+4}
\sum_{j=1}^M\lz_{k,\,j}m_{k,\,j}\r)(x)\r|^p\,d\mu(x)\\
&\hs+\sum_{\ell=0}^4\int_{U_\ell(B)}\lf|S\lf(\sum_{k=\ell+5}^{\fz}
\sum_{j=1}^M\lz_{k,\,j}m_{k,\,j}\r)(x)\r|^p\,d\mu(x)
=:{\rm IV}_1+{\rm IV}_2.
\end{align*}
By some arguments similar to those used in the estimates for $\rm II_1$
and $\rm III$, we respectively obtain
$${\rm IV}_1\ls|b|^p_{\mhpe}\quad {\rm and}\quad {\rm IV}_2\ls|b|^p_{\mhpe},$$
which, together with the estimates for $\rm III$, $\rm I$
and $\rm II$, completes the proof of
\textbf{Step 2} and hence Theorem \ref{t10.10}.
\end{proof}

\begin{remark}\label{r10.11}
(i) Let $\rho\in(1,\fz)$, $\gz\in[1,\fz)$ and
$\frac{\nu}{\nu+1}<p\le1<q\le 2$.
Combining Propositions \ref{p8.1} and \ref{p8.2}, and
Theorems \ref{t10.10} and \ref{t9.4},
we finally obtain
$$
\widehat{H}^{p,\,q,\,\gz}_{\rm atb,\,\rho}(\mu)
=H^{p}_{\rm at}(\mu)
=\wz{H}^{p,\,q,\,\gz}_{\rm atb,\,\rho}(\mu)
=\wz{H}^{p,\,q,\,\gz,\,\ez}_{\rm mb,\,\rho}(\mu)
$$
over an RD-space $(\cx,d,\mu)$ with $\mu(\cx)=\fz$.

(ii) It is still unclear whether
$\widehat{H}^{p,\,q,\,\gz}_{\rm atb,\,\rho}(\mu)\
({\rm or}\ H^{p}_{\rm at}(\mu))$ and
$\wz{H}^{p,\,q,\,\gz}_{\rm atb,\,\rho}(\mu)$ (or $\mhp$) coincide
or not for any $q\in(2,\fz]$ over RD-spaces $(\cx,d,\mu)$ with $\mu(\cx)=\fz$.

(iii) Let $(\mathcal{X},d,\mu):=(\mathbb{R}^D,|\cdot|,dx)$
with the $D$-dimensional Lebesgue measure $dx$,
$\rho\in(1,\fz)$, $\gz\in[1,\fz)$,
$\frac{D}{D+1}<p\le1<q<\fz$ and $\ez\in(0,\fz)$.
By Theorem \ref{t9.4}, we see that
$\widehat{H}^{p,\,q,\,\gz}_{\rm atb,\,\rho}(\mu)
=H^{p}(\rr^D)$. Now we deal with the relation between
$\wz{H}^{p,\,q,\,\gz}_{\rm atb,\,\rho}(\mu)$ and
$H^{p}(\rr^D)$.
To this end, consider \cite[Theorem 5.4]{bckyy} with
$\varphi(x,t):=t^p$ ($p\in(0,1]$)
and $L=-\Delta$, we notice that
$H^{p}_{-\Delta}(\rr^D)=H^{p}(\rr^D)$ (see \cite{fs}),
$q(\varphi)=1$, $r(\varphi)=\fz$,
$\ell(\varphi)=p=i(\varphi)$, $q\in(1,\fz)$
and $p_{-\Delta}=1$ therein. By $e^{t\Delta}1=1$, $-\Delta$ satisfying
assumptions \textbf{(H1)} and \textbf{(H2)} in \cite{bckyy},
and \cite[p.\,107, (6.16)]{bckyy}
(see also \cite[Remark 5.1]{jy}), we conclude that, for any
$(p,q,M)_L$-atom $a$ defined in \cite[Definition 5.2]{bckyy},
$$
\int_{\rr^D}a(x)\,dx=0.
$$
Thus, $a$ is a $(p,q)$-atom. From this,
\textbf{Step 2} of the proof of Theorem \ref{t10.10} and
the proof of \cite[Theorem 5.4]{bckyy},
we deduce that
\begin{equation}\label{10.10}
\lf(H^{p}(\rr^D)\cap L^2(\rr^D)\r)
=\wz{\mathbb{H}}^{p,\,q,\,\gz}_{\rm atb,\,\rho}(\mu).
\end{equation}
Thus, $H^{p}(\rr^D)=\nhp$.

Moveover, by \textbf{Step 2} of the proof of Theorem \ref{t10.10}
and \eqref{10.10}, we know that
$$
\pnhp\st\pmhp\st H^{p}(\rr^D)\cap L^2(\rr^D)=\pnhp.
$$
Thus, by this and Theorems \ref{t9.4} and \ref{t7.9},
we have
$$
\mhp=\nhp=H^{p}(\rr^D)
=\widehat{H}^{p,\,q,\,\gz}_{\rm atb,\,\rho}(\mu)
=\widehat{H}^{p,\,q,\,\gz,\,\ez}_{\rm mb,\,\rho}(\mu).
$$
\end{remark}

\Acknowledgements{The Second author is supported by the National
Natural Science Foundation of China (Grant No. 11301534) and Da Bei Nong Education Fund (Grant No.
1101-2413002). The third author is supported by the National Natural
Science Foundation of China (Grant Nos. 11171027 and 11361020),
the Specialized Research Fund for the Doctoral Program of Higher Education
of China (Grant No. 20120003110003) and the Fundamental Research Funds
for Central Universities of China (Grant Nos. 2012LYB26, 2012CXQT09, 2013YB60
and 2014KJJCA10). The fourth author is supported by the National Natural Science Foundation
of China (Grant No. 11101339).
The authors would
like to express their deep thanks to Professor Tuomas Hyt\"onen
for indicating them an important example (see Example \ref{e6.3} above).
Dachun Yang also wishes to express his sincerely thanks to Professor
Yasuo Komori-Furuya for a suggestive conversation on the topic of this article in a conference
held in Tokyo Metropolitan University of Japan in 2012.}


\end{document}